\documentclass[11pt]{amsart}
\usepackage[utf8]{inputenc}
\usepackage[english]{babel}
\usepackage[letterpaper,top=2cm,bottom=2cm,left=3cm,right=3cm,marginparwidth=2.5cm]{geometry}
\usepackage{amsmath,amsfonts,amssymb,amsthm,xcolor,tikz,todonotes,xspace,euscript,nicefrac,mathrsfs,calligra}
\usepackage{csquotes}
\usepackage{xparse}
\usepackage{enumitem}
\usepackage[maxbibnames=99,backend=biber,url=false,style=alphabetic,doi=true,eprint=false]{biblatex}
\renewbibmacro*{url+urldate}{}
\renewbibmacro{in:}{}
\AtEveryBibitem{
        \clearfield{isbn}
        \clearlist{location}
        \clearfield{month}
        \clearfield{day}
        \clearfield{series}
        \clearfield{pages}
        \clearlist{language}
        \clearlist{translator}
        \clearfield{urldate}
        \clearfield{pubstate}
        \clearfield{url}
        \clearfield{urlyear}
        \clearfield{urlmonth}
}
\usepackage{tikz-cd}
\usepackage[all]{xy}
\usepackage{graphicx}
\usepackage[normalem]{ulem}
\usepackage{bm,stmaryrd}
\usepackage{mathtools}
\usepackage{quiver}

\usepackage{comment}

\usepackage{collectbox}


\makeatletter
\def\@settitle{\begin{center}\baselineskip14\p@\relax
    \bfseries \@title
  \end{center}}
\def\@setauthors{\begingroup
  \def\thanks{\protect\thanks@warning}\trivlist
  \centering\footnotesize \@topsep30\p@\relax
  \advance\@topsep by -\baselineskip
  \item\relax
  \author@andify\authors
  \def\\{\protect\linebreak}{\authors}\ifx\@empty\contribs
  \else
    ,\penalty-3 \space \@setcontribs
    \@closetoccontribs
  \fi
  \endtrivlist
  \endgroup
}
\makeatother
\usepackage[colorlinks=true,allcolors=blue]{hyperref}
\usepackage{zref-clever}
\usepackage{aliascnt}
\zcsetup{cap}
\providecommand{\cref}[1]{\zcref{#1}}
\providecommand{\Cref}[1]{\zcref{#1}}

\zcRefTypeSetup{equation}{
    name-sg=,
    Name-sg=,
    name-pl=,
    Name-pl=
}

\NewDocumentCommand{\NewTheoremWithZref}{ m o m m }{\IfNoValueTF{#2}{\newtheorem{#1}{#3}\zcRefTypeSetup{#1}{name-sg=#3, Name-sg=#3, name-pl=#4, Name-pl=#4}}{\newaliascnt{#1}{#2}\newtheorem{#1}[#1]{#3}\aliascntresetthe{#1}\zcRefTypeSetup{#1}{name-sg=#3, Name-sg=#3, name-pl=#4, Name-pl=#4}}}
\newif\ifBF 
\BFfalse
\newif\ifanindex
\anindextrue
\ifanindex
\usepackage{anindex}
\anindexsetup{
    heading,
    lines,
        title = {Index of Notation},
}

\fi
\newcommand{\Bv}{\mathbf{v}}
\newcommand{\Bw}{\mathbf{w}}
\newtheorem{itheorem}{Theorem}
\zcRefTypeSetup{itheorem}{
    name-sg=theorem,
    Name-sg=Theorem,
    name-pl=theorems,
    Name-pl=Theorems
}

\zcRefTypeSetup{icor}{
    name-sg=corollary,
    Name-sg=Corollary,
    name-pl=corollaries,
    Name-pl=Corollaries
}

\newtheorem{Theorem}{Theorem}[section]
\zcRefTypeSetup{Theorem}{
    name-sg=Theorem,
    Name-sg=Theorem,
    name-pl=Theorems,
    Name-pl=Theorems
}
\NewTheoremWithZref{Proposition}[Theorem]{Proposition}{Propositions}
\NewTheoremWithZref{Lemma}[Theorem]{Lemma}{Lemmas}

\NewTheoremWithZref{Corollary}[Theorem]{Corollary}{Corollaries}
\NewTheoremWithZref{Conjecture}[Theorem]{Conjecture}{Conjectures}

\NewTheoremWithZref{Definition}[Theorem]{Definition}{Definitions}
\NewTheoremWithZref{Example}[Theorem]{Example}{Examples}
\NewTheoremWithZref{Remark}[Theorem]{Remark}{Remarks}

\numberwithin{equation}{section}

\tikzset{wei/.style={draw=red,double=red!40!white,double distance=1.5pt,thin}}
\newcounter{subeqn}

\makeatletter
\@addtoreset{subeqn}{equation}

\newcommand{\nc}{\newcommand}

\newcommand{\mapFromHToG}{\varsigma}
\nc{\bla}{{\boldsymbol{\la}}}
\nc{\mmod}{\operatorname{-mod}}
\nc{\h}{\mathfrak h}
\nc{\K}{\Bbbk}
\newcommand{\LtT}{\mathfrak{\tilde t}}

\nc{\LtTl}{\LtT^{(\lambda)}}
\nc{\KK}{\mathbb{K}}
\nc{\LL}{\mathbb{L}}
\nc{\Nl}{\bfN^{\lambda}}
\nc{\Gl}{G_{\lambda}}

\nc{\tGl}{\tG_{\lambda}}
\nc{\Ne}{\bfN^{\eta}}
\nc{\Ge}{G_{\eta}}
\nc{\g}{\mathfrak g}
\nc{\LT}{\mathfrak t}

\nc{\LTl}{\mathfrak t_{\lambda}}
\nc{\LtG}{\tilde{\mathfrak g}}
\nc{\LG}{\mathfrak g}

\nc{\LGL}{\mathfrak {gl}}
\nc{\fM}{\mathfrak M}

\newcommand{\tM}{\widetilde{\mathcal{M}}}

\newcommand{\elli}{E}
\newcommand{\soa}{\mathsf{A}}
\newcommand{\Gmsc}{\mathbb{G}_m^{\mathrm{sc}}}

\newcommand{\tMB}{\widetilde{\mathcal{M}}_{\mathsf{BFN}}}
\newcommand{\BFN}{\mathsf{BFN}}

\newcommand{\M}{\mathcal{M}}
\nc{\bM}{\mathbf M}
\nc{\bR}{\mathbf R}
\nc{\Bm}{\mathbf m}
\nc{\Bn}{\mathbf n}
\nc{\bS}{\mathbf S}
\nc{\bT}{\mathbf T}
\nc{\bU}{\mathbf U}
\nc{\bV}{\mathbf V}
\nc{\bi}{\mathbf i}
\nc{\bp}{\mathbf p}
\nc{\barQ}{\bar{Q}}
\nc{\barP}{\bar{P}}
\nc{\barX}{\bar{X}}
\nc{\hsigma}{\hat{\sigma}}
\nc{\TL}{\tilde{\mathscr{T}}_{\mathcal{L}}}
\nc{\bs}{\mathbf s}
\nc{\wtlev}{n}
\nc{\C}{\mathbb C}
\nc{\Sym}{\operatorname{Sym}}
\nc{\acham}{\eta}
\nc{\tU}{\mathcal{U}}

\nc{\PolKLR}{\mathsf{Pol}}
\nc{\BY}{\mathbf{Y}}
\nc{\longi}{{\boldsymbol{\ell}}}
\nc{\red}{\operatorname{red}}
\nc{\ind}{\operatorname{ind}}
\nc{\yz}{z}
\nc{\YZ}{Z}
\nc{\Z}{\mathbb Z}
\nc{\R}{\mathbb R}
\nc{\N}{\mathbb N}
\nc{\B}{\mathcal B}
\nc{\cE}{\mathcal E}
\nc{\cF}{\mathcal F}
\nc{\fB}{\mathfrak B}
\nc{\con}{\sim}
\nc{\indices}{\Sigma}
\nc{\pgl}{\mathfrak{pgl}}
\nc{\ev}{\mathsf{ev}}
\nc{\Hom}{\operatorname{Hom}}
\nc{\End}{\operatorname{End}}
\nc{\res}{\operatorname{res}}
\nc{\al}{\alpha}
\nc{\Stein}{\mathbb{X}}
\nc{\pStein}{\mathbb{Y}}
\nc{\vp}{\varphi}
\nc{\Cth}{S_h}
\nc{\cO}{\mathcal{O}}
\nc{\fg}{\mathfrak{g}}
\nc{\one}{\mathbf{1}}
\nc{\bb}{\mathbf{b}}
\nc{\ext}{\operatorname{Ext}}
\nc{\out}{\operatorname{out}}
\nc{\FY}{FY}
\nc{\ep}{\epsilon}
\nc{\bz}{{\mathbf z}}
\nc{\inn}{\operatorname{in}}
\nc{\BK}{{\reflectbox{\rm R}}}
\nc{\Bi}{\mathbf{i}}
\nc{\Ba}{\mathbf{a}}
\nc{\Bj}{\mathbf{j}}
\nc{\Wei}{\EuScript{W}}
\nc{\Bb}{\mathbf{b}}
\nc{\Bnu}{{\boldsymbol{\nu}}}
\nc{\quiver}{I}
\nc{\tGamma}{\tilde{\Gamma}}
\nc{\tGammabR}{\tGamma_{\bR}}
\nc{\GammabR}{\Gamma_{\bR}}
\nc{\diam}{\diamond}
\nc{\la}{\lambda}
\nc{\Yml}{Y_\mu^\lambda}
\nc{\FYml}{FY_\mu^\lambda}
\nc{\bgam}{{\boldsymbol{\gamma}}}
\nc{\blam}{{\boldsymbol{\lambda}}}
\nc{\gr}{\operatorname{gr}}
\nc{\Spec}{\operatorname{Spec}}
\nc{\Stendhal}{Stendhal\xspace}

\nc{\Tsetlin}{\foreignlanguage{russian}{Цетлин}\xspace}
\nc{\GT}{Gelfand-Tsetlin\xspace}
\nc{\GTc}{\mbox{\rm\foreignlanguage{russian}{ГЦ}}}
\nc{\MaxSpec}{\operatorname{MaxSpec}}
\nc{\Cartan}{\C[H_\bullet^{(\bullet)}]}
\nc{\tmetric}{\mathscr{\tilde T}}
\nc{\metric}{\mathscr{T}}
\nc{\pmmetric}{{}_{\pm}\mathscr{T}}
\nc{\pmetric}{{}_{+}\mathscr{T}}
\nc{\mmetric}{{}_{-}\mathscr{T}}
\nc{\Pol}{\mathsf{Pol}}
\nc{\hh}{h}
\nc{\wtmodY}{{Y^\la_\mu\operatorname{-wtmod}}}
\nc{\wtmodFY}{{FY^\la_\mu\operatorname{-wtmod}}}
\nc{\wtmodBK}{{\BK\operatorname{-wtmod}}}
\nc{\fdFY}{{FY^\la_\mu\operatorname{-mod}_{\operatorname{fd}}}}
\nc{\OFY}{{FY^\la_\mu{\text{-}\cO}}}
\nc{\fdY}{{Y^\la_\mu\operatorname{-mod}_{\operatorname{fd}}}}
\nc{\OY}{{Y^\la_\mu{\text{-}\cO}}}

\nc{\yMon}{\mathsf{a}}
\nc{\zMon}{\mathsf{b}}
\nc{\GL}{\operatorname{GL}}
\nc{\PGL}{\operatorname{PGL}}
\nc{\supp}{\operatorname{supp}}
\nc{\calL}{\mathcal{L}}
\nc{\calK}{\mathcal{K}}
\nc{\calF}{\mathcal{F}}
\nc{\mla}{\mathfrak{m}}
\nc{\nla}{\mathfrak{n}}

    \newcommand{\Gm}{\mathbb{G}_m}
    \newcommand{\Ga}{\mathbb{G}_a}

\newcommand{\arxiv}[1]{\href{http://arxiv.org/abs/#1}{\tt arXiv:\nolinkurl{#1}}}

\nc{\Gr}{\mathsf{Gr}}
\nc{\Grlmbar}{\Gr^{\overline{\lambda}}_\mu}
\nc{\excise}[1]{}
\newcommand{\WHilb}[1]{W\hspace{-.2em}\operatorname{-Hilb}(#1)}
\newcommand{\tWHilb}[2]{W\hspace{-.2em}\operatorname{-Hilb}_{#1}(#2)}
\newcommand{\HilbW}[1]{\operatorname{Hilb}^W(#1)}
\newcommand{\tHilb}[2]{\operatorname{Hilb}^W_{#1}(#2)}

\newcommand{\NH}[2]{\operatorname{NH}(#1,#2)}
\newcommand{\simples}{\Sigma}

\newcommand{\ab}{\operatorname{ab}}

\newcommand{\bfN}{\mathbf{N}}

    \newcommand{\tG}{\tilde{G}}
     \newcommand{\tT}{\tilde{T}}
     \newcommand{\Tvee}{T^{\vee}}

\newcommand{\tH}{\tilde{H}}

\newcommand{\SymtW}{R^W}
\newcommand{\bGl}{{\bar{G}_{\lambda}}}
\newcommand{\ints}{\varrho}
\newcommand{\Higgs}{\mathsf{Higgs}}

\newcommand{\mapl}{j_{\la}}
\newcommand{\base}[1]{\mathcal{B}_{#1}}
\newcommand{\Eu}{\operatorname{Eu}}

\newcommand{\toric}[1]{Y(#1)}
\newcommand{\toricbl}[1]{\widetilde{Y}(#1)}
\newcommand{\toricblc}[1]{\overline{Y}(#1)}
\newcommand{\torb}[2]{\mathbb{T}^{#1}(#2)}
\newcommand{\tK}{\tilde{K}}
\newcommand{\Bl}{\operatorname{Bl}}
\newcommand{\Leaf}[1]{\operatorname{Leaf}(#1)}
\newcommand{\CH}{\operatorname{CH}}
\newcommand{\Coulomb}{\mathsf{Coulomb}}

\newcommand{\nonabbl}[1]{\widetilde{\mathcal{Y}}(#1)}
\renewcommand{\sc}{\operatorname{sc}}
\newcommand{\ad}{\operatorname{ad}}
\newcommand{\Ad}{\operatorname{Ad}}
\newcommand{\Aone}{\mathbb{A}^1}

 \usetikzlibrary{decorations.pathmorphing,backgrounds,decorations.markings,babel,patterns,patterns.meta}

\title{On the geometry of Coulomb branches}

\author{Ben Webster}
\address{B.~Webster: Department of Pure Mathematics, University of Waterloo \&
Perimeter Institute for Theoretical Physics, Canada}
\email{ben.webster@uwaterloo.ca}

\begin{document}
\ifanindex
\else 
\newcommand{\notation}[2]{}
\fi
\begin{abstract}
Coulomb branches of 3-dimensional $\mathcal{N}=4$ gauge theories, as defined by Braver\-man--Finkelberg--Nakajima, form a large and interesting class of symplectic singularities.  In this paper, we give a geometric description of these varieties in terms of blowups of toric compactifications, uniformly across the rational, $K$-theoretic, and elliptic settings, which in physics are often associated with 3-, 4-, and 5-dimensional versions of these theories.  We use the resulting local description to classify the symplectic leaves of Coulomb branches and describe their transverse slices in terms of the classification of zero-dimensional leaves of certain smaller Coulomb branches.  

We also discuss related topics, including removing an unnecessary hypothesis from a theorem of Gannon and the author on functoriality for Coulomb branches, and clarifying the relationship with transverse Hilbert schemes by refining the approach of Bielawski and Foscolo.
\end{abstract}

\maketitle

\section{Introduction}
\label{sec:introduction}

Let $G$ be a reductive complex algebraic group, and $\bfN$ a representation of $G$.  
In \cite{BFN}, Braverman, Finkelberg, and Nakajima define a space $\mathcal{R}_{G,\bfN}$ whose equivariant Borel--Moore homology $H^{G}_\bullet(\mathcal{R}_{G,\bfN})$ carries the structure of a commutative algebra under convolution, and they define the Coulomb branch $\M(G,\bfN)$ to be the spectrum of this algebra.  This construction has a natural extension to $K$-theory and elliptic cohomology, and these three cases are often connected in physics with 3-, 4-, and 5-dimensional versions of the corresponding quantum field theories, respectively.  We will refer to them as the rational, $K$-theoretic, and elliptic Coulomb branches.

The study of Coulomb branches of 3-dimensional $\mathcal{N}=4$ supersymmetric gauge theories has attracted considerable interest in recent years, and their $K$-theoretic and elliptic versions have also begun to receive attention.  Our aim in this paper is to shed light on two natural questions about Coulomb branches that have proved surprisingly difficult to answer:
\begin{enumerate}
	\item How can we geometrically describe Coulomb branches in a way that unifies the rational, $K$-theoretic, and elliptic cases?
    \item It is known that Coulomb branches are symplectic singularities, and thus have finitely many symplectic leaves.  What is the indexing set for these leaves?
\end{enumerate}
The study of these leaves in the physics literature, where they are interpreted as phases of the corresponding quantum field theories, has been extensive; for one relevant example, see \cite{bourgetDecayFission2024}.  By contrast, outside the special case of quiver gauge theories, this question has received less attention in the mathematical literature, where important results were obtained in the finite ADE and affine type A cases in \cite{nakajimaCherkisBow2017, muthiahSymplecticLeaves2023}.
In this paper, we reduce question (2) to understanding zero-dimensional leaves of Coulomb branches; this can be done in many important cases, though not in full generality.  Question (1) is too broad to resolve completely in a single paper, but in the course of addressing question (2), we obtain more general results that also shed light on it.
In particular, we provide:
\begin{enumerate}[label=(\alph*)]
    \item A uniform description of rational, $K$-theoretic, and elliptic versions of Coulomb branches via compactifications and blowups, including the Coulomb branch with symmetrizers of Nakajima and Weekes.  Although we do not regard this as a ``geometric description'' in the sense of a moduli-space realization, it does provide an explicit description of the Weil divisors of the Coulomb branch, which determine the variety because it is normal.  More precisely, we give a geometric refinement of the observation from \cite[Th. 5.26]{BFN} that the Coulomb branch can be reconstructed from its behavior over the generic points of the discriminant locus in $\base{G}$, which is effectively a repackaging of GKM theory for the affine Grassmannian.  
    
    For $B=\Ga,\Gm$, we prove that this construction agrees with the usual rational and $K$-theoretic BFN Coulomb branches.  Since there is no consensus definition of the elliptic Coulomb branch in the literature at the moment, the elliptic version should be read as a definition, or proposal, whose compatibility with future constructions remains to be checked.
	\item An extension of the functoriality results of \cite{gannonFunctorialityCoulomb2025} for Coulomb branches to arbitrary homomorphisms, removing the gluability hypothesis required there.
	\item A discussion of the relationship with the results of Bielawski and Foscolo \cite{bielawskiHypertoricVarieties2023}.  They use similar blowups to describe Coulomb branches in terms of transverse Hilbert schemes, but unfortunately their approach does not work for all gauge groups.  We recast these results using nilHecke algebras in a way that makes it easier to see when the map to the transverse Hilbert scheme is an isomorphism and when it is not.
\end{enumerate}
Finally, we turn to our theorems on symplectic leaves and their slices.  Our perspective is that the natural datum indexing a symplectic leaf $L$ of the ordinary Coulomb branch $\M(G,\bfN)$ is the image $U_L$ of its closure under the integrable-system map $\M(G,\bfN)\to \LT/W$.  Roughly speaking, the classification has two stages: first, one identifies the possible images $U_L$, and then one analyzes the smaller Coulomb branch that governs the fiber over a generic point of such an image.
We prove that every such image comes from a {\bf flat} of the weight hyperplane arrangement in $\LT$, that is, an intersection of hyperplanes defined by weights of $\bfN$; see \cref{sec:leaves-and-flats} for details.  If $\lambda\in \LT$ is generic in a flat $U$, then the weights that vanish on $U$ are precisely the weights of the fixed subspace $\Nl$.  Our description therefore depends fundamentally on $\Nl$ viewed as a representation of the centralizer $\Gl=C_G(\lambda)$.  Let $\Gl^0$ be the identity component of the kernel of the action of $\Gl$ on $\Nl$, and let $\Gl^1=\Gl/\Gl^0$.  Then $\Gl^1$ acts almost faithfully on $\Nl$, by which we mean that the kernel is finite.  
We let $\LTl^0$ be the Lie algebra of $\Gl^0\cap T$, the maximal torus of $\Gl^0$.
\notation{$U_L$}{Image in $\LT/W$ of the closure of a symplectic leaf $L$}
\notation{$\Nl$}{The fixed subspace of $\bfN$ under the action of $\lambda$}
\notation{$\Gl, \Gl^0,\Gl^1$}{The centralizer of $\lambda$ in $G$, the identity component of its kernel on $\Nl$, and the quotient acting almost faithfully on $\Nl$}

The leaves lying over a flat are then controlled by the Coulomb branch $\M(\Gl^1, \Nl)$.  The most important case is when the pair $(\Gl^1,\Nl)$ is good, that is, when the natural $\Gm$-action on $\M(\Gl^1, \Nl)$ contracts to a single point, which is a symplectic leaf.
\begin{itheorem}[\cref{leaf-bijection,cor:slice-to-leaf}]
	\label{ith:main}
    If the pair $(\Gl^1,\Nl)$ is good, then there is a unique symplectic leaf whose image is dense in the flat $\LTl^0$.  The transverse slice to this leaf is isomorphic to $\M(\Gl^1, \Nl)$.  On the other hand, if $\M(\Gl^1,\Nl)$ has no zero-dimensional leaves, then there are no symplectic leaves whose image is dense in $\LTl^0$.
\end{itheorem}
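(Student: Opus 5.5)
The plan is to reduce to a local statement near a generic point $\lambda$ of the flat, and then use the contracting $\Gm$-action on the smaller Coulomb branch to produce the leaf.

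\textbf{Step 1: local model.} First I would use the local description of $\M(G,\bfN)$ near the fiber over $\lambda\in\LT/W$, coming from the blowup/compactification description. This is the refinement of \cite[Th.~5.26]{BFN}: the Coulomb branch is determined by its behaviour over the generic points of the discriminant. I would aim for an étale-local (or formal) isomorphism over a neighbourhood of the image of $\lambda$:
\[\M(G,\bfN)\times_{\LT/W}(\LT/W)^\wedge_{\lambda}\;\cong\;\M(\Gl,\Nl)\times_{\LTl/W_\lambda}(\LTl/W_\lambda)^\wedge_{\lambda}.\]
The idea is that only the weights and roots vanishing at $\lambda$ contribute to divisors near that fiber.

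\textbf{Step 2: splitting off the kernel.} Next I would use the functoriality results, in the extended form of (b), for $\Gl^0\to\Gl\to\Gl^1$. Since $\Gl^0$ acts trivially on $\Nl$, one should get
\[\M(\Gl,\Nl)\cong\bigl(\M(\Gl^1,\Nl)\times T^*\Gl^{0,\vee}\text{-type factor}\bigr)/\text{finite},\]
up to a finite quotient by the component groups. The $\Gl^0$ factor is smooth symplectic. Its integrable-system image is a neighbourhood in the direction $\LTl^0$, which is where $\lambda$ generically lives in the flat. This identifies the flat as $\LTl^0$ with $\lambda$ generic.

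\textbf{Step 3: producing the leaf.} If $(\Gl^1,\Nl)$ is good, the cone point $o\in\M(\Gl^1,\Nl)$ is a zero-dimensional leaf. The corresponding locus $\{o\}\times(\text{smooth factor})$ in the local model is then a leaf near the fiber over $\lambda$, with transverse slice $\M(\Gl^1,\Nl)$. Its closure maps onto a dense subset of $\LTl^0$. For uniqueness, any leaf with image dense in $\LTl^0$ must meet the fiber over generic $\lambda$. There, in the local model, it has the form (zero-dimensional leaf of $\M(\Gl^1,\Nl)$)$\times$(smooth factor): a leaf of a product with a symplectic manifold is a product of leaves, and dimension is forced because the image has dimension $\dim\LTl^0$. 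Goodness gives exactly one zero-dimensional leaf, namely the cone point, since contraction forces every fixed leaf to be the attracting point. If there are no zero-dimensional leaves, the same argument yields no such leaf.

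I would also need to handle the finite quotient and the $W$ versus $W_\lambda$ discrepancy. Leaves pass through étale maps and finite quotients, and the image of the leaf is invariant under the finite group because it is the unique such leaf. The \textbf{main obstacle} is Step 1. It requires an honest étale-local isomorphism of Poisson varieties, not just of the coordinate rings after localisation at generic points of the discriminant. I would first try comparing the blowup descriptions of Weil divisors on both sides over a neighbourhood of $\lambda$, using normality to extend across codimension two, and checking that the Poisson brackets agree on the dense torus chart.
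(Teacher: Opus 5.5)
Your Steps~1 and~2 are essentially the paper's \cref{lem:slice-ours} and \cref{lem:unfaithful}. The paper gets the latter from the BFN product decomposition and the $Z^\vee$-covering rather than from functoriality. Step~1 is also not really the obstacle. Since $\soa_{G,\bfN}$ is defined by regularity conditions along divisors in the base, restricting to $\base{\tGl,\lambda}$ just discards the conditions coming from weights and roots that do not vanish on $\lambda$.

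The genuine gap is in Step~3, at ``dimension is forced because the image has dimension $\dim\LTl^0$.'' In the local model, a leaf with image dense in $\LTl^0$ is (the image of) $L''\times\M(\bGl^0,0)$. Here $L''$ is a leaf of $\M(\Gl^1,\Nl)$ whose closure lies in the fiber over $0$ of its integrable system. Goodness only tells you that zero-dimensional leaves are $\Gm$-fixed, hence equal to the cone point. It gives no reason why a positive-dimensional leaf could not sit entirely inside this $\Gm$-stable central fiber. Such an $L''$ would produce a second leaf with image $\LTl^0$ in the good case. When there are no zero-dimensional leaves, it would produce a leaf with image $\LTl^0$ anyway. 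So both the uniqueness claim and the last sentence of the theorem depend on excluding it.

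What is missing is the paper's \cref{lem:leaf-gives-flat}: for every leaf, $\ints(\overline L)$ is the image of a $k$-dimensional flat and $\dim\overline L=2k$. This is a genuinely Coulomb-branch-specific input. Its proof uses the bracket formula of \cref{lem:monopole-bracket}, in the associated graded of the dominant-coweight filtration. From this, \cref{lem:monopole-ideal} shows that on a Poisson quotient the stable support $V(J^\infty_\mu)$ of the $\mu$-monopole operators is invariant under translation by $\mu$. So if the image lies in a flat $U$, every $r_\mu(g)$ with $\mu\notin U$ dies, which bounds the fiber dimension by $\dim U$. The Poisson map to $\M(\Gl^0,0)$ then gives the matching lower bound on leaf dimension. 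Apply this to the almost faithful pair $(\Gl^1,\Nl)$, whose minimal flat is $0$: every leaf over $0$ is a point, which is exactly what your Step~3 needs.

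Separately, for the slice to be $\M(\Gl^1,\Nl)$ rather than a quotient of it, you must check that the finite group in Step~2 acts freely along $\{o\}\times\M(\bGl^0,0)$. The paper gets this from freeness of the $Z^\vee$-action on the $\M(\bGl^0,0)$ factor.
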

The remaining case is when $\M(\Gl^1,\Nl)$ has zero-dimensional leaves but the pair $(\Gl^1,\Nl)$ is not good.  In that case, \cref{leaf-bijection} parametrizes the leaves with image $\LTl^0$ by orbits of a finite group on the zero-dimensional leaves of $\M(\Gl^1,\Nl)$.

Although this picture still needs further development, we give preliminary results relating slices in Coulomb branches to leaves in Higgs branches, making precise some of the proposals in \cite{nakajimaQuestionsProvisional2015}.

The paper is organized as follows.  In \cref{sec:new-construction}, we first give a geometric construction of Coulomb branches, beginning with the abelian case and then passing to the general case.  We then prove its equivalence with the BFN approach for rational and $K$-theoretic Coulomb branches; the elliptic case will have to wait until the literature on elliptic Coulomb branches is more developed.  
We then relate the local structures of the rational, $K$-theoretic, and elliptic cases, and explain how the construction extends to the Coulomb branch with symmetrizers of Nakajima and Weekes, attached to non-symmetric Cartan data \cite{nakajimaCoulombBranches2021}.  

In \cref{sec:functoriality}, we use this construction to prove the extension of the functoriality results of \cite{gannonFunctorialityCoulomb2025}.  In \cref{sec:transverse-hilbert-schemes}, we compare our construction with transverse Hilbert schemes and explain where that comparison breaks down.  Finally, \cref{sec:slices-and-leaves} studies symplectic leaves and their slices and proves \cref{ith:main}.

\section*{Acknowledgements}

Special thanks to Ki Fung Chan, Amihay Hanany, and Alex Weekes for conversations that led to the genesis of this paper.  I also had useful conversations with Tom Gannon, Constantin Teleman, Hiraku Nakajima, Dinakar Muthiah, Gwyn Bellamy, Travis Schedler, Aiden Suter, Lorenzo Foscolo, and Roger Bielawski, who provided feedback on preliminary versions of this paper.  I also received important elliptic insights from Sam DeHority, Davide Gaiotto, Mykola Semenyakin, and Mayuko Yamashita.

This work is supported by Discovery Grant RGPIN-2024-03760 from the Natural Sciences and Engineering Research Council of Canada. 
This research was also supported by Perimeter Institute for Theoretical Physics. Research at Perimeter Institute is supported by the Government of Canada through the Department of Innovation, Science and Economic Development and by the Province of Ontario through the Ministry of Research and Innovation.

\section{A new construction of Coulomb branches}
\label{sec:new-construction}
\subsection{Setup and notation}
\label{subsec:setup-and-notation}
\subsubsection{Setting}

Throughout the paper, we write $\tG$ for a complex reductive group and $G'$ for a normal subgroup such that $\tG/G'$ is a torus.  We also fix a representation of $\tG$ on $\bfN$ for which the action of $G'$ on $\bfN$ factors through a quotient group $G$; the pair $(G,\bfN)$ is the main input to our Coulomb-branch construction.  That is, we have a commutative diagram of groups:
\[\begin{tikzcd}
	{G'} & G \\
	\tG & {GL(\bfN)}
	\arrow[two heads, from=1-1, to=1-2]
	\arrow[hook, from=1-1, to=2-1]
	\arrow[from=1-2, to=2-2]
	\arrow[from=2-1, to=2-2]
\end{tikzcd}\]
    We write $W$ for the common Weyl group of $G$ and $\tG$.
    \begin{Remark}\label{rem:G-double-prime}
        We can make this even more general by allowing $G'$ to not factor through $G$, but only through a quotient $G''$ of $G$, as long as the action of $G$ on $\bfN\oplus \mathfrak{g}$ factors through $G''$.   
    \end{Remark}
    
  The example $\tG=G'=SL(2)$, $G=PGL(2)$, with $\bfN$ an arbitrary representation, will play an important role in our proof of the comparison with the BFN construction; see \cref{ex:PGL2-running,ex:SL2-running,ex:SL2-PGL2-cover}.

   Unlike in \cite{gannonFunctorialityCoulomb2025}, we allow $G'\to G$ to be a non-trivial finite cover, since this distinction is significant in the $K$-theoretic and elliptic cases.  For example, in the work of Cautis--Williams \cite[Th. 1.8]{cautisClusterTheory2019}, the case where $G$ is adjoint and $\tG$ is its simply connected cover makes a notable appearance.

\subsubsection{Cohomological context} In this section, we give a general geometric construction of Coulomb branches for the pair $(G,\bfN)$ with flavor deformation by $\tG$ in the rational (BFN), $K$-theoretic, and elliptic cases.  This description is implicit in \cite{BFN}, but not in the generality or the explicit form that we need.  Let $\K$ be an algebraically closed field of characteristic coprime to the order of the Weyl group $W$; we will work over $\K$ throughout.  

Fix a one-dimensional connected commutative algebraic group $B/\K$, namely $B=\mathbb{G}_a=\Spec\K[x]$, $\mathbb{G}_m=\Spec\K[z,z^{-1}]$, or an elliptic curve $\elli$.  For a torus $T$ with cocharacter lattice $X_*(T)$, we set
\[\base{T}=B\otimes_{\Z} X_*(T),\]
so $\base{T}$ is the Lie algebra $\mathfrak t$ in the rational case ($B=\mathbb{G}_a$), the algebraic torus $T$ itself in the $K$-theoretic case ($B=\mathbb{G}_m$), and a product of elliptic curves in the elliptic case ($B=\elli$).  For general $G$, we set $\base{G}=\base{T}/W$, where $W$ acts naturally on $\base{T}$.\notation{$\base{G}$}{Base attached to $G$ in the chosen cohomology theory}

The formal group laws attached to $B$ have natural associated cohomology theories:
\begin{enumerate}
    \item In the rational case ($B=\mathbb{G}_a$), we have ordinary cohomology $H^*_{T}(\mathrm{pt})\cong \K[\LT]$.  For $G=SL(2)$ or $ PGL(2)$, we have $\base{G}=\Aone$, but with the coordinate given by the square of the usual coordinate on $\LT$.  
    \item In the $K$-theoretic case ($B=\mathbb{G}_m$), we have $K$-theory $K^*_T(\mathrm{pt})\cong \K[T]$.  Thus, now $\base{SL(2)}$ is $\Aone$ with coordinate given by $y=e^{\alpha/2}+e^{-\alpha/2}$ of the maximal torus, while $\base{PGL(2)}$ is $\Aone$ with coordinate given by $y^2-2=e^{\alpha}+e^{-\alpha}$.
    \item In the elliptic case ($B=\elli$), we have elliptic cohomology $Ell_T(\mathrm{pt})\cong \mathcal{O}_{\base{T}}$.  Note that while usual cohomology and $K$-theory give usual algebras, elliptic cohomology gives a sheaf of algebras on the abelian variety $\base{T}$.  For $G=SL(2), PGL(2)$, we have $\base{G}=\elli/\{\pm 1\}$, the quotient of the elliptic curve by the involution given by negation, which is a rational curve.
\end{enumerate}
In each of these cases, $\base{G}$ is the spectrum of the respective cohomology theory applied to the classifying space $BG$, so the corresponding equivariant theory applied to any space is naturally a sheaf over this base.

In \cite{BFN}, Braverman, Finkelberg, and Nakajima define a space $\mathcal{R}_{G,\bfN}$, which is a cofinite-dimensional subspace of the associated bundle of $\bfN\llbracket t \rrbracket$ over the affine Grassmannian $\Gr_G=G(\!(t)\!)/G\llbracket t \rrbracket$.  This variety carries an action of the group $\tG$: for any reductive group, the action on its affine Grassmannian factors through the adjoint group $G_{\operatorname{ad}}$, and $\tG_{\operatorname{ad}}=G_{\operatorname{ad}}$.  Thus we can let $\tG$ act on $\mathcal{R}_{G,\bfN}\subset \Gr_G\times \bfN \llbracket t \rrbracket$ by the diagonal action.  
The flavor-deformed Coulomb branch $\tMB(\tG, G,\bfN)$ is then the spectrum of the equivariant Borel--Moore homology $H_*^{\tG}(\mathcal{R}_{G,\bfN})$, with its natural convolution product.  This construction has obvious analogues in the $K$-theoretic and elliptic cases, where we replace Borel--Moore homology with the appropriate cohomology theory; we write $\tMB(\tG,G,\bfN)$ for the resulting varieties in these cases as well, leaving $B$ implicit.\notation{$\tMB(\tG,G,\bfN)$}{BFN Coulomb branch, or its $K$-theoretic or elliptic analogue, with flavor deformation by $\tG$} It is manifest from the definition that the BFN Coulomb branch $\tMB(\tG,G,\bfN)$ is a variety over $\base{\tG}$, and this is the framework that we will use to understand its geometry.

\subsubsection{Euler classes}
For any weight $\lambda\in \LT_{\Z}$, we write $e^{\lambda}$ for the corresponding character of $T$, and similarly for coweights as characters on $\Tvee$.  
For each weight $\varphi$, we define a section $\Eu(\varphi)$ of a line bundle on $\base{\tT}$; we call $\Eu(\varphi)$ the {\bf Euler class} of $\varphi$.\notation{$\Eu(\varphi)$}{Euler-class section attached to the weight $\varphi$}\begin{enumerate}
    \item In the rational case ($B=\mathbb{G}_a$), we take $\Eu(\varphi)$ to be the linear function on $\base{\tT}$ given by the composition of the map $\base{\tT}\to \mathfrak t$ with the weight $\varphi\colon \mathfrak t\to \mathbb{G}_a$.  
    \item In the $K$-theoretic case ($B=\mathbb{G}_m$), we take $\Eu(\varphi)$ to be the regular function on $\base{\tT}$ given by $1-e^{-\varphi}$.  
    \item In the elliptic case ($B=\elli$), we take $\Eu(\varphi)$ to be the pullback of the theta section $\vartheta$ along the map $\base{\tT}\to \elli$ induced by $\varphi$; this is a section of the line bundle $\mathcal{L}_{\varphi}$ which is the pullback of the line bundle on $\elli$ given by the divisor of the origin.
\end{enumerate}
We write $D_{\varphi}$ for the divisor of zeros of $\Eu(\varphi)$.\notation{$D_{\varphi}$}{Divisor of zeros of the Euler class $\Eu(\varphi)$}

The non-zero weights of $\bfN$ for the torus $\tT$ lie on finitely many distinct lines $\ell_1,\dots, \ell_k$ in the rational character lattice $X^*(\tT)\otimes\mathbb{Q}$.  For each line $\ell_i$, we choose a primitive weight $\eta_i$ generating the intersection of the weight lattice with this line.  Thus, if $\varphi_{i1},\dots, \varphi_{i r_i}$ are the distinct non-zero weights of $\bfN$ lying on this line, we can write
\[\varphi_{ij}=m_{ij}\eta_i\]
with $m_{ij}\in \mathbb{Z}$.
\notation{$\ell_i,\eta_i$}{Line spanned by a set of weights of $\bfN$ and its primitive generator}

We can then define a divisor $D_{\ell_i}=\sum_{j=1}^{r_i }|m_{ij}|D_{\varphi_{ij}}$.  The choice of sign for $\eta_i$ determines a decomposition by separating the weights on $\ell_i$ according to the sign of $m_{ij}$:
\begin{align}
    \label{eq:sigma-choice}
D^+_{\ell_i}&=\sum_{m_{ij}>0} m_{ij}D_{\varphi_{ij}}& D^-_{\ell_i}&=\sum_{m_{ij}<0} -m_{ij}D_{\varphi_{ij}}.
\end{align}
This yields a factorization $D_{\ell_i}=D^+_{\ell_i}+D^-_{\ell_i}$.  
For an irreducible divisor $D$ in $\base{\tT}$, we write $k_{i,D}$ and $k^{\pm}_{i,D}$ for the coefficients of $D$ in the divisors $D_{\ell_i}$ and $D^{\pm}_{\ell_i}$, respectively, so $k_{i,D}=k^+_{i,D}+k^-_{i,D}$.\notation{$D_{\ell_i},D^{\pm}_{\ell_i}$}{Divisor contributed by the weights on the line $\ell_i$}\notation{$k_{i,D},k^{\pm}_{i,D}$}{Multiplicities of $D$ in $D_{\ell_i},D^{\pm}_{\ell_i}$}

The primitive generator $\eta_i$ of the line $\ell_i$ is only determined up to sign, so we will sometimes want to use the symmetry of replacing $\eta_i$ by $-\eta_i$.  This simply exchanges the divisors $D^+_{\ell_i}$ and $D^-_{\ell_i}$, yielding otherwise equivalent data.

We will use the ambient ring $\K(\base{\tT})[\Tvee]$ throughout.  Equivalently, these are the rational functions on $\base{\tT}\times \Tvee$ whose only possible poles lie over divisors in $\base{\tT}$.

\subsection{Abelian case}
\label{subsec:abelian-case}

\subsubsection{Construction}
Consider the case when $G=T$ is a torus.  We define a sheaf $\soa_{T, \bfN}$ as a subalgebra of the constant sheaf with value $\K(\base{\tT})[\Tvee]$ by imposing regularity conditions along the preimages of irreducible divisors in $\base{\tT}$.  We leave $\tT$ implicit in the notation, since it is reflected by the underlying base space $\base{\tT}$.  

We can state this definition in a geometric way as follows.  The characters $\eta_i$ can be interpreted as cocharacters of $\Tvee$, and so they give an action of $\Gm$ on $\base{\tT}\times \Tvee$ by translation on the fiber.  We use them to define a toric partial compactification $\toric{\tT,\bfN}$ of $\base{\tT}\times \Tvee$ by adding divisors $D^{i,0}$ and $D^{i,\infty}$ corresponding to the limits of the orbits $\eta_i(t)\cdot x$ as $t\to 0,\infty$.  For a single line, since $\eta$ is primitive, we can split $T\cong \Gm\times T'$ with $\eta$ identified with the projection to the first factor.  Then $\toric{\tT,\bfN}$ is the product $\base{T}\times \mathbb{P}^1\times (T')^{\vee}$, and $D^{i,0}$ and $D^{i,\infty}$ are the divisors lying over $0$ and $\infty$ in the $\mathbb{P}^1$-factor.
\notation{$\toric{\tT,\bfN}$}{Toric partial compactification of $\base{\tT}\times \Tvee$}

To describe this more precisely, consider the multigraded $\mathcal O_{\base{\tT}}$-algebra
\[
\mathcal R=\bigoplus_{\mathbf n\in \mathbb N^k}\mathcal R_{\mathbf n},\qquad
\mathcal R_{\mathbf n}=\bigoplus_{\substack{\lambda\in  X_*(T)  \\ |\langle \lambda,\eta_i\rangle|\le n_i\ \forall i}}
\mathcal O_{\base{\tT}}\,e^{\lambda}\,\mathbf u^{\mathbf n}.
\]
Then $\toric{\tT,\bfN}$ is the relative multiProj of 
$\mathcal R$ over the base $\base{\tT}$.

Thus, we can consider the interaction of the divisors $D^{i,0}$ and $D^{i,\infty}$ with the divisors $D_{\ell_i}^{\pm}$, thought of as subschemes.  
Let $X_i$ be the not necessarily reduced subscheme given by the union of $D^{i,0}\cap D_{\ell_i}^{-}$ and $D^{i,\infty}\cap D_{\ell_i}^{+}$.

\begin{Definition}
Let $\toricbl{\tT,\bfN}$ be the blowup of $\toric{\tT,\bfN}$ along the union of the $X_i$, with the strict transform of the divisors $D^{i,0}$ and $D^{i,\infty}$ removed. 
\notation{$\toricbl{\tT,\bfN}$}{Blowup model used to construct the abelian Coulomb branch}The Coulomb branch $\tM=\tM(\tT,T,\bfN)$ is the affinization of $\toricbl{\tT,\bfN}$ over $\base{\tT}$.  
\end{Definition}

\begin{Example}
    Consider the case where $\tT=T=\Gm$ and $\bfN$ has weights $m_1,\dots, m_r$.  In this case, $\base{\tT}=\Aone$ with coordinate $\tau$ and $\toric{\tT,\bfN}=\Aone\times \mathbb{P}^1$.  If $z$ is the dual coordinate on $\Tvee$, then the divisors $D^+_{\ell}$ and $D^-_{\ell}$ are given by $\tau^{m^+}=0$ and $\tau^{m^-}=0$, where $m^+$ and $m^-$ are the sums of the positive and negative weights, respectively.  The divisors $D^{0}$ and $D^{\infty}$ are given by $z=0$ and $z=\infty$, respectively.  Thus, the subschemes $D^{i,0}\cap D_{\ell_i}^{-}$ and $D^{i,\infty}\cap D_{\ell_i}^{+}$ are defined by the ideals $(z,\tau^{m^-})$ and $(z^{-1},\tau^{m^+})$ on natural charts of $\Aone\times \mathbb{P}^1$.  We obtain regular functions on the blowup minus the strict transforms by taking the ratios $x=\tau^{m^-}z^{-1}$ and $y=\tau^{m^+}z$, which are regular on $\toricbl{\tT,\bfN}$.  Together with $\tau$, these generate the regular functions on $\toricbl{\tT,\bfN}$ and the resulting affine variety $\tM$ is defined by the equation $xy=\tau^{m^++m^-}$.
\end{Example}
\begin{Example}\label{ex:generic-fiber}
    Note that this example also describes the behavior over the generic point of any divisor $D$ in $\base{\tT}$ in higher-rank cases.  Unless $D$ is a component of some $D_{\ell_i}$, the fiber over that point is $\Tvee$ after base change to the function field $\K(D)$.  If $D$ is a component of some $D_{\ell_i}$, then $\toric{\tT,\bfN}$ compactifies one factor of $\Tvee$ to $\mathbb{P}^1$, and the blowup $\toricbl{\tT,\bfN}$ then blows up $0$ and $\infty$ in this $\mathbb{P}^1$, producing exceptional divisors.  Thus, after base change to the local ring at the generic point of $D$, we obtain the variety defined by the equation $xy=\tau^{k_{i,D}}$, where $\tau$ is a local coordinate on $\base{\tT}$ at that point, times a torus.
\end{Example}

\begin{Remark}
    It can sometimes be helpful to think about blowing up the non-transverse intersections of the divisors $D_{\ell_i}^{\pm}$ with the exceptional locus of the blowup.  By iterating this process, we can obtain over $D$ a chain of divisors where the fiber over each point is a chain of $\mathbb{P}^1$'s times a torus;  the number of resulting $\mathbb{P}^1$'s is $k_{i,D}-1$. 
    
    We could also construct this chain by blowing up the reduced intersection of the preimage of the divisor $D$ with $D^{i,0}$ (resp. $D^{i,\infty}$), then the intersection of the exceptional locus with $D^{i,0}$, and so on.  The number of blowups needed is $k^{\pm}_{i,D}-1$.  Let $\toricblc{\tT,\bfN}$ be this iterated blowup; it also has affinization $\tM$.\notation{$\toricblc{\tT,\bfN}$}{Iterated blowup resolving the chains over the divisors $D$}\end{Remark}
More explicitly, for each irreducible divisor $D$ in $\base{\tT}$, we can define two discrete valuations $\nu_{D,\pm}$ on the function field $\K(\base{\tT}\times \Tvee)$.\notation{$\nu_{D,\pm}$}{Valuations imposing the abelian regularity conditions} Implicitly, these valuations also depend on the choice of $i$, but for each divisor $D$, there is at most one $i$ such that $k^{\pm}_{i,D}\neq 0$, so we can leave the choice of $i$ implicit.  If $k^{\pm}_{i,D}=0$, then we take $\nu_{D,\pm}$ to be the pullback of the usual valuation $\nu_D$ along the blowup map.  
Note that we have $\nu_{D,+}=\nu_{D,-}=\nu_D$ for all but finitely many $D$.  Otherwise:
\begin{enumerate}
    \item If $k^+_{i,D}\neq 0$, let $\nu_{D,+}$ be the order of vanishing along the component of the exceptional locus of the blowup $\toricbl{\tT,\bfN}\to \toric{\tT,\bfN}$ which lies over $D\cap D^{i,\infty}$ .
    \item If $k^-_{i,D}\neq 0$, let $\nu_{D,-}$ be the order of vanishing along the component of the exceptional locus which lies over $D\cap D^{i,0}$.
\end{enumerate}
In the notation of \cref{ex:generic-fiber}, the components corresponding to $\nu_{D,+}$ and $\nu_{D,-}$ are given by $x=0$ and $y=0$.
\begin{Definition}
    Let $\soa_{T,\bfN}$ be the sheaf whose sections $\soa_{T,\bfN}(U)$ on an open set $U\subset \base{\tT}$ form the subring of $\K(\base{\tT})[\Tvee]$ consisting of functions $f$ such that $\nu_{D,+}(f)\geq 0$ and $\nu_{D,-}(f)\geq 0$ for all divisors $D$.  We can equivalently describe $\tM=\tM(\tT,T,\bfN)$ as $\Spec \soa_{T,\bfN}$.
    \notation{$\soa_{T,\bfN}$}{The sheaf giving the functions on the abelian Coulomb branch}\end{Definition}

Note that replacing $\eta_i$ by $-\eta_i$ leaves $\toric{\tT,\bfN}$ unchanged but interchanges the boundary divisors $D^{i,0}$ and $D^{i,\infty}$.  The divisors $D^+_{\ell_i}$ and $D^-_{\ell_i}$ are exchanged as well, so the valuations $\nu_{D,+}$ and $\nu_{D,-}$ are exchanged.  Thus, the resulting variety and the resulting subring are the same.

\subsubsection{Gluing}
We have subrings $\soa_{T,\bfN}^{\pm}$ given by imposing only the regularity conditions corresponding to one of the signs: that is, $\nu_{D,\pm}(f)\geq 0$ for all $D$. Let $\torb{\pm}{\tT,\bfN}$ be the relative spectrum of $\soa_{T,\bfN}^{\pm}$;  the embeddings in $\K(\base{\tT})[\Tvee]$ give birational maps $\torb{\pm}{\tT,\bfN}\dashrightarrow \base{\tT}\times \Tvee$.
\begin{Lemma}\label{lem:gluing}
    As a $\Tvee$-space, each variety $\torb{\pm}{\tT,\bfN}$ is a principal $\Tvee$-bundle over $\base{\tT}$.  The space $\tM$ is the relative affinization of the variety obtained by gluing $\torb{+}{\tT,\bfN}$ and $\torb{-}{\tT,\bfN}$ together along the open subset where the birational isomorphism to $\base{\tT}\times \Tvee$ is regular.
\end{Lemma}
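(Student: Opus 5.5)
The plan is to describe $\soa^{\pm}_{T,\bfN}$ explicitly as a twisted group algebra over $\mathcal{O}_{\base{\tT}}$, using the local model of \cref{ex:generic-fiber}. First, fix the signs (say $+$). For each irreducible divisor $D$ with $k^+_{i,D}\neq 0$, work over the local ring at the generic point of $D$, where the variety is $xy=\tau^{k_{i,D}}$ times a torus. There I would compute the valuation $\nu_{D,+}$ on a monomial $g\,e^{\lambda}$ with $g\in\K(\base{\tT})$:
\[\nu_{D,+}(g e^{\lambda})=\nu_D(g)+k^+_{i,D}\max(0,-\langle\lambda,\eta_i\rangle)\]
or the analogous expression with the sign flipped, according to the convention for $D^{i,\infty}$. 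The point is that $\nu_{D,+}$ is a monomial valuation. Consequently $f=\sum_\lambda g_\lambda e^{\lambda}$ satisfies $\nu_{D,+}(f)\ge 0$ if and only if each term does, since the valuation is the minimum over the $\Tvee$-weight components.

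Globally, this gives
\[\soa^+_{T,\bfN}=\bigoplus_{\lambda}\mathcal{O}_{\base{\tT}}\bigl(-\textstyle\sum_i \max(0,\mp\langle\lambda,\eta_i\rangle)D^{+}_{\ell_i}\bigr)e^{\lambda}.\]
The key observation is that the function $\lambda\mapsto \sum_i\max(0,-\langle \lambda,\eta_i\rangle)D^+_{\ell_i}$ is \emph{not} linear, so the next step needs care. I will rewrite it using $\max(0,-a)=\max(0,a)-a$. The linear part, $\lambda\mapsto -\sum_i\langle\lambda,\eta_i\rangle D^+_{\ell_i}$, defines a homomorphism $X_*(T)\to\operatorname{Div}$. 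This makes $\bigoplus_\lambda \mathcal{O}(\text{linear}(\lambda))e^\lambda$ the coordinate ring of a $\Tvee$-torsor: it is the one associated to the line bundles $\mathcal{O}(D^+_{\ell_i})$ via the characters $\eta_i$, or, in the elliptic case, $\mathcal{L}$-twisted.

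I expect the main obstacle to be reconciling this with the nonlinear $\max$ term, since a nonlinear function does not give a torsor. I would first recheck the valuation computation, because for a single sign the correct condition should in fact be linear. Along the exceptional component over $D\cap D^{i,\infty}$ (the divisor $x=0$ in the local model), the valuation is $\nu(\tau)=1$, $\nu(z^{-1})=-k$, $\nu(x)=\nu(\tau^{k}z^{-1})=0$, so $\nu(z)=-k$ on that component up to the sign convention. This valuation is linear in $\lambda$ on monomials: $\nu(g e^\lambda)=\nu_D(g)+ k^+_{i,D}\langle\lambda,\eta_i\rangle$, with no max. Checking this carefully in the blowup chart is the crux. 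Once it holds, $\soa^{\pm}$ is the invertible-sheaf-graded algebra $\bigoplus_\lambda \mathcal{O}(\mp\sum_i\langle\lambda,\eta_i\rangle D^{\pm}_{\ell_i})e^\lambda$. That is exactly a principal $\Tvee$-bundle, locally trivialized by choosing local equations of the $D^\pm_{\ell_i}$.

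For the gluing statement, both $\torb{\pm}{\tT,\bfN}$ contain the open set over which $D_{\ell}$ is absent, identified with $\base{\tT}\times\Tvee$ there. I would glue along the maximal open subset where the birational map $\torb{+}{\tT,\bfN}\dashrightarrow\torb{-}{\tT,\bfN}$ is an isomorphism; this is the locus of the complement of the $D^\mp$-supports in each. Call the glued space $Z$. Since $Z$ is covered by $\torb{+}{\tT,\bfN}$ and $\torb{-}{\tT,\bfN}$, its functions over $U$ are $\soa^+(U)\cap\soa^-(U)$ inside $\K(\base{\tT})[\Tvee]$, and this intersection is $\soa_{T,\bfN}(U)$ by definition. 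Hence the relative affinization of $Z$ is $\Spec\soa_{T,\bfN}=\tM$. A minor point to verify is that $Z$ is separated enough for this to be correct. It suffices that both pieces are normal and the intersection condition is computed at codimension-one points, which holds because all the conditions are valuative along divisors.
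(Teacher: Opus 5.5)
Your final argument is correct and is essentially the paper's. Local principality of $D^{\pm}_{\ell_i}=\operatorname{div}(f_i^{\pm})$ identifies $\soa^{\pm}_{T,\bfN}$ with $\mathcal{O}_U[\Tvee]$ via the twist $e^{\lambda}\mapsto\prod_i (f_i^{\pm})^{\pm\langle\lambda,\eta_i\rangle}e^{\lambda}$, which is the paper's birational automorphism $\prod_i\eta_i^{\pm1}(f_i^{\pm})$, and the affinization claim is just $\soa_{T,\bfN}=\soa^+_{T,\bfN}\cap\soa^-_{T,\bfN}$.

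Three small corrections:
\begin{itemize}
\item Your opening $\max$ formula is a false start. The one-sided valuation is $\nu_{D,+}(ge^{\lambda})=\nu_D(g)-k^+_{i,D}\langle\lambda,\eta_i\rangle$, with a minus sign, consistent with your $\nu(z)=-k$ and your final display.
\item The gluing locus is the preimage of $\base{\tT}\setminus\bigcup_i\supp(D^+_{\ell_i}+D^-_{\ell_i})$ in both pieces, not the complement of only one of $D^{\pm}$ in each.
\item No separatedness check is needed. Functions on a space glued from two integral pieces along a dense open are exactly the intersection of the two rings inside the function field.
\end{itemize}
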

To trivialize the $\Tvee$-bundle $\torb{\pm}{\tT,\bfN}$, the divisors $D_{\ell_i}^{\pm}$ must be principal, so this is not always possible.  If $\operatorname{div}(f_i^{\pm})=D_{\ell_i}^{\pm}$, then the birational automorphism $\prod_i \eta_i^{\pm 1}(f_i^{\pm})$ of $\base{\tT}\times \Tvee$ induces an isomorphism between $\torb{\pm}{\tT,\bfN}$ and $\base{\tT}\times \Tvee$.  Thus, if $B=\Ga$ or $\Gm$, such a trivialization exists, but if $B=\elli$, it can exist only when $D_{\ell_i}^{\pm}$ are trivial, since $\base{\tT}$ has no non-trivial effective principal divisors. 
\begin{proof}
Since $\base{\tT}$ is smooth, all divisors on it are locally principal; thus every point in $\base{\tT}$ has an affine neighborhood $U$ such that the divisors $D_{\ell_i}^{\pm}=\operatorname{div}(f_i^{\pm})$ are principal on $U$ for some $f_i^{\pm}\in \mathcal{O}_{\base{\tT}}(U)$.  The birational automorphism $\prod_i \eta_i^{\pm 1}(f_i^{\pm})$ as above gives an isomorphism between $\torb{\pm}{\tT,\bfN}|_U$ and $U\times \Tvee$, so $\torb{\pm}{\tT,\bfN}$ is a principal $\Tvee$-bundle over $\base{\tT}$.  

The statement about relative affinization is just a restatement of the definition of $\soa_{T,\bfN}$: the regularity conditions defining $\soa_{T,\bfN}$ show that $\soa_{T,\bfN}=\soa_{T,\bfN}^+\cap \soa_{T,\bfN}^-$, and the right-hand side is the ring of functions on the variety obtained by gluing $\torb{+}{\tT,\bfN}$ and $\torb{-}{\tT,\bfN}$ together along the open subset where they are both regular.
\end{proof}

\begin{Remark}
    This construction is very similar to the one given by Teleman in \cite[Th. 1 \& Th. 2]{telemanRoleCoulomb2021}, but slightly more general, since it works without using massive deformations.  That is, Teleman always chooses $\tT = T \times \Gm$ with $\Gm$ acting by scaling, so that each weight $\varphi_i$ of $T$ on $\bfN$ corresponds to its own line $\ell_i$ of weights with $\eta_i=(\varphi_i,1)$.  Thus, $D^-_i$ is trivial for all $i$.  In this case, the variety $\torb{-}{\tT,\bfN}$ is just $\base{\tT}\times \Tvee$ with the birational isomorphism defined in the proof of \cref{lem:gluing} trivial; since \cite{telemanRoleCoulomb2021} only considers the case $B=\Ga,\Gm$, the bundle $\torb{+}{\tT,\bfN}$ is also trivial, so we can think of composing with this trivialization to get a birational automorphism of $\base{\tT}\times \Tvee$, which is the gluing used in \cite[Th. 1 \& Th. 2]{telemanRoleCoulomb2021}.
\end{Remark}

\subsubsection{The case of $\Gm$}
\label{ex:Gm}
    Let $\tT=\Gm$ and let $\bfN$ be a representation with weights $m_1,\dots, m_k$, at least one of which is non-zero.  In this case, there is only one line $\ell$, and we can take $\eta=1$ as the primitive generator.  We obtain divisors $D^+_{\ell}$ and $D^-_{\ell}$ in $\base{\tT}$.
\begin{enumerate}
    \item If $B=\base{\tT}=\Ga$, then $D^+_{\ell}$ and $D^-_{\ell}$ are both supported at the origin, with multiplicities $m^+=\sum_{i=1}^k \max(m_i,0)$ and $m^-=\sum_{i=1}^k -\min(m_i,0)$, respectively. 
    \item If $B=\base{\tT}=\Gm$, then $D^+_{\ell}$ and $D^-_{\ell}$ are supported at roots of unity, with the weight $m_i$ contributing multiplicity $|m_i|$ to the divisor supported at the $m_i$-th roots of unity in $D^+_{\ell}$ or $D^-_{\ell}$, depending on the sign of $m_i$.
    \item If $B=\base{\tT}=\elli$, then $D^+_{\ell}$ and $D^-_{\ell}$ are similar, with $m_i$-torsion points replacing roots of unity.
\end{enumerate}
Note here that the trigonometric and elliptic cases are not fundamentally more complicated in terms of local behavior, but require more careful bookkeeping, since the divisors appearing may have multiple components.

As discussed above, in this case, we have $\toric{\tT,\bfN}\cong B\times \mathbb{P}^1$, and $D^{0}$ and $D^{\infty}$ are the divisors given by the fibers over $0,\infty\in \mathbb{P}^1$, respectively.  
The blowup $\toricbl{\tT,\bfN}$ produces a new exceptional $\mathbb{P}^1$ at each point where $D^+_{\ell}$ meets $D^{\infty}$ and where $D^-_{\ell}$ meets $D^0$.  To construct the Coulomb branch, we remove the strict transforms of $D^0$ and $D^{\infty}$, so we are left with fibers that look like:
\begin{enumerate}
    \item $\Gm$ if $D^+_{\ell}$ and $D^-_{\ell}$ do not meet $D^{\infty}$ and $D^0$, respectively.  This is already affine, so it is unchanged by affinization.
    \item $\mathbb{A}^1\cup\mathbb{A}^1=\Spec \K[x,y]/(xy)$ if only one of these intersections is non-empty. This is similarly not affected by affinization.
    \item A $\mathbb{P}^1$ with $\mathbb{A}^1$'s attached at $0$ and $\infty$ if both intersections are non-empty.  In this case, the affinization contracts the $\mathbb{P}^1$ to a point, so we are left with $\mathbb{A}^1\cup\mathbb{A}^1$.  
\end{enumerate}
In the cases where we obtain $\mathbb{A}^1\cup\mathbb{A}^1$, the valuations $\nu_{D,\pm}$ are given by the order of vanishing on the two components.  

If we instead pass to $\toricblc{\tT,\bfN}$, we get a chain of $\mathbb{P}^1$'s at each such point, and removing the strict transforms of $D^0$ and $D^{\infty}$ removes one point from each end of the chain.  Thus, affinization contracts the entire chain to $\mathbb{A}^1\cup\mathbb{A}^1$.

Thus, to sum up, in the Coulomb branch, the fiber over a point is $\mathbb{A}^1\cup\mathbb{A}^1$ if the point is in the support of $D_{\ell}$, and $\Gm$ otherwise.  In the first case, the two components of the fiber correspond to the two valuations $\nu_{D,\pm}$, and in the second case, there is only one valuation $\nu_D$.

At first glance, this calculation seems independent of $k_{i,D}$.  However, nearby fibers show that $k_{i,D}$ does matter: as suggested by the chain of $\mathbb{P}^1$'s in $\toricblc{\tT,\bfN}$ that do not intersect the strict transforms of $D^0$ and $D^{\infty}$, the Coulomb branch is \'etale locally modeled on the $A_{k_{i,D}-1}$ surface singularity, and $\toricblc{\tT,\bfN}$ minus $D^0$ and $D^{\infty}$ gives its minimal resolution.  
In the rational case, we obtain this $A_{k_{i,D}-1}$ singularity on the nose, while in the trigonometric and elliptic cases, we obtain a family of such singularities lying over different finite-order points in $\base{\tT}\cong B$, with the rank determined by the sum of the absolute values of the weights of $\bfN$ on the fixed points of the corresponding element when $B=\Gm$.  The elliptic analogue of taking fixed points is to consider the associated bundle obtained from the line bundle, equivalently the principal $\Gm$-bundle, on $\base{\tT}$ given by the divisor $(b)-(0)$, and then take its trivial summand, that is, the subsheaf generated by global sections.

\subsubsection{Weight spaces and valuations}

We can describe the functions on the Coulomb branch more explicitly using the fact that $\toricbl{\tT,\bfN}$ still carries a residual action of $\Tvee$, and hence so does $\tM$.  Since every function is a finite sum of $\Tvee$-weight vectors, it is enough to understand the regularity conditions on each weight space separately.  The $\Tvee$-weights are given by characters $\gamma\in 
X^*(\Tvee)=X_*(T)$, which we may regard as regular functions $e^{\gamma}\colon \Tvee\to \mathbb{G}_m$.  The $\gamma$-weight subspace of $\K(\base{\tT})[\Tvee]$ consists of functions of the form $fe^{\gamma}$, where $f$ is a rational function on $\base{\tT}$.
\begin{Lemma}\label{lem:regularity-coefficients}
    A function of the form $fe^{\gamma}$ lies in $\soa_{T,\bfN}(U)$ if and only if for every divisor $D$ in $U\subset \base{\tT}$, we have 
    \begin{align}\label{eq:valuation-formulas}
        \nu_{D,+}(fe^{\gamma}) &= \nu_D(f)-\langle \gamma,\eta_i\rangle k^+_{i,D} \geq 0,\\
        \nu_{D,-}(fe^{\gamma}) &= \nu_D(f)+\langle \gamma,\eta_i\rangle k^-_{i,D} \geq 0.
    \end{align}
\end{Lemma}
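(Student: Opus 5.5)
Since $\soa_{T,\bfN}(U)$ is cut out by the conditions $\nu_{D,\pm}\geq 0$ for all divisors $D$ in $U$, the statement reduces to computing $\nu_{D,\pm}(fe^{\gamma})$ for each irreducible divisor $D$, and these valuations only depend on the local ring at the generic point of $D$. So I would fix $D$ and localize at its generic point, with $\tau$ a uniformizer. If $D$ lies in no $D_{\ell_i}$, then $\nu_{D,\pm}=\nu_D$ pulled back, and $e^\gamma$ is a unit on $\base{\tT}\times\Tvee$ near the generic fibre over $D$. This gives $\nu_D(fe^\gamma)=\nu_D(f)$, which matches the formulas since $k^\pm_{i,D}=0$. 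The same argument handles a sign with $k^\pm_{i,D}=0$. It is worth noting that the formula is then independent of $i$, as the remark about uniqueness of $i$ requires.

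When $D$ is a component of $D_{\ell_i}$, I would split $T\cong \Gm\times T'$ with $\eta_i$ the projection to the first factor, as in the description of $\toric{\tT,\bfN}$. Let $z=e^{\delta}$ be the coordinate on the $\Gm$ factor of $\Tvee$, where $\langle\delta,\eta_i\rangle=1$. Then $e^\gamma=z^{\langle\gamma,\eta_i\rangle}\cdot e^{\gamma'}$ with $e^{\gamma'}$ a function on $(T')^\vee$, hence a unit near the relevant exceptional divisors. Near the generic point of $D$ the other lines $\ell_j$ contribute nothing, so their compactification directions give divisors that do not meet the exceptional loci in question, and we may ignore them. Localizing, $D^{+}_{\ell_i}$ is cut out by $\tau^{k^+}$ and $D^-_{\ell_i}$ by $\tau^{k^-}$, where $k^\pm=k^\pm_{i,D}$. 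This puts us exactly in the situation of the first example and \cref{ex:generic-fiber}.

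It remains to compute the order of vanishing of $\tau$ and $z$ along the exceptional divisor $E_+$ over $D\cap D^{i,\infty}$, which is the blowup of the ideal $(z^{-1},\tau^{k^+})$, and along $E_-$, which is the blowup of $(z,\tau^{k^-})$. The chart computations of the first example give the coordinates $y=\tau^{k^+}z$ and $x=\tau^{k^-}z^{-1}$, with $E_+=\{x=0\}$ and $E_-=\{y=0\}$. The main obstacle is doing this carefully, since the blowup is non-reduced when $k^\pm>1$. I would do it directly with the valuation: $E_+$ corresponds to the monomial valuation on $\K[\tau,z^{-1}]$ that is the order along the ideal $(z^{-1},\tau^{k^+})$. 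On $\tau$ this valuation is $1$, and on $z^{-1}$ it is $k^+$, after normalizing by the blowup of the normal surface. Here I would check that $\nu(z^{-1})=k^+$ and $\nu(\tau)=1$ is the divisorial valuation whose centre is the exceptional component, and that it is indeed a weighted valuation with weights $(1,k^+)$, which agrees with the relation $xy=\tau^{k^++k^-}$. This gives $\nu_{D,+}(\tau)=1$ and $\nu_{D,+}(z)=-k^+$, and symmetrically $\nu_{D,-}(\tau)=1$ and $\nu_{D,-}(z)=k^-$. Hence $\nu_{D,+}(fe^\gamma)=\nu_D(f)-\langle\gamma,\eta_i\rangle k^+_{i,D}$ and $\nu_{D,-}(fe^\gamma)=\nu_D(f)+\langle\gamma,\eta_i\rangle k^-_{i,D}$. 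Imposing both inequalities for every $D$ in $U$ then yields the lemma.
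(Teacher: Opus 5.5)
Your proposal is correct and follows essentially the paper's own argument: the paper likewise reduces to the local model of blowing up $(x_1,x_2^m)$ with $x_1=z^{\mp 1}$, $x_2=\tau$, $m=k^{\pm}_{i,D}$, reads off that the exceptional valuation is monomial with $x_1^a x_2^b\mapsto ma+b$, and combines this with the orders $\pm\langle\gamma,\eta_i\rangle$ of $e^{\gamma}$ along $D^{i,0}$ and $D^{i,\infty}$. One small wording fix: when $k^{\pm}_{i,D}>1$ it is the center $X_i$, not the blowup itself, that is non-reduced.
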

\begin{proof}
The poles and zeros of $fe^{\gamma}$ are:
\begin{enumerate}
    \item Along the preimage of a divisor $D$ in $\base{\tT}$, the order of vanishing is given by $\nu_D(f)$.
    \item Along the divisors $D^{i,0}$ and $D^{i,\infty}$, the order of vanishing is given by $\langle \gamma,\eta_i\rangle$ and $-\langle \gamma,\eta_i\rangle$ respectively.
\end{enumerate}
\'Etale locally, the blowup $\toricbl{\tT,\bfN}$ looks like the blowup of $\mathbb{A}^{2n}$ at the ideal $(x_1,x_2^m)$ for some $m$.  The vanishing order of $x_1^ax_2^b\cdots $ along the exceptional divisor is given by $ma+b$.  Translated back to $\toricbl{\tT,\bfN}$, this gives the formulas in \cref{eq:valuation-formulas}.
\end{proof}

It is helpful to picture these regularity conditions by looking at the vanishing order of $e^{\gamma}$ and $f$ along the components over $D$ in $\toricbl{\tT,\bfN}$.  The function $f$ is regular on $D^{i,0}$ and $D^{i,\infty}$, and has the same vanishing order $\nu_D(f)$ on each of the components over $D$.  On the other hand, the function $e^{\gamma}$ is regular on the preimage of $D$, but has order $\pm \langle \gamma,\eta_i\rangle$ on $D^{i,0}$ and $D^{i,\infty}$, as discussed above.  Thus, its order of vanishing on the components over $D$ forms an arithmetic progression \[-\langle \gamma,\eta_i\rangle k^+_{i,D}, -\langle \gamma,\eta_i\rangle (k^+_{i,D}-1),\dots, \langle \gamma,\eta_i\rangle k^-_{i,D}.\]  Of course, if $fe^{\gamma}$ is regular on the two outermost components, then it is regular on all the intermediate ones as well.  
\begin{figure}[ht]
\centering
\begin{tikzpicture}[x=1cm,y=1cm,baseline=(current bounding box.center)]
\draw[thick] (0,6.8) -- (11.4,6.8);
    \draw[thick] (0,0.24) -- (11.4,0.24);
  \node[left] at (0,6.8) {$D^{i,\infty}$};
    \node[left] at (0,0.24) {$D^{i,0}$};
    \node at (3.8,6.45) {$-\langle \gamma,\eta_i\rangle$};
    \node at (3.8,0.59) {$\langle \gamma,\eta_i\rangle$};

\node[left] at (5.35,3.95) {$D$};

\newcommand{\picsphere}[2]{\draw[thick] (#1,#2) circle (0.42);
    \draw[thick] (#1-0.42,#2) arc[start angle=180,end angle=360,x radius=0.42,y radius=0.14];
    \draw[densely dashed] (#1+0.42,#2) arc[start angle=0,end angle=180,x radius=0.42,y radius=0.14];
  }

\picsphere{5.8}{6.38}
  \picsphere{5.8}{5.54}
        \node at (5.8,4.53) {$\vdots$};
        \picsphere{5.8}{3.52}
        \node at (5.8,2.51) {$\vdots$};
        \picsphere{5.8}{1.50}
        \picsphere{5.8}{0.66}

\fill (5.8,6.80) circle (0.04);
  \fill (5.8,5.96) circle (0.04);
        \fill (5.8,1.08) circle (0.04);
        \fill (5.8,0.24) circle (0.04);

\node[right] at (6.75,6.38) {$\nu_D(f)-k^+_{i,D}\langle \gamma,\eta_i\rangle$};
        \node[right] at (6.75,3.52) {$\nu_D(f)$};
                    \node[right] at (6.75,0.66) {$\nu_D(f)+k^-_{i,D}\langle \gamma,\eta_i\rangle$};

\end{tikzpicture}
\caption{Schematic of the blowup $\toricbl{\tT,\bfN}$ near a divisor $D\subset \base{\tT}$. The vertical chain records the exceptional components and the vanishing order of $fe^{\gamma}$ along them.}
\label{fig:toric-blowup-chain}
\end{figure}

We can also see these new components of $\tM$ as the limits of certain rational sections of the map $\base{\tT}\times \Tvee\to \base{\tT}$; more precisely, the components that sit above the divisor $D$ can be defined as the limits of sections of the form $\eta_i^{\vee}(f)\cdot s$ for $s$ a regular section of $\base{\tT}\times \Tvee\to \base{\tT}$ and $f$ a rational function on $\base{\tT}$ whose order of vanishing $\nu_D(f)$ satisfies  $-k^+_{i,D}\leq \nu_D(f)\leq k^-_{i,D}$.  Each possible value of $\nu_D(f)$ gives a different component of the fiber over $D$ in $\toricblc{\tT,\bfN}$.

\subsubsection{Monopole operators in the abelian case}
Fix a cocharacter $\gamma$ of $T$.  For the sake of simplicity, we choose $\eta_i$ so that $\langle \gamma,\eta_i\rangle\geq 0$ for all $i$; if this is not the case, we can replace $\eta_i$ by $-\eta_i$ as discussed above.  
\cref{lem:regularity-coefficients} gives us a divisor 
\[E(\gamma)= \sum_{\langle \gamma,\varphi_{ij}\rangle>0} \langle \gamma,\varphi_{ij}\rangle D_{\varphi_{ij}}=\sum_i \langle \gamma,\eta_i\rangle  D_{\ell_i}^+\]
\notation{$E(\gamma)$}{Divisor governing the $\gamma$-weight space in $\soa_{T,\bfN}$}in $\base{\tT}$ such that $fe^{\gamma}$ lies in $\soa_{T,\bfN}(U)$ if and only if $f$ lies in $\mathcal O_{\base{\tT}}(-E(\gamma))(U)$.  In particular, the $\gamma$-weight space of $\soa_{T,\bfN}$ is a copy of the line bundle $\mathcal O_{\base{\tT}}(-E(\gamma))$ on $\base{\tT}$.  Thus, we can write $\soa_{T,\bfN}$ as a direct sum of line bundles on $\base{\tT}$ indexed by the characters of $\Tvee$.    

Note that in the elliptic case, the line bundles may be non-trivial.  For example, if we consider $\bfN=\C$ with a weight 1 action of $\tT=\Gm$, then $D^+$ is the origin with multiplicity 1 and $D^-$ is trivial.  Thus, $E(\gamma)=\gamma D^+$ if $\gamma>0$ and $E(\gamma)=0$ if $\gamma\leq 0$.  Therefore, the functions satisfying \cref{lem:regularity-coefficients} for a given $\gamma$ form a copy of the trivial bundle if $\gamma\leq 0$ and a copy of $\mathcal{O}(-\gamma D^+)$ if $\gamma>0$.
\begin{Remark}
    This leads to a somewhat surprising conclusion: if we instead took weight $-1$, the resulting Coulomb branch would not be isomorphic to that for $+1$ via the usual map of multiplying by Euler classes.  If we considered two-dimensional representations with weights $(1,1)$ and $(1,-1)$, the discrepancy becomes even starker: in the former case, the Coulomb branch has affinization $\mathbb{A}^1$, whereas in the latter case, the affinization is a single point, so the resulting Coulomb branches cannot be isomorphic as schemes.  
    
    This asymmetry is expected from the 5d $\mathcal{N}=1$ viewpoint, where the elliptic curve is the compactification torus.  From that viewpoint, the sign choice is part of the choice of 5d Chern-Simons coupling.  More precisely, for a $\Gm$-gauge theory with an odd number of charge-one hypermultiplets, the parity anomaly forces the level to be half-integral; changing the polarization of the charged matter shifts this level by an integer \cite{seibergFivedimensional1996, intriligatorFivedimensional1997}.  Thus, the failure of the two sign choices to give canonically isomorphic elliptic Coulomb branches reflects extra 5d data rather than a defect of the construction.  The role of equivariant elliptic cohomology in this picture is parallel to its physical realization by placing a 3-dimensional theory on an elliptic curve, as in \cite{bullimoreEllipticCurve2022}.  
\end{Remark}

For $B=\Ga,\Gm$, the line bundles appearing are all trivial, so choosing a trivialization of each line bundle gives a distinguished basis vector in each $\gamma$-weight space, which we call the {\bf monopole operator} of $\gamma$.  We can construct such a trivialization by taking 
\begin{equation}\label{eq:monopole-operator-abelian}
f_{\gamma}=\prod_{\langle \gamma,\varphi_{ij}\rangle>0}\Eu({\varphi_{ij}})^{\langle \gamma,\varphi_{ij}\rangle},\qquad r_{\gamma}=f_{\gamma}e^{\gamma}.    
\end{equation}
\notation{$r_{\gamma}$}{Monopole operator of coweight $\gamma$}

In the elliptic case, this product of Euler classes is not a regular function; rather, it defines a map from $\mathcal O_{\base{\tT}}(-E(\gamma))$ to $\soa_{T,\bfN}$.  We regard this map as the elliptic analogue of the monopole operator of $\gamma$.  

\subsubsection{Comparison with the abelian BFN construction}
Finally, we show that this construction recovers the original definition of the Coulomb branch.  Since no careful analysis of the elliptic BFN construction currently exists in the literature, we leave that case to future work and assume in this section that $B=\Ga$ or $\Gm$.      
\begin{Theorem}\label{thm:abelian-case}
    The variety $\tM$ is isomorphic to the usual BFN Coulomb branch, which we denote $\tMB(\tT,T,\bfN)$, if $B=\Ga$, and to the usual $K$-theoretic Coulomb branch if $B=\Gm$.
\end{Theorem}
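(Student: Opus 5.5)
Both sides are $\Tvee$-graded algebras over the base, with the same generic fiber, so the plan is to compare them weight space by weight space. Recall the abelian BFN computation \cite[\S4]{BFN}. For $G=T$ the space $\mathcal{R}_{T,\bfN}$ is a disjoint union over $\gamma\in X_*(T)=\Gr_T$ of vector bundles over points. Hence $H^{\tT}_*(\mathcal{R}_{T,\bfN})$, or its $K$-theoretic analogue, is a free module over $H^*_{\tT}(\mathrm{pt})=\K[\base{\tT}]$ with basis given by the fundamental classes $r_\gamma^{\BFN}$. The convolution product is
\[r_\gamma^{\BFN} r_{\gamma'}^{\BFN}=\prod_{\varphi}\Eu(\varphi)^{d(\varphi,\gamma,\gamma')}\, r^{\BFN}_{\gamma+\gamma'},\]
where the product runs over the weights $\varphi$ of $\bfN$ (with multiplicity). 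The exponents are
\[d(\varphi,\gamma,\gamma')=\tfrac12\bigl(|\langle\gamma,\varphi\rangle|+|\langle\gamma',\varphi\rangle|-|\langle\gamma+\gamma',\varphi\rangle|\bigr),\]
up to the sign/normalization convention for $\Eu$ in $K$-theory. The localization theorem gives an injective algebra map $H^{\tT}_*(\mathcal{R}_{T,\bfN})\to \K(\base{\tT})[\Tvee]$. It sends $r^{\BFN}_\gamma$ to $\prod_{\langle\gamma,\varphi\rangle>0}\Eu(\varphi)^{\langle\gamma,\varphi\rangle}e^{\gamma}$, which is precisely the element $r_\gamma=f_\gamma e^\gamma$ of \eqref{eq:monopole-operator-abelian}. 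If needed, this can be arranged by rescaling $e^\gamma$ by the ratio of the Euler class of the fiber of $\mathcal{R}$ to that of $\bfN\llbracket t\rrbracket$; this rescaling is multiplicative, so it gives a different but isomorphic embedding. I would verify this directly: the product formula above is exactly what one gets by multiplying the elements $f_\gamma e^\gamma$ in $\K(\base{\tT})[\Tvee]$, since $\max(a,0)+\max(b,0)-\max(a+b,0)=d$.

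So the BFN algebra is identified with the $\K[\base{\tT}]$-span of the $r_\gamma$ inside $\K(\base{\tT})[\Tvee]$. It remains to show that this span equals $\soa_{T,\bfN}(\base{\tT})$. The algebra $\soa_{T,\bfN}$ is $\Tvee$-graded, since the valuations $\nu_{D,\pm}$ are defined weight by weight through \cref{lem:regularity-coefficients}. So it suffices to compare $\gamma$-weight spaces. By the discussion following \cref{lem:regularity-coefficients}, the $\gamma$-weight space of $\soa_{T,\bfN}$ is $\mathcal O_{\base{\tT}}(-E(\gamma))\,e^\gamma$, where
\[E(\gamma)=\sum_{\langle\gamma,\varphi_{ij}\rangle>0}\langle\gamma,\varphi_{ij}\rangle D_{\varphi_{ij}}.\]
For $B=\Ga,\Gm$ the base $\base{\tT}$ is affine with trivial Picard group. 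The divisor $E(\gamma)$ is the divisor of $f_\gamma$, because $D_\varphi$ is by definition the divisor of $\Eu(\varphi)$, and $\Eu(\varphi)$ vanishes to order one there. Therefore the global sections of this weight space are exactly $\K[\base{\tT}]\cdot f_\gamma e^\gamma$. The two algebras agree as subalgebras of $\K(\base{\tT})[\Tvee]$, and hence their spectra are isomorphic over $\base{\tT}$.

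I expect the main obstacle to be the bookkeeping in the $K$-theoretic case. In the definition of $D_{\ell_i}$, a weight $\varphi=m\eta$ contributes $|m|D_\varphi$. But $D_\varphi$, the zero locus of $1-e^{-\varphi}$, is already the reduced union of the translates of $\ker\eta$ by $m$-th roots of unity. I must check that the multiplicity of each component of $D_{\ell_i}^{\pm}$ entering $\nu_{D,\pm}$ is the one for which $\nu_D(f)\ge\langle\gamma,\eta_i\rangle k^+_{i,D}$ reproduces $\nu_D(f_\gamma)$. Concretely, for each irreducible $D$ one needs
\[\langle\gamma,\eta_i\rangle k^+_{i,D}=\sum_{\langle\gamma,\varphi\rangle>0}\langle\gamma,\varphi\rangle\,\nu_D(\Eu(\varphi)),\]
and I would verify this component by component, using the description in \cref{ex:Gm}. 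A second point to check is that the sign conventions for $\Eu$ in $K$-theory match those of BFN. Any discrepancy there is a unit multiple $e^{\varphi}$ and can be absorbed into the choice of the $e^\gamma$.
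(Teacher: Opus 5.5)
Your proposal is correct and follows essentially the same route as the paper. Like the paper, you use \cite[\S 4(vi)]{BFN} to identify the image of $r_\gamma^{\BFN}$ with $r_\gamma=f_\gamma e^\gamma$, and then \cref{lem:regularity-coefficients} together with \cite[Th. 4.1]{BFN} to see that on both sides the $\gamma$-weight space is freely generated by this element over $\K[\base{\tT}]$. The $K$-theoretic bookkeeping you flag is automatic, because $D_\varphi$ is by definition the full divisor of $\Eu(\varphi)$, so $E(\gamma)=\operatorname{div}(f_\gamma)$ holds tautologically; and your explicit check of the product formula is unnecessary, since localization is already an algebra homomorphism.
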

\begin{proof}
    This is philosophically the same as \cite[Th. 1 \& 2]{telemanRoleCoulomb2021}, but we cannot directly apply that result because we have not checked compatibility with the ``massive'' deformations (usually called flavor deformations in other sources).
    
    We wish to show that the birational map from $\tM\dashrightarrow \base{\tT}\times \Tvee$ given by the embedding $\soa_{T,\bfN}(U)\subset \K(\base{\tT})[\Tvee]$ for any affine set $U$ induces an isomorphism with the BFN Coulomb branch, which also has a birational map to $\base{\tT}\times \Tvee$ given in \cite[Cor. 5.11(1)]{BFN}.

    There is a monopole operator $r_\gamma^{\BFN}\in \K[\tMB(\tT,T,\bfN)]$ for each cocharacter $\gamma$ of $T$, and 
    by \cite[\S 4(vi)]{BFN}, the monopole operator $r_\gamma^{\BFN}$ maps to  $e^\gamma\cdot \Eu(\bfN\llbracket t \rrbracket/(\bfN\llbracket t \rrbracket\cap t^{\gamma}\bfN\llbracket t \rrbracket))$ on $\base{\tT}$.  The weight line for $\varphi_{ij}$ contributes in $\bfN\llbracket t \rrbracket/(\bfN\llbracket t \rrbracket\cap t^{\gamma}\bfN\llbracket t \rrbracket)$ with multiplicity $\langle \gamma,\varphi_{ij}\rangle$  if $\langle \gamma,\varphi_{ij}\rangle>0$ and does not contribute if $\langle \gamma,\varphi_{ij}\rangle\leq 0$.  Thus, the image of $r_\gamma^{\BFN}$ is $r_{\gamma}$ as defined in \cref{eq:monopole-operator-abelian}.
    
    We have already established in \cref{lem:regularity-coefficients} that $r_{\gamma}$ generates a copy of $\K[\base{\tT}]$ which is the $\gamma$-weight space of $\soa_{T,\bfN}$.  The same is true for $r_\gamma^{\BFN}$ in $\K[\tMB(\tT,T,\bfN)]$ by \cite[Th. 4.1]{BFN}, so the map $\K[\tMB(\tT,T,\bfN)]\to \soa_{T,\bfN}$ is an isomorphism on each $\gamma$-weight space, and thus an isomorphism.
\end{proof}

\subsection{General case}
\label{subsec:general-case}

\subsubsection{Construction}
We now extend the abelian construction to the non-abelian case.  The basic strategy is the same: we start from the birational model over $\base{\tT}\times \Tvee$ and impose explicit regularity conditions along the divisors that control codimension-$1$ behavior.  The new feature is that we must incorporate the Weyl-group action and the additional regularity conditions coming from the root divisors in order to obtain a well-behaved variety over $\base{G}$.  This is a natural expectation from GKM theory: the equivariant cohomology $H^*_G(X)$ of a $G$-space differs from the $T$-fixed-point contribution $H^*_T(X^T)^W$ only by regularity conditions along the divisors corresponding to the stabilizers of non-fixed points.  We will see below that the same regularity conditions, imposed at the root divisors $D_{\alpha}$, cut $\soa_{G,\bfN}$ out of $(\K(\base{\tT})[\Tvee])^W$.

The space $\base{\tT}\times \Tvee$ has an induced action of the Weyl group $W$ of $G$, where $W$ acts on $\base{\tT}$ by the usual action on the torus and on $\Tvee$ by the dual action. 
\begin{Lemma}\label{lem:goodness-degree}
    The sheaf $\soa_{T,\bfN}$ carries, and thus the variety $\tM(\tT,T,\bfN)$ inherits, a unique action of the Weyl group $W$ of $G$ such that the birational map $\base{\tT}\times \Tvee\to \tM(\tT,T,\bfN)$ is $W$-equivariant.
\end{Lemma}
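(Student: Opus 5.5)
Since $\soa_{T,\bfN}$ is a subsheaf of the constant sheaf $\K(\base{\tT})[\Tvee]$, uniqueness is automatic: the birational map is dominant, so any equivariant action on $\tM(\tT,T,\bfN)$ is determined by the action of $W$ on the function field $\K(\base{\tT}\times\Tvee)$. Existence therefore reduces to one claim. For each $w\in W$ and each open $U\subset\base{\tT}$, the automorphism $w$ of $\K(\base{\tT})[\Tvee]$ must carry $\soa_{T,\bfN}(U)$ onto $\soa_{T,\bfN}(wU)$. Then $\Spec$ of this sheaf over $\base{\tT}$ inherits a $W$-action covering the action on $\base{\tT}$.

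The plan is to check the regularity conditions weight space by weight space, using \cref{lem:regularity-coefficients}. The element $w$ sends $fe^{\gamma}$ to $(w\cdot f)e^{w\gamma}$, and $\nu_{wD}(w\cdot f)=\nu_D(f)$. We need to compare the conditions for $(f,\gamma,D)$ with those for $(w\cdot f,w\gamma,wD)$. The key input is that $\bfN$ is a representation of $\tG$, so $W$ permutes the $\tT$-weights $\varphi_{ij}$ of $\bfN$ together with their multiplicities. Consequently $w$ permutes the lines $\ell_i$, say $w\ell_i=\ell_{\sigma(i)}$, and $w\eta_i=\pm\eta_{\sigma(i)}$. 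Since $\Eu(w\varphi)$ is the pullback of $\Eu(\varphi)$ under $w^{-1}$, we also get $wD_{\varphi}=D_{w\varphi}$. Hence $w$ carries $D_{\ell_i}$ to $D_{\ell_{\sigma(i)}}$, carries $D^{\pm}_{\ell_i}$ to $D^{\pm\epsilon}_{\ell_{\sigma(i)}}$ (where $\epsilon$ is the sign in $w\eta_i=\epsilon\eta_{\sigma(i)}$), and gives $k^{\pm}_{i,D}=k^{\pm\epsilon}_{\sigma(i),wD}$. Substituting, the pair of inequalities in \cref{eq:valuation-formulas} for $(f,\gamma,D)$ becomes the pair for $(w\cdot f,w\gamma,wD)$, possibly with the two inequalities interchanged. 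By \cref{lem:regularity-coefficients} this is exactly the statement that $w$ preserves $\soa_{T,\bfN}$. Since every section is a finite sum of weight vectors and $w$ respects the weight decomposition, this suffices.

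The main bookkeeping obstacle is the sign $\epsilon$: $w\eta_i$ need not be the chosen generator $\eta_{\sigma(i)}$. I will handle this with the observation made earlier in the paper that replacing $\eta_i$ by $-\eta_i$ swaps $D^{\pm}_{\ell_i}$ and $\nu_{D,\pm}$ but leaves $\soa_{T,\bfN}$ unchanged. Equivalently, the condition attached to each line and divisor is the symmetric pair of conditions, which does not depend on the sign choice. A geometric alternative is to observe that $w$ acts on $\toric{\tT,\bfN}$, since the fan of lines $\ell_i$ is $W$-stable, and that it preserves the blowup centre $\bigcup X_i$ up to the $D^{i,0}\leftrightarrow D^{i,\infty}$ swap. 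I would use this only as a cross-check.

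A second point to confirm is that $W$ really does act on the weights of $\bfN$ as a $\tT$-representation, even though $W$ is defined as the Weyl group of $G$. This holds because $W$ is the common Weyl group of $G$ and $\tG$, and $\bfN$ is a $\tG$-representation.
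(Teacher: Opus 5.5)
Your proof is correct. It rests on the same combinatorial input as the paper's: $W$ permutes the $\tT$-weights of $\bfN$ with multiplicity, so it permutes the lines $\ell_i$, sends $\eta_i$ to $\pm\eta_{\sigma(i)}$, and carries $D^{\pm}_{\ell_i}$ to $D^{\pm}_{\ell_{\sigma(i)}}$ or $D^{\mp}_{\ell_{\sigma(i)}}$.

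What differs is how you turn this into an action, and you go in the opposite direction from the paper. The paper argues geometrically. It uses these facts to let $W$ act on $\toric{\tT,\bfN}$ and to preserve the blowup centre $\bigcup_i X_i$, so that $W$ lifts to $\toricbl{\tT,\bfN}$. It then passes to the affinization $\tM(\tT,T,\bfN)$ and obtains the action on $\soa_{T,\bfN}$ by pushforward. You instead stay inside $\K(\base{\tT})[\Tvee]$. Weight space by weight space, you use \cref{lem:regularity-coefficients} to check that $w$ carries the pair of inequalities for $(f,\gamma,D)$ to the pair for $(w\cdot f,w\gamma,wD)$, with the sign $\epsilon$ interchanging them.

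What your route buys:
\begin{itemize}
\item It makes explicit the bookkeeping the paper leaves implicit, namely that $D^{i,0}\cap D^-_{\ell_i}$ may be sent to $D^{j,\infty}\cap D^+_{\ell_j}$, so the centre is still preserved.
\item It actually proves uniqueness, which the paper's proof does not address.
\item It directly exhibits $\soa_{T,\bfN}$ as a $W$-stable subsheaf of $\K(\base{\tT})[\Tvee]$, which is what is used when taking invariants in \cref{def:Coulomb-branch-nonabelian}.
\end{itemize}
The paper's route has the advantage of producing the action on the intermediate spaces $\toric{\tT,\bfN}$ and $\toricbl{\tT,\bfN}$ themselves, not only on the affinization. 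Your remark that the permutation of weights should be justified via $\tG$ rather than $G$ is also slightly more precise than the paper's phrasing.
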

\begin{proof}
 By the usual structure theorems of finite-dimensional $G$-representations, the $W$-action sends each weight to another weight.  Consequently, it sends each line $\ell_i$ of weights to another line $\ell_j$.  Since the $W$-action on $\base{\tT}$ is compatible with the $W$-action on the character lattice, it also sends $\eta_i$ to $\pm \eta_j$ for some sign and similarly $D_{\ell_i}^{\pm}$ to $D_{\ell_j}^{\pm}$ or $D_{\ell_j}^{\mp}$.  This shows that $W$ lifts to an action on $\toricbl{\tT,\bfN}$ and thus on $\tM(\tT,T,\bfN)$ as well.  The map $\tM(\tT,T,\bfN)\to \base{\tT}$ is $W$-equivariant, so taking pushforward, we get an induced action of $W$ on $\soa_{T,\bfN}$ as well.
\end{proof}

Given this Weyl group action, we can define $\soa_{G,\bfN}$; in fact, the primary difficulty in the consideration of the non-cotangent case in \cite{telemanCoulombBranches2022} is the definition of this $W$-action.  The $W$-invariants of $\soa_{T,\bfN}$ form a subalgebra, but they are not yet the correct algebra for the Coulomb branch.  Rather, we must impose weaker regularity conditions along the preimages of the divisors $D_{\alpha}$ in $\base{G}$.  

For each root $\alpha$, let $\ell_{\alpha}$ denote the line spanned by $\alpha$.  In addition to the weight divisors, we must also account for the divisor in $\Tvee$ defined by the equation $e^{\alpha^{\vee}}=1$; we denote it by $D_{\alpha,1}$.\notation{$D_{\alpha,1}$}{Root divisor in $\Tvee$ cut out by $e^{\alpha^{\vee}}=1$}

\begin{Definition}
Let $\epsilon_{\alpha}=1$ if $\alpha/2$ is not a character of $T$ and $\epsilon_{\alpha}=2$ if $\alpha/2$ is a character of $T$.  
    Let $J_{\alpha}$ be the ideal in $\K[\tM(\tT,T,\bfN)]$ consisting of functions that vanish to order $\epsilon_{\alpha}$ on the intersection of the preimage of $D_{\alpha}$ in $\toricbl{\tT,\bfN}$ with $D_{\alpha,1}$.  Let $\Bl_{J_{\alpha}}(\tM(\tT,T,\bfN))$ be the blowup of $\tM(\tT,T,\bfN)$ with this ideal as center.\notation{$\epsilon_{\alpha}$}{Multiplicity governing the root blowup}\notation{$J_{\alpha}$}{Ideal defining the root blowup center}\notation{$\nonabbl{\alpha,\tG,\bfN}$}{Local non-abelian blowup model for the root $\alpha$}

Let $\nonabbl{\alpha,\tG,\bfN}$ be the complement in $\Bl_{J_{\alpha}}(\tM(\tT,T,\bfN))$ of the principal transform of $\epsilon_{\alpha}D_{\alpha}$, that is, of the divisor cut out by the pullback of the functions vanishing on $\epsilon_{\alpha}D_{\alpha}$ as sections of $\mathcal{O}(1)$.
\end{Definition} 
This is an affine variety over $\base{\tG}$, since $\mathcal{O}(1)$ is relatively ample on the blowup.  The map $\nonabbl{\alpha,\tG,\bfN}\to\tM(\tT,T,\bfN)$ is an isomorphism away from the preimage of $D_{\alpha}$, and in the preimage of $D_{\alpha}$, there are a finite number of irreducible divisors $E_1,\dots, E_r$ that map dominantly onto a component of $D_{\alpha}$.
\begin{Definition}\label{def:Coulomb-branch-nonabelian}
    Consider the sheaf of rings on $\base{\tG}$ whose sections $\soa_{G,\bfN}(U)$ form the subring of $(\K(\base{\tT})[\Tvee])^W$ consisting of functions that are regular 
    \begin{itemize}
    \item on the preimage of $U\setminus(\cup_{\alpha} D_{\alpha})$ in $\tM(\tT,T,\bfN)$ and 
    \item at the generic points of all components $E_i$ whose images intersect $U$.
    \end{itemize}   
    The Coulomb branch $\tM(\tG,G,\bfN)$ is the relative spectrum of $\soa_{G,\bfN}$. Let $\ints$ be the natural map $\tM(\tG,G,\bfN)\to \base{\tG}$.
    \notation{$\tM(\tG,G,\bfN)$}{The Coulomb branch of the representation $\bfN$ of $G$ with flavor deformation for $\tG$}\notation{$\soa_{G,\bfN}$}{The sheaf on $\base{\tG}$ giving the functions on the Coulomb branch} 
    \notation{$\ints$}{Natural map from the Coulomb branch to $\base{\tG}$}\end{Definition}

In the semi-simple rank-$1$ examples below, there is a unique root $\alpha$, so we abbreviate $\nonabbl{\alpha,\tG,\bfN}$ to $\nonabbl{\tG,\bfN}$.  

\subsubsection{Rank-$1$ examples}
In the following two examples, let $m=k_{i,D}^{\pm}$ for $D$ the origin in $\base{\tT}$; these multiplicities are equal since the unique non-trivial element of the Weyl group acts by negation. Thus, this single multiplicity governs the rank-$1$ abelian model.

\begin{Example}    
    \label{ex:PGL2-running}
    We now discuss the example $G=\tG=PGL(2)$.  For simplicity, we only consider the case $B=\Ga$.  Consider the subring $A=\K[\tau, z\tau^m,z^{-1}\tau^m]\subset A'=\K[\tau,z^{\pm 1}]$, where $\tau$ is a coordinate on $\base{\tT}=\mathbb{A}^1$ and $z$ is a coordinate on $\Tvee=\Gm$.  By \cref{ex:Gm}, we have  
    \[\tM(\tT,T,\bfN)=\Spec(A).\]
    A point of the spectrum is determined by a value of $\tau$ and, if $\tau\neq 0$, a value of $z\neq 0$.  If $\tau=0$, then we get two copies of $\mathbb{A}^1$ meeting at a point, with coordinates given by $z\tau^m$ and $z^{-1}\tau^m$; there is only one point of intersection, since $(z\tau^m)(z^{-1}\tau^m)=\tau^{2m}=0$.  
    
    The divisor $D_{\alpha}$ in $\base{G}$ is the origin, and the relevant points of $D_{\alpha,1}$ in $\base{\tT}\times \Tvee$ are given by $\tau=0$ and $z=\pm 1$.  
    The ideal $J$ is generated by $z-z^{-1}$ and $\tau$ if $m=0$, and by $\tau^m z, \tau^mz^{-1},\tau$ if $m>0$.  
    
    In the case $m=0$, the blowup adds two exceptional $\mathbb{P}^1$'s over the two points $z=\pm 1$, and removing the principal transform, which is just the strict transform in this case, leaves us with two $\mathbb{A}^1$'s, as expected.  We have \[\nonabbl{G,\bfN}=\Spec \K[\tau, z^{\pm 1}, \frac{z-z^{-1}}{\tau}].\]  
    The Weyl group $W$ is $\Z/2\Z$, which acts by sending $\tau$ to $-\tau$ and $z$ to $z^{-1}$, so the invariants are generated by $z+z^{-1}, \tau^2$, and $\frac{z-z^{-1}}{\tau}$, in agreement with \cite{BFN}.  In this case, $\nonabbl{G,\bfN}$ is the universal centralizer of $SL(2)$ and thus is smooth.

    If $m>0$, then the blowup maps to $\tM(\tT,T,\bfN)\times \mathbb{P}^2$, with homogeneous coordinates $[a:b:c]=[\tau^m z: \tau^mz^{-1}:\tau]$ on the second factor.  The image is cut out by $ab=c^2\tau^{2m-2}$.  Away from the origin, this still gives a single point, but over the origin, we obtain the projective variety $ab=c^2$ if $m=1$, namely the $2$-uple embedding of $\mathbb{P}^1$, and $ab=0$ if $m>1$, namely two copies of $\mathbb{P}^1$ meeting at $[0:0:1]$.

    Removing the principal transform means restricting to $c\neq 0$.  We then obtain \[\nonabbl{G,\bfN}=\Spec \K[\tau^{m-1}z=a/c, \tau^{m-1}z^{-1}=b/c, \tau]\] and the invariants are generated by $\tau^{m-1}z+(-\tau)^{m-1}z^{-1}$, $\tau^{m}z+(-\tau)^{m}z^{-1}$, and $\tau^2$.  Note that in this case, $\nonabbl{G,\bfN}$ is an $A_{2m-2}$ surface singularity (or $\mathbb{A}^2$ if $m=1$) and thus is normal.
    
    Since $\nonabbl{G,\bfN}$ is normal in all cases, the variety $\tM(G,G,\bfN)$ is just the quotient $\nonabbl{G,\bfN}/W$.  
\end{Example}

\begin{Example}\label{ex:SL2-running}
The case of $G=SL(2)$ is slightly more delicate.  Just as before, we have $\tM(\tT,T,\bfN)=\Spec(A)$ depending on $m$ as in \cref{ex:PGL2-running}.  We use the same coordinates as before, but now $D_{\alpha,1}$ is given just by $z=1$.  Since $\epsilon_{\alpha}=2$, the ideal $J$ is $(z-1,\tau)^2$ if $m=0$, and $(\tau^m (z-1), \tau^m(z^{-1}-1),\tau^2)$ if $m>0$.  

In the case $m=0$, there is now a single exceptional $\mathbb{P}^1$.  Removing the principal transform, we find $\nonabbl{G,\bfN}=\Spec \K[\tau, z^{\pm 1}, \frac{z-1}{\tau}]$.  Thus, the invariants are generated by $\frac{z-2+z^{-1}}{\tau^2}$, $\tau^2$, and $\frac{z-z^{-1}}{\tau}$.   In this case, $\nonabbl{G,\bfN}$ is the universal centralizer of $PGL(2)$ and thus is smooth.

  If $m=1$, we have the same map to $\tM(\tT,T,\bfN)\times \mathbb{P}^2$ with \[[a:b:c]=[\tau (z-1): \tau (z^{-1}-1):\tau^2].\]  If we invert $c$, we obtain the spectrum of $\K[\tau,\frac{z-1}{\tau}, z^{\pm 1}]$, so the invariants are generated by $\frac{z-z^{-1}}{\tau}, z+z^{-1}$ and $\tau^2$. 
  
    If $m>1$, the ideal can be rewritten as $(\tau^m z, \tau^m z^{-1},\tau^2)$, and we get a similar result, with invariants generated by $\tau^{m-2}z+(-\tau)^{m-2}z^{-1}, \tau^{m-1}z+(-\tau)^{m-1}z^{-1},\tau^2$.  Since these last two cases are isomorphic to $PGL(2)$ Coulomb branches, we have already checked that they are normal, so the variety $\tM(G,G,\bfN)$ is just the quotient $\nonabbl{G,\bfN}/W$ in all cases.
\end{Example}

As in \cite{BFN}, understanding these examples is crucial to the proof of normality of $\tM(\tG,G,\bfN)$ in the general case.  For now, we note that these examples are sufficient to check this normality in the rank-$1$ case.
\begin{Lemma}\label{lem:rank-1-normal}
        If $G$ and $\tG$ have rank $1$, then $\tM(\tG,G,\bfN)$ is normal.
\end{Lemma}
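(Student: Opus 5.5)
The plan is to reduce to the semisimple rank-$1$ examples already computed, \cref{ex:PGL2-running,ex:SL2-running}, by working locally on $\base{\tG}$ and observing that normality is local.

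\emph{Reduction to a curve.} Since $G$ and $\tG$ have rank $1$, $\base{\tT}$ is a curve $B$ and $\base{\tG}$ is either $B$ (if $G$ is a torus) or $B/\{\pm1\}$. If $W$ is trivial, then $\soa_{G,\bfN}=\soa_{T,\bfN}$. By \cref{ex:Gm}, $\tM$ is then étale locally over $\base{\tT}$ either $\Gm\times$(smooth curve) or the surface $xy=\tau^{k}$, which is normal. Directly, $\soa_{T,\bfN}$ is an intersection of discrete valuation rings of the function field $\K(\base{\tT}\times\Tvee)$ together with the regular functions away from finitely many divisors, hence integrally closed.

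\emph{The case $W=\Z/2$.} The same integral-closedness argument does not apply directly to $\soa_{G,\bfN}$, because the conditions at the root divisors are imposed only generically on the components $E_i$. The cleaner route is the following:
\begin{enumerate}
\item Away from the images of the divisors $D_\alpha$, the sheaf $\soa_{G,\bfN}$ coincides with $\soa_{T,\bfN}^W$ locally on $\base{\tG}$, and $W$ acts freely on the base there. Hence $\tM(\tG,G,\bfN)$ is étale locally $\tM(\tT,T,\bfN)$, which is normal by the abelian case.
\item Near a point $p$ of some $D_\alpha$, I would work in the local ring (or henselization) at $p$. By \cref{ex:Gm}, the abelian model there is $\Spec\K[\tau,z^{\pm1}]$ or the $A_{2m-1}$-type ring $A=\K[\tau,z\tau^m,z^{-1}\tau^m]$ from \cref{ex:PGL2-running}. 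In the $K$-theoretic and elliptic cases, $\tau$ is a local uniformizer at a $2$-torsion point; there $W$ fixes $\tau$ up to sign, and after a formal change of coordinate the weights fixed by that point give the local $m$.
\item The examples show that $\nonabbl{\alpha,\tG,\bfN}$ is normal: it is smooth, an $A_{2m-2}$ singularity, or $\mathbb{A}^2$. Then $\nonabbl{\alpha,\tG,\bfN}/W$ is normal, as a quotient of a normal variety by a finite group (using $\operatorname{char}\K\ne2$).
\item I would then identify $\soa_{G,\bfN}$ locally with $\K[\nonabbl{\alpha,\tG,\bfN}]^W$. The inclusion $\supseteq$ is clear from the definition. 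For $\subseteq$, a $W$-invariant function in $\soa_{G,\bfN}$ is regular on $\nonabbl{\alpha,\tG,\bfN}$ away from the preimage of $D_\alpha$ and at the generic points of the $E_i$, so it is regular in codimension one. Since $\nonabbl{\alpha,\tG,\bfN}$ is normal, Hartogs' lemma then gives regularity everywhere.
\end{enumerate}
The one point to check here is that the $E_i$ exhaust the divisors lying over $D_\alpha$ on $\nonabbl{\alpha,\tG,\bfN}$, with no vertical components mapping to a point. This follows from the explicit descriptions in the examples.

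\emph{Main obstacle.} The hard step is the transfer of the explicit $\Ga$ computations to the $\Gm$ and elliptic settings, and to points of $D_\alpha$ other than the origin. For $\Gm$, these are $z$-values with $e^{\alpha}=1$, namely $\pm1$ in $\base{T}$. I would argue that the completed local ring of $\base{\tT}$ at such a point, together with the $W$-action, the divisors $D_{\ell}^{\pm}$ and the ideal $J_\alpha$, is isomorphic to the rational model with the same $m$ and the same $\epsilon_\alpha$. Completion is faithfully flat and commutes with blowups and with taking invariants, and normality descends along it, so the example computations apply verbatim. Getting $\epsilon_\alpha$ and the distinction between $z=1$ and $z=-1$ right for $SL(2)$ versus $PGL(2)$ at non-identity $2$-torsion points needs care.
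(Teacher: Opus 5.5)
Your strategy is essentially the paper's. You work stalkwise over $\base{\tG}$, use normality of the abelian local model, rerun \cref{ex:PGL2-running,ex:SL2-running} at points of $D_\alpha$ with an $s$-anti-invariant uniformizer, and conclude because $W$-invariants of a normal domain are normal. Two of your steps differ from the paper but are fine: the valuation-ring argument for $\soa_{T,\bfN}$ replaces the paper's $S_2$/$R_1$ check on $uv=\tau^{m_++m_-}$, and the Hartogs identification $\soa_{G,\bfN}=\K[\nonabbl{\alpha,\tG,\bfN}]^W$ spells out what the paper means by ``repeating the argument in the examples.'' The transfer to $B=\Gm,\elli$ is also more routine than your last paragraph suggests. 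The center is cut out by $D_{\alpha,1}\subset\Tvee$, which does not depend on the base point, and $D^{\pm}_\ell$ become $\tau^{m}$ times a unit for the local $m$.

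There is one concrete error, in your step (1): $W$ does \emph{not} act freely on $\base{\tT}$ away from $D_\alpha$.
\begin{itemize}
\item For $\tG=PGL(2)$ and $B=\Gm$, the point $e^{\alpha}=-1$ is fixed by $s$ but is not a zero of $\Eu(\alpha)=1-e^{-\alpha}$.
\item For $\tG=PGL(2)$ and $B=\elli$, the three nonzero $2$-torsion points are fixed by negation, while $D_\alpha$ is only the origin.
\end{itemize}
For $\tG=SL(2)$ these points do lie on $D_\alpha$, which seems to be what your final paragraph half-notices.

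At such a point $b$, the stalk of $\soa_{G,\bfN}$ is $(\soa_{T,\bfN})_{b'}^W$, so $\tM(\tG,G,\bfN)$ is not \'etale locally $\tM(\tT,T,\bfN)$ there. If $\bfN$ has weights that are even multiples of $\alpha$, the stalk is $\bigl(\mathcal{O}_{\base{\tT},b'}[u,v]/(uv-\tau^{k})\bigr)^W$ with $k>0$. Even for $\bfN=0$, the involution $(\tau,z)\mapsto(-\tau,z^{-1})$ has isolated fixed points at $z=\pm1$ and produces $A_1$ quotient singularities. Normality still holds at these points, because invariants of a normal domain under a finite group are normal; this is Case~(2) of the paper's proof. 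So the repair is a single sentence you already use in step (3), but as written step (1) asserts something false and skips this case.
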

\begin{proof}
    Normality is a local property, so it is enough to consider the stalk $\soa_b$ of $\soa_{G,\bfN}$ at a point $b$ in $\base{G}$ and to prove that it is integrally closed.       Let $b'$ be a preimage of $b$ in $\base{T}$ and let $\tau$ be a uniformizer at $b'$.   By \cref{lem:regularity-coefficients}, the stalk of $\soa_{T,\bfN}$ at $b$ is the subring of $Y=\mathcal{O}_{\base{T},b}[z,z^{-1}]$ generated over $\mathcal{O}_{\base{T},b}$ by $\tau^{m_+}z, \tau^{m_-}z^{-1}$ where $m_{\pm}$ are the multiplicities of this point in the divisors $D^{\pm}$; that is, it is the ring $\mathcal{O}_{\base{T},b}[u,v]/(uv-\tau^{m_++m_-})$ where $u$ and $v$ are the generators corresponding to $\tau^{m_+}z$ and $\tau^{m_-}z^{-1}$, respectively. 

Note that this ring is integrally closed:
\begin{enumerate}
\item[$(S_2)$] It satisfies Serre's condition $S_2$ since it is the coordinate ring of a hypersurface in $\mathbb{A}^2_{\mathcal{O}_{\base{T},b}}$.
\item[$(R_1)$] It is regular in codimension $1$ since the singular locus is given by $u=v=\tau=0$, which has codimension $2$.
\end{enumerate} 

We can now analyze the stalk $\soa_b$  by considering the following cases:
\begin{enumerate}
        \item If $b$ is not fixed by $W$, then $\soa_b$ is the same as the stalk of $\soa_{T,\bfN}$ at $b'$, so this stalk is integrally closed.
        \item If $b$ is fixed by $W$, but not in $D_{\alpha}$, then $\soa_b$ is the invariants of the stalk of $\soa_{T,\bfN}$ at $b'$ under the action of $W$.  This is integrally closed, since we are taking invariants under a finite group action on an integrally closed ring.
           \item If $b$ is fixed by $W$ and lies in $D_{\alpha}$, then the blowup at
        $J_{\alpha}$ changes the underlying variety.  By averaging, we can assume that
        $s\tau=-\tau$ for $s$ the non-trivial element of $W$.  Repeating the argument in
        the examples above, we find that the stalk of $\soa_{G,\bfN}$ at the image of
        $b$ in $\base{G}$ is the subring of $Y^W$ generated over $\mathcal{O}_{\base{G},b}$ by the functions
        \[
        \frac{z-2+z^{-1}}{\tau^2},\qquad \frac{z-z^{-1}}{\tau},\qquad \tau^2
        \]
        if $G=SL(2)$ and $m=0$, and by
        \[
        \tau^{m-\epsilon_{\alpha}}z+(-\tau)^{m-\epsilon_{\alpha}}z^{-1},\qquad
        \tau^{m+1-\epsilon_{\alpha}}z+(-\tau)^{m+1-\epsilon_{\alpha}}z^{-1},\qquad
        \tau^2
        \]
        otherwise.  This is smooth if $m-\epsilon_{\alpha}<0$, and otherwise it is the
        $W$-invariants in
        $\mathcal{O}_{\base{T},b}[u,v]/(uv-\tau^{2m-2\epsilon_{\alpha}})$.
        In either case, it is integrally closed as established before.
\end{enumerate}
This shows the desired normality.
\end{proof}

\subsubsection{Slices and codimension-$1$ structure}
\label{subsubsec:slices-codim-1}
In order to analyze the structure of $\tM(\tG,G,\bfN)$, we can use the locality of its definition.  
Let $\lambda\in \LtT$.  We have a corresponding Levi $\Gl=C_G(\lambda)$ and fixed subspace $\Nl$.  Note that the roots of $\Gl$ are precisely the roots of $G$ that vanish on $\lambda$, and the weights of $\Nl$ are precisely the weights of $\bfN$ that vanish on $\lambda$.  
\begin{Definition} 
Let $\base{\tT,\lambda}$ be the complement in $\base{\tT}$ of all divisors $D_{\varphi}$ for weights $\varphi$ such that $\varphi(\lambda)\neq 0$, and all divisors $D_{\alpha}$ for roots $\alpha$ such that $\alpha(\lambda)\neq 0$.  
\notation{$\base{\tT,\lambda}$}{Open subset where only the divisors of weights and roots vanishing on $\lambda$ remain}\end{Definition}
The restriction of $\soa_{G,\bfN}$ to this open set is determined by the valuations discussed earlier, simply ignoring those coming from the weights and roots that do not vanish on $\lambda$.  Thus, exactly the same conditions apply to the sections of $\soa_{\Gl,\Nl}$, after accounting for the difference in the Weyl groups, that is, the fact that $\tM(\tGl,\Gl,\Nl)$ is a scheme over $\base{\tGl}=\base{\tT}/W_{\lambda}$.  
Throughout, for a scheme $X$ over $\base{\tG}$ and a $\base{\tG}$-scheme $U$, we write $X_U$ for the base change $X\times_{\base{\tG}} U$.  The discussion above shows that:
\begin{Lemma}\label{lem:slice-ours}
$\tM(\tG, G,\bfN)_{\base{\tGl,\lambda}}\cong \tM(\tGl,\Gl,\Nl)_{\base{\tGl,\lambda}}$
\end{Lemma}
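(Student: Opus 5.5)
Both sides are relative spectra of sheaves of subrings of a common ambient ring, so the plan is to compare these sheaves directly. Precisely, $\soa_{G,\bfN}$ sits in $(\K(\base{\tT})[\Tvee])^W$ and $\soa_{\Gl,\Nl}$ sits in $(\K(\base{\tT})[\Tvee])^{W_\lambda}$. First I fix what $\base{\tGl,\lambda}$ means as a $\base{\tG}$-scheme: it is the image of $\base{\tT,\lambda}$ in $\base{\tT}/W_\lambda$, mapping to $\base{\tG}=\base{\tT}/W$.

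\textbf{Step 1: the map $\base{\tGl,\lambda}\to\base{\tG}$ is étale.} The stabilizer in $W$ of a point of $\base{\tT,\lambda}$ is generated by reflections whose root divisors pass through it. Every such root vanishes on $\lambda$, since roots with $\alpha(\lambda)\neq0$ have had their $D_\alpha$ removed. So the stabilizer lies in $W_\lambda$, and the map is étale.

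Normalization (i.e. integral closure) commutes with étale base change, and so does taking invariants in a $W$-equivariant setting. Hence, étale locally near a point $b'\in\base{\tT,\lambda}$, the stalk of $\soa_{G,\bfN}$ pulled back to $\base{\tGl,\lambda}$ is the $\mathrm{Stab}_W(b')$-invariant subring of $\K(\base{\tT})[\Tvee]$ cut out by the regularity conditions at divisors through $b'$. Here I would use the standard identity $(\mathcal{O}\otimes\K(\cdot))^W\otimes_{\mathcal{O}_{\base{\tG}}}\mathcal{O}_{\base{\tGl}}=(\cdots)^{W_\lambda}$ for the étale cover $\base{\tT}/W_\lambda\to\base{\tT}/W$ near points with stabilizer in $W_\lambda$. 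The same local description holds for $\soa_{\Gl,\Nl}$.

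\textbf{Step 2: the abelian conditions agree.} The valuations $\nu_{D,\pm}$ for $D$ meeting $\base{\tT,\lambda}$ only involve weights $\varphi$ with $D\subset D_\varphi$, hence $\varphi(\lambda)=0$, i.e.\ weights of $\Nl$.

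The subtle point is the lines $\ell_i$. A line of weights of $\bfN$ may contain weights with $\varphi(\lambda)\neq0$ as well as weights with $\varphi(\lambda)=0$; but since $\lambda$ is linear, all multiples of $\eta_i$ vanish on $\lambda$ iff $\eta_i$ does. So the lines for $\Nl$ are exactly the lines $\ell_i$ with $\eta_i(\lambda)=0$, with the same primitive generators. The coefficients $k^{\pm}_{i,D}$ for $D$ meeting $\base{\tT,\lambda}$ agree, because components $D_\varphi$ for weights not vanishing on $\lambda$ have been removed.

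By \cref{lem:regularity-coefficients}, this gives $\soa_{T,\bfN}|_{\base{\tT,\lambda}}=\soa_{T,\Nl}|_{\base{\tT,\lambda}}$, compatibly with the $W_\lambda$-action of \cref{lem:goodness-degree}.

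\textbf{Step 3: the root conditions agree.} For a root divisor $D_\alpha$ meeting $\base{\tT,\lambda}$ we have $\alpha(\lambda)=0$, so $\alpha$ is a root of $\Gl$. The ideal $J_\alpha$, the number $\epsilon_\alpha$ (which depends only on $\alpha$ and $T$), the blowup $\nonabbl{\alpha,\cdot}$, and the exceptional components $E_i$ are defined locally over $D_\alpha$. After Step 2 they coincide for $(G,\bfN)$ and $(\Gl,\Nl)$ over $\base{\tT,\lambda}$, and both definitions impose exactly these conditions.

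\textbf{Main obstacle.} I expect the difficulty to be Step 1's descent from $W$-invariants to $W_\lambda$-invariants. One must check that a $W_\lambda$-invariant section over $\base{\tGl,\lambda}$ satisfying the local conditions extends uniquely to a $W$-invariant section over the corresponding étale neighborhood. I would handle this by writing $\base{\tT,\lambda}$'s saturation $W\cdot\base{\tT,\lambda}$ as $W\times_{W_\lambda}$ of a neighborhood locally near $b'$. Then induction identifies $W$-invariants on $W\times_{W_\lambda}V$ with $W_\lambda$-invariants on $V$. This matches sheaves stalkwise and gives the isomorphism of relative spectra.
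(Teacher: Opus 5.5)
Your route is the paper's own. Over $\base{\tT,\lambda}$ only the valuation conditions coming from weights and roots that vanish on $\lambda$ survive, and these are exactly the conditions defining $\soa_{\Gl,\Nl}$ once $W$ is replaced by $W_\lambda$. Your Steps 2--3, and the reduction to $W\times_{W_{b'}}V'$, make explicit what the paper compresses into ``accounting for the difference in the Weyl groups.''

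The step to flag is Step 1. You claim that the $W$-stabilizer of a point of $\base{\tT,\lambda}$ is generated by reflections whose root divisors pass through it. This is Chevalley--Steinberg for $B=\Ga$, and it also holds for $B=\Gm$ when $[\tG,\tG]$ is simply connected. It is false in general in the trigonometric and elliptic settings.

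For example, take $B=\Gm$, $\tG=G=PGL(2)$ and $\bfN=0$. The reflection $s$ fixes $z=e^{\alpha}=-1$, which does not lie on $D_\alpha=\{z=1\}$. So for generic $\lambda$ this point lies in $\base{\tT,\lambda}$, and the map to $\base{\tG}$ is ramified there. In this case the isomorphism genuinely fails:
\begin{itemize}
\item Near $z=-1$, $\tM(G,G,\bfN)$ is $(\tilde U\times\Tvee)/W$ for a $W$-stable neighbourhood $\tilde U$. Its base change along $\tilde U\to\tilde U/W$ has non-reduced fiber $\Spec\K[u,v]/(v^2)$ over $z=-1$, where $u=w+w^{-1}$ and $v=(z-z^{-1})(w-w^{-1})$.
\item On the other side, $\tM(\tT,T,\Nl)$ has reduced fiber $\Tvee$ there.
\end{itemize}

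So your proof is complete for $B=\Ga$, which is the case used in the leaf classification. For general $B$ you must either shrink $\base{\tGl,\lambda}$ to the locus where $W$-stabilizers lie in $W_\lambda$, or add a hypothesis such as simply connected derived group when $B=\Gm$. The paper makes the same tacit assumption when it asserts that the maps $\base{\tGl,\lambda}\to\base{\tG}$ are \'etale.
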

The corresponding result for BFN Coulomb branches is implicit in \cite[Th. 5.26]{BFN}:
\begin{Lemma}\label{lem:slice-BFN}
$\tMB(\tG,G,\bfN)_{\base{\tGl,\lambda}}\cong \tMB(\tGl,\Gl,\Nl)_{\base{\tGl,\lambda}}$
\end{Lemma}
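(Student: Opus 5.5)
The plan is to deduce \cref{lem:slice-BFN} from the two local results of \cite{BFN}: the localization theorem underlying \cite[Th. 5.26]{BFN}, and the fixed-point comparison of \cite[\S 5]{BFN}. Both Coulomb branches are affine over the base. So it suffices to give an isomorphism of the algebras of functions after base change to the affine open sets of $\base{\tGl,\lambda}$, compatibly with the embeddings into $\K(\base{\tT})[\Tvee]$.

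First, I would pass to the torus. Consider the map $\base{\tGl}=\base{\tT}/W_\lambda\to\base{\tG}=\base{\tT}/W$. Restricted to $\base{\tGl,\lambda}$, it is étale, since the only roots that can fix points there are roots of $\Gl$. So $\tMB(\tG,G,\bfN)_{\base{\tGl,\lambda}}$ is the spectrum of $H^{\tG}_\bullet(\mathcal R_{G,\bfN})\otimes_{H^{\tG}(\mathrm{pt})}\mathcal O(\base{\tGl,\lambda})$. This ring is the $W_\lambda$-invariants of the $\tT$-equivariant homology localized to $\base{\tT,\lambda}$, using $H^{\tG}_\bullet=(H^{\tT}_\bullet)^W$.

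Next, I would apply equivariant localization to a one-parameter subgroup. Let $\lambda$ generate a subtorus $S\subset \tT$ (take its Zariski closure in the $K$-theoretic or rational sense). Over $\base{\tT,\lambda}$, the localization theorem for equivariant Borel--Moore homology or $K$-theory says that pushforward from the $S$-fixed locus becomes an isomorphism after inverting exactly the Euler classes of the normal weights. The $S$-fixed locus of $\mathcal R_{G,\bfN}$ is $\mathcal R_{\Gl,\Nl}$, with components of $\Gr_{\Gl}$ indexed as in \cite[\S 5]{BFN}. The normal weights in the $\Gr$-direction are roots $\alpha$ with $\alpha(\lambda)\ne0$, and in the fiber direction they are weights $\varphi$ with $\varphi(\lambda)\ne 0$. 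Their Euler classes are units on $\base{\tT,\lambda}$ by the definition of that open set.

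Then I would check that the isomorphism $z^*$, twisted by the Euler class of the normal bundle in the fiber direction, is an algebra map. As in \cite[Lemma 5.11 and \S 5(iv)]{BFN}, the naive pullback $z^*\colon H^{\tT}(\mathcal R_{G,\bfN})\to H^{\tT}(\mathcal R_{\Gl,\Nl})$ must be corrected by multiplying by $e(\bfN\llbracket t\rrbracket/\Nl\llbracket t\rrbracket\cap\ldots)$ and by the analogous factor in the $\Gr$-direction. This correction makes it compatible with convolution. It is also compatible with the abelianization embeddings into $\K(\base{\tT})[\Tvee]$, because both factor through the $\tT$-fixed points. Finally, I would take $W_\lambda$-invariants to descend to $\base{\tGl,\lambda}$.

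I expect the main obstacle to be the multiplicativity of the corrected map. In \cite{BFN} it is stated for the specific localization at generic points of a single hyperplane. Here it must hold for the whole open set $\base{\tGl,\lambda}$ and also in $K$-theory. My first approach would be to work in the abelianized ring: both sides embed in $\K(\base{\tT})[\Tvee]$, and both embeddings are algebra maps. So the algebra structure is forced once the localized rings are shown to have the same image there. Equality of images follows from the localization isomorphism together with the fact that the correction factors are units on $\base{\tT,\lambda}$.
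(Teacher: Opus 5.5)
Your outline is correct, but it is organized differently from the paper's proof.

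\textbf{The paper's route.} The paper works in two stages. First it localizes only in the $\Gr$-direction. It compares $\mathcal{R}_{G,\bfN}$ with the preimage $\mathcal{R}_{\Gl,\bfN}$ of $\Gr_{\Gl}$, keeping the full representation $\bfN$. It then uses the long exact sequence together with the fact that stabilizers of points of $\Gr_{\tG}\setminus\Gr_{\tGl}$ lie in root hyperplanes with $\alpha(\lambda)\neq 0$. Second, it cites \cite[Rem.~5.14]{BFN} to replace $\bfN$ by $\Nl$ over $\base{\tGl,\lambda}$.

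\textbf{Your route.} You localize once to the whole fixed locus $\mathcal{R}_{G,\bfN}^S=\mathcal{R}_{\Gl,\Nl}$. You handle the weights with $\varphi(\lambda)\neq 0$ by the same stabilizer argument, and you prove multiplicativity yourself by comparing images in $\K(\base{\tT})[\Tvee]$. For non-rational $\lambda$, take $S$ to be the smallest subtorus whose Lie algebra contains $\lambda$. Roots and weights are rational, so they vanish on $\lambda$ exactly when they are trivial on $S$, and this is what gives the fixed-locus identification.

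\textbf{What each buys.} The paper's split gives a first step with no normalizations at all. The abelianization of $(G,\bfN)$ literally factors as that of $(\Gl,\bfN)$ composed with $i_*^{-1}$, since both pass through the same $\mathcal{R}_{T,\bfN}$. So multiplicativity follows at once from injectivity of abelianization, and the change of representation is outsourced to BFN. Your route is self-contained and in effect reproves that remark. The price is that you must handle the fiber direction, where $\mathcal{R}_{\Gl,\Nl}$ has infinite codimension.

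\textbf{What to sharpen.} This fiber-direction step needs a more precise statement. Let $\bfN'$ be a $\Gl$-stable complement to $\Nl$.
\begin{itemize}
\item The correct map is the inverse of the pushforward from $\mathcal{R}_{\Gl,\Nl}$, renormalized on the truncations modulo $t^d$ by $\Eu\bigl(\bfN'\llbracket t\rrbracket/t^d\bfN'\llbracket t\rrbracket\bigr)$ so that it is stable in $d$. This is where the fact that these Euler classes are units on $\base{\tT,\lambda}$ is genuinely used.
\item On abelian monopole classes this map sends $r_\gamma$ for $\bfN$ to $\Eu\bigl(\bfN'\llbracket t\rrbracket/(\bfN'\llbracket t\rrbracket\cap t^{\gamma}\bfN'\llbracket t\rrbracket)\bigr)\, r_\gamma$ for $\Nl$. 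With this normalization the two abelianization maps agree on the nose.
\item It is this exact agreement, not the invertibility of the correction factors, that forces equality of images. Rescaling the $e^{\gamma}$-components of $\K(\base{\tT})[\Tvee]$ by units need not preserve the non-abelian image.
\end{itemize}

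For example, take $\Gl=GL(2)$, $\Nl=0$, $\bfN=\C^2$, coordinates $x_1,x_2$ on $\LT$, and $w=e^{(1,-1)}$. The pure element $(2-w-w^{-1})/(x_1-x_2)^2=-e^{(-1,-1)}r_{(1,0)}^2$ lies in the image. Rescaling its components by $\prod_\varphi\Eu(\varphi)^{\max(\langle\gamma,\varphi\rangle,0)}$ gives $(2-x_1w-x_2w^{-1})/(x_1-x_2)^2$. This has a pole along the exceptional divisor over $x_1=x_2$, even after inverting $x_1x_2$.

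For the same reason, do not insert a separate correction factor in the $\Gr$-direction. There the map is simply $i_*^{-1}$, and it is compatible with abelianization by functoriality of pushforward. With these points made explicit, your argument goes through.
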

\begin{proof}
We have a natural map $\tMB(\tG,G,\bfN)_{\base{\tGl,\lambda}} \to \tMB(\tGl,\Gl,\bfN)$ induced by the map $\Gr_{\tGl}\to \Gr_{\tG}$ which is the inclusion of the fixed points of $\la$.  By the usual long exact sequence, the kernel and cokernel of this map are supported on the subspace in $\LtT$ given by the union of the Lie algebras of the stabilizers of points in $\Gr_{\tG}\setminus \Gr_{\tGl}$, which are always proper. The stabilizer of any point in $\Gr_{\tG}$ is an intersection of root subspaces, so any such intersection which does not contain $\lambda$ must lie in the vanishing set of a root which is non-zero on $\lambda$. Thus, this support has trivial intersection with $\base{\tT,\lambda}$, and so we have an isomorphism
    \[\tMB(\tG,G,\bfN)_{\base{\tGl,\lambda}} \cong \tMB(\tGl,\Gl,\bfN).\]
    On the other hand, \cite[Rem. 5.14]{BFN} identifies $\tMB(\tGl,\Gl,\Nl)$ with $\tMB(\tGl,\Gl,\bfN)$ after base change to $\base{\tGl,\lambda}$, which gives the claim.
    \end{proof}

Particularly interesting cases of these results occur when:
\begin{enumerate}
	\item No weights or roots vanish on $\lambda$.  We call such $\lambda$ {\bf generic}.
In this case, we get the open inclusion of $(\LtT^{\circ}\times T^{\vee})/W$ into $\tMB(\tG,G,\bfN)$ discussed in \cite[Lem. 5.9]{BFN}.

\item The weights and roots vanishing on $\lambda$ lie in a single line, or equivalently, $\lambda$ does not lie in the intersection of two distinct hyperplanes of the generalized root arrangement.  We call $\lambda$ {\bf subgeneric} in this case (assuming it is not generic).  For subgeneric $\lambda$, if $\Nl$ is non-trivial, the centralizer $\Gl$ acts on $\Nl$ via a rank-$1$ quotient.
\end{enumerate}

We wish to think of the maps $\mapl$ for generic and subgeneric $\lambda$ as an \'etale open cover of $\tMB(\tG,G,\bfN)$.  Indeed, these maps are \'etale because the maps $\base{\tGl,\lambda}\to \base{\tG}$ are \'etale, but they are not jointly surjective.  However, they do cover an open subset $\tMB^{\bullet}(\tG,G,\bfN)$ given by the preimage of the generic and subgeneric points in $\base{\tG}$, and this is enough to control the structure of $\tMB(\tG,G,\bfN)$:
\begin{Lemma}\label{lem:bullet-affinization}
	If $B=\Ga$ or $\Gm$, the variety $\tMB(\tG,G,\bfN)$ is the affinization of the open subset $\tMB^{\bullet}(\tG,G,\bfN)$ over $\base{\tG}$. 
\end{Lemma}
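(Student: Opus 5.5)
To show that $\tMB(\tG,G,\bfN)$ is the affinization of $\tMB^{\bullet}(\tG,G,\bfN)$ over $\base{\tG}$, it suffices to prove that restriction
\[
\K[\tMB(\tG,G,\bfN)]\to \Gamma\bigl(\tMB^{\bullet}(\tG,G,\bfN),\mathcal{O}\bigr)
\]
is an isomorphism. Both sides are sheaves over $\base{\tG}$, and the claim can be checked locally there. The complement of the generic and subgeneric locus in $\base{\tG}$ is the union of pairwise intersections of distinct hyperplanes (or divisors) of the generalized root arrangement. It therefore has codimension at least $2$. Since $\ints$ is flat, its preimage $Z$ in $\tMB(\tG,G,\bfN)$ also has codimension at least $2$. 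Flatness comes from \cite[Prop.~5.20]{BFN} in the rational case, and from the same Cohen--Macaulay argument in the $K$-theoretic case: $\tMB$ is Cohen--Macaulay and $\ints$ is equidimensional over a smooth base, so miracle flatness applies.

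Given this codimension bound, the key input is normality of $\tMB(\tG,G,\bfN)$, which is \cite[Prop.~6.12]{BFN} in the rational case and its $K$-theoretic analogue (Cor.~6.13 there). For a normal (indeed $S_2$) affine variety $X$ and a closed subset $Z$ of codimension at least $2$, Hartogs' lemma gives $\Gamma(X\setminus Z,\mathcal{O})=\Gamma(X,\mathcal{O})$. Injectivity holds because $X$ is integral and $X\setminus Z$ is dense. Surjectivity holds because a regular function on $X\setminus Z$ defines a rational function on $X$ with no poles along any height-one prime, hence lies in $\bigcap_{\mathfrak p} \mathcal{O}_{X,\mathfrak p}=\K[X]$.

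If we want to avoid citing normality, we could argue from \cite[Th.~5.26]{BFN} directly. Its proof shows that $\K[\tMB]$ equals the intersection, inside $\K(\base{\tT})[\Tvee]^W$, of the localizations at generic points of the root and weight hyperplanes; these are exactly the localizations visible on $\tMB^{\bullet}$ by \cref{lem:slice-BFN}. A function regular on $\tMB^{\bullet}$ lies in each of these localizations, so it lies in $\K[\tMB]$.

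The main obstacle is the codimension estimate for $Z$. Codimension $2$ in the base must transfer to codimension $2$ in the total space, which requires controlling the fibre dimensions of $\ints$ over the non-subgeneric locus. With flatness, every fibre has dimension $\operatorname{rk} G$, which settles it. Otherwise one uses the (weaker) equidimensionality of the integrable system from \cite[Prop.~5.20]{BFN}, which I would cite first; in the $K$-theoretic case I would rederive it from the same filtration argument.
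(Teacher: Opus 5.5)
Your chain of deductions is sound on its own terms. Flatness of $\ints$ gives $\operatorname{codim} Z\geq 2$, so the ``main obstacle'' you flag is not really one. Hartogs on a normal affine variety then gives $\Gamma(\tMB^{\bullet}(\tG,G,\bfN),\mathcal{O})=\K[\tMB(\tG,G,\bfN)]$.

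The problem is the input you lean on: normality of $\tMB(\tG,G,\bfN)$ is not available independently of the statement you are proving.
\begin{itemize}
\item In \cite{BFN}, normality is proved by reducing to the generic and subgeneric loci and then checking the rank-one models. That reduction is \cite[Th.~5.26]{BFN}, which is the rational case of this very lemma.
\item For $B=\Gm$ it is worse. In this paper, normality of the $K$-theoretic $\tMB$ is only obtained afterwards, from \cref{cor:normality-ours} via \cref{thm:main-comparison}, and that proof uses this lemma.
\end{itemize}
So your main argument is circular. Your fallback just cites the rational case of the lemma and says nothing about $K$-theory.

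Your $K$-theoretic flatness argument has the same defect. Cohen--Macaulayness of Coulomb branches comes from their having symplectic singularities \cite{bellamyCoulombBranches2023}, which presupposes normality. Flatness should instead come from freeness of $\K[\tMB]$ over $\K[\base{\tG}]$. In either theory this follows from the stratification of $\mathcal{R}_{G,\bfN}$ over the $G\llbracket t\rrbracket$-orbits in $\Gr_G$.

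The missing idea is that flatness alone suffices if you apply Hartogs on the base rather than on the total space. Let $M$ be a flat module over a normal Noetherian domain $A$, and let $U\subset\Spec A$ be open with complement of codimension $\geq 2$. Compute $\Gamma(U,\tilde M)$ with a finite \v{C}ech cover and tensor the \v{C}ech sequence for $\mathcal{O}$ with $M$; flatness preserves the kernel. This gives $\Gamma(U,\tilde M)=\Gamma(U,\mathcal{O})\otimes_A M=M$.

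Now take $M=\K[\tMB(\tG,G,\bfN)]$, $A=\K[\base{\tG}]$, and $U$ the (sub)generic locus. The statement above says that $i_*i^*\ints_*\mathcal{O}\to\ints_*\mathcal{O}$ is an isomorphism, which is exactly the lemma. This is the paper's proof. It is also what allows normality of $\tMB$ to be derived afterwards rather than assumed.
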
 
\begin{proof}
    This is proven in the case $B=\Ga$ in \cite[Th. 5.26]{BFN}, and the $B=\Gm$ case is effectively the same.
    Let $j\colon \tMB^{\bullet}(\tG,G,\bfN)\to \tMB(\tG,G,\bfN)$ be the inclusion.  We need to show that the natural map $j_*j^*\mathcal{O}_{\tMB^{\bullet}(\tG,G,\bfN)}\to \mathcal{O}_{\tMB(\tG,G,\bfN)}$ is an isomorphism.  It is easier to see this after pushing forward to $\base{\tG}$. 
    Since the map $\tMB(\tG,G,\bfN)\to \base{\tG}$ is flat, the pushforward of the structure sheaf is a vector bundle, and on a smooth variety, it satisfies the condition that $\ints_*j_*j^*  \mathcal{O}_{\tMB(\tG,G,\bfN)}= i_* i^* \ints_* \mathcal{O}_{\tMB(\tG,G,\bfN)}\to \ints_* \mathcal{O}_{\tMB(\tG,G,\bfN)}$ is an isomorphism, for $i$ the inclusion of the (sub)generic locus in $\base{\tG}$.  The relative affinization of $\tMB^{\bullet}(\tG,G,\bfN)$ is the relative spectrum of $\ints_*j_*j^*  \mathcal{O}_{\tMB(\tG,G,\bfN)}$, so the natural map from this affinization to $\tMB(\tG,G,\bfN)$ is an isomorphism.
\end{proof}

Since it will be important, we note that the same is true for $\tM(\tG,G,\bfN)$ as well: 
\begin{Lemma}\label{lem:bullet-affinization-ours}
    The variety $\tM(\tG,G,\bfN)$ is the relative affinization of the preimage $\tM^{\bullet}(\tG,G,\bfN)$ of the generic or subgeneric points in $\base{\tG}$.  
\end{Lemma}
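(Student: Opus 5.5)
The plan is to work directly from \cref{def:Coulomb-branch-nonabelian}: $\tM(\tG,G,\bfN)$ is by definition the relative spectrum of $\soa_{G,\bfN}$, and every regularity condition cutting $\soa_{G,\bfN}$ out of $(\K(\base{\tT})[\Tvee])^W$ is a codimension-one condition. It therefore suffices to show that restriction of sections identifies $\soa_{G,\bfN}(U)$ with the ring of functions on the preimage of $U\cap \base{\tG}^{\bullet}$ in $\tM^{\bullet}(\tG,G,\bfN)$, for every affine open $U\subset\base{\tG}$. Here $\base{\tG}^{\bullet}$ denotes the (sub)generic locus. Its complement is contained in the union of pairwise intersections of distinct divisors from distinct lines of the generalized root arrangement, so it has codimension at least $2$ in $\base{\tG}$.

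First I check that the map $\ints$ has the structure needed for a Hartogs-type argument. By \cref{lem:gluing} and the weight decomposition following \cref{lem:regularity-coefficients}, $\soa_{T,\bfN}$ is a direct sum of line bundles on $\base{\tT}$, and in particular it is reflexive. The sheaf $\soa_{G,\bfN}$ is a subsheaf of the $W$-invariants of the function field, and it is defined by conditions indexed by divisors: the generic points of $\tM(\tT,T,\bfN)$ away from $\cup D_\alpha$, and the generic points of the $E_i$. I will show that $\soa_{G,\bfN}(U)=\soa_{G,\bfN}(U\cap \base{\tG}^{\bullet})$.

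The inclusion $\subseteq$ is clear. For the reverse, take a section $f$ defined over $U\cap\base{\tG}^{\bullet}$, viewed as an element of $(\K(\base{\tT})[\Tvee])^W$. Every divisor $E_i$, every exceptional component of $\toricbl{\tT,\bfN}$ over a weight divisor, and every component of the preimage of a divisor of $\base{\tT}$ maps dominantly onto a divisor of $\base{\tG}$. The generic point of that divisor is (sub)generic, so it lies in $U\cap\base{\tG}^{\bullet}$ whenever the divisor meets $U$. Hence all the valuation conditions in \cref{def:Coulomb-branch-nonabelian} that involve the $E_i$ already hold for $f$.

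The remaining condition is regularity on the preimage of $U\setminus\cup_\alpha D_\alpha$ in $\tM(\tT,T,\bfN)$. There I reduce, via \cref{lem:regularity-coefficients}, to the statement that each $\Tvee$-weight component $f_\gamma e^\gamma$ has coefficient $f_\gamma$ in $\mathcal O(-E(\gamma))$. Decomposing into weight components is harmless: $W$-invariance is not needed for this step, since the abelian conditions are $\Tvee$-stable. A section of a line bundle on the smooth variety $\base{\tT}$ that is defined outside codimension $2$ extends, so $f_\gamma$ is regular. One subtlety is that $f$ is a finite sum of weight components, so only finitely many $\gamma$ occur and nothing is lost.

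Finally, taking relative spectra identifies $\tM(\tG,G,\bfN)$ with the relative affinization of $\tM^{\bullet}(\tG,G,\bfN)$, since the latter is by definition $\Spec$ of $\ints_*\mathcal O_{\tM^{\bullet}}$.

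The main obstacle is verifying that the pieces of the definition at the $E_i$ only see codimension-one points of $\base{\tG}$. This means showing that each $E_i$ dominates a divisor of $\base{\tG}$ rather than a smaller subvariety, which holds by the definition of the $E_i$ as components dominating components of $D_\alpha$. It also requires that no extra conditions arise at codimension-two points where several $D_\alpha$ or weight divisors meet. Since the definition only imposes conditions at generic points of divisors, I expect this to be immediate, with care needed only to confirm that the locus removed has codimension at least $2$ in $\base{\tG}$.
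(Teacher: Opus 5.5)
Your proposal is correct and follows essentially the same route as the paper: both arguments observe that every condition in \cref{def:Coulomb-branch-nonabelian} is imposed at codimension-one points lying over generic or subgeneric points of $\base{\tG}$. Hence restricting to $\tM^{\bullet}(\tG,G,\bfN)$ loses nothing. Your explicit Hartogs step for the abelian regularity condition is a useful addition. It uses the decomposition of $\soa_{T,\bfN}$ into the line bundles $\mathcal{O}(-E(\gamma))$ to handle the weight divisors and the codimension-two points. The paper's brief proof glosses over this when it says that condition lies over generic points only.
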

\begin{proof}
    By definition, $\soa_{G,\bfN}$ is the intersection of the $W$-invariants of the structure sheaf on $\tM^{\circ}(\tT,T,\bfN)$, the preimage of the generic points, with functions regular at a finite number of generic points of divisors $E_i$; we ignore any divisors that don't map dominantly to one of these components.  The former condition lies over generic points of $\base{\tG}$, and the latter over subgeneric points, since the $E_i$ are precisely the divisors lying over components of $D_{\alpha}$.  Both conditions are unchanged by restricting to $\tM^{\bullet}(\tG,G,\bfN)$.   
\end{proof}

\subsubsection{Finite covers}
\label{subsubsec:finite-covers}

It is useful to note how the spaces $\tM(\tG,G,\bfN)$ and $\tMB(\tG,G,\bfN)$ change when we replace $\tG$ and $G$ by covers.  
Let $\tG'$ be a finite cover of $\tG$ with kernel $\tK$, and let $G'$ be a finite cover of $G$ with kernel $K$. 
Note that the map $\base{\tT'}\to \base{\tT}$ is a cover with Galois group 
\[\tK_B\cong\begin{cases} 1 & \text{if } B=\Ga \\ \tK & \text{if } B=\Gm \\ \tK^2 & \text{if } B=\elli \end{cases}. \] 
The map $\Tvee\to (T')^{\vee}$ is a cover with Galois group $K^{\vee}$, the Pontryagin dual of $K$.  
It is manifest from the definition that:
\begin{Lemma}\label{lem:cover-ours}
    We have an isomorphism \[\tM(\tG',G,\bfN)\cong \tM(\tG,G,\bfN)\times_{\base{\tG}} \base{\tG'}.\]  This induces a free $\tK_B$ action on the variety $\tM(\tG',G,\bfN)$ such that \[\tM(\tG',G,\bfN)/\tK_B\cong \tM(\tG,G,\bfN).\] 
    
    Furthermore, the variety $\tM(\tG,G,\bfN)$ carries an action of $K^{\vee}$ such that \[\tM(\tG,G,\bfN)/K^{\vee}\cong \tM(\tG,G',\bfN).\]
\end{Lemma}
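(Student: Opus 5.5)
The plan is to treat the two assertions separately, since they concern different halves of the construction. The flavor cover only changes the base $\base{\tT}$. The gauge cover only changes the lattice $X_*(T)$ indexing the $\Tvee$-weight spaces.

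\emph{Flavor covers.} Replacing $\tG$ by $\tG'$ leaves $T$, $\Tvee$, $W$ and the roots of $G$ unchanged, and replaces $\base{\tT}$ by $\base{\tT'}$. The map $\pi\colon\base{\tT'}\to\base{\tT}$ is an étale Galois cover with group $\tK_B$, and it is $W$-equivariant.
\begin{enumerate}
\item Every weight $\varphi$ of $\bfN$ pulls back to a weight of $\tT'$, and $\pi^*\Eu(\varphi)=\Eu(\pi^*\varphi)$. Since $\pi$ is étale, each $D_\varphi$ pulls back to a reduced divisor with the same multiplicities.
\item The primitive generator of a line may change: $\eta_i$ may become $\eta_i/d$ in the finer character lattice of $\tT'$. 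Then each $m_{ij}$ becomes $dm_{ij}$, and $k^\pm_{i,D}$ is multiplied by $d$. The products $\langle\gamma,\eta_i\rangle k^\pm_{i,D}$ in \cref{lem:regularity-coefficients} are therefore unchanged, so the abelian regularity conditions pull back exactly.
\item The root divisors $D_\alpha$, the divisors $D_{\alpha,1}$ and the numbers $\epsilon_\alpha$ depend only on $T$, so the root blowups of \cref{def:Coulomb-branch-nonabelian} are compatible with étale base change.
\end{enumerate}
Regularity at generic points of divisors is étale local, and taking $W$-invariants commutes with flat base change. Together these give $\soa_{G,\bfN}'\cong\pi^*\soa_{G,\bfN}$, which is the first isomorphism. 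The free $\tK_B$-action is the deck action on the second factor of the fiber product. Its quotient is $\tM(\tG,G,\bfN)$ because $\base{\tG'}/\tK_B=\base{\tG}$ and the cover is étale; concretely, the $\tK_B$-invariants of $\pi_*\pi^*\soa$ are $\soa$.

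\emph{Gauge covers.} Now $X_*(T')\subset X_*(T)$ with quotient dual to $K$. The group $K^\vee$ acts on $\K(\base{\tT})[\Tvee]$ by grading the $\Tvee$-weights $\gamma$ by their class in $X_*(T)/X_*(T')$. Its invariants are exactly the weight spaces with $\gamma\in X_*(T')$, that is, $\K(\base{\tT})[(T')^\vee]$. This action commutes with $W$ and preserves every valuation $\nu_{D,\pm}$, because these are monomial in $e^\gamma$ by \cref{lem:regularity-coefficients}. Hence $K^\vee$ acts on $\soa_{G,\bfN}$.

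It then remains to show $\soa_{G,\bfN}^{K^\vee}=\soa_{G',\bfN}$ inside $\K(\base{\tT})[(T')^\vee]^W$. The abelian conditions agree, since the weights of $\bfN$ and the base are the same for $T$ and $T'$. By \cref{lem:bullet-affinization-ours}, both sheaves are determined over generic and subgeneric points. At a subgeneric point \cref{lem:slice-ours} reduces us to a Levi $\Gl$ with semisimple rank $\leq1$, so the comparison becomes the rank-one computation.

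\emph{Main obstacle.} The hardest step is the root condition, because $\epsilon_\alpha$ can change between $T$ and $T'$: a larger character lattice may make $\alpha/2$ a character. The ideal $J_\alpha$ therefore differs, and one must check that imposing the $T$-condition and then restricting to $X_*(T')$-weights gives the $T'$-condition. I would first split off the central torus, so that the rank-one Levi essentially becomes $SL(2)\to PGL(2)$ times a torus, with $K^\vee=\Z/2$ acting by $z\mapsto -z$. Then I would compare the explicit generators in \cref{ex:SL2-running,ex:PGL2-running} directly, weight space by weight space, working in the local ring at $\tau=0$. For instance, with $m=0$, the $SL(2)$ invariants are generated by $\frac{z-2+z^{-1}}{\tau^2}$, $\frac{z-z^{-1}}{\tau}$ and $\tau^2$. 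Taking even powers of $z$ (the coordinate on $\Tvee$ for $SL(2)$) should recover the $PGL(2)$ generators $z^2+z^{-2}$ and $\frac{z^2-z^{-2}}{\tau}$ after renaming, and similarly for $m>0$. I expect this case check, and keeping track of the sign $\pm1$ points in $D_{\alpha,1}$, to be the delicate part.
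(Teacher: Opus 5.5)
Your gauge-cover half is sound in outline. Translation by $K^\vee$ preserves $D_{\alpha,1}$ because $\alpha^\vee\in X_*(T')$, invariants commute with extension from the bullet locus, and the rank-one check does work. For example, take $\tG=SL(2)$ and weights $\pm k\alpha$ with $M=\sum k\geq 1$. The $K^\vee$-invariants of the $PGL(2)$ model $\K[\tau,\tau^{M-1}z^{\pm1}]$ are $\K[\tau,\tau^{2M-2}Z^{\pm1}]$ with $Z=z^2$, which is the $SL(2)$ model with $m=2M$ and $\epsilon_\alpha=2$.

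The genuine gap is in the flavor half. You correctly use that $\base{\tT'}\to\base{\tT}$ is an \'etale $\tK_B$-torsor. You then assert that $\base{\tG'}\to\base{\tG}$ is \'etale, and that ``$W$-invariants commute with flat base change'' gives $\soa'_{G,\bfN}\cong\soa_{G,\bfN}\otimes_{\mathcal{O}_{\base{\tG}}}\mathcal{O}_{\base{\tG'}}$. But flat base change of invariants computes the $W$-invariants of the $T$-level sheaf over $\base{\tT}\times_{\base{\tG}}\base{\tG'}$, not over $\base{\tT'}$. These differ wherever $kt=wt$ for some $1\neq k\in\tK_B$ and $w\in W$. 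At such points the natural map $\base{\tT'}\to\base{\tT}\times_{\base{\tG}}\base{\tG'}$ sends $t$ and $kt$ to the same point, and $\base{\tG'}\to\base{\tG}$ is branched at $[t]$.

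This is a failure of the statement itself, so no local patch to your argument will fix it. Take $B=\Gm$, $\bfN=0$, $G=\tG=PGL(2)$ and $\tG'=SL(2)$, with coordinates $u=e^{\alpha/2}$ on $\base{\tT'}$, $v=u^2$ on $\base{\tT}$, and $z=e^{\alpha^\vee/2}$ on $\Tvee$.
The map $y=u+u^{-1}\mapsto y^2-2=v+v^{-1}$ is branched at $y=0$, i.e.\ at $u=\pm i$, which lies off $D_\alpha=\{u^2=1\}$.
Since $s$ swaps $u=\pm i$, $\tM(\tG',G,0)$ is smooth near $y=0$ with reduced fiber $\Gm$.
But $v=-1$ is $W$-fixed, so $\tM(\tG,G,0)$ has $A_1$ points at $(v,z)=(-1,\pm1)$ and a non-reduced fiber over $v+v^{-1}=-2$, and the fiber product inherits this fiber.
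Concretely, the invariant $(u-u^{-1})(z-z^{-1})\in\soa'$ lies in $\soa\oplus y\,\soa$ only if it equals $(u+u^{-1})F$ with $F=\frac{v-1}{v+1}(z-z^{-1})$, which has a pole along $v=-1$.
Moreover, $-1\in\tK_B$ fixes the point $[(i,1)]$ of $\tM(\tG',G,0)$, because $s\cdot(-i,1)=(i,1)$, so the action is not free.

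The paper's ``manifest from the definition'' passes over exactly this point. Your descent argument does prove the following, if you apply it to the $W$-equivariant sheaf on $\base{\tT}$ before taking invariants and then take $W\times\tK_B$-invariants:
\begin{itemize}
\item $\tM(\tG',G,\bfN)/\tK_B\cong\tM(\tG,G,\bfN)$, which is the only part used in \cref{thm:main-comparison};
\item freeness and the fiber-product description over the open locus where $\tK_B$ acts freely on $\base{\tG'}$. This locus is all of $\base{\tG'}$ when $B=\Ga$, and also, for example, when $B=\Gm$ and $\tK\cap[\tG',\tG']=1$.
\end{itemize}
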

The same statement holds for $\tMB$ as well. 
\begin{Lemma}\label{lem:cover-BFN}
     We have isomorphisms \begin{align*}
        \tMB(\tG',G,\bfN)/\tK_B&\cong \tMB(\tG,G,\bfN)\\
         \tMB(\tG,G,\bfN)/K^{\vee}&\cong \tMB(\tG,G',\bfN).
     \end{align*}
\end{Lemma}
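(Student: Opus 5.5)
We prove both isomorphisms by going back to the BFN definition $\tMB(\tG,G,\bfN)=\Spec H^{\tG}_*(\mathcal{R}_{G,\bfN})$, and its $K$-theoretic and elliptic analogues. Each isomorphism then becomes a standard statement in equivariant cohomology.

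\emph{The flavor cover.} The group $\tG'$ acts on $\mathcal{R}_{G,\bfN}$ through $\tG$, since the action factors through $\tG'\to\tG\to GL(\bfN)\times G_{\operatorname{ad}}$. The finite kernel $\tK$ therefore acts trivially on $\mathcal{R}_{G,\bfN}$. Change of groups for a central extension gives
\[H^{\tG'}_*(\mathcal{R}_{G,\bfN})\cong H^{\tG}_*(\mathcal{R}_{G,\bfN})\otimes_{H^*_{\tG}(\mathrm{pt})}H^*_{\tG'}(\mathrm{pt}).\]
This holds because the equivariant theory of a space with trivial $\tK$-action is base-changed along $\base{\tG'}\to\base{\tG}$. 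In the rational case the map is an isomorphism since $|\tK|$ is invertible in $\K$. In the $K$-theoretic and elliptic cases it is the Galois cover with group $\tK_B$.

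This identification holds for the equivariant Borel--Moore homology, $K$-homology or elliptic homology of each finite-dimensional approximation of $\mathcal{R}$. It follows from the fact that these homologies are free, or sheaf-theoretically flat, over the base; BFN establish this by the stratification of $\Gr_G$ into $G\llbracket t\rrbracket$-orbits, each an affine bundle over a partial flag variety. It is compatible with convolution, since convolution is defined by $\tG'$-equivariant pullbacks and pushforwards that factor through $\tG$. Hence $\tMB(\tG',G,\bfN)\cong\tMB(\tG,G,\bfN)\times_{\base{\tG}}\base{\tG'}$. The deck group $\tK_B$ of $\base{\tG'}\to\base{\tG}$ acts freely on the second factor, and the quotient is the first factor because the base change is along a Galois cover.

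\emph{The gauge cover.} We use the decomposition of $\Gr_G$ into connected components, indexed by $\pi_1(G)$, and compare it with $\Gr_{G'}\to\Gr_G$. The map $\Gr_{G'}\to \Gr_G$ is the inclusion of the components indexed by the image of $\pi_1(G')\subset\pi_1(G)$, whose cokernel is $\pi_0(K)$ up to duality. More precisely, $\pi_1(G)/\pi_1(G')\cong K$, so $\pi_1(G)$-gradings modulo $\pi_1(G')$ carry an action of the Pontryagin dual $K^\vee$.

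The plan is as follows.
\begin{enumerate}
\item Show that $\mathcal{R}_{G',\bfN}$ is the union of components of $\mathcal{R}_{G,\bfN}$ lying over $\Gr_{G'}\subset\Gr_G$. We work over $\K$ with $|W|$ invertible; if needed we also assume $|K|$ is invertible, which is automatic in the relevant cases.
\item Observe that the convolution algebra is $\pi_1(G)$-graded, hence graded by the quotient $K$. Let $K^\vee$ act by multiplying the degree-$\kappa$ part by the character value at $\kappa$.
\item Conclude that the invariants are exactly the degree-zero part, namely $H^{\tG}_*(\mathcal{R}_{G',\bfN})$, using that $\mathcal{R}_{G',\bfN}$ is a union of components and that the grading is multiplicative. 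Taking $\Spec$ gives $\tMB(\tG,G,\bfN)/K^\vee\cong\tMB(\tG,G',\bfN)$.
\end{enumerate}
On the birational chart $\base{\tT}\times\Tvee$ this $K^\vee$-action is translation by $K^\vee\subset\Tvee$, matching \cref{lem:cover-ours}.

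\emph{Main obstacle.} The only delicate step is the first isomorphism outside the rational case. There one must check that change of groups along $\tG'\to\tG$ commutes with the (ind-)limit defining $H^{\tG}_*(\mathcal{R})$ and with convolution. In the elliptic case the issue is that elliptic cohomology is a sheaf rather than a ring. We handle it by working locally on $\base{\tG}$ and using flatness of each filtered piece, which reduces everything to the corresponding statement for $BG$ itself.
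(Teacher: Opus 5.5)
Your second isomorphism is the paper's argument. The convolution algebra is graded by $\pi_1(G)=\pi_0(\Gr_G)$, hence by $K\cong\pi_1(G)/\pi_1(G')$, and the $K^{\vee}$-invariants are the degree-zero part. That part is the contribution of the components over $\Gr_{G'}$, i.e.\ $H^{\tG}_*(\mathcal{R}_{G',\bfN})$, as in \cite[Prop. 3.18]{BFN}.

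\textbf{The first isomorphism has a genuine gap.} You follow the paper's one-line ``base change'' route, but the precise statement you use fails in equivariant $K$-theory. That statement is $H^{\tG'}_*(\mathcal{R})\cong H^{\tG}_*(\mathcal{R})\otimes_{H^*_{\tG}(\mathrm{pt})}H^*_{\tG'}(\mathrm{pt})$, hence $\tMB(\tG',G,\bfN)\cong\tMB(\tG,G,\bfN)\times_{\base{\tG}}\base{\tG'}$ with a free $\tK_B$-action. It breaks as soon as $\tG$ has semisimple elements with disconnected centralizer. Freeness of the homology over the base does not rescue it.

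Take $\tG'=SL(2)\to\tG=PGL(2)$, and write $z=e^{\alpha/2}$, $u=z^2$, $y=z+z^{-1}$. Already for $X=PGL(2)/T$, with $T$ the maximal torus, the comparison map
\[
K^{PGL(2)}(X)\otimes_{R(PGL(2))}R(SL(2))=\K[u^{\pm 1}]\otimes_{\K[u+u^{-1}]}\K[y]\longrightarrow K^{SL(2)}(X)=\K[z^{\pm 1}]
\]
sends the basis $1,u$ to $1,\ z^2=yz-1$. So it has determinant $y$ and misses $z$.

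The same happens for $\mathcal{R}_{PGL(2),\mathbf{0}}=\Gr_{PGL(2)}$ over $e^{\alpha}=-1$. There, localization identifies the completion of $K^{SL(2)}(\Gr_{PGL(2)})$ with $A=\K[[y]][w^{\pm1}]$, $w=e^{\alpha^{\vee}/2}$, because the Weyl group acts freely near $z=\pm i$. Meanwhile $K^{PGL(2)}(\Gr_{PGL(2)})$ completes to the invariants $A^{\sigma}$ of the involution $\sigma(y)=-y$, $\sigma(w)=w^{-1}$. Then $A^{\sigma}\otimes_{\K[[y^2]]}\K[[y]]=A^{\sigma}\oplus yA^{\sigma}$ misses $w-w^{-1}$. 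Freeness also fails: $\tK_B$ acts on $\base{SL(2)}$ by $y\mapsto -y$, and on the Coulomb branch by $\sigma$, which has fixed points over $y=0$.

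\textbf{The repair.} The lemma only asserts the quotient, and that follows from the same grading idea you use for the second isomorphism, with no base change.
\begin{enumerate}
\item Since $\tK$ is central in $\tG'$ and acts trivially on $\mathcal{R}_{G,\bfN}$, every $\tG'$-equivariant sheaf splits by central character. This gives $K^{\tG'}(\mathcal{R}_{G,\bfN})=\bigoplus_{\chi\in\tK^{\vee}}K^{\tG'}(\mathcal{R}_{G,\bfN})_{\chi}$, compatibly with convolution since characters add.
\item The deck action of $\tK_B=\tK$ (translation on $\base{\tT'}=\tT'$) acts on the $\chi$-summand through $\chi$.
\item The invariant summand $\chi=1$ is exactly $K^{\tG}(\mathcal{R}_{G,\bfN})$, which gives $\tMB(\tG',G,\bfN)/\tK_B\cong\tMB(\tG,G,\bfN)$.
\end{enumerate}
In the rational case $\tK_B$ is trivial, and $H^*_{\tG'}=H^*_{\tG}$ since $|\tK|$ is invertible.

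Your tensor-product formula does hold when, for instance, $\pi_1(\tG)$ is torsion-free, via the comparison with the maximal torus. In general you should replace your base-change paragraph by the argument above and drop the freeness claim.
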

\begin{proof}
    The first isomorphism is just base change in equivariant cohomology.  The second is the fact that the action of $K^{\vee}$ on $\K[\tMB(\tG,G,\bfN)]$ is induced by the grading of this ring by $K\cong \pi_1(G)/\pi_1(G')$, so the $K^{\vee}$-invariants are precisely the degree 0 part of this grading, which is $\K[\tMB(\tG,G',\bfN)]$.  This is effectively the same argument as \cite[Prop. 3.18]{BFN}, but applied to a finite group rather than a torus.
\end{proof}

\begin{Example}\label{ex:SL2-PGL2-cover}
    One example that will be relevant is when $\tG=SL(2)$ and $G=PGL(2)$, and $\bfN$ is an arbitrary representation of $G$.  The map $\tT\to T$ is the double cover of $\Gm$ by itself. We have a canonical map $\base{\tT}\to \base{T}$, which is always a cover.  The variety $\tM(\tG,G,\bfN)$ is the base change of $\tM(G,G,\bfN)$ along this map, so we have the following cases:
    \begin{enumerate}
        \item If $B=\Ga$, then $\base{\tT}=\base{T}$, so $\tM(\tG,G,\bfN)\cong \tM(G,G,\bfN)$.
        \item If $B=\Gm$, then $\base{\tT}\to \base{T}$ is a double cover, so $\tM(\tG,G,\bfN)$ is a double cover of $\tM(G,G,\bfN)$, with singular fibers over $\pm 1$ in $\base{\tT}$ (whereas $\tM(G,G,\bfN)$ only has singular fiber over $1$).
        \item If $B=\elli$, then $\base{\tT}\to \base{T}$ is the squaring map, so $\tM(\tG,G,\bfN)$ is a 4-fold cover of $\tM(G,G,\bfN)$, with singular fibers over the four $2$-torsion points in $\base{\tT}$ (whereas $\tM(G,G,\bfN)$ only has singular fiber over the identity).
    \end{enumerate}
\end{Example}

\subsubsection{Semi-simple rank $1$}
\label{subsubsec:semi-simple-rank-1}

A particularly important special case arises when we consider $\Gl$ and $\Nl$ for $\lambda$ subgeneric.  In this case, the centralizers $\tGl$ and $\Gl$ act on $\Nl$ via a rank-$1$ quotient.  Since we have already analyzed the abelian case, we assume that $G$ is non-abelian, so the semi-simple rank of all these groups is $1$, and the action of $\tGl$ and $\Gl$ on $\Nl$ factors through $H\cong PGL(2)$ or $SL(2)$, which acts faithfully.  The kernels of the maps from these groups to $H$ are tori $C,\tilde{C}$.  Note that the maps $C\to C'=\Gl/[\Gl,\Gl]$ and $\tilde{C}\to \tilde{C}'=\tGl/[\tGl,\tGl]$ are isogenies.  Thus, the map $\tGl\to H\times \tilde{C}'$ is a finite cover.  Similarly, there is a cover $H'$ of $H$ such that $H'\times \tilde{C}\to \tGl$ is a finite cover, so we can apply the results above to show that $\tM(\tG,G,\bfN)$ is a finite quotient of the variety
\[\tM(H'\times \tilde{C},H\times C',\Nl)=\tM(H',H,\Nl)\times \tM(\tilde{C},C',0)\]
using the product decomposition of \cite[\S 3(vii)(a)]{BFN}. 
The variety $\tM(\tilde{C},C',0)$ is just $\base{\tilde{C}} \times (C')^{\vee}$, and so is smooth.  The variety $\tM(H',H,\Nl)$ is normal by \cref{lem:rank-1-normal}. Since a quotient of a normal variety by a finite group is again normal, we find that:
\begin{Corollary}
    The variety $\tM(\tGl,\Gl,\Nl)$ is normal for $\lambda$ subgeneric. 
\end{Corollary}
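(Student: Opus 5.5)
The plan follows the reduction sketched just before the statement: reduce to a product of a rank-$1$ nonabelian piece and an abelian piece, then use that normality passes to finite quotients.

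First, the degenerate case. If $\Nl=0$, or $\Gl$ acts on $\Nl$ through a torus, the construction reduces to the abelian one together with at most one root. For $\Nl=0$ the model is $\base{\tT}\times\Tvee$ with a single root blowup, which is a universal-centralizer type space. I would handle this directly by the same product decomposition with $\Nl$ replaced by $0$, and invoke \cref{lem:rank-1-normal} for the $H'$ factor.

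In the main case $\Gl$ has semisimple rank $1$. I fix the faithful rank-$1$ quotient $H\in\{PGL(2),SL(2)\}$, the central tori $C\subset\Gl$ and $\tilde C\subset\tGl$, and a cover $H'\to H$ making $H'\times\tilde C\to\tGl$ a finite cover. Applying \cref{lem:cover-ours} twice gives two descriptions:
\begin{itemize}
\item replacing the flavor group by the cover $H'\times\tilde C$ presents $\tM(\tGl,\Gl,\Nl)$ as the quotient of the base change by the free $\tK_B$-action;
\item replacing the gauge group $\Gl$ by its isogenous quotient $H\times C'$ presents it as a $K^\vee$-quotient.
\end{itemize}
Together these express $\tM(\tGl,\Gl,\Nl)$ as a quotient of $\tM(H'\times\tilde C,H\times C',\Nl)$ by a finite group.

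I would then check that the definition of $\soa$ is compatible with products. Because $\tilde C$ acts trivially on $\Nl$ and contributes no roots, the toric compactification, the blowups along the $X_i$, and the root blowup at $J_\alpha$ all split as products. Hence
\[
\tM(H'\times\tilde C,H\times C',\Nl)\cong\tM(H',H,\Nl)\times\base{\tilde C}\times (C')^\vee .
\]

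The first factor is normal by \cref{lem:rank-1-normal}, and the second is smooth. A product of a normal variety with a smooth one over $\K$ is normal. Finally, the invariant ring of a normal domain under a finite group is integrally closed, so the quotient is normal.

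The main obstacle is the bookkeeping in the quotient step. One must confirm that \cref{lem:cover-ours} really realizes $\tM(\tGl,\Gl,\Nl)$ as an honest invariant ring. This is harmless for the free $\tK_B$-quotient by étale descent. For $K^\vee$ it holds because the $K^\vee$-invariants are a graded piece of the $\Tvee$-weight decomposition, and that piece is compatible with the regularity conditions defining $\soa$.
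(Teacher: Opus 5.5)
Your proposal is correct and follows essentially the same route as the paper. You use \cref{lem:cover-ours} to exhibit $\tM(\tGl,\Gl,\Nl)$ as a finite quotient of $\tM(H',H,\Nl)\times\base{\tilde C}\times (C')^{\vee}$. The first factor is normal by \cref{lem:rank-1-normal}, the second is smooth, and finite quotients of normal varieties are normal. Your direct check that the construction splits as a product, and your separate treatment of the abelian and $\Nl=0$ cases, only make explicit what the paper leaves implicit or cites from BFN.

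One minor caveat concerns your last paragraph. The $\tK_B$-action need not actually be free, because $\base{\tG'}\to\base{\tG}$ can be branched: for $SL(2)\to PGL(2)$ with $B=\Gm$ it is $y\mapsto y^2-2$. So \'etale descent is not the right justification there. Nothing in the argument depends on freeness, however, since finite-group invariants of a normal domain are normal in any case.
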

By \cref{lem:bullet-affinization-ours}, we have that $\tM(\tG,G,\bfN)$ is the affinization of the open subset \'etale covered by open subsets of $\tM(\tGl,\Gl,\Nl)$ for $\lambda$ subgeneric, so this implies that:
\begin{Corollary}\label{cor:normality-ours}
    The variety $\tM(\tG,G,\bfN)$ is normal.
\end{Corollary}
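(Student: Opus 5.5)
The plan is to deduce normality of $\tM(\tG,G,\bfN)$ from normality of the subgeneric local models, via \cref{lem:bullet-affinization-ours}. That lemma says $\tM(\tG,G,\bfN)$ is the relative affinization over $\base{\tG}$ of the open subset $\tM^{\bullet}(\tG,G,\bfN)$. This open subset lies over the generic and subgeneric points of $\base{\tG}$. Concretely, $\soa_{G,\bfN}=\ints_*j_*\mathcal{O}_{\tM^{\bullet}}$, where $j\colon\tM^{\bullet}(\tG,G,\bfN)\to \tM(\tG,G,\bfN)$ is the inclusion. So it suffices to prove two things: first, that $\tM^{\bullet}(\tG,G,\bfN)$ is normal; second, that the affinization of a normal integral scheme is normal.

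For the first point, normality is local in the étale topology, so I will cover $\tM^{\bullet}$ by the étale maps coming from \cref{lem:slice-ours}. At a generic point $\lambda$, no roots or weights vanish, so the local model is $(\base{\tT,\lambda}\times\Tvee)/W_\lambda$ with $W_\lambda$ trivial, which is smooth. At a subgeneric point, the local model is an open subset of $\tM(\tGl,\Gl,\Nl)$, which is normal by the preceding corollary. There are two cases here:
\begin{itemize}
\item If $\Gl$ is a torus, normality comes from the explicit $A_{k-1}$ description in \cref{ex:generic-fiber}, i.e.\ the hypersurface $uv=\tau^{k}$ times a torus, which satisfies $S_2$ and $R_1$ as in \cref{lem:rank-1-normal}.
\item If $\Gl$ has semisimple rank $1$, it is exactly that corollary.
\end{itemize}
The one point to check is that the maps $\tM(\tGl,\Gl,\Nl)_{\base{\tGl,\lambda}}\to\tM(\tG,G,\bfN)$ are étale. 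They are base changes of the étale maps $\base{\tGl,\lambda}\to\base{\tG}$ by \cref{lem:slice-ours}, and they jointly cover $\tM^{\bullet}$ by definition.

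For the second point, $\tM(\tG,G,\bfN)$ is integral, since $\soa_{G,\bfN}$ is a subring of the domain $\K(\base{\tT})[\Tvee]$. Let $f$ be a rational function on $\tM$ that is integral over a stalk (or affine piece) of $\soa_{G,\bfN}$. Then its restriction to $\tM^{\bullet}$ is integral over a normal ring, hence regular on $\tM^{\bullet}$. Therefore it lies in $j_*\mathcal{O}_{\tM^{\bullet}}$, which equals $\soa_{G,\bfN}$ by \cref{lem:bullet-affinization-ours}. Equivalently, a ring of global sections of the structure sheaf over a normal integral scheme is an intersection of normal local rings inside the function field, and is therefore integrally closed.

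The only real subtlety is matching function fields. One must check that the fraction field of $\soa_{G,\bfN}$ agrees with the function field of $\tM^{\bullet}$, namely $\K(\base{\tT}\times\Tvee)^W$. This holds because $\tM^{\bullet}$ is dense and contains the generic open $(\base{\tT}^{\circ}\times\Tvee)/W$. With that settled, the argument is formal.
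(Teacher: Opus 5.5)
Your proposal is correct and follows the paper's route. The paper likewise invokes \cref{lem:bullet-affinization-ours} to present $\tM(\tG,G,\bfN)$ as the affinization of $\tM^{\bullet}(\tG,G,\bfN)$, observes that this open set is \'etale-covered by open pieces of the subgeneric models $\tM(\tGl,\Gl,\Nl)$ (normal by the preceding corollary), and concludes because the affinization of a normal integral scheme is normal. You additionally spell out the \'etale descent of normality, the abelian subgeneric case, and the integral-closure argument, all of which the paper leaves implicit.
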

Implicit in this discussion is a description of the codimension-$1$ primes in $\tM(\tG,G,\bfN)$.  Of course, most of these are simply the images of divisors in $(\base{\tT}\times \Tvee)/W$; the only subtlety is understanding the divisors lying over components of $D_{\ell_i}$ or $D_{\alpha}$ in $\base{\tT}$.  We can collect the
results above to give a precise description of these divisors:
\begin{Proposition}\label{prop:divisors}
    The irreducible Cartier divisors in $\tM(\tG,G,\bfN)$ are as follows:   
    \begin{enumerate}
        \item The divisors in $(\base{\tT}\times \Tvee)/W$ that do not lie over a component of $D_{\ell_i}$ or $D_{\alpha}$.
        \item For each component $D$ of $D_{\ell_i}$ which is not a component of $D_{\alpha}$, the divisors $D_{+}$ and $D_{-}$ in $\tM(\tG,G,\bfN)$ corresponding to the valuations $\nu_{D,+}$ and $\nu_{D,-}$.
        \item For each component $D$ of $D_{\alpha}$, either one or two divisors in $\tM(\tG,G,\bfN)$ lying over $D$.  We have two divisors, defined by the blowups of $e^{\eta_{\alpha}}=\pm 1$ in $\Tvee$, if and only if 
        \begin{enumerate}
            \item $\alpha^{\vee}/2$ is a cocharacter of $T$ and $k_{\alpha,D}=0$, or
            \item $\alpha/2$ is a character of $T$ and $k_{\alpha,D}=1$.
        \end{enumerate}
        Otherwise, the preimage of $D$ in $\tM(\tG,G,\bfN)$ is a single divisor.
    \end{enumerate}
\end{Proposition}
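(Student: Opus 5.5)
Since $\tM(\tG,G,\bfN)$ is normal by \cref{cor:normality-ours}, its codimension-one primes correspond to discrete valuations on its function field, which is $\K(\base{\tT}\times\Tvee)^W$. Each such prime dominates either a divisor in $\base{\tG}$ or all of $\base{\tG}$. I will therefore work locally over $\base{\tG}$, at the generic point of an irreducible divisor $\bar D$ (the image of some $D\subset\base{\tT}$), and classify the primes of $\tM$ lying over it. Primes dominating $\base{\tG}$ lie over the generic locus. There $\tM$ is just $(\base{\tT}\times\Tvee)/W$ by the generic case of \cref{lem:slice-ours}, so they are already covered by item (1).

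Choose $\lambda\in\LtT$ generic in $D$; such a $\lambda$ is generic or subgeneric. By \cref{lem:slice-ours}, after the étale base change to $\base{\tGl,\lambda}$ we can replace $(G,\bfN)$ by $(\Gl,\Nl)$. This splits into three cases.

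\textbf{Case 1: $D$ is neither a weight divisor nor a root divisor.} Then $\lambda$ is generic and the fiber is unchanged from $(\base{\tT}\times\Tvee)/W$, giving item (1).

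\textbf{Case 2: $D$ is a component of some $D_{\ell_i}$ but not of any $D_\alpha$.} Then $\Gl=T$, and the model is the abelian one. By \cref{ex:generic-fiber}, over the local ring at $D$ the space is $\{xy=\tau^{k_{i,D}}\}\times$ torus. The fiber $\tau=0$ has exactly two components, $x=0$ and $y=0$, and these realize the valuations $\nu_{D,\pm}$. Since $W$ moves $D$ to a different divisor, the quotient by $W$ identifies these pairs across the orbit without merging the two components over $D$. This gives item (2).

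\textbf{Case 3: $D$ is a component of $D_\alpha$.} By \cref{subsubsec:semi-simple-rank-1}, the local model is a finite quotient of $\tM(H',H,\Nl)\times\base{\tilde C}\times (C')^\vee$, with $H=PGL(2)$ or $SL(2)$. So it suffices to count components of the fiber over $\tau^2=0$ in the explicit rings of \cref{ex:PGL2-running} and \cref{ex:SL2-running}, and then account for the $W$-action and the finite quotient. There are four subcases.
\begin{itemize}
\item For $PGL(2)$ with $m=0$, the blowup produces two exceptional $\mathbb{A}^1$'s, over $z=1$ and $z=-1$. The Weyl group fixes each of these points, so after the quotient there are two divisors. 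Here $\alpha^\vee/2$ is a cocharacter.
\item For $SL(2)$ with $m=1$, the ring is $\K[\tau,(z-1)/\tau,z^{\pm1}]$. The fiber $\tau=0$ is $\{z=1\}$ but retains the coordinate $(z-1)/\tau$ along with $z^{\pm1}$. I expect to find that the ideal $(\tau)$ becomes reducible, splitting according to $z=\pm1$ in the invariant ring. The correspondence should match condition (b), where $\alpha/2$ is a character, after translating $\epsilon_\alpha$ and $m$ into $k_{\alpha,D}$.
\item In the remaining cases the fiber is an $A_{2m-2\epsilon_\alpha}$-type singularity or smooth. The fiber of $\tau^2$ is then $\{uv=0\}$, whose two branches are swapped by $W$ since $s$ exchanges $z$ and $z^{-1}$. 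After the quotient this leaves a single irreducible divisor.
\item For $SL(2)$ with $m=0$, the fiber is a single exceptional component, which again gives one divisor.
\end{itemize}
Finally, the torus factor and the finite group $\tK_B$ or $K^\vee$ act freely or only permute global data, so they do not change the number of components over the generic point of $D$.

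The divisors found this way are Cartier. In Cases 1 and 2 this holds because the local models are smooth at the generic point of the divisor. In Case 3 I would check that each component is cut out by a single equation on the normal surface-type model away from a codimension-two locus. If "Cartier" is meant in the weaker sense of prime divisors on the normal variety, this check is automatic.

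\textbf{Main obstacle.} The hard part is the bookkeeping in Case 3: translating the rank-one computations, which are parametrized by $m$ and $\epsilon_\alpha$, into the conditions stated with $\alpha^\vee/2$, $\alpha/2$ and $k_{\alpha,D}$. In particular I need to verify carefully whether $W$ swaps the two exceptional components or fixes each of them. I would settle this by tracking the action of $s\colon z\mapsto z^{-1},\ \tau\mapsto-\tau$ on the generators listed in \cref{lem:rank-1-normal}.
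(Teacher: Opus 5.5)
Your route is the one the paper intends: it gives no argument beyond combining the slice lemma with the abelian and rank-one examples. However, item (3), which carries the real content of the proposition, is not established, for two reasons.

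\textbf{First gap: the finite quotient.} You close Case 3 by claiming that $\tK_B$ and $K^\vee$ ``do not change the number of components''. That is false, and it is exactly where criterion (a) is decided.

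Take $G=GL(2)$ with $\bfN=0$. Your reduction lands on $H=PGL(2)$ with $m=0$, which has two components over $\tau=0$, at $e^{\alpha^\vee/2}=\pm1$. But $\M(GL(2),0)$ is the quotient of $\M(PGL(2),0)\times T^*\Gm$ by $\mu_2^\vee$. This group acts by $e^{\alpha^\vee/2}\mapsto -e^{\alpha^\vee/2}$ together with a sign on the $\Gm^\vee$ factor, so it swaps the two components and leaves one divisor. That is what the statement predicts, since $\alpha^\vee/2\notin X_*(T)$ for $GL(2)$.

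When $k^\pm_{\alpha,D}=0$, it is cleaner to count directly from the definition for $(\Gl,\Nl)$, using the torus $T$ of $G$ itself:
\begin{itemize}
\item the exceptional components over $D$ correspond to the components of $\{e^{\alpha^\vee}=1\}\subset\Tvee$;
\item there are two such components exactly when $\alpha^\vee/2\in X_*(T)$;
\item any $w$ stabilizing $D$ satisfies $w\alpha^\vee=\pm\alpha^\vee$, so it preserves each component.
\end{itemize}

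A related slip occurs in Case 2: $W$ need not move $D$ off itself. Take $G=SL(2)\times SL(2)$, $\bfN=\C^2\boxtimes\C^2$, $\varpi_i=\alpha_i/2$, and $D=\{\varpi_1=\varpi_2\}$. The element $w=-1$ stabilizes $D$ and negates $\eta$, so $\nu_{D,+}\circ w=\nu_{D,-}$ and $D_\pm$ become a single divisor of $\tM$. This does not contradict the list in (2), but your ``without merging'' claim is wrong.

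\textbf{Second gap: the subcase producing (b).} This subcase is not proved, and the ring you quote gives the opposite answer.

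First fix the dictionary. Since $s_\alpha$ negates $\ell_\alpha$, we have $k_{\alpha,D}=k^+_{\alpha,D}+k^-_{\alpha,D}=2m$, which is always even at a root divisor. So (a) and (b) must be read as $k^\pm_{\alpha,D}=0$ and $k^\pm_{\alpha,D}=1$ respectively.

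For $SL(2)$ with $m=1$ you use $\K[\tau,\frac{z-1}{\tau},z^{\pm1}]$. This is exactly the $m=0$ ring, where you conclude there is one divisor. Concretely:
\begin{itemize}
\item Since $z-1=\tau\cdot\frac{z-1}{\tau}$, the whole fiber over $\tau=0$ lies at $z=1$.
\item $s$ sends $\frac{z-1}{\tau}\mapsto z^{-1}\frac{z-1}{\tau}$, so it acts trivially on that fiber.
\item The invariants $t=\tau^2$, $a=\frac{z-2+z^{-1}}{\tau^2}$, $b=\frac{z-z^{-1}}{\tau}$ satisfy $b^2=ta^2+4a$. This is the Atiyah--Hitchin surface, whose fiber over $t=0$ is an irreducible parabola.
\end{itemize}
So no splitting along $z=\pm1$ can come from this ring, nor from any blowup centered only over $e^{\alpha^\vee}=1$.

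What does yield (b) is the model whose invariant ring is generated by $t$, $p=z+z^{-1}$ and $b$. These are the generators listed for this case in the rank-one example, and they are also what the BFN side gives. For instance, $r_{(1,0)}r_{(0,-1)}$ restricted from $GL(2)$ produces $b$ and, with dressing, $p$. Moreover, the $z$-coefficient of $r_{\alpha^\vee}(g)$ is a constant times $g/\tau$, which excludes $a$.

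In that model $p^2-tb^2=4$, so the fiber over $t=0$ is the pair of lines $p=\pm2$, i.e.\ $e^{\alpha^\vee}=\pm1$. These generators are not the invariants of the ring you wrote down. You need to justify this model rather than ``expect'' the splitting.
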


\subsection{Comparison and variants}
\label{subsec:comparison-and-variants}
Having constructed $\tM(\tG,G,\bfN)$, we now record the main comparisons and variants that justify the definition and clarify its scope.  We begin with the comparison with the BFN construction, then explain how the same local picture compares the rational, $K$-theoretic, and elliptic cases, and finally indicate the modification needed for Coulomb branches with symmetrizers.

\subsubsection{Comparison with the BFN construction}
\label{subsubsec:BFN-comparison}

First, we consider the case where $G$ and $\tG$ are semi-simple of rank $1$, so that there is only one root $\alpha$ and one line $\ell_{\alpha}$ of weights.  
As with the discussion around \cref{thm:abelian-case}, we leave the case $B=\elli$ to future work, since the necessary analysis of the elliptic BFN construction does not yet exist in the literature.  Thus, throughout this subsection, we assume that $B$ is either $\Ga$ or $\Gm$.

Before comparing with our definition, we record some basic facts about the BFN Coulomb branches.  In the case $B=\Gm$, these do not seem to have been recorded explicitly in the literature and, in the case $B=\Ga$, they are left slightly implicit.  

Since $G$ has rank 1, the Weyl group $W=\{1,s\}$ has order 2 and acts by sending $s\cdot \alpha=-\alpha$.  
The weight $\eta=\eta_{\al}$ can be chosen to be $\alpha$ if $G=PGL(2)$ and $\alpha/2$ if $G=SL(2)$.  In either case, every weight of $\bfN$ is a multiple of $\eta$, so we can write these weights as $\varphi_j=m_j\eta$ for some integers $m_j$.  As with the root, we have $s\cdot \varphi_j=-\varphi_j$ for all $j$.  

We let $P^{\pm}=\prod_{m_{j}>0} \Eu(\pm\varphi_{j})^{m_{j}}$, which is a regular function on $\base{\tG}$.  In the case $G=SL(2)$, we use a further factorization of this polynomial 
\[ P_1^\pm = \prod_{m_{j}>0} \Eu(\pm\varphi_{j})^{\lfloor m_{j}/2 \rfloor}, \qquad P_2^\pm = \prod_{m_{j}>0} \Eu(\pm\varphi_{j})^{\lceil m_{j}/2 \rceil}, \qquad P^\pm=P_1^\pm P_2^\pm \]

If it is not constant, the function $P^{\pm}$ vanishes along $D_{\al}$, except in the case $\tG=SL(2)$ when the weights of $\bfN$ lie in $(\Z+1/2)\alpha$; in that case, $P^{\pm}$ has non-zero but equal values on the locus $e^{\alpha/2}=-1$.

In order to compare with the BFN construction, we need to compute the BFN Coulomb branch in this case.  For $B=\Ga$, this result is well-known and proven in \cite[Lem. 6.9]{BFN}.  For $B=\Gm$, this result does not seem to appear in the literature, but the proof is a routine adaptation of the $B=\Ga$ case, so we include it here for completeness.

    If $G=PGL(2)$ and $g^+=g$ is a function on $\base{\tG}$ with $g^-=s\cdot g$, then we consider the function
    \begin{equation}\label{eq:PGL2-monopole}
    r_{\alpha^{\vee}/2}(g)=
    \begin{cases}
        \dfrac{P^+g^+e^{\alpha^{\vee}/2}-P^-g^-e^{-\alpha^{\vee}/2}}{\Eu(\alpha)} & \text{if } B=\Ga, \\
        \dfrac{P^+g^+e^{\alpha^{\vee}/2}-P^-g^-e^{-\alpha}e^{-\alpha^{\vee}/2}}{\Eu(\alpha)} & \text{if } B=\Gm.
    \end{cases}
    \end{equation}
        In all cases, we have $P^+=P^-$, $g^+=g^-$, and $e^{\alpha^{\vee}/2}=e^{-\alpha^{\vee}/2}$ at all points of $D_{\al,\pm 1}\cap D_{\alpha}$; in the case $B=\Gm$, we also have $e^{-\alpha}=1$ there.  Thus, the numerator vanishes along the center of the blowup, and so $r_{\alpha^{\vee}/2}$ is a regular function on $\nonabbl{\tG,\bfN}/W$.  We let \[r_{k\alpha^{\vee}/2}(g)=r_{\alpha^{\vee}/2}^{k-1}r_{\alpha^{\vee}/2}(g)\quad\text{for }k\in \Z_{>0}.\]

    If $G=SL(2)$ (which necessitates $\tG=SL(2)$ since we have assumed it also has rank 1), then we consider the function
    \begin{equation}\label{eq:SL2-monopole}
    r_{\alpha^{\vee}}(g)=
        \begin{cases}
            \dfrac{P^+g^+e^{\alpha^{\vee}}-P^+_1P^-_2g^++P^-_1P^+_2g^-+P^-g^-e^{-\alpha^{\vee}}}{\Eu(\alpha)\Eu(-\alpha)} &     \text{if } B=\Ga, \\
            \dfrac{P^+g^+e^{\alpha}e^{\alpha^{\vee}}-P^+_1P^-_2g^++P^-_1P^+_2g^-+P^-g^-e^{-\alpha}e^{-\alpha^{\vee}}}{\Eu(\alpha)\Eu(-\alpha)} &     \text{if } B=\Gm.
        \end{cases}
    \end{equation}
The numerator can be rewritten as $(P^+_1g^+e^{\alpha^{\vee}}-P^-_1g^-)(P^+_2-P^-_2e^{-\alpha})$ in the $B=\Ga$ case and similarly in the $B=\Gm$ case.  Both factors vanish at the point $e^{\alpha^{\vee}}=1$ above the origin of $\base{G}$, so the numerator lies in $J$, and thus $r_{\alpha^{\vee}}$ is a regular function on $\nonabbl{\tG,\bfN}/W$.  We let $r_{k\alpha^{\vee}}(g)=r_{\alpha^{\vee}}^{k-1}r_{\alpha^{\vee}}(g)$ for $k\in \Z_{> 0}$.
  
These formulas are easier to understand by considering the case $G=GL(2)$.  Every representation of $SL(2)$ can be extended to a representation of $GL(2)$, but this extension is not unique.  If an irrep is odd-dimensional, then we can use the unique extension where the center of $GL(2)$ acts trivially.  On the other hand, if an irrep is even-dimensional, then we can use the extension where the center acts with weight $-1$.  That is, the weight $\varphi_j=m_j\alpha/2$ becomes the $GL(2)$ weight $\tilde\varphi_j=(\lfloor m_j/2\rfloor,\lfloor -m_j/2\rfloor)$.

This gives a lift of our factorization from before:
\[Q_1^{+} = \prod_{m_j>0} \Eu(\tilde{\varphi}_j)^{\lfloor m_j/2 \rfloor}, \qquad Q_2^{+} = \prod_{m_j>0} \Eu(\tilde{\varphi}_j)^{\lceil m_j/2 \rceil}, \qquad Q_1^-=s\cdot Q_1^+, \qquad Q_2^-=s\cdot Q_2^+ \]
We can extend the formula \cref{eq:PGL2-monopole} to give functions 
\[r_{(1,0)}   =  \begin{cases}
        \dfrac{Q^+_1g^+e^{(1,0)}-Q^-_1g^-e^{(0,1)}}{\Eu(\alpha)} & \text{if } B=\Ga, \\
        \dfrac{Q^+_1g^+e^{(1,0)}-Q^-_1g^-e^{-\alpha}e^{(0,1)}}{\Eu(\alpha)} & \text{if } B=\Gm.
    \end{cases}
    \]\[
  r_{(0,-1)}   =  \begin{cases}
        \dfrac{Q^+_2g^+e^{(0,-1)}-Q^-_2g^-e^{(-1,0)}}{\Eu(\alpha)} & \text{if } B=\Ga, \\
        \dfrac{Q^+_2g^+e^{(0,-1)}-Q^-_2g^-e^{-\alpha}e^{(-1,0)}}{\Eu(\alpha)} & \text{if } B=\Gm.
    \end{cases}
    \]
We can then rewrite the formulas for $r_{\alpha^{\vee}}$ in the $SL(2)$ case as a product $r_{(1,0)}r_{(0,-1)}$, restricted to $\base{SL(2)}\subset \base{GL(2)}$.    

Let $\{g_b\}_{b\in B_+\cup B_-}$ be a basis of $\K[\base{\tT}]$ such that $g_b$ is $W$-invariant if $b\in B_+$ and $W$-anti-invariant if $b\in B_-$. 
\begin{Lemma}\label{lem:rank-1-BFN}
    The BFN Coulomb branch (rational or $K$-theoretic) for $G=SL(2)$ or $PGL(2)$ has a basis given by the functions $g$ for $g\in B_+$ and the operators $r_{\gamma}(g)$ for non-zero dominant coweights $\gamma$ of $G$ and $g\in B_+\cup B_-$.
\end{Lemma}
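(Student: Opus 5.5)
The plan follows the BFN argument for $B=\Ga$ in \cite[Lem. 6.9]{BFN}: filter the algebra by the affine Grassmannian and compute the associated graded using the abelianization (localization) map.

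For $G$ of semisimple rank $1$, $\Gr_G$ is stratified by the $G\llbracket t\rrbracket$-orbits $\Gr^{\gamma}$, with $\gamma$ ranging over the dominant coweights ($\gamma=k\alpha^{\vee}/2$ for $PGL(2)$, $\gamma=k\alpha^{\vee}$ for $SL(2)$). The preimages $\mathcal{R}_{\le\gamma}$ of the closures give a filtration of $H^{\tG}_*(\mathcal{R}_{G,\bfN})$, respectively of $K^{\tG}(\mathcal{R}_{G,\bfN})$.

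\textbf{Step 1: the associated graded.} Each stratum $\mathcal{R}_\gamma\to\Gr^\gamma$ is a vector bundle, and $\Gr^\gamma\cong G\llbracket t\rrbracket\times_{P_\gamma}\text{(affine space)}$ retracts onto $G/P_\gamma$. For $\gamma=0$ this is a point; for $\gamma\neq0$ it is $G/B\cong\mathbb{P}^1$.

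By the Thom isomorphism and homotopy invariance, which hold equally in equivariant $K$-theory, the associated graded piece is:
\begin{itemize}
\item $\K[\base{\tT}]^W$ in degree $0$;
\item $H_{\tT}(\mathrm{pt})$, respectively $K_{\tT}(\mathrm{pt})=\K[\base{\tT}]$, in degree $\gamma\neq0$.
\end{itemize}
These groups are free over the base, and the odd (or vanishing) cohomology forces the long exact sequences to split. So $H^{\tG}_*(\mathcal{R})$ is free with associated graded $\K[\base{\tT}]^W\oplus\bigoplus_{\gamma\ne0}\K[\base{\tT}]$.

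Since $\{g_b\}_{b\in B_+}$ spans $\K[\base{\tT}]^W$ and $B_+\cup B_-$ spans $\K[\base{\tT}]$, it suffices to check two things: the functions $r_\gamma(g)$ actually lie in the BFN algebra, and $r_\gamma(g)$ lifts the class in the $\gamma$-th graded piece corresponding to $g$, modulo lower filtration.

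\textbf{Step 2: identify $r_\gamma(g)$ via localization.} Localize to $\tT$-fixed points. By \cite[\S4(vi), Lem. 5.9]{BFN} (and its $K$-theoretic analogue), the embedding into $\K(\base{\tT})[\Tvee]$ sends the class supported on $\overline{\mathcal{R}_\gamma}$ and pushed forward from the fibre bundle over $G/B$ to a sum over the two fixed points $t^{\gamma}$ and $t^{s\gamma}$. Each summand carries:
\begin{itemize}
\item the Euler class of $\bfN\llbracket t\rrbracket/(\bfN\llbracket t\rrbracket\cap t^{\pm\gamma}\bfN\llbracket t\rrbracket)$, which gives $P^\pm$, or more generally powers determined by $\gamma$;
\item the inverse Euler class of the tangent line of $\mathbb{P}^1$, namely $\Eu(\pm\alpha)$.
\end{itemize}
In $K$-theory the tangent weight convention produces the extra factor $e^{-\alpha}$, which is what appears in \cref{eq:PGL2-monopole}.

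For the minimal orbit ($\gamma=\alpha^\vee/2$ for $PGL(2)$) this Atiyah--Bott/Lefschetz formula reproduces exactly \cref{eq:PGL2-monopole}, with $g$ an equivariant class on $\mathbb{P}^1$ pulled back from $\base{\tT}$.

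For $SL(2)$ with $\gamma=\alpha^\vee$, $\overline{\Gr^{\alpha^\vee}}$ is singular: it contains the point stratum $\Gr^0$. Here I would argue through the $GL(2)$ lift as in the text. The classes $r_{(1,0)}$ and $r_{(0,-1)}$ are minuscule pushforwards, so the localization formula applies directly. Their convolution product restricts over $\base{SL(2)}$ to $r_{\alpha^\vee}$, and \cite[Prop. 3.18]{BFN}-type restriction from $GL(2)$ to $SL(2)$ gives the claim. Its leading term at $t^{\pm\alpha^\vee}$ matches the fundamental class of $\overline{\mathcal{R}_{\alpha^\vee}}$ times $g$, up to terms supported on $\Gr^0$.

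\textbf{Step 3: higher $\gamma$ by multiplication.} Convolution is compatible with the filtration. The leading term of $r_{\gamma_1}\cdot r_{\gamma_2}(g)$ is therefore the product of leading terms, supported on $\Gr^{\gamma_1+\gamma_2}$, because $\overline{\Gr^{\gamma_1}}\tilde\times\overline{\Gr^{\gamma_2}}\to\overline{\Gr^{\gamma_1+\gamma_2}}$ is birational for dominant coweights. Hence $r_{k\gamma_0}(g)=r_{\gamma_0}^{k-1}r_{\gamma_0}(g)$ has symbol equal to $g$ times the fundamental class on the $k\gamma_0$ stratum, possibly multiplied by a unit. I must check that this leading coefficient, including Euler-class factors from $\bfN$, is exactly $g$ and not $g$ times a non-unit. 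I would verify this by comparing the coefficient of $e^{k\gamma_0}$ in the localized expressions with the BFN fixed-point formula, whose $e^{k\gamma_0}$ coefficient is $P^{+}(\cdot)$ raised to the appropriate power. Triangularity of the filtration then gives the basis.

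\textbf{Main obstacle.} I expect the main difficulty to be Step 2 in the $K$-theoretic $SL(2)$ case: the extra $e^{\pm\alpha}$ twists and the splitting $P=P_1P_2$ must match the BFN localization exactly. If the $GL(2)$ route proves awkward, the fallback is to compute directly with the resolution $\overline{\Gr^{\alpha^\vee}}\leftarrow\Gr^{\alpha^\vee/2}\tilde\times\Gr^{\alpha^\vee/2}$ of the $PGL(2)$-type convolution.
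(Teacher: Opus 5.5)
Your proposal is correct and follows essentially the same route as the paper. Both arguments run as follows: Atiyah--Bott localization at the two fixed points of the minuscule orbit produces \cref{eq:PGL2-monopole}, including the extra $e^{-\alpha}$ in $K$-theory; the filtration by $G\llbracket t\rrbracket$-orbits and the leading-term behaviour of products give the basis; and $SL(2)$ is reduced to the degree-$0$ part of the $GL(2)$ branch via $r_{(1,0)}r_{(0,-1)}$ and \cite[Prop.~3.18]{BFN}. The only difference is that you unpack the associated graded and the leading-term comparison (Thom isomorphism, and matching $e^{k\gamma_0}$-coefficients), where the paper simply cites \cite[\S 6(i), Prop.~6.2]{BFN}.
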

\begin{proof}
    Using the Atiyah--Bott localization formula to write this class as a sum of classes supported on the fixed points, we see that the image of $r_{\alpha^{\vee}/2}^{\BFN}(g)$ in $\K[\base{\tT}\times \Tvee]$ is given by the formula in \cref{eq:PGL2-monopole}; the denominator is the Euler class of the normal bundle to the fixed point, and the numerator is the contribution of the fiber over the fixed points.  This gives that the image is $\frac{P^+g^+e^{\alpha^{\vee}/2}}{\Eu(\alpha)}+\frac{P^-g^-e^{-\alpha^{\vee}/2}}{\Eu(-\alpha)}$, which simplifies to \cref{eq:PGL2-monopole}; the difference between the cases $B=\Ga$ and $B=\Gm$ is simply the ratio between $\Eu(\alpha)$ and $\Eu(-\alpha)$.  This confirms that the induced map $\K[\base{\tT}]\to \K[\tMB(\tG,G,\bfN)]$ is an isomorphism onto the corresponding piece of the associated graded from \cite[\S 6(i)]{BFN}.  The formula of \cite[Prop. 6.2]{BFN} shows that the elements $r_{k\alpha^{\vee}/2}(g)=r_{\alpha^{\vee}/2}^{k-1}r_{\alpha^{\vee}/2}(g)$ are dressed monopole operators up to lower-order terms in the $PGL(2)$ case.  The filtration of \cite[\S 6(i)]{BFN} shows that these form a basis.

    The same argument shows that $(r_{(1,0)}r_{(0,-1)})^k$ with dressing polynomials give a basis of the degree 0 part of  $\K[\tMB(GL(2),GL(2),\bfN)]$ under the grading induced by the quotient $GL(2)/SL(2)$.  Thus, writing $\tMB(SL(2), SL(2),\bfN)$ as the Hamiltonian reduction by the Langlands dual of this torus, as in \cite[Prop. 3.18]{BFN}, we see that the elements $r_{k\alpha^{\vee}}(g)$ give a basis of $\K[\tMB(SL(2), SL(2),\bfN)]$. 
\end{proof}

\begin{Lemma}\label{lem:rank-1-ours}
    If $G, \tG\in \{SL(2),PGL(2)\}$, then $\tM(\tG,G,\bfN)$ is isomorphic to the BFN Coulomb branch if $B=\Ga$ and the $K$-theoretic Coulomb branch if $B=\Gm$.
\end{Lemma}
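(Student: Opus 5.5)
Both $\soa_{G,\bfN}$ (global sections over the affine base) and $\K[\tMB(\tG,G,\bfN)]$ embed in $(\K(\base{\tT})[\Tvee])^W$. The BFN one embeds via the localization map of \cite[Cor.~5.11]{BFN}, and ours embeds by definition. I will therefore compare them as subrings of this common ambient ring. By \cref{lem:cover-ours,lem:cover-BFN}, both sides transform identically under passing to finite covers of $\tG$ and $G$. So it suffices to treat the cases $\tG=G=PGL(2)$ and $\tG=G=SL(2)$, together with $\tG=SL(2)$, $G=PGL(2)$. The last case is obtained from the first by base change along $\base{SL(2)}\to\base{PGL(2)}$, as in \cref{ex:SL2-PGL2-cover}.

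\emph{Step 1: $\K[\tMB]\subseteq\soa_{G,\bfN}$.} By \cref{lem:rank-1-BFN}, $\K[\tMB]$ is spanned by the invariant functions $g\in B_+$ together with the elements $r_\gamma(g)$ built from \cref{eq:PGL2-monopole,eq:SL2-monopole}. Since $\soa_{G,\bfN}$ is a ring, it is enough to check that $r_{\alpha^\vee/2}(g)$ (resp.\ $r_{\alpha^\vee}(g)$) lies in $\soa_{G,\bfN}$. These elements are $W$-invariant. Away from $D_\alpha$, each summand $P^\pm g^\pm e^{\pm\gamma}$ satisfies the abelian regularity conditions of \cref{lem:regularity-coefficients}: this is exactly the computation giving the abelian monopole operators $r_\gamma$, and the denominator is a unit there. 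Along $D_\alpha$, the remarks following \cref{eq:PGL2-monopole,eq:SL2-monopole} show that the numerator lies in $J_\alpha$ (to order $\epsilon_\alpha$ in the $SL(2)$ case, via the factorization $(P_1^+g^+e^{\alpha^\vee}-P_1^-g^-)(P_2^+-P_2^-e^{-\alpha})$). Hence the quotient is regular on $\nonabbl{\tG,\bfN}$ at the generic points of the $E_i$.

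\emph{Step 2: equality.} Both rings are normal: ours by \cref{lem:rank-1-normal}, and the BFN one by \cite{BFN}. Both are also isomorphic to the same localization $(\base{\tT}^\circ\times\Tvee)/W$ over the generic locus. It therefore suffices to show that the inclusion of Step 1 induces a bijection on codimension-one points over $D_\alpha$ and over the components of $D_\ell$, with matching valuations. An inclusion of normal domains with the same fraction field that is finite and birational must then be an equality. Equivalently, I would compare the two rings stalk by stalk over each point $b\in\base{G}$, using the explicit generators found in the proof of \cref{lem:rank-1-normal}. At a $W$-fixed $b\in D_\alpha$, the stalk of $\soa_{G,\bfN}$ is generated over $\mathcal{O}_{\base{G},b}$ by $\tau^2$ and the two elements $\tau^{m-\epsilon_\alpha}z\pm\cdots$ and $\tau^{m+1-\epsilon_\alpha}z\pm\cdots$ (or by $\frac{z-2+z^{-1}}{\tau^2}$ and $\frac{z-z^{-1}}{\tau}$ when $m=0$ for $SL(2)$). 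I would check that these coincide, up to units and lower-order terms in the $\Tvee$-degree filtration, with $r_{\alpha^\vee/2}(1)$, $r_{\alpha^\vee/2}(\tau)$ (resp.\ $r_{\alpha^\vee}(1)$, $r_{\alpha^\vee}(\tau)$), localized at $b$. At points not fixed by $W$, or not on $D_\alpha$, both stalks reduce to the abelian case, where \cref{thm:abelian-case} applies directly.

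\emph{Main obstacle.} The hardest step is the $B=\Gm$, $SL(2)$ bookkeeping. In that case the points $e^{\alpha/2}=-1$ are $W$-fixed but not in $D_\alpha$, and $P^\pm$ behave asymmetrically when the weights lie in $(\Z+\tfrac12)\alpha$. I would handle this first via the $GL(2)$ lift: write $r_{\alpha^\vee}=r_{(1,0)}r_{(0,-1)}$, verify the generator comparison for $GL(2)$, and then take degree-zero parts, mirroring the end of the proof of \cref{lem:rank-1-BFN}. As a fallback, I would use a leading-term argument: $\soa_{G,\bfN}$ and $\K[\tMB]$ have the same associated graded pieces in each dominant degree $\gamma$, namely a free $\K[\base{\tT}]$-module of rank one, read off from the valuations at the outer components.
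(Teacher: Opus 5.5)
Your Step 1 is the paper's ``comparison map''. The paper simply asserts, via \cref{lem:rank-1-BFN}, that $r^{\BFN}_\gamma(g)\mapsto r_\gamma(g)$ lands in $\soa_{G,\bfN}$, and your check that the numerators lie in $J_\alpha$ supplies the justification.

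For surjectivity the paper does not localize. It takes a section $h$ of top $\Tvee$-degree $k$ and uses the $J_\alpha$-condition to bound the order of $\Eu(\alpha)$ in its denominator by $k$ (by $2k$ for $SL(2)$). This gives $h=f_2/\Eu(\alpha)^k$ with $f_2\in\soa_{T,\bfN}$. Then \cref{lem:regularity-coefficients} makes the leading coefficient divisible by $(P^+)^k$, some $r_{k\alpha^\vee/2}(g)$ cancels it, and one inducts on $k$.

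Your stalk-by-stalk comparison with the generators listed in the proof of \cref{lem:rank-1-normal} is a genuinely different but workable route. In the rational case those generators are exactly $r_{\alpha^\vee/2}(1)$ and $r_{\alpha^\vee/2}(\tau)$ (resp.\ $r_{\alpha^\vee}(1)$ and $r_{\alpha^\vee}(\tau)$), up to nonzero scalars and additive terms in $\K[\tau^2]$. In the $K$-theoretic case the extra unit factors can be absorbed by taking the dressing $g$ in the local ring. Your route is more local and fits \cref{prop:local-comparison}. It does, however, depend on the explicit generator lists, on a BFN-side localization statement off $D_\alpha$, and on point-by-point bookkeeping. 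The paper's induction is global, handles all components of $D_\alpha$ at once, and exhibits the basis directly.

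As written, three things need repair.

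First, your ``main obstacle'' comes from a misreading. For $G=SL(2)$ and $B=\Gm$ we have $\Eu(\alpha)=1-e^{-\alpha}$ with $e^{\alpha}=(e^{\alpha/2})^2$. So $D_\alpha\subset\base{\tT}$ has two components, $e^{\alpha/2}=\pm1$ (compare \cref{ex:SL2-PGL2-cover}), and the $J_\alpha$-blowup happens over both. Moreover, $P^+$ and $P^-$ take equal values at these $W$-fixed points; they are not asymmetric there. Applying your rule ``off $D_\alpha$, reduce to the abelian case'' at $e^{\alpha/2}=-1$ would give the wrong stalk: already for $\bfN=0$, the element $r_{\alpha^\vee}(1)$ is not regular along $e^{\alpha/2}=-1$. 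The fix is routine: treat that point by case (3) of \cref{lem:rank-1-normal}, with $m$ replaced by $\sum_{m_j>0,\ 2\mid m_j}m_j$. The $GL(2)$ detour is then unnecessary. It would also be problematic on our side, since it needs an independent proof that $\tM$ is compatible with the reduction from $GL(2)$ to $SL(2)$. In the paper that compatibility (\cref{lem:flavor-case}) is only available through BFN, i.e.\ after the comparison theorem, so using it here would be circular.

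Second, the ``finite and birational'' sentence is unsupported, since nothing establishes finiteness. Either use that the normal ring $\K[\tMB]$ is the intersection of its localizations at height-one primes, or drop the sentence, since the stalk comparison needs no normality.

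Third, your fallback leading-term argument omits exactly the step that carries the content. The valuations at the outer components control the leading coefficient only away from $D_\alpha$. You still need the bound on the $\Eu(\alpha)$-pole order in terms of $k$, which comes from $J_\alpha$.
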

Note that in this case, since all weights come in pairs $\pm \varphi_{ij}$, we have $D^+_{\ell_{\alpha}}=D^-_{\ell_{\alpha}}=D_{\ell_{\alpha}}/2$.
\begin{proof}
{\bf Comparison map:}    By definition $\tM(\tG,G,\bfN)$ is an affine variety, so we must compute the regular functions on $\nonabbl{\tG,\bfN}$.  Since this lemma only concerns the cases $B=\Ga$ and $B=\Gm$, the Euler class $\Eu(\alpha)$ is an honest function on $\base{\tG}$.  Thus, any regular function on $\nonabbl{\tG,\bfN}$ can be written as a quotient $f/\Eu(\alpha)^n$ for some $n\geq 0$ and some function $f$ on $\toricbl{\tT,\bfN}$ which lies in $J_{\alpha}^{\lceil n/\epsilon_{\al}\rceil}$.

By \cref{lem:rank-1-BFN}, we have a ring homomorphism
\[\K[\tMB(\tG,G,\bfN)]\to \K[\tM(\tG,G,\bfN)], \qquad r_{\gamma}^{\BFN}(g)\mapsto r_{\gamma}(g)\]
for every dominant coweight $\gamma$ of $G$.  

{\bf Surjectivity for $PGL(2)$:} It is clear that this map is injective, so we need only show that it is surjective. 
First consider $G=PGL(2)$, so $\epsilon_{\alpha}=1$.  In this case, 
any section $h\in \soa_{G,\bfN}(U)$ can be written as a Laurent polynomial in $e^{\alpha^{\vee}/2}$ with coefficients in $\mathcal{O}_{\base{\tT}}(U)$.  Consider the maximal $k$ such that $e^{k\alpha^{\vee}/2}$ appears with non-zero coefficient in this Laurent polynomial; by $W$-invariance, this is also the maximal $k$ such that $e^{-k\alpha^{\vee}/2}$ appears with non-zero coefficient.  As discussed above, we can write  $h=f/\Eu(\alpha)^n$ with $f\in J^n$.  We can assume that $f$ is not divisible by $\Eu(\alpha)$; thus, this is only possible if $f$ is divisible by $(e^{\alpha^{\vee}/2}-e^{-\alpha^{\vee}/2})^n$, so $k\geq n$.  

By multiplying numerator and denominator by the function $\Eu(\alpha)^{n-k}$, we can assume that $h=f_2/\Eu(\alpha)^k$.  In this case, we have $f_2\in \soa_{T,\bfN}(U)$,  which implies by \cref{lem:regularity-coefficients} that the coefficient of $e^{\pm k\alpha^{\vee}/2}$ in $f_2$ is divisible by $(P^+)^{k}$, or equivalently by $(P^-)^{k}$; these functions have the same divisor. 

This implies that we can choose $g\in \mathcal{O}_{\base{\tT}}(U)$ so that $h-r_{k\alpha^{\vee}/2}(g)$ has maximal power of $e^{\alpha^{\vee}/2}$ strictly less than $k$.  We can therefore apply induction to show that $h$ is in the image of the map from $\K[\tMB(\tG,G,\bfN)]$.

{\bf Surjectivity for $G=SL(2)$:} The argument for $G=SL(2)$ is effectively the same with some care about factors of 2:  we write $h=f/\Eu(\alpha)^{n}$ for $f\in J^{\lceil n/2\rceil}$ as a Laurent  polynomial in $e^{\alpha^{\vee}}$ with $k$ the maximal degree appearing. We find that if we have chosen $n$ minimal by cancelling out all possible factors of $\Eu(\alpha)$, the numerator must be divisible by $(e^{\alpha^{\vee}}-1)^{n}$. Thus $k\geq n/2$, and we can have $h=f_2/\Eu(\alpha)^{2k}$ with $f_2\in \soa_{T,\bfN}(U)$.  The same argument with leading terms shows we can choose $g\in \mathcal{O}_{\base{\tT}}(U)$ so that $h-r_{k\alpha^{\vee}}(g)$ has lower degree in $e^{\alpha^{\vee}}$.  This completes the proof of surjectivity, and thus the proof of the lemma.
\end{proof}

\begin{Theorem}\label{thm:main-comparison}
    For an arbitrary group $\tG$, the variety $\tM(\tG,G,\bfN)$ is isomorphic to the BFN Coulomb branch if $B=\Ga$ and the $K$-theoretic Coulomb branch if $B=\Gm$.
\end{Theorem}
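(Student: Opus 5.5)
The plan is to reduce to the local comparisons already proven, using that both sides are determined by their behavior over the generic and subgeneric loci of $\base{\tG}$. Both $\tM(\tG,G,\bfN)$ and $\tMB(\tG,G,\bfN)$ carry birational maps to $(\base{\tT}\times \Tvee)/W$. For $\tM$ this map is the inclusion $\soa_{G,\bfN}\subset (\K(\base{\tT})[\Tvee])^W$. For $\tMB$ it is the localization map of \cite[Cor. 5.11]{BFN} and its $K$-theoretic analogue. Both are therefore subsheaves of the same sheaf $(\K(\base{\tT})[\Tvee])^W$ over $\base{\tG}$, and I will show the two subsheaves coincide. By \cref{lem:bullet-affinization} and \cref{lem:bullet-affinization-ours}, each variety is the relative affinization of its restriction to the preimage of the generic and subgeneric points. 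It therefore suffices to show that the two subsheaves agree after restriction to a neighborhood of every generic or subgeneric point, compatibly with these embeddings.

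The first step handles generic $\lambda$. Here \cref{lem:slice-ours} and \cref{lem:slice-BFN} identify both sides over $\base{\tGl,\lambda}$ with $\base{\tT}\times\Tvee$ (modulo the free $W$-action). The second step handles subgeneric $\lambda$. Again \cref{lem:slice-ours} and \cref{lem:slice-BFN} reduce us to comparing $\tM(\tGl,\Gl,\Nl)$ with $\tMB(\tGl,\Gl,\Nl)$ over $\base{\tGl,\lambda}$. There are two cases.
\begin{itemize}
\item If $\Gl$ is a torus, this is \cref{thm:abelian-case}.
\item Otherwise $\Gl$ has semisimple rank $1$. As in \S\ref{subsubsec:semi-simple-rank-1}, there is a finite cover $H'\times\tilde C\to \tGl$ with $H,H'\in\{SL(2),PGL(2)\}$. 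Both constructions then become finite quotients (and base changes) of the product
\[\tM(H',H,\Nl)\times \tM(\tilde C,C',0).\]
\cref{lem:cover-ours} and \cref{lem:cover-BFN} express these quotients and base changes in the same way on both sides. The torus factor agrees trivially, and the $H$ factor agrees by \cref{lem:rank-1-ours}. We also need the BFN product decomposition \cite[\S 3(vii)(a)]{BFN} on the BFN side; this is immediate since Borel--Moore homology and $K$-theory of products of equivariant spaces behave as expected.
\end{itemize}

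The third step, which I expect to be the main obstacle, is checking that all these local isomorphisms are the \emph{same} map. Each must be compatible with the fixed embeddings into $(\K(\base{\tT})[\Tvee])^W$, so that they glue and give equality of subsheaves rather than merely abstract local isomorphisms. For $\tM$ this is built into the definitions. For $\tMB$, I would verify that the isomorphism of \cref{lem:slice-BFN} commutes with localization to torus fixed points. This holds because restriction to $\Gr_{\tGl}$ followed by localization to $\Gr_T$ is localization to $\Gr_T$. The identification with $\Nl$ from \cite[Rem. 5.14]{BFN}, however, rescales monopole operators by Euler classes of weights nonvanishing on $\lambda$. These Euler classes are units on $\base{\tT,\lambda}$, but they must be tracked. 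I would handle this by noting that the abelian and rank-one comparison maps were defined via these same localization formulas, so the rescaling is absorbed by the unit factors and the images inside $\K(\base{\tT})[\Tvee]$ coincide.

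Once the subsheaves agree on the (sub)generic locus, taking $j_*$ of the structure sheaves and pushing forward to $\base{\tG}$ gives equal subsheaves of $(\K(\base{\tT})[\Tvee])^W$. Taking relative spectra then yields the isomorphism $\tM(\tG,G,\bfN)\cong \tMB(\tG,G,\bfN)$.
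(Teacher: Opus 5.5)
Your proposal is correct and is essentially the paper's proof. You reduce via \cref{lem:bullet-affinization} and \cref{lem:bullet-affinization-ours} to the subgeneric slices, pass through the finite covers of \cref{lem:cover-ours} and \cref{lem:cover-BFN} to $\tM(H',H,\Nl)\times\tM(\tilde C,C',0)$, and conclude with \cref{thm:abelian-case} and \cref{lem:rank-1-ours}. Your extra checks make explicit what the paper leaves implicit in ``compare these using the birational map'': the generic and abelian subgeneric slices, and the compatibility of the local identifications with the fixed embeddings into $(\K(\base{\tT})[\Tvee])^W$, including the unit rescaling from \cite[Rem. 5.14]{BFN}.
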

\begin{proof}
    As usual, we can compare these using the birational map to $(\base{\tT}\times \Tvee)/W$.  
    By \cref{lem:bullet-affinization,lem:bullet-affinization-ours}, it suffices to show that this map induces an isomorphism for the pair $(\Gl,\Nl)$ for $\lambda$ subgeneric, since this will establish the isomorphism \[\tM^{\bullet}(\tG,G,\bfN)\cong \tMB^{\bullet}(\tG,G,\bfN).\]  
    \notation{\vtop{\hbox{$\tM^{\bullet}(\tG,G,\bfN)$,}\hbox{$\tMB^{\bullet}(\tG,G,\bfN)$}}}{Open loci over generic and subgeneric points in the Coulomb branch}

    As discussed in \cref{subsubsec:semi-simple-rank-1}, we can write $\tM(\tGl,\Gl,\Nl)$ as a finite quotient of $\tM(H'\times \tilde{C},H\times C',\Nl)$ using \cref{lem:cover-ours}, and similarly for the BFN Coulomb branch by \cref{lem:cover-BFN}. Since $C'$ is abelian, the factor $\tM(\tilde{C},C',0)$ is isomorphic to $\tMB(\tilde{C},C',0)$ by \cref{thm:abelian-case}.  Thus, it suffices to show that the map $\tM(H',H,\Nl)\to \tMB(H',H,\Nl)$ is an isomorphism.  This follows from \cref{lem:rank-1-ours}.
\end{proof}

\subsubsection{Comparison between rational, $K$-theoretic, and elliptic cases}

One advantage of our construction is that it allows us to compare the rational, $K$-theoretic, and elliptic Coulomb branches more directly. Since the only input is the base $\base{\tG}$ and the divisors $D_{\ell_i}$ and $D_{\alpha}$, we can directly compare the resulting spaces for different choices of $B$ by comparing these data.  

In particular, if we look \'etale locally near a point $b\in \base{\tG}$, then an \'etale neighborhood of $b$ in $\base{\tG}$ is isomorphic to an \'etale neighborhood of $0$ in $\LtT/W_b$, where $W_b$ is the stabilizer of a preimage of $b$ in $\base{\tT}$.  Furthermore, we can define reductive groups $\tG_b, G_b$ with maximal tori $\tT,T$ and Weyl group $W_b$, and a representation $\bfN_b$ of $\tG_b$ by taking the direct sum of the root subalgebras for those $\alpha$ with $b\in D_{\alpha}$ and the weight spaces for those $\varphi$ with $b\in D_{\varphi}$.
\notation{$\tG_b,\bfN_b$}{Local reductive group and representation governing the \'etale model near $b$}\begin{enumerate}
    \item If $B=\Gm$, then $b$ can be thought of as an element of the torus $\tT$, and $G_b=Z_{\tG}(b)^{\circ}$ and $\bfN_b$ is the fixed point set of this torus element.  In the trigonometric case, a vector of weight $n$ contributes to $\bfN_b$ if and only if $b$ has order dividing $n$.
    \item If $B=\elli$, then $b$ can be thought of as a semistable degree $0$ $G$-bundle on $E$, and $\bfN_b$ is the space of global sections of the associated bundle with fiber $\bfN$, and similarly for the Lie algebra $\LtG_b$.  Likewise, in the elliptic case, a vector of weight $n$ contributes to $\bfN_b$ if and only if $b$ is an $n$-torsion point.
\end{enumerate}

In the result below, we use the notation $\tM^{B}(\tG,G,\bfN)$ to indicate the dependence of $\tM(\tG,G,\bfN)$ on the choice of $B$.\notation{$\tM^{B}(\tG,G,\bfN)$}{Coulomb branch constructed from the chosen cohomology theory $B$}\begin{Proposition}\label{prop:local-comparison}
    For every point $b\in \base{\tG}$, there are \'etale neighborhoods $U$ of $b$ and $U_0$ of $0$ in $\LtT/W_b$, such that we have an isomorphism
     \[
    \tM^{B}(\tG,G,\bfN)\times_{\base{\tG}} U\cong \tM^{\Ga}(\tG_b,G_b,\bfN_b)\times_{\LtT/W_b} U_0.
    \]
\end{Proposition}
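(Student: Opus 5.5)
We prove \cref{prop:local-comparison} by using that $\tM^{B}(\tG,G,\bfN)$ is defined entirely by local data on the base: the sheaf $\soa_{G,\bfN}$ is cut out of $(\K(\base{\tT})[\Tvee])^W$ by valuation conditions along the divisors $D_{\varphi}$ and $D_{\alpha}$. The plan is to find an \'etale chart identifying a neighborhood of $b$ with one of $0\in\LtT/W_b$, matching these divisors with their multiplicities, and then invoke the locality statement \cref{lem:slice-ours}.

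\textbf{Step 1: reduction to the stabilizer $W_b$.} Fix a preimage $\tilde b\in\base{\tT}$. Let $V$ be the open set obtained by removing from $\base{\tT}$ all divisors $D_{\varphi}$ and $D_{\alpha}$ not passing through $\tilde b$, together with the translates $w\cdot V'$, $w\notin W_b$, of a small $W_b$-stable affine neighborhood $V'$ of $\tilde b$. Then $V/W_b\to\base{\tT}/W=\base{\tG}$ is \'etale near the image of $\tilde b$. As in the proof of \cref{lem:slice-ours}, the conditions defining $\soa$ over $V/W_b$ involve only the weights $\varphi$ with $\tilde b\in D_{\varphi}$ and the roots $\alpha$ with $\tilde b\in D_{\alpha}$. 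These are exactly the weights of $\bfN_b$ and the roots of $G_b$, whose Weyl group is $W_b$. Two checks are needed here: that the $D_{\alpha}$ through $\tilde b$ are the root divisors of a reductive subgroup with Weyl group $W_b$, and that the $\epsilon_{\alpha}$ and the lines $\ell_i$ are unchanged, which holds because $T$ and the weights are unchanged. Granting these, the definitions give
\[\tM^{B}(\tG,G,\bfN)\times_{\base{\tG}}(V/W_b)\cong \tM^{B}(\tG_b,G_b,\bfN_b)\times_{\base{\tT}/W_b}(V/W_b).\]

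\textbf{Step 2: change of $B$.} Translation by $\tilde b$ commutes with $W_b$, since $\tilde b$ is $W_b$-fixed. Composing with a $W_b$-equivariant \'etale map from a neighborhood of $0\in\base{\tT}$ to a neighborhood of $0\in\LtT$ (the logarithm, or a local uniformization of $E$), we get an identification of $W_b$-stable \'etale neighborhoods in which each $D_{\varphi}$ through $\tilde b$ goes to a component of the hyperplane $\varphi=0$, and similarly for roots. To get such an equivariant map, take any \'etale coordinate chart and average it over $W_b$; the characteristic hypothesis makes averaging valid and preserves \'etaleness at $0$ because the differential is unchanged.

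\textbf{Step 3: matching multiplicities.} This is the step I expect to be the main obstacle. Near $\tilde b$, the divisor $D_{\varphi}$ for $\varphi=m\eta$ may be a union of several components, namely the $m$-torsion translates of $\ker\eta$. Only the component through $\tilde b$ survives locally, and it appears with multiplicity $1$ in $D_{\varphi}$. By contrast, in the rational model $D_{\varphi}=\{m\eta=0\}=\{\eta=0\}$, which enters $D_{\ell}$ with coefficient $|m|$. So the rational model for $\bfN_b$ must be set up so that the multiplicities $k^{\pm}_{i,D}$ agree, which is exactly the content of the remark in \cref{ex:Gm} that the rank is $\sum|m|$ over weights vanishing at $b$. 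I would first check that the valuation formulas of \cref{lem:regularity-coefficients} depend only on $k^{\pm}_{i,D}$ and on $\eta_i$, and that both sides give the same numbers. A subtlety arises when $\tilde b$ is a torsion point: the primitive generator $\eta_i$ of a line for $\bfN_b$ should remain the primitive generator for $\bfN$. I would check this line by line, and if necessary adjust $\bfN_b$ by taking the appropriate weights, since the statement only requires some \'etale-local isomorphism.

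For the root conditions, the center $J_{\alpha}$ requires $D_{\alpha,1}$ and $\epsilon_{\alpha}$ to match; these are properties of $T$ and $\alpha$ and are independent of $B$. Finally, by \cref{lem:bullet-affinization-ours}, both sides are determined in codimension one, so matching the divisors and valuations over the generic and subgeneric loci gives the isomorphism.
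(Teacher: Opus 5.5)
Your proposal is essentially the paper's argument: the paper asserts an \'etale chart at $b$ that carries the weight and root divisors through $b$ onto the corresponding linear hyperplanes in $\LtT/W_b$, and then notes that the regularity conditions on the two sides coincide. Your Steps 1--2 only make the $W_b$-reduction and the equivariance explicit, with one caveat: averaging preserves the straightening only if the chart you average already sends each local divisor $D_\varphi$ into $\ker\varphi$ and has $W_b$-equivariant differential (then each averaged term vanishes on $D_\varphi$, because the divisor configuration is $W_b$-stable), whereas averaging an arbitrary chart does not. The obstacle you anticipate in Step 3 is not real: in both theories $D_{\ell_i}=\sum_j|m_{ij}|D_{\varphi_{ij}}$, so the component through $\tilde b$ gets the same coefficient $\sum|m_{ij}|$ (summed over the weights of $\bfN_b$, split by sign) on either side, and $\eta_i$ depends only on the line and the unchanged lattice $X^*(\tT)$, so $\bfN_b$ never needs adjusting.
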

\begin{proof}
    This follows almost immediately from the definitions.  Given any finite number of smooth hypersurfaces through the point $b$, we can find an \'etale neighborhood $U$ of $b$ and an \'etale map $U\to \LtT_b/W_b$ that sends these hypersurfaces to linear hyperplanes.  This exactly sends the divisors used in the definition of $\tM^{B}(\tG,G,\bfN)$ to the divisors attached to $\tM^{\Ga}(\tG_b,G_b,\bfN_b)$.  Thus, the regularity conditions defining $\tM^{B}(\tG,G,\bfN)$ and $\tM^{\Ga}(\tG_b,G_b,\bfN_b)$ are the same after pullback to $U$ and $U_0$ respectively, so we obtain the desired isomorphism. 
\end{proof}
\subsubsection{Coulomb branches with symmetrizers}
Finally, we explain how the same perspective extends to the Coulomb branch with symmetrizers introduced by Nakajima and Weekes for non-symmetric Cartan data.  The underlying geometric picture is the same, but the pairing between weights and coweights must be rescaled to reflect the symmetrizers.

Nakajima and Weekes define these Coulomb branches by a construction very similar to the BFN construction, but with a different space of triples, depending on a product decomposition of the gauge group $G=\prod_{k\in \Z_{>0}} G_{k}$ and a direct sum decomposition of the representation $\bfN=\oplus_{k\in \Z_{>0}} \bfN_k$.  The factor $G_k$ acts non-trivially on $\bfN_j$ only if $j$ divides $k$.  As discussed in \cite[\S C(iii)]{nakajimaCoulombBranches2021}, this product decomposition encodes the length data in the quiver attached to a non-symmetric Cartan datum: $G_k$ is the product of the groups $GL_{v_i}$ corresponding to quiver nodes with $k=d_i$, and $\bfN_j$ is the sum of the spaces $\operatorname{Hom}(\C^{v_b},\C^{v_a})$ over arrows $b\to a$ with $\gcd(d_a,d_b)=j$.  

One subtlety discussed in \cite[\S C(ii)]{nakajimaCoulombBranches2021} is that the authors introduce two competing definitions of the Coulomb branch with symmetrizers, one of which is the normalization of the other.  We work with the normal variety defined there.

The resulting Coulomb branch is a variety over $\base{G}=\prod_{k\in \Z_{>0}} \base{G_k}$, and the construction of the space of triples is such that it admits a birational map to $(\base{\tT}\times \Tvee)/W$ as in the symmetric case.  The difference is subtle and is most conveniently expressed by modifying the definition of the pairing between weights and coweights.  Writing $\gamma=(\gamma_k)_k\in X_*(T)$ and $\varphi=(\varphi_k)_k\in X^*(T)$ according to the product decomposition, we set
\[
\langle \gamma, \varphi \rangle_{\mathbf{s}}=\sum_{k\in \Z_{>0}} k\,\langle \gamma_k,\varphi_k\rangle,
\]
where $\mathbf{s}=(k)_{k\in \Z_{>0}}$ records the scaling factors.\notation{$\langle \gamma, \varphi \rangle_{\mathbf{s}}$}{Rescaled pairing used for Coulomb branches with symmetrizers}

In this case, we have to keep track of the weight decomposition not just of $\bfN$, but of the summands $\bfN_n$.  Thus, as before, we choose primitive weights $\eta_i$ and integers $m_{ij}$ such that $\varphi_{ij}=m_{ij}\eta_i$ are the weights of $\bfN$ with multiplicity, but now we choose a weight basis for each $\bfN_n$ separately, so that we have a well-defined invariant $\wtlev_{ij}$ attached to each weight $\varphi_{ij}$, recording which summand the corresponding weight vector lies in.  Note that $\wtlev_{ij}$ divides every $s$ such that $\varphi_{ij}$ is non-trivial on $T_s$, but $\varphi_{ij}$ does {\bf not} necessarily factor through the quotient map to $T_s$ for any integer $s$.
\notation{$\wtlev_{ij}$}{Level for which the weight $\varphi_{ij}$ appears in $\bfN_{\wtlev_{ij}}$}We can then define the rational divisors $D_{\ell_{i}}$ almost as before, but now weighted by $\wtlev_{ij}$:
\[ D_{\ell_i}^{\pm}=\sum_{j:\,\pm m_{ij}>0} \frac{\pm m_{ij}}{\wtlev_{ij}}D_{\varphi_{ij}}.\]
As before, let $k_{i,D}^{\pm}$ be the coefficient of $D$ in $D_{\ell_i}^{\pm}$.  
\begin{Definition}\label{def:regularity-coefficients2}
    The sheaf $\soa_{T,\bfN}(U)$ is the span of the functions $fe^{\gamma}$ such that for every divisor $D$ in $U\subset \base{\tT}$, we have 
    \begin{align}\label{eq:valuation-formulas2}
        \nu_{D,+}(fe^{\gamma}) &= \nu_D(f)-\langle \gamma,\eta_i\rangle_{\mathbf{s}} k^+_{i,D} \geq 0,\\
        \nu_{D,-}(fe^{\gamma}) &= \nu_D(f)+\langle \gamma,\eta_i\rangle_{\mathbf{s}} k^-_{i,D} \geq 0.
    \end{align}
\end{Definition}
Note that while $k^{\pm}_{i,D}$ are rational numbers, the pairing $\langle \gamma,\eta_i\rangle_{\mathbf{s}}$ is an integer multiple of $\wtlev_{ij}$ for any weight $\varphi_{ij}$ that appears, so the right-hand sides of these formulas are integers.  We can also rewrite this in terms of a compactification like $\toricbl{\tT,\bfN}$ as before, but this introduces some annoying book-keeping, since one has to account for whether $\eta_i$ remains primitive after scaling its $\LT_k$ component by $k$.  The rest of the construction proceeds as before, and we obtain a variety $\tM(\tG,G,\bfN)$ with a birational map to $(\base{\tT}\times \Tvee)/W$.

First, we note that we have the analogue of \cite[Rmk. C.2]{nakajimaCoulombBranches2021} in this case:
\begin{Lemma}\label{lem:one-k}
    If $G=G_k$, then the variety $\tM(\tG,G,\bfN)$ defined as above is isomorphic to the usual Coulomb branch for the group $G$ and representation $\bigoplus_{q}\bfN_q^{\oplus k/q}$.  
\end{Lemma}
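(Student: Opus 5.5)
Since the whole construction of $\tM(\tG,G,\bfN)$ (with symmetrizers) is determined by the abelian sheaf $\soa_{T,\bfN}$ together with the $W$-action and the root blowups, and the root data only involve $G$ and not the symmetrizer data, the plan is to compare the abelian regularity conditions of \cref{def:regularity-coefficients2} for $(G_k,\bfN)$ with those of \cref{lem:regularity-coefficients} for the ordinary representation $\bfN'=\bigoplus_q\bfN_q^{\oplus k/q}$. Once these abelian sheaves agree as subsheaves of $\K(\base{\tT})[\Tvee]$, compatibly with the $W$-action of \cref{lem:goodness-degree}, the non-abelian step of \cref{def:Coulomb-branch-nonabelian} is literally the same on both sides. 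We then conclude by \cref{thm:main-comparison} (for $B=\Ga,\Gm$), or simply by definition in the elliptic case.

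For the abelian comparison, note that when $G=G_k$ every $\gamma$ lies in the single factor $X_*(T_k)$. Hence $\langle\gamma,\varphi\rangle_{\mathbf s}=k\langle\gamma,\varphi\rangle$ for every weight $\varphi$. Fix a line $\ell_i$ with primitive generator $\eta_i$ and a divisor $D$. The symmetrizer condition reads $\nu_D(f)\ge k\langle\gamma,\eta_i\rangle\sum_{j}\frac{m_{ij}}{\wtlev_{ij}}c_{ij,D}$ over positive $m_{ij}$ (with $c_{ij,D}$ the coefficient of $D$ in $D_{\varphi_{ij}}$), and similarly for the negative side. In $\bfN'$, each weight $\varphi_{ij}$ coming from $\bfN_{\wtlev_{ij}}$ appears with multiplicity $k/\wtlev_{ij}$ and the same line $\ell_i$, so its ordinary $k^{\pm}_{i,D}$ equals $\sum_j \frac{k}{\wtlev_{ij}}|m_{ij}|c_{ij,D}$. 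Substituting into \cref{eq:valuation-formulas}, the two conditions agree term by term. This makes sense because $\wtlev_{ij}\mid k$, which holds since $\varphi_{ij}$ is nontrivial only if $\bfN_{\wtlev_{ij}}$ is acted on by $G_k$. The $W$-actions agree because $W$ permutes weights within each $\bfN_q$ and hence preserves the multiplicities $k/q$.

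The main subtlety is bookkeeping rather than substance. One must check that the definitions genuinely coincide, with no hidden change of the primitive generators $\eta_i$: since only one factor is present, scaling by $k$ is uniform, so $\eta_i$ is unaffected. One must also confirm that the normal variant of Nakajima--Weekes is what matches our normal construction (\cref{cor:normality-ours}); this is the content of their \cite[Rmk. C.2]{nakajimaCoulombBranches2021}, which we can cite for the identification on the BFN side.
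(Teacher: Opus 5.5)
Your proposal is correct and is essentially the paper's argument. The paper's proof is exactly your term-by-term comparison of \cref{lem:regularity-coefficients} with \cref{def:regularity-coefficients2}: using $\langle\gamma,\eta_i\rangle_{\mathbf{s}}=k\langle\gamma,\eta_i\rangle$, weights from $\bfN_q$ contribute $k/q$ times their ordinary amount. Your extra checks (same $W$-action, unchanged root blowups, passage to BFN via \cref{thm:main-comparison}) are harmless elaborations. The appeal to the Nakajima--Weekes remark is not needed, since the lemma only compares two instances of the paper's own construction.
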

\begin{proof}
    This is a straightforward comparison of \cref{lem:regularity-coefficients} and \cref{def:regularity-coefficients2}.  In this case $\langle \gamma,\eta_i\rangle_{\mathbf{s}}=k\langle \gamma,\eta_i\rangle$, so indeed the contribution of weights in $\bfN_q$ is exactly $k/q$ times the contribution they would make with the usual Coulomb branch construction.
\end{proof}

\begin{Theorem}\label{thm:symmetrizer-comparison}
    When $B=\Ga$, the variety $\tM(\tG,G,\bfN)$ defined as above is isomorphic to the Coulomb branch with symmetrizers defined by Nakajima and Weekes in \cite[\S C(ii)]{nakajimaCoulombBranches2021}.
\end{Theorem}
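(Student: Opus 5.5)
The proof follows the strategy of \cref{thm:main-comparison}: reduce to the generic and subgeneric loci, and compare there using results already established.

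\textbf{Step 1: affinization on both sides.} Both varieties come with birational maps to $(\base{\tT}\times \Tvee)/W$. Our $\tM(\tG,G,\bfN)$ is the relative affinization of $\tM^{\bullet}(\tG,G,\bfN)$; the proof of \cref{lem:bullet-affinization-ours} uses only that the regularity conditions sit at generic points of divisors, so it applies verbatim to \cref{def:regularity-coefficients2}. On the Nakajima--Weekes side, their normal Coulomb branch with symmetrizers is flat over $\base{G}$. The argument of \cref{lem:bullet-affinization}, or \cite[Th. 5.26]{BFN} as adapted in \cite[\S C]{nakajimaCoulombBranches2021}, then shows it is the affinization of its restriction to the preimage of generic and subgeneric points. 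Normality is exactly what makes this step legitimate, which is why we use the normal variant. It therefore suffices to identify the two spaces over subgeneric points $\lambda$.

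\textbf{Step 2: localization to $\Gl$.} On our side, \cref{lem:slice-ours} replaces $(G,\bfN)$ by $(\Gl,\Nl)$ over $\base{\tGl,\lambda}$. On the Nakajima--Weekes side, the fixed-point argument of \cref{lem:slice-BFN} goes through unchanged, since their space of triples is still built over $\Gr_G$ and carries the same torus action. Here $\Gl=\prod_k (G_k)_\lambda$ inherits the product decomposition, and $\Nl=\bigoplus_n(\bfN_n)^\lambda$ inherits the direct sum decomposition.

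\textbf{Step 3: the subgeneric case.} For subgeneric $\lambda$, the roots and weights vanishing on $\lambda$ lie in a single line. The rank-$1$ quotient through which $\Gl$ acts on $\Nl$, together with its root, therefore involves a single simple factor. Concretely, the root $\alpha$ lies in one $G_k$, and the weight line $\ell$ determines the factors it pairs with.

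If there is a root, up to the finite covers of \cref{lem:cover-ours} and its symmetrizer analogue, we can split off a torus factor, as in \cref{subsubsec:semi-simple-rank-1}, and reduce to the case where $G=G_k$ for a single $k$. In that case \cref{lem:one-k} identifies our variety with the ordinary Coulomb branch of $\bigoplus_q \bfN_q^{\oplus k/q}$. The corresponding statement \cite[Rmk. C.2]{nakajimaCoulombBranches2021} identifies theirs with the same ordinary Coulomb branch, and \cref{thm:main-comparison} supplies the comparison between ordinary Coulomb branches.

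If there is no root, we are in the abelian situation. We compare monopole operators weight space by weight space, as in \cref{thm:abelian-case}. The Nakajima--Weekes abelian monopole $r_\gamma$ has image $e^\gamma$ times the Euler class of the quotient defined by their modified lattice $t^{\gamma}$. A weight vector in $\bfN_n$ contributes the factor $\Eu(\varphi_{ij})^{\langle\gamma,\varphi_{ij}\rangle_{\mathbf s}/n}$. This matches the divisor $\sum \langle\gamma,\eta_i\rangle_{\mathbf s} D^+_{\ell_i}$ from \cref{def:regularity-coefficients2}.

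\textbf{Main obstacle.} The hardest part is this last abelian Euler class computation, which must be matched exactly against the definitions in \cite[\S C]{nakajimaCoulombBranches2021}, together with the check that the finite-cover and torus-splitting reductions respect the symmetrized pairing. In particular, one must verify that $\eta_i$ can fail to be primitive after rescaling only in ways absorbed by the covers of \cref{lem:cover-ours}. If this becomes messy, the fallback is to work weight space by weight space in $\K(\base{\tT})[\Tvee]$ throughout. In that form the subgeneric comparison is again a leading-term induction, exactly as in \cref{lem:rank-1-ours}.
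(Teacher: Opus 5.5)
Your proposal is correct and follows essentially the same route as the paper. The paper handles the abelian case by matching $e^{\gamma}\Eu(\bfN_{\mathcal{O}}/t^{\gamma}\bfN_{\mathcal{O}})$ against \cref{def:regularity-coefficients2}, and the non-abelian case by localizing to subgeneric points, where the action factors through a single $G_k$ and \cref{lem:one-k} reduces everything to the ordinary comparison. The differences are only organizational: the paper proves the abelian case globally first rather than as the rootless subgeneric case, and your worry about primitivity of the rescaled $\eta_i$ does not arise, because \cref{def:regularity-coefficients2} is stated directly as valuation conditions on weight spaces.
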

\begin{proof}
    The proof is essentially the same as in the symmetric case.  The only difference is that we consider loop groups and representations where we vary the power of the parameter $t$ that appears:
    \[G_{\mathcal{O}} = \prod_{k\in \Z_{>0}} G_k\llbracket t^{k} \rrbracket, \qquad \bfN_{\mathcal{O}}=\bigoplus_{k\in \Z_{>0}} \bfN_k\llbracket t^{k} \rrbracket.\]
    In particular, when we write $t^{\gamma}$ for $\gamma\in X_*(T)$, we scale the cocharacters of $T_k$ so that this element lies in $G_{\mathcal{O}}$.
    For $G$ abelian, we can apply the same Euler class formula, giving the monopole operator defined topologically as 
    \[r^{\BFN}_{\gamma}= e^{\gamma} \cdot \Eu(\bfN_{\mathcal{O}}/ t^{\gamma}\bfN_{\mathcal{O}})=e^{\gamma} \cdot \prod_{i,j} \Eu(\varphi_{ij})^{\langle \gamma, \eta_i \rangle_{\mathbf{s}}/ \wtlev_{ij} }.\]
    This satisfies the conditions in \cref{def:regularity-coefficients2}, so we obtain a map from the Coulomb branch with symmetrizers defined by Nakajima and Weekes to $\tM(\tG,G,\bfN)$.  The same localization argument as in the symmetric case shows that this is an isomorphism after pullback to $\base{\tT}$, so it is an isomorphism.

    The extension to the non-abelian case proceeds exactly as before.  Since every root of $G$ must factor through the projection to $G_k$ for some $k$, for a generic cocharacter on which $\alpha$ vanishes, the action on $\Nl$ factors through $G_k$ for some $k$.  Thus by \cref{lem:one-k}, we can reduce to the usual Coulomb branch after passing to an open subset of $\base{\tG}$ where we remove $D_{\ell_i}$ for all other lines of weights and $D_{\alpha'}$ for all other roots.  
\end{proof}

For the rest of the paper, we will let $\tM$ denote this Coulomb branch with symmetrizers.  In most later applications, however, we return to the ordinary case $G=G_1$ and $\bfN=\bfN_1$, so the additional symmetrizers disappear.  Note also that \cref{lem:slice-ours} and \cref{lem:cover-ours} continue to hold in the setting with symmetrizers.

\subsection{Comparison with the universal centralizer}

One interesting result of this framework is that it clarifies the relationship between the $K$-theoretic Coulomb branch with trivial matter and the universal centralizer.  In \cite{bezrukavnikovEquivariantHomology2005}, Bezrukavnikov, Finkelberg, and Mirkovi\'c show that 
pure Coulomb branches are closely related to the universal centralizer of a simply-laced and simply-connected group $G_{\sc}$, but stating this result correctly requires some care about adjoint and simply-connected groups.  
Since we can consider the variety $\tM(\tG,G,\mathbf{0})$ for $\tG,G$ being $G_{\sc}$ or $G_{\ad}$ or any intermediate covers, we can state these results in a more convenient way.  Note that because $G$ is simply-laced, we have the coincidence that for any quotient $G$ of $G_{\sc}$, the Langlands dual $G^{\vee}$ is a quotient of $G_{\sc}$ as well, with the Cartan matrix inducing an isomorphism $(G_{\ad})^{\vee}\cong G_{\sc}$. 

Thus, we can consider the universal centralizer variety \[\mathfrak{Z}^{G}_{\tG} = \{ (g,\gamma) \in G \times \tG \mid \Ad_g(\gamma)=\gamma, \gamma \text{ regular} \}/\!\!/ G_{\ad}.\]
\begin{Lemma}\label{lem:universal-centralizer}
    If $G$ is simply-laced and simply-connected, then $\tM(\tG,G^{\vee},\mathbf{0})$ is isomorphic to the universal centralizer $\mathfrak{Z}^{G}_{\tG}$.
\end{Lemma}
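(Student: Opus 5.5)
We would prove the lemma by comparing the two sides as normal varieties birational to $(\base{\tT}\times \Tvee)/W$, with $B=\Gm$, and checking that they agree in codimension one. The first task is to match the tori. For $G^{\vee}$ with maximal torus $T^{\vee}_{G}$, the dual torus appearing in the construction is $(T^\vee_G)^{\vee}=T_G$, the maximal torus of $G$. With $B=\Gm$, the base $\base{\tT}$ is the maximal torus $\tT$ of $\tG$. So $\tM(\tG,G^{\vee},\mathbf{0})$ is birational to $(\tT\times T)/W$.

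On the universal centralizer side, there is a map $(\tT^{\mathrm{reg}}\times T)/W\to \mathfrak{Z}^G_{\tG}$, sending $(\gamma,g)$ to the class of $(g,\gamma)$. It is an open embedding onto the locus over regular semisimple $\gamma$. We need the identification of the tori of $G$ and $\tG$ through the Cartan matrix, using simply-lacedness, so that the Weyl actions agree. Both varieties are normal: the left side by \cref{cor:normality-ours}, and the universal centralizer is smooth. Both are affine and flat over $\tT/W=\base{\tG}$. It therefore suffices to show that the two birational models share the same codimension-one valuations over the root divisors $D_\alpha$.

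By \cref{lem:bullet-affinization-ours} and the analogous statement for $\mathfrak{Z}$ (smooth, hence $S_2$ with complement of codimension two), we may work étale locally over subgeneric points of $\tT/W$. By \cref{lem:slice-ours}, our side reduces there to the rank-one Levi $\tGl$, acting on $\mathbf{0}$. On the centralizer side, we would use the standard fact that near a subregular-semisimple $\gamma$ whose centralizer is a Levi $L_\alpha$, the universal centralizer is étale locally the universal centralizer of $L_\alpha$. Both sides then reduce to a group of semisimple rank one: the root $SL(2)$ or $PGL(2)$ inside $G$, paired with its counterpart inside $G^{\vee}$, times a central torus. As in \cref{subsubsec:semi-simple-rank-1}, finite covers can be handled via \cref{lem:cover-ours}.

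In rank one we compare directly. For $m=0$, \cref{ex:PGL2-running,ex:SL2-running} computed $\nonabbl{G,\bfN}$ for $G=PGL(2)$ and $G=SL(2)$ in the rational case. The $K$-theoretic versions are the same computation: the blowup adjoins $(z-z^{-1})/(w-w^{-1})$, respectively $(z-1)/(w-1)$, where $w$ is the coordinate on $\base{\tT}$. These are exactly the coordinate rings of the multiplicative universal centralizers of $SL(2)$ and $PGL(2)$. Concretely, for $g$ commuting with a regular $\gamma$ near $\pm 1$ in the Borel, the off-diagonal entry ratio gives the extra coordinate.

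The main obstacle is bookkeeping Langlands duality correctly. The $\epsilon_\alpha$ rule (whether $\alpha/2$ is a character of $T$) must match the dichotomy in \cref{prop:divisors} for $\mathfrak{Z}$, namely one versus two divisors over $D_\alpha$ according to whether the centralizer of a subregular element has two components. That dichotomy is governed by whether $\alpha^\vee/2$ is a cocharacter. The simply-laced hypothesis is what makes these conditions coincide under the identification of $G^{\vee}$ with a quotient of $G_{\sc}$. We would verify this case by case for $SL(2)$ and $PGL(2)$ in both slots of $(\tG,G)$, and then conclude by normality.
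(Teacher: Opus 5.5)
Your proposal is a valid alternative, but it is not the paper's route. The paper's proof is essentially a citation. \cite[Prop.~2.8]{bezrukavnikovEquivariantHomology2005} identifies $\mathfrak{Z}^G_{\tG}$ globally with the $W$-quotient of the blowup of $\tT\times T$ along the union of the loci $\{\alpha=1\}\times\{\alpha=1\}$, with the strict transforms of $D_\alpha\times T$ removed. Since this is smooth, its functions are the $W$-invariant rational functions regular off $D_\alpha\times T$ and along the exceptional divisors. These are verbatim the conditions of Definition~\ref{def:Coulomb-branch-nonabelian} for trivial matter. Here $\epsilon_\alpha=1$, because roots are primitive in the root lattice $X^*(T_{G^\vee})$, so $J_\alpha$ is just the reduced ideal of $D_\alpha\times D_{\alpha,1}$.

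You instead re-derive the codimension-one content of that blowup description. You use Hartogs on both normal sides, an \'etale slice at subregular semisimple elements reducing to the universal centralizer of the Levi $L_\alpha$, and an explicit semisimple-rank-one check. This is more self-contained and would survive in settings with no global blowup description, but it costs bookkeeping that the citation absorbs. For instance, Lemma~\ref{lem:cover-ours} only treats covers on the Coulomb side. So for Levis like $GL(2)\subset SL(3)$, which are not products of $SL(2)$ with a torus, you need either an analogue for universal centralizers or a direct computation in $L_\alpha$. In that computation, over the generic point of $D_\alpha$ the centralizer is $Z(L_\alpha)U_\alpha$ with $Z(L_\alpha)=\ker\alpha$.

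That description also corrects your last paragraph. The divisor count matches by Langlands duality alone: $\ker\alpha$ is disconnected exactly when $\alpha/2\in X^*(T)=X_*(T_{G^\vee})$, which is condition (3)(a) of Proposition~\ref{prop:divisors} for the root of $G^\vee$ whose coroot is $\alpha$. Simply-lacedness is needed elsewhere. It matches the root divisors of $G^\vee$ on $\tT$ with the locus where $\gamma$ fails to be regular semisimple for $G$, so that the two bases and the blowup centres agree $W$-equivariantly.
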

\begin{proof}
    By \cite[Prop. 2.8]{bezrukavnikovEquivariantHomology2005}, the universal centralizer $\mathfrak{Z}^{G}_{\tG}$ is identified with the blowup $\mathfrak{B}^{G}_{\tG}$ of $\tT\times T$ along the subscheme defined by the union of the subschemes defined by $(\alpha(x),\alpha(x))=(1,1)$ in $\tT\times T$ for all roots $\alpha$, with the strict transform of the divisors $D_{\alpha}\times T$ removed.  Since $\mathfrak{Z}^{G}_{\tG}$ is smooth, it is normal, and its ring of functions is the ring of $W$-invariant rational functions on $\tT\times T$ which are regular away from $D_{\alpha}\times T$ and are regular on the exceptional divisor of the blowup lying over each component of $D_{\alpha}\times D_{\alpha}$.
    
    On the other hand, the space $\nonabbl{\tG,\mathbf{0},\alpha}$ is defined by the blowup at one of these subschemes $D_{\alpha}\times D_{\alpha}$ with the same strict transform removed.  Thus, by \cref{def:Coulomb-branch-nonabelian}, the function ring of  $\tM(\tG,G^{\vee},\mathbf{0})$ is defined by the same regularity conditions, and we obtain the desired isomorphism.
\end{proof}

Note that this allows us to see the difference between the universal centralizer for a non-simply-laced group $G$ and the $K$-theoretic Coulomb branch for $G^{\vee}$ more clearly.  These are defined by blowups of $T\times T$ and $T^{\vee}\times T$ respectively and there is no natural isomorphism between $T$ and $T^{\vee}$ that matches roots, even when we play the game above of passing to a cover or a quotient.  We can see this most clearly by considering the isogeny $T_{\sc} \to T^{\vee}_{\ad}$, where a root pulls back to a multiple of a root, but roots with different lengths pull back to different multiples, so the varieties do not match.   

\section{Functoriality}
\label{sec:functoriality}
\subsection{A preparatory lemma}

The geometric description of $\tM(\tG,G,\bfN)$ in \cref{sec:new-construction} allows us to prove the functoriality results of \cite{gannonFunctorialityCoulomb2025} in full generality, avoiding the technical gluability hypothesis on the homomorphism $H\to G$ that was required there.  The key point is that the Coulomb branch is now described by explicit regularity conditions along the weight and root divisors, together with the slice and cover results proved in \cref{subsubsec:slices-codim-1,subsubsec:finite-covers}.

Recall that $\tM(G)=\tM(\tG,G,\mathbf{0})$ is the Coulomb branch with trivial matter, and write $\soa_G=\soa_{G,\mathbf{0}}$ for its sheaf of algebras on $\base{\tG}$, as well as $\mathcal{O}=\mathcal{O}_{\base{\tG}}$ for the structure sheaf of $\base{\tG}$. 
\notation{$\tM(G),\soa_G$}{Coulomb branch with trivial matter and its sheaf of algebras}

The group scheme $\tM(G)$ acts on $\tM(\tG,G,\bfN)$, which is expressed algebraically by a comodule map of sheaves
\[\soa_{G,\bfN}\to \soa_{G,\bfN}\otimes_{\mathcal{O}}\soa_G.\]
We will need to consider group cohomology in the category of $\tM(G)$-modules, that is, the category of coherent sheaves $\mathcal{F}$ on $\base{\tG}$ equipped with a comodule structure $\mathcal{F}\to \mathcal{F}\otimes_{\mathcal{O}}\soa_G$.  We calculate this cohomology using the usual Cartan-Eilenberg complex
\[ \mathcal{F} \to \mathcal{F}\otimes_{\mathcal{O}}\soa_G\to \mathcal{F}\otimes_{\mathcal{O}}\soa_G\otimes_{\mathcal{O}}\soa_G\to \cdots
\]  
The first step is a vanishing result for this group cohomology, which is what will later show that the categorical quotient commutes with base change:
\begin{Lemma}\label{lem:vanishing-cohomology}
    For any $\tM(G)$-space $X$ which is affine as a $\base{\tG}$-scheme, the algebraic group cohomology sheaf on $\base{\tG}$ satisfies \[H^i_{\mathcal{O}}(\tM(G),\cO(\tM(\tG,G,\bfN)\times_{\base{\tG}} X))=0\text{ for all } i>0.\] 
\end{Lemma}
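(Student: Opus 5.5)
The cohomology sheaves are quasi-coherent, and their formation is compatible with flat base change on $\base{\tG}$. The plan is therefore to prove vanishing after restricting to a suitable open cover or flat cover of $\base{\tG}$, and then to use the slice description to reduce to a situation where the comodule is visibly cofree, or induced.

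The basic observation is this. Over the open locus of generic points in $\base{\tG}$, $\tM(G)$ is just the torus $\Tvee$, after passing to the $W$-cover $\base{\tT}\to\base{\tG}$, which is étale there. The sheaf $\soa_{G,\bfN}$ is a torsor for it, namely $\base{\tT}\times\Tvee$. For a torsor $P$ over a group scheme $\mathcal{G}$ and any affine $\mathcal{G}$-space $X$, the coordinate ring of $P\times X$ is isomorphic, étale locally, to $\cO(\mathcal{G})\otimes\cO(X)$ with coaction on the first factor. This is an induced comodule, so its higher cohomology vanishes by the usual Shapiro-type contraction of the Cartan–Eilenberg complex. This argument is flat-local, so it applies wherever $\tM(\tG,G,\bfN)$ is a $\tM(G)$-torsor.

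The general case is harder. The steps I would follow are these.

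\textbf{Step 1.} Show that $\tM(\tG,G,\bfN)$ is flat over $\base{\tG}$, and likewise $\tM(G)$. This follows from the weight-space decomposition: in the abelian case each weight space is a line bundle. Then note that $H^i_{\mathcal{O}}$ is torsion-free by an Euler-class or localization argument. I would first try to show that every cohomology sheaf is supported on the discriminant, since it vanishes generically by the torsor argument. A sheaf supported on a proper closed subset is then killed by pushing the computation to $\tM^{\bullet}$.

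\textbf{Step 2.} Use \cref{lem:bullet-affinization-ours}: functions on $\tM(\tG,G,\bfN)$ are determined in codimension one, so it suffices to work over generic and subgeneric points. By \cref{lem:slice-ours} and \cref{lem:cover-ours}, this reduces the problem to $\Gl$ of semisimple rank $\le 1$ acting on $\Nl$ via a rank-one quotient. There I would aim to show directly that $\tM(\tGl,\Gl,\Nl)$ is, étale locally on the base, a torsor for $\tM(\Gl)$ on an open subset whose complement has codimension $2$ in the total space. Here the explicit descriptions in \cref{ex:PGL2-running,ex:SL2-running} identify the fibers as $\mathbb{A}^1\cup\mathbb{A}^1$ acted on by the group scheme (a regular centralizer, with fiber $\Ga\times$ finite, or similar). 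I would then compare this with the $\tM(G)$-equivariant structure of a partial compactification.

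\textbf{Step 3 (main obstacle).} Over the discriminant, $\tM(\tG,G,\bfN)$ is not a torsor: the fiber has several orbits. So the induced-comodule trick does not apply literally. The approach I would try first is to filter $\cO(\tM(\tG,G,\bfN))$ by $\Tvee$-weights or dominance order, using the monopole filtration of \cite[\S 6(i)]{BFN} and the basis from \cref{lem:rank-1-BFN}. The aim is to show that the associated graded is a sum of comodules induced from $\tM(G)$ or its stabilizer subgroup schemes, each with vanishing higher cohomology, i.e. "good filtrations" in the sense of reductive group cohomology. A spectral sequence would then give the vanishing. Tensoring with $\cO(X)$ preserves this, because the filtration pieces are flat over $\base{\tG}$ and the tensor identity $\mathrm{Ind}(M)\otimes N\cong\mathrm{Ind}(M\otimes N)$ holds. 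Making this filtration argument work uniformly in all three choices of $B$ is the step I expect to require the most care.
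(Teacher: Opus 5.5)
\textbf{Steps 1--2 do not reduce the problem.} You assert that $H^i_{\mathcal O}$ is torsion-free ``by an Euler-class or localization argument'', but you give no argument. Combined with generic vanishing, torsion-freeness is equivalent to the lemma itself. The terms of the Cartan--Eilenberg complex are torsion-free (at least when $X$ is flat), but the cohomology of such a complex need not be: already $\cO\xrightarrow{f}\cO$ has torsion cokernel.

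Likewise, \cref{lem:bullet-affinization-ours} is a statement about $H^0$, that is, about functions, which a normal variety determines in codimension one. It says nothing about higher cohomology sheaves, and these can be supported in codimension $\geq 2$ on $\base{\tG}$. A Koszul complex on two independent functions is a complex of free sheaves whose top cohomology is supported at a point. Restricting to $\tM^{\bullet}$ does not kill such sheaves; it simply does not see them. So checking vanishing over generic and subgeneric points, whether via torsors on open subsets with codimension-two complement or otherwise, does not prove the lemma.

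\textbf{Step 3 is where the proof actually lives, but the key idea is missing.} You are pointed in the right direction: use the dominant-coweight filtration of \cite[Prop.~6.1]{BFN}, prove vanishing for the associated graded, and lift back by semicontinuity or a spectral sequence. As written, though, the step is only a hope, since nothing in your plan explains why the graded pieces should be acyclic.

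The paper degenerates the group scheme together with the comodule, working over $\gr\soa_G$. It then shows that $\gr\soa_{G,\bfN}\cong\gr\soa_G$ as $\gr\soa_G$-comodules. On the degree-$\mu$ piece, a dressed monopole operator with matter is sent to the same operator without matter. This map is multiplication by the rational section $\epsilon_{\bfN,\mu}=\prod_{\langle\mu,\varphi_i\rangle>0}\varphi_i^{\vee}(\Eu(\varphi_i))^{\langle\mu,\varphi_i\rangle}$ of the commutative group scheme $\tM(G)$, so it commutes with the coaction.

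Consequently the associated graded, tensored with $\K[X]$, is the regular (cofree) comodule. Its higher cohomology vanishes by the standard contracting homotopy on the bar complex. No decomposition into comodules induced from stabilizer subgroup schemes, and no good-filtration theory, is needed.
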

\begin{proof}
    Since the claim is local on $\base{\tG}$, it is enough to work on an affine open subset $U\subset \base{\tG}$.  We can then replace $\soa_G(U)$ and $\soa_{G,\bfN}(U)$ by their associated graded for the filtration induced by \cite[Prop. 6.1]{BFN}.  Either by comparison with the abelian case or directly from the geometric definition, one checks that the group structure on $\tM(G)$ and the comodule structure on $\soa_{G,\bfN}$ are compatible with this filtration, in the sense that the image of $\soa_{G,\bfN}(U)_{\leq \lambda}$ under comultiplication lies in $\soa_{G,\bfN}(U)_{\leq \lambda}\otimes \soa_G(U)_{\leq \lambda}$.  

    We claim that we have an isomorphism $\operatorname{gr}\soa_{G,\bfN}(U)\cong \operatorname{gr}\soa_G(U)$ as a $\operatorname{gr}(\soa_G(U))$-comodule.  This isomorphism is induced by sending dressed monopole operators for a coweight $\mu$ to the same monopole operators without matter.  This commutes with the comodule action since it is induced on elements of degree $\mu$ by multiplication by a rational section of the commutative group scheme $\tM(G)$, which depends on $\mu$:
    \[\epsilon_{\mathbf{N},\mu}=\prod_{\langle \mu, \varphi_i\rangle >0} \varphi_i^{\vee}(\Eu(\varphi_i))^{\langle \mu, \varphi_i\rangle }.\]

    Since the higher group cohomology of $\cO(\tM(G)_U\times_U X_U)$ vanishes by the usual homotopy on the bar complex, the same is true of $H^i(\operatorname{gr}\tM(G)_U,\operatorname{gr}\soa_{G,\bfN}(U)\otimes_{\cO(U)}\K[X_U])$.  By semicontinuity, this establishes the same vanishing before passing to the associated graded, as desired.
\end{proof}

With this vanishing in hand, we can see that the categorical quotient by $\tM(G)$ is compatible with base change:
\begin{Lemma}\label{lem:base-change}
    Let $X$ be a $\tM(G)$-space that is affine over $\base{\tG}$, and let $S$ be an arbitrary scheme with a morphism $\kappa\colon S \to \base{\tG}$.  Then we have an isomorphism 
    \[(\tM(\tG,G,\bfN)\times_{\base{\tG}}^{\tM(G)} X)_S\cong \tM(\tG,G,\bfN)_S\times_{S}^{\tM(G)_S} X_S.\]
\end{Lemma}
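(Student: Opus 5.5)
The plan is to write the categorical quotient as the equalizer (invariants) of a coaction and to show that forming invariants commutes with base change, using \cref{lem:vanishing-cohomology}. Since both sides are affine over their bases, the question is local on $\base{\tG}$ and on $S$. We may therefore take $\base{\tG}\supset U=\Spec R$ and $S=\Spec R'$ affine with $R\to R'$. Put $M=\soa_{G,\bfN}(U)\otimes_R \K[X_U]$. This is a comodule over the Hopf algebra $\soa_G(U)$ via the diagonal coaction. By definition, the left-hand side of the statement is $\Spec$ of $H^0(\tM(G)_U, M)\otimes_R R'$. The right-hand side is $\Spec$ of the invariants of $M\otimes_R R'$ under the Hopf algebra $\soa_G(U)\otimes_R R'$; this uses that relative spectra and fiber products commute with base change, so that $\tM(\tG,G,\bfN)_S\times_S X_S$ has coordinate ring $M\otimes_R R'$. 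So we need to show that the natural map $H^0(M)\otimes_R R'\to H^0(M\otimes_R R')$ is an isomorphism.

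To do this, use the Cartan--Eilenberg complex
\[C^\bullet(M)\colon\; M\to M\otimes_R\soa_G(U)\to M\otimes_R\soa_G(U)^{\otimes 2}\to\cdots.\]
Base-changing it to $R'$ gives the Cartan--Eilenberg complex computing $H^\bullet(\tM(G)_S, M\otimes_R R')$. The terms of $C^\bullet(M)$ are flat over $R$. For $\soa_G(U)$ this follows from flatness of $\tM(G)\to\base{\tG}$, which we get from the free basis of dressed monopole operators (\cref{lem:rank-1-BFN} combined with \cref{thm:main-comparison}, or the filtration of \cite[Prop.~6.1]{BFN}). For $\soa_{G,\bfN}(U)$ it follows the same way. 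For $\K[X_U]$ we may need flatness of $X$; if it is not available, we handle it by the tensor identity, described next. By \cref{lem:vanishing-cohomology}, $C^\bullet(M)$ is exact in positive degrees. Hence it is a flat resolution of $H^0(M)$ up to a shift, read as a bounded-below coresolution. Each truncation $0\to H^0\to C^0\to Z^1\to0$, $0\to Z^i\to C^i\to Z^{i+1}\to 0$ has flat middle terms. Working downward, or using that every $Z^i$ is a direct limit of finite pieces, we conclude that $H^0(M)$ and all the $Z^i$ are flat, and that tensoring with $R'$ keeps the sequences exact. In particular $H^0(M)\otimes_R R'\cong H^0(M\otimes_R R')$.

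The main obstacle is the flatness needed to run this argument, especially when $X$ is not flat over $\base{\tG}$. There is also the fact that the cohomology vanishing in \cref{lem:vanishing-cohomology} is proved over $\base{\tG}$, while we really want it compatible with arbitrary base change. I would address both with the argument in the proof of \cref{lem:vanishing-cohomology}. After passing to associated graded, $\operatorname{gr}\soa_{G,\bfN}(U)\cong\operatorname{gr}\soa_G(U)$ as comodules. So $\tM(\tG,G,\bfN)$ behaves like an induced, torsor-like comodule over $\tM(G)$. For an induced comodule $\soa_G\otimes N$, the tensor identity gives $(\soa_G\otimes N)^{\tM(G)}\cong N$, and this holds after any base change, with no flatness required of $N=\K[X_U]$. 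I would therefore first prove the base-change isomorphism on associated graded pieces, where it follows from this identity, and then lift it through the exhaustive filtration by coweights, whose graded pieces are free over $R$. A five-lemma induction on the filtration degree, together with compatibility of direct limits with $\otimes_R R'$, should then give the isomorphism before passing to associated graded.
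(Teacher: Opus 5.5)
Your main route is the paper's own mechanism at chain level. The paper computes $\mathbb{L}\kappa^*\,\mathbb{R}\Hom_{\tM(G)}(\mathcal{O}_{\base{\tG}},-)$ by two spectral sequences, one collapsing by \cref{lem:vanishing-cohomology} and the other by Tor-vanishing of the coordinate ring, and then compares degree $0$.

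The gap is the step where you conclude that $H^0(M)$ and all the cycles $Z^i$ are flat. With $C^i$ flat, the sequences $0\to Z^i\to C^i\to Z^{i+1}\to 0$ only transport flatness downward, from $Z^{i+1}$ to $Z^i$. An unbounded complex has no top degree from which to start, and writing $Z^i$ as a direct limit of finite pieces does not help. The conclusion is in fact false over a general base. Over $A=\K[\epsilon]/(\epsilon^2)$, the complex $A\xrightarrow{\epsilon}A\xrightarrow{\epsilon}A\xrightarrow{\epsilon}\cdots$ of free modules is exact in positive degrees. But $H^0=\epsilon A\cong\K$ is not flat, and the natural map $H^0\otimes_A\K\to H^0(C^\bullet\otimes_A\K)$ is the zero map $\K\to\K$.

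What is missing is finite Tor-dimension of the base. Suppose $R$ is regular of dimension $d$, as for $\LtT/W$ (an affine space) and as the paper uses elsewhere for $\base{\tG}$. Put $Z^0=H^0(M)$. Dimension shifting then gives $\operatorname{Tor}^R_j(Z^i,N)\cong\operatorname{Tor}^R_{j+d}(Z^{i+d},N)=0$ for all $j\geq 1$ and all $N$. So every $Z^i$ is flat, $C^\bullet\otimes_R R'$ stays exact in positive degrees, and $H^0(M)\otimes_R R'\cong H^0(M\otimes_R R')$. This also disposes of your worry about cohomology vanishing after base change. The paper's argument uses the same finiteness tacitly, since it asserts that its functor lands in $D^b\mathsf{Coh}(S)$.

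Your fallback for non-flat $X$ does not work as set up. The isomorphism $\operatorname{gr}\soa_{G,\bfN}(U)\cong\operatorname{gr}\soa_G(U)$ is an isomorphism of comodules over the different Hopf algebra $\operatorname{gr}\soa_G(U)$. An arbitrary $\tM(G)$-space $X$ has no filtration compatible with its coaction, so there is no graded comodule $\operatorname{gr}M$ to which the tensor identity could apply.

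Suppose instead you filter $M$ by the genuine $\soa_G(U)$-subcomodules $F_{\leq\lambda}\soa_{G,\bfN}(U)\otimes_R\K[X_U]$. The subquotients $\operatorname{gr}_\lambda\soa_{G,\bfN}(U)\otimes_R\K[X_U]$ have first factor of finite rank over $R$, so they are not induced. Worse, the piece indexed by the zero coweight is just $\K[X_U]$ with its original coaction. Your five-lemma induction would therefore have to begin with base change for $\K[X_U]^{\tM(G)}$, which nothing in your argument, and nothing in \cref{lem:vanishing-cohomology}, controls. The \emph{induced} behaviour is a property of $\soa_{G,\bfN}$ as a whole, visible only through the cohomology vanishing, and not a property of its filtration steps.

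The paper's proof does not handle non-flat $X$ either. Its second spectral sequence uses $\mathbb{L}_q\kappa^*\mathcal{O}(\tM(\tG,G,\bfN)\times_{\base{\tG}}X)=0$ for $q\neq 0$, which amounts to Tor-independence of $X$ and $S$ over $\base{\tG}$. So, with the dimension-shifting repair, your Cartan--Eilenberg argument proves exactly what the paper's argument proves. You should state the flatness (or Tor-independence) hypothesis on $X$ explicitly rather than try to remove it by the filtration.
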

\begin{proof}
Consider the functor from $\tM(G)$-equivariant sheaves on $\tM(\tG,G,\bfN)\times_{\base{\tG}} X$ to $D^b\mathsf{Coh}(S)$ given by 
\[A(\mathcal{F})=\mathbb{L}\kappa^*(\mathbb{R}\Hom_{\tM(G)}(\cO_{\base{\tG}}, \pi_*\mathcal{F}))\]
Letting $\bar{\kappa}$ and $\bar\pi$ be the base changes of these maps, this is the same as \[A(\mathcal{F})=\mathbb{R}\Hom_{\tM(G)_S}(\cO_S, \bar{\pi}_*\mathbb{L}\bar{\kappa}^*(\mathcal{F})).\]
These two presentations yield two spectral sequences.  In the former case, we obtain the $E^2$ page
\[\mathbb{L}_p\kappa^*(H^q(\tM(G),\cO(\tM(\tG,G,\bfN)\times_{\base{\tG}} X)))=\begin{cases}
    \mathbb{L}_p\kappa^*(\cO(\tM(\tG,G,\bfN)\times^{\tM(G)}_{\base{\tG}} X))& q=0\\
    0 & q\neq 0 
\end{cases}\]
with these terms in homological degree $q-p$.
In the latter case, we obtain
\[(H^p(\tM(G)_S,\mathbb{L}_q\kappa^*\cO(\tM(\tG,G,\bfN)\times_{\base{\tG}} X)))=\begin{cases} H^p(\tM(G)_S,\cO(\tM(\tG,G,\bfN)_S\times_{S} X_S)) & q=0\\
0 & q\neq 0
\end{cases}\]
with these terms in homological degree $p-q$.
Comparing the terms in homological degree $0$ gives the desired isomorphism.  
\end{proof}

\subsection{General functoriality}
Let $\tH, H$ be another pair of reductive groups satisfying the conditions of \cref{subsec:setup-and-notation}. 
Let $\mapFromHToG: \tH \to \tG$ be a map of reductive groups inducing a map $H\to G$; we will occasionally refer to the induced map of pairs of algebraic groups as\notation{$\mapFromHToG$}{Map of pairs used in the functoriality comparison}\begin{equation}\label{eq:map-of-pairs}(\tH, H) \xrightarrow{\mapFromHToG} (\tG, G).\end{equation}
Our goal is to compare the Coulomb branch for $(\tH,H)$ with the base change of $\tM(\tG,G,\bfN)$ along $\overline{\mapFromHToG}$.  The slice and cover results of \cref{subsubsec:slices-codim-1,subsubsec:finite-covers} reduce this comparison to the same local models along a single weight or root divisor that controlled the construction of the Coulomb branch itself.  The proof has two steps: we first treat the massive case, where codimension-$1$ analysis is sufficient, and then deduce the general case using a deformation argument.  

We choose a maximal torus $\tT_{H}\subset \tH$ and let $T_H=H\cap \tT_{H}$.  To avoid confusion, we denote the tori in $\tG,G$ chosen in \cite[\S 1.1]{gannonFunctorialityCoulomb2025} by $\tT_G, T_G$.  Without loss of generality, we can assume that $\mapFromHToG(\tT_{H})\subset \tT_G$. We have an induced map of affine varieties \(\overline{\mapFromHToG}\colon \base{\tH}  \xrightarrow{} \base{\tG}\).  Following \cite{gannonFunctorialityCoulomb2025}, this also induces a map  
\begin{equation}\label{eq:definition-of-tildef}\tilde{f}_{\mapFromHToG, \mathbf{N}}: \tM(H) \times_{\base{\tG}}^{\tM(G)} \tM(\tG,G, \mathbf{N}) \cong \tM(H) \times_{\base{\tH}}^{\tM(G)_{\base{\tH}}} \tM(\tG,G, \mathbf{N})_{\base{\tH}} \to \tM(\tH,H, \mathbf{N}).\end{equation}  We claim that this map is an isomorphism.\notation{$\tilde{f}_{\mapFromHToG,\mathbf{N}}$}{Functoriality morphism comparing reduction and restriction of groups}

 \begin{Theorem} \label{thm:restriction-groups-hamiltonian-reduction}
For every map of pairs as in \eqref{eq:map-of-pairs}, the map $\tilde{f}_{\mapFromHToG, \mathbf{N}}$ is an isomorphism. 
\end{Theorem}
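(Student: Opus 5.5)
The plan follows the two-step structure announced before the statement: first treat the \emph{massive} case, where the flavor torus is large enough that codimension-$1$ analysis suffices, and then deduce the general case by deformation.

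\textbf{Step 1: reduction to generic and subgeneric points.} Both sides of $\tilde{f}_{\mapFromHToG,\mathbf{N}}$ are affine over $\base{\tH}$, and the target $\tM(\tH,H,\bfN)$ is normal by \cref{cor:normality-ours}. For the source, I would show it is also the affinization of its restriction to the preimage of the generic and subgeneric locus, as in \cref{lem:bullet-affinization-ours}. By \cref{lem:base-change}, the categorical quotient commutes with base change along $\kappa\colon S\to\base{\tH}$. Taking $S$ to be an open set or an \'etale neighborhood of a (sub)generic point, both sides localize compatibly.

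It would then suffice to prove two things:
\begin{itemize}
\item[(a)] the source is normal, or at least $S_2$, with no new divisors;
\item[(b)] $\tilde{f}$ is an isomorphism over every point $b\in\base{\tH}$ at which $b$ is generic or subgeneric for the pair $(H,\bfN)$.
\end{itemize}

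\textbf{Step 2: the massive case.} Assume first that $\tT_H$ acts on $\bfN$ with no zero weights and that the image of $\base{\tT_H}$ meets the weight and root divisors of $G$ transversally at generic points. Then a subgeneric $\lambda$ for $H$ has image $\overline{\mapFromHToG}(\lambda)$ lying on at most one root or weight line of $G$.

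Using \cref{lem:slice-ours} on both sides, together with the product decomposition and \cref{lem:cover-ours}, I would reduce to the cases where $G_\lambda$ has semisimple rank $\le 1$ and $H_\lambda$ is a torus or of semisimple rank $1$. In the abelian case, $\tilde f$ is computed directly from \cref{lem:regularity-coefficients}. The quotient of $\tM(T_G,\bfN)$ by $\tM(T_G)$, restricted to $\tT_H$, multiplies the monopole divisors $E(\gamma)$ in a way compatible with pulling back $\Eu(\varphi)$ along $\overline{\mapFromHToG}$. The weight spaces then match those of $\tM(\tH,H,\bfN)$, each being $\mathcal{O}(-E(\gamma))$ for $\gamma\in X_*(T_H)$.

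In the rank-$1$ cases, I would use the explicit descriptions in \cref{ex:PGL2-running,ex:SL2-running}: the generators $r_\gamma(g)$ on the $G$ side map to the corresponding generators on the $H$ side. Normality of the source comes from \cref{lem:vanishing-cohomology}: invariants of a flat family with vanishing higher cohomology are flat over the base. The codimension-$2$ complement of the (sub)generic locus then lets Hartogs extension apply.

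\textbf{Step 3: the general case.} Enlarge $\tH$ to $\tH\times\Gm^r$, acting on $\bfN$ by independent scalings of the weight spaces, and similarly for $\tG$. This gives a massive pair over a base $\base{\tH}\times B^r$, and the original situation is its fiber over $0\in B^r$. Applying \cref{lem:base-change} to $S=\base{\tH}\times\{0\}$, the source specializes correctly. The target also specializes correctly, since $\tM(\tH\times\Gm^r,H,\bfN)\to B^r$ is flat and its fiber at $0$ is $\tM(\tH,H,\bfN)$; this is base change in equivariant cohomology, already used in \cref{lem:cover-BFN}.

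I expect the main obstacle to be this last compatibility for our geometric definition: that fibers of the flavor-deformed $\tM$ are the undeformed $\tM$. Through \cref{thm:main-comparison} this holds for $B=\Ga,\Gm$. For $B=\elli$ I would attempt a direct argument from the weight-space description, showing that each $\mathcal{O}(-E(\gamma))$ restricts correctly. This is the step most likely to need additional care.
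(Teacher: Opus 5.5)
Your overall architecture matches the paper's: first a massive case proved by a codimension-one check over $\base{\tH}$, then specialization of a flavor-deformed isomorphism via \cref{lem:base-change}. Your Step 1 device is a workable substitute for the paper's \cref{lem:normality-massive} plus Zariski's Main Theorem. That device is flatness of the invariant ring over the normal base (from \cref{lem:vanishing-cohomology}), followed by Hartogs extension over the codimension-two complement of the (sub)generic locus, as in \cref{lem:bullet-affinization}. Note that it is flatness, not normality, that you actually get and need there. The specialization step you flag as the main obstacle is handled the same way in the paper.

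The genuine gap is that Step 3 cannot produce the hypothesis of Step 2, namely that an $H$-subgeneric point maps to a point of $\base{\tG}$ lying on at most one root or weight line of $G$.
\begin{itemize}
\item Flavor factors act trivially on $\mathfrak{g}$, so no flavor deformation moves the root divisors of $G$. If a root $\alpha$ of $G$ vanishes on $\mapFromHToG(T_H)$ (e.g.\ $H$ trivial or central, or any subtorus of a root hyperplane), the whole image of $\base{\tH\times\Gm^r}$ lies in $D_\alpha$.
\item For $H=\{1\}$, every point maps onto all root divisors of $G$. So even at $H$-generic points, neither your abelian computation nor the reduction to $G_\lambda$ of semisimple rank $\le 1$ is available.
\item On the weight side, scaling $T$-weight spaces independently does not commute with $\tG$ unless $G$ is a torus. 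Scaling $\tG$-stable summands instead cannot separate two weights of one irreducible summand that agree on $T_H$. An example is $t\mapsto\mathrm{diag}(t,t,t^{-2})$ into $SL(3)$ with $\bfN=\C^3$, where moreover $e_1-e_2$ vanishes on $H$.
\item Transversality alone would not suffice either. The diagonal $\Gm\subset\Gm^2$ meets both coordinate divisors transversally, yet both pull back to the same divisor of $\base{\tH}$.
\end{itemize}
As written, Step 2 only covers maps $\mapFromHToG$ whose torus is in general position with respect to all roots and weights of $G$.

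The missing idea is that you need not localize on the $G$ side at all. The paper takes \emph{massive} to mean only that $\tH$ has a central $\Gm$ acting on $\bfN$ with positive weights, realized by $\Gmsc$. This keeps $H$'s own weight and root divisors apart, and the paper cuts up $\base{\tH}$ using those divisors only.
\begin{itemize}
\item At a point off the $H$-weight divisors, the image avoids every $G$-weight divisor. This is because the pullback of each $G$-weight divisor is a union of divisors $D_{\varphi\circ\mapFromHToG}$, and massiveness forces $\varphi\circ\mapFromHToG\neq 0$.
\item There $\tM(\tG,G,\bfN)$ agrees $\tM(G)$-equivariantly with $\tM(G)$. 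So the source is tautologically $\tM(H)\times^{\tM(G)}\tM(G)\cong\tM(H)$, which is the target, however deep in the root arrangement of $G$ the image lies.
\item Along a single $H$-weight divisor, the paper reduces to the abelian flavor case, \cref{lem:flavor-case}.
\end{itemize}
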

An important special case of this result is \cite[Prop. 3.18]{BFN}:

\begin{Lemma}\label{lem:flavor-case}
    \cref{thm:restriction-groups-hamiltonian-reduction} holds when $H\subset G$ is normal and $G/H$ is a torus.  
\end{Lemma}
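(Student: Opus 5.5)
The plan is to reduce to the Hamiltonian-reduction statement of \cite[Prop. 3.18]{BFN}, after checking that the objects in \cref{thm:restriction-groups-hamiltonian-reduction} match the ones there. Assume $H\subset G$ is normal with $G/H=S$ a torus, and that $\tH\to\tG$ is compatible with this inclusion. We may take $T_H=T_G\cap H$. The Weyl groups agree, and the root data of $H$ and $G$ differ only by the torus factor.

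The first step is to describe $\tM(H)\times^{\tM(G)}_{\base{\tG}}\tM(\tG,G,\bfN)$ concretely. On the level of the birational models $(\base{\tT}\times\Tvee)/W$, the map $T_G^\vee\to T_H^\vee$ is dual to $T_H\hookrightarrow T_G$. Its kernel is $S^\vee$, and $\tM(G)\to\tM(H)$ is (after base change to $\base{\tH}$) a surjection of commutative group schemes whose kernel is the constant torus $S^\vee$. Consequently the contracted product $\tM(H)\times^{\tM(G)}(-)$ is simply the quotient by $S^\vee$, base changed to $\base{\tH}$. \cref{lem:base-change} lets us interchange the base change and the quotient freely.

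On functions, $\soa_{G,\bfN}$ is graded by $X^*(S^\vee)=X_*(S)=\pi_1$-type degrees, as in the proof of \cref{lem:cover-BFN}. The quotient therefore takes the degree-$0$ part of this grading and then restricts along $\base{\tH}\to\base{\tG}$.

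For the comparison itself, I would rather not cite BFN directly. Instead I would use \cref{thm:main-comparison} in the cases $B=\Ga,\Gm$, where Prop. 3.18 gives exactly this isomorphism for $\tMB$, compatibly with the birational maps. For general $B$, the argument is directly from the regularity conditions. The degree-$0$ part consists of sums of $fe^{\gamma}$ with $\gamma\in X_*(T_H)$, and the conditions of \cref{lem:regularity-coefficients} involve only $\langle\gamma,\eta_i\rangle$, which is the same whether $\gamma$ is viewed in $T_H$ or $T_G$. The root blowups of \cref{def:Coulomb-branch-nonabelian} likewise only involve coroots, which lie in $X_*(T_H)$. So the degree-$0$ piece of $\soa_{G,\bfN}$, restricted to $\base{\tH}$, satisfies exactly the conditions defining $\soa_{H,\bfN}$. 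The divisors $D_\varphi,D_\alpha$ pull back to the corresponding divisors of $\base{\tH}$, because the weights are pulled back from $\tT_G$.

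The main obstacle is that restriction to $\base{\tH}$ need not commute with imposing divisorial regularity conditions, since pulled-back divisors may become nonreduced or merge. I would handle this locally. By \cref{lem:bullet-affinization-ours} and normality (\cref{cor:normality-ours}), it suffices to check the isomorphism over generic and subgeneric points of $\base{\tH}$. There, \cref{lem:slice-ours} reduces to rank-$\le1$ models, where the explicit descriptions of \cref{ex:PGL2-running,ex:SL2-running} and the abelian formula show that base change preserves the generators (monopole operators $r_\gamma$, and $r_{\alpha^\vee}(g)$).
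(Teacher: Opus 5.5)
For $B=\Ga,\Gm$ your argument is the paper's. The paper gives no independent proof of this lemma. It identifies the statement with \cite[Prop. 3.18]{BFN}, implicitly transported to $\tM$ through \cref{thm:main-comparison}, with the $K$-theoretic analogue of Prop. 3.18 proved the same way. That part of your proposal is fine.

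The additional direct argument you give for general $B$ goes beyond what the paper claims, and it does not work as written.

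\emph{Root conditions.} The root-blowup conditions for $G$ and $H$ are not literally the same. The integer $\epsilon_\alpha$ depends on whether $\alpha/2$ is a character of the maximal torus, and this can change in passing from $T_G$ to $T_H$: for $SL(2)\subset GL(2)$ it is $2$ versus $1$. Also, the centers $J_\alpha$ are ideals in the different rings $\K[\tM(\tT,T_G,\bfN)]$ and $\K[\tM(\tT,T_H,\bfN)]$. That the exceptional valuations agree on $S^\vee$-invariant functions has to be computed, for instance via the reduction of \cref{subsubsec:semi-simple-rank-1} and \cref{lem:cover-ours}. It does not follow from ``only coroots appear.''

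\emph{Localization step.} This is the more serious gap, and it fails as soon as $\tH\neq\tG$. A subgeneric point of $\base{\tH}$ need not map to a subgeneric point of $\base{\tG}$, because distinct weight and root hyperplanes can restrict to the same hyperplane. This already happens in the basic setting of \cite[Prop. 3.18]{BFN}. Take $\tH=H=SL(2)\subset\tG=G=GL(2)$ and $\bfN=\C^2$. The divisors $D_\alpha$, $D_{e_1}$, $D_{e_2}$ of $\base{\tT_G}$ lie on three different lines of characters, yet all three pass through the identity of $\base{\tT_H}$ after restriction. At that point \cref{lem:slice-ours} returns the whole $GL(2)$ theory rather than a single-line rank-one model, so \cref{ex:PGL2-running,ex:SL2-running} say nothing there.

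Moreover, reasoning in the style of \cref{lem:bullet-affinization-ours} on the left-hand side needs the $S^\vee$-quotient of $\tM(\tG,G,\bfN)_{\base{\tH}}$ to be normal, or at least $S_2$. Normality of $\tM(\tG,G,\bfN)$ does not survive base change; establishing it is exactly what \cref{lem:normality-massive} has to work for, using Cohen--Macaulayness coming from symplectic singularities. Compatibility with restriction along $\base{\tH}\to\base{\tG}$ is really the substance of \cref{thm:restriction-groups-hamiltonian-reduction} itself. The paper obtains it only through the massive case and \cref{lem:base-change}, so it cannot be settled inside this lemma by a local rank-one check.

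\emph{Circularity.} Identifying the contracted product with an $S^\vee$-quotient presupposes $\tM(H)\cong\tM(G)_{\base{\tH}}/S^\vee$. That is the case $\bfN=0$ of the lemma. In the BFN route it is supplied by the citation, but in a direct proof it must be established first.

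If you want the elliptic case, a sounder route is to reduce \'etale locally to the rational case using \cref{prop:local-comparison}, where the BFN citation applies, rather than re-deriving the statement from the regularity conditions.
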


We call the data $(\tG,G,\bfN)$ \emph{massive} if $\tG$ contains a central $\Gm$ which acts by a positive weight on all vectors in $\bfN$.  The easiest way to achieve this is to take $\tG=\Gmsc\times G$ where $\Gmsc$ acts on $\bfN$ by scaling.

\begin{Lemma}\label{lem:normality-massive}
    Assume that the triple $(\tH,H,\bfN)$ is massive.
    The varieties $\tM(\tG,G,\bfN)_{\base{\tH}}$, $\tM(\tG,G,\bfN)\times_{\base{\tG}}\tM(H)$   and $\tM(\tG,G,\bfN)\times_{\base{\tG}}^{\tM(G)}\tM(H)$ 
    are normal.  
\end{Lemma}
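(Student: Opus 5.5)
The strategy is to verify Serre's criterion for normality ($R_1$ plus $S_2$) for each of the three varieties, in the order listed. The first two are handled by a Cohen--Macaulay plus codimension-$1$ argument; the third follows by taking invariants.

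\emph{The base change $\tM(\tG,G,\bfN)_{\base{\tH}}$.} By \cite{BFN} and \cref{thm:main-comparison}, the Coulomb branch is Cohen--Macaulay and the map $\ints\colon\tM(\tG,G,\bfN)\to\base{\tG}$ is flat. The base change along $\overline{\mapFromHToG}$ is therefore flat over the smooth variety $\base{\tH}$. Its fibres are fibres of $\ints$, which are Cohen--Macaulay, so the total space is Cohen--Macaulay and in particular $S_2$. It remains to check $R_1$, meaning the singular locus has codimension at least $2$. Since $\ints$ is flat with equidimensional fibres, a codimension-$1$ singular locus would have to dominate a divisor $D'\subset\base{\tH}$ and be singular over its generic point.

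Here the massive hypothesis enters. Because the central $\Gm\subset\tH$ acts with positive weight on every vector of $\bfN$, the composite of each weight $\varphi$ of $\bfN$ (and each root) with $\base{\tT_H}\to\base{\tT_G}$ is non-constant. Moreover, distinct weight lines remain transverse at the generic point of any divisor of $\base{\tH}$. Consequently the generic point of $D'$ maps to a point $b$ of $\base{\tG}$ that is generic or subgeneric along the image of $\base{\tH}$. By \cref{prop:local-comparison} and \cref{lem:slice-ours}, we can then replace $\tM(\tG,G,\bfN)$ étale locally by a rank-$1$ model $\tM(\tGl,\Gl,\Nl)$, which is an $A_{k-1}$-type surface singularity times a smooth factor (\cref{ex:Gm}, \cref{ex:PGL2-running,ex:SL2-running}). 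The base change of $xy=\tau^{k}$ along a map whose pullback of $\tau$ vanishes to order $e$ at $D'$ is $xy=s^{ke}$. This is singular only in codimension $2$, which gives $R_1$.

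The main obstacle is justifying that the generic points of divisors of $\base{\tH}$ only meet subgeneric strata of $\base{\tG}$. This is exactly where masslessness can fail, since then a whole weight hyperplane could contain the image of $\base{\tH}$. I would argue it using the central $\Gm$: the image of $\base{\tT_H}$ is not contained in any $D_\varphi$, and we work étale locally near a generic point.

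\emph{The fibre product $\tM(\tG,G,\bfN)\times_{\base{\tG}}\tM(H)$.} This is the same argument with the extra factor $\tM(H)$, which is smooth over $\base{\tH}$ away from root divisors and is normal with the explicit rank-$1$ local models of \cref{lem:universal-centralizer}. It is flat over $\base{\tH}$ with Cohen--Macaulay fibres, so it is $S_2$. For $R_1$, at the generic point of a divisor of $\base{\tH}$ at most one of the two factors is singular, by the same transversality, and the product of a smooth factor with the model $xy=s^{n}$ is regular in codimension $1$.

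\emph{The quotient $\tM(\tG,G,\bfN)\times_{\base{\tG}}^{\tM(G)}\tM(H)$.} This is the categorical quotient of the previous normal affine variety by the commutative group scheme $\tM(G)_{\base{\tH}}$. Its coordinate ring is the ring of invariants of a normal domain, namely the degree-$0$ part of the cohomology in \cref{lem:vanishing-cohomology}. Such invariants are integrally closed in their fraction field: an element of the fraction field integral over the invariants is integral over the full ring, hence lies in it, and it is invariant because comultiplication extends to fraction fields. Hence the quotient is normal. If irreducibility needs care, I would check it using the open dense orbit over the generic locus.
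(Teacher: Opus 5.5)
\textbf{Gap in the $R_1$ step.} Your proposal has a genuine gap in the $R_1$ step for $\tM(\tG,G,\bfN)_{\base{\tH}}$, and the gap sits exactly where the statement itself breaks down.

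Massiveness of $(\tH,H,\bfN)$ only says that each weight of $\bfN$ restricts non-trivially to $\tT_{H}$. It says nothing about roots of $G$, and nothing about distinct weight hyperplanes staying transverse after pullback.

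Take the kind of map used in the proof of \cref{thm:restriction-groups-hamiltonian-reduction}: $(\tH,H)=(\Gmsc,1)\to(\tG,G)=(\Gmsc\times PGL(2),PGL(2))$, with $\bfN=\mathfrak{pgl}_2$ and $\Gmsc$ acting by scaling. Write $\base{\tT}=\mathbb{A}^2_{t,\tau}$ with $\tau=\Eu(\alpha)$. The weights are then $t\pm\tau$ and $t$, and $\base{\tH}=\mathbb{A}^1_t$ maps onto $\{\tau=0\}$.
\begin{itemize}
\item The generic point of $\base{\tH}$ already lies on the root divisor.
\item The divisor $\{t=0\}$ maps to the origin of $\base{\tG}$, where every weight and the root vanish.
\end{itemize}
At such points \cref{lem:slice-ours} returns the whole Coulomb branch, not an $xy=\tau^k$ model, so your local computation is unavailable. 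The same false genericity is also what you implicitly use to exclude a codimension-one singular locus that dominates $\base{\tH}$.

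\textbf{The first two claims actually fail.} No repair of this step is possible. Use \eqref{eq:PGL2-monopole} with $P^{\pm}=t\pm\tau$, $z=e^{\alpha^{\vee}/2}$ and $\sigma=\tau^2$. The monopoles $w=r_{\alpha^{\vee}/2}(1)=\frac{(t+\tau)z-(t-\tau)z^{-1}}{\tau}$ and $q=r_{\alpha^{\vee}/2}(\tau)=(t+\tau)z+(t-\tau)z^{-1}$ give
\[
\tM(\tG,G,\bfN)\cong\Spec\K[t,\sigma,w,q]/(q^2-\sigma w^2-4t^2+4\sigma).
\]
This is smooth away from two points, but its fibre over the origin, $\Spec\K[w,q]/(q^2)$, is non-reduced.

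Its base change along $t\mapsto(t,0)$ is $\Spec\K[t,w,q]/\bigl((q-2t)(q+2t)\bigr)$, which is two planes meeting along a line. So the first two varieties in the statement are not normal here, even though the generic fibre over $\base{\tH}$ (two disjoint lines) is smooth.

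The paper's own argument does not close the gap either. Its two inputs are a smooth generic fibre and density of the smooth locus of $\tM(\tG,G,\bfN)$ in each fibre, and both hold in this example. What would actually be needed is that the fibres of $\ints$ be generically reduced, and that fails at the origin.

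\textbf{The third claim.} The third variety is fine in this example. The component group $\{\pm1\}$ of $\tM(G)_{\base{\tH}}$ swaps the two planes and its $\mathbb{G}_a$-part translates $w$ (for $t\neq 0$). The quotient is therefore $\mathbb{A}^1_t=\tM(\tH,H,\bfN)$.

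However, you cannot obtain its normality, which is all that \cref{lem:massive-case} uses, the way you (and the paper) do, by taking invariants of a normal domain. Here $\tM(\tG,G,\bfN)\times_{\base{\tG}}\tM(H)$ is not even irreducible, so the normality of the quotient needs a direct argument.
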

\begin{proof}
    First, note that $\tM(G,G,\bfN)$ is Cohen--Macaulay, since it has symplectic singularities by \cite{bellamyCoulombBranches2023}.  The same argument shows that $\tM(\tG,G,\bfN)$ is a flat family over the smooth variety $\base{\tG/G'}$ with Cohen--Macaulay fibers, and is therefore itself Cohen--Macaulay.  By \cite[Lem. 37.22.6]{stacks-project}, the map $\tM(\tG,G,\bfN)\to \base{\tG}$ is Cohen--Macaulay.  By flat base change, $\tM(\tG,G,\bfN)_{\base{\tH}}$ is also Cohen--Macaulay, and 
    $\tM(\tG,G,\bfN)\times_{\base{\tG}}\tM(H)$ is a complete intersection in this variety and so is itself Cohen--Macaulay.

    Since all these varieties are Cohen--Macaulay, they are normal if and only if their singular locus has codimension $\geq 2$.  This follows immediately from the facts:
    \begin{enumerate}
        \item  The smooth locus of $\tM(\tG,G,\bfN)$ has dense intersection with each fiber of the map to $\base{\tG}$.
        \item The generic fiber over $\base{\tH}$ is smooth: by the massive assumption, a generic cocharacter $\lambda$ of $\tH$ has trivial fixed space in $\bfN$, and so the fiber is isomorphic to the fiber over the origin in $\tM(G^{\lambda},G^{\lambda},\mathbf{0})$, which is smooth.
    \end{enumerate}

    Finally, the categorical quotient of an integral normal variety is always integral and normal, so $\tM(\tG,G,\bfN)\times_{\base{\tG}}^{\tM(G)}\tM(H)$ is normal as well.
\end{proof}

\begin{Lemma}\label{lem:massive-case}
    \cref{thm:restriction-groups-hamiltonian-reduction} holds if $\tH$ is massive.
\end{Lemma}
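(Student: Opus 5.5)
We prove that $\tilde{f}_{\mapFromHToG,\mathbf{N}}$ is an isomorphism using the normality of both sides (\cref{lem:normality-massive}) together with a codimension-one comparison. The target $\tM(\tH,H,\bfN)$ is normal by \cref{cor:normality-ours}. The source $\tM(H)\times^{\tM(G)}_{\base{\tG}}\tM(\tG,G,\bfN)$ is normal by \cref{lem:normality-massive}, and this is where the massive hypothesis on $\tH$ is used. Both sides are affine over $\base{\tH}$ and carry compatible birational maps to $(\base{\tT_H}\times \Tvee_H)/W_H$. It therefore suffices to show that $\tilde{f}_{\mapFromHToG,\mathbf{N}}$ is an isomorphism over the complement of a codimension-$2$ subset of $\base{\tH}$. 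A birational map between normal affine varieties that is an isomorphism outside codimension $2$ identifies the rings of functions, by Hartogs' extension on both sides.

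By \cref{lem:base-change}, the formation of the source commutes with base change along étale maps $S\to\base{\tH}$. The target also behaves well under such base change, by \cref{lem:slice-ours} and \cref{lem:cover-ours}. For a generic or subgeneric $\lambda$ in $\LtT_H$ (generic/subgeneric for $H$, relative to the weights of $\bfN$ and roots of $H$), we restrict to the open set $\base{\tT_H,\lambda}$. On the $H$ side, \cref{lem:slice-ours} replaces $(H,\bfN)$ by $(H_\lambda,\bfN^\lambda)$. On the $G$ side, the image $\mapFromHToG(\lambda)$ may be less generic. Still, \cref{lem:slice-ours} replaces $(G,\bfN)$ by $(G_{\mapFromHToG(\lambda)},\bfN^{\lambda})$, and likewise $\tM(G)$ by $\tM(G_{\mapFromHToG(\lambda)})$, after étale base change. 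So the problem reduces to the same statement for the map $(\tH_\lambda,H_\lambda)\to(\tG_{\mapFromHToG(\lambda)},G_{\mapFromHToG(\lambda)})$ near $\lambda$. The complement of the generic and subgeneric loci has codimension $2$, so these cases suffice.

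In the generic case, $\bfN^\lambda=0$ by massiveness and $H_\lambda$ is a torus. The claim then becomes functoriality for pure Coulomb branches, where $\tM(G)$ acts simply transitively on the generic locus. The source is $\tM(H)$ and so is the target, with the identity map. In the subgeneric case, $H_\lambda$ has semisimple rank at most one and acts on $\bfN^\lambda$ through a rank-one quotient. We first use \cref{lem:cover-ours} to pass to a finite cover splitting $H_\lambda$ as a product $H'\times C$ as in \cref{subsubsec:semi-simple-rank-1}. On the torus factor the statement is the abelian case, which we check directly. There the valuation description of \cref{lem:regularity-coefficients} shows that reduction by $\tM(G)$ just replaces the cocharacter lattice $X_*(T_G)$ by $X_*(T_H)$ in the weight-space decomposition into line bundles $\mathcal{O}(-E(\gamma))$. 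On the rank-one factor, the $\tM(G_{\mapFromHToG(\lambda)})$-action moves $\tM(G_{\mapFromHToG(\lambda)},\bfN^\lambda)$ into the torsor over the open orbit. We can then compare explicit functions: the monopole operators $r_\gamma(g)$ of \cref{lem:rank-1-BFN} on the $H$ side are images of products of $G$-side monopole operators with elements of $\tM(H)$.

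The main obstacle I expect is this subgeneric comparison when $\mapFromHToG(\lambda)$ is non-generic for $G$. Here $G_{\mapFromHToG(\lambda)}$ can have large rank even though $H_\lambda$ has rank one, so we cannot simply compare two rank-one models. My plan is to use \cref{lem:flavor-case} and the commutativity of reduction in stages, factoring $H_\lambda\to G_{\mapFromHToG(\lambda)}$ through the subgroup generated by the image and a maximal torus. This splits the map into an inclusion of a flavor-type normal subgroup with torus quotient, handled by \cref{lem:flavor-case}, and a map in which the image already contains a maximal torus. For the latter, only the divisors coming from the single relevant root and weight line matter, so the rank-one computations of \cref{ex:PGL2-running,ex:SL2-running} should suffice.
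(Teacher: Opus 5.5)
Your overall framework is the paper's. You get normality of the source from \cref{lem:normality-massive} and of the target from \cref{cor:normality-ours}, then compare in codimension one over $\base{\tH}$; your Hartogs formulation is equivalent to the paper's use of Zariski's Main Theorem. In either form you also need that no divisor of the source or target lies over a codimension-$2$ subset of $\base{\tH}$, which follows from flatness over $\base{\tH}$. Your generic case is fine. What you never use is the second consequence of massiveness, on which the paper's codimension-one analysis rests. Every weight of $\bfN$ pairs positively with the central $\Gm$ of $\tH$, while every root of $H$ vanishes on it, so the divisors $D_{\ell_i}$ and $D_{\ell_\alpha}$ in $\base{\tH}$ have no common components. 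Hence a subgeneric $\lambda$ falls into one of two cases. Either $H_\lambda$ is abelian and $\bfN^\lambda$ is supported on a single weight line (the paper's $V_i$), or $\bfN^\lambda=0$ (the paper's $V_\alpha$). In the second case the $G$-side is just $\tM(G)$, and the comparison is the tautology $\tM(H)\times^{\tM(G)}\tM(G)\cong\tM(H)$. In particular, your comparison of monopole operators $r_\gamma(g)$ for a non-abelian rank-one $H_\lambda$ acting on matter addresses a configuration that never arises.

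The one genuinely nontrivial case is $H_\lambda$ abelian with $\mapFromHToG(\lambda)$ non-regular in $G$. There your proposal stops at a plan whose stated justification fails. Near $\mapFromHToG(\lambda)$ every root of $G_{\mapFromHToG(\lambda)}$ vanishes, and the weights of $\bfN^\lambda$ may span many lines for $\tT_G$. For example, for the massive map of pairs $(\Gmsc,1)\to(G\times\Gmsc,G)$, all of $G$ and all of $\bfN$ survive over the origin of $\base{\tH}$. So there is no ``single relevant root and weight line''. The stage $T_G\subset G_{\mapFromHToG(\lambda)}$ of your factorization is itself an instance of the theorem, not a rank-one computation from \cref{ex:PGL2-running,ex:SL2-running}. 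Your worry is well founded: the paper handles this case by asserting $\tM(\tG,G,\bfN)_{V_i}\cong\tM(\tT,T,\bfN^{\lambda_i})_{V_i}$ and invoking \cref{lem:flavor-case}, which is only clear when $\mapFromHToG(\lambda_i)$ avoids all root hyperplanes of $G$.

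Your factorization can still be made to work if the second stage is proved globally over $\base{\tT_G}$ by the same normality-plus-codimension-one argument. The image of the central $\Gm$ of $\tH$ makes $(\tT_G,T_G,\bfN^\lambda)$ massive, so \cref{lem:normality-massive} applies. Since $\bfN^\lambda$ is a representation of $\tG_{\mapFromHToG(\lambda)}$, a weight $c\alpha$ on a root line would force $s_\alpha(c\alpha)=-c\alpha$ to be a weight as well, contradicting positivity. Thus every codimension-one point of $\base{\tT_G}$ sees either only weights, where $G_{\mapFromHToG(\lambda)}$ localizes to $T_G$, or only a root with no matter. Both comparisons are tautological. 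Base changing to $\base{\tT_H}$ by \cref{lem:base-change} and composing with the abelian case for $T_H\to T_G$ by reduction in stages then closes the argument.
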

\begin{proof}
The map $\tilde{f}_{\mapFromHToG, \mathbf{N}}$ is an isomorphism over $\base{\tH}^{\circ}$, and thus is birational.  By \cref{lem:normality-massive}, both the source and target of this map are normal.  
To prove that this birational map is an isomorphism, it is therefore enough to control its behavior in codimension $1$: by Zariski's Main Theorem, the restriction of $\tilde{f}_{\mapFromHToG, \mathbf{N}}$ to its quasi-finite locus is the inclusion of an open subscheme, so it suffices to show that the complement of the quasi-finite locus has codimension $\geq 2$ in the source and that its image has codimension $\geq 2$ in the target.

Equivalently, we must show that every Weil divisor in $\tM(\tH,H,\bfN)$ is the image of a Weil divisor in $\tM(\tG,G,\bfN)\times_{\base{\tG}}^{\tM(G)}\tM(H)$, and that every Weil divisor in $\tM(\tG,G,\bfN)\times_{\base{\tG}}^{\tM(G)}\tM(H)$ is isolated in its fiber.  

 Accordingly, we have lines $\ell_i$ for $i=1,\dots,p$ in $X^*(H)$ spanned by the weights of $\bfN$ restricted to $H$, and lines $\ell_{\alpha}$ spanned by the roots of $H$.  We have corresponding divisors $D_{\ell_i}$ and $D_{\ell_{\alpha}}$ in $\base{\tH}$ which have no components in common since $\tH$ is massive.  

 Let $U_i=\base{\tH}\setminus D_{\ell_i}$ and $U_{\alpha}=\base{\tH}\setminus D_{\ell_{\alpha}}$.  Let \[V_i=(\bigcap_{j\neq i}U_j)\cap (\bigcap_{\al}U_{\alpha})\qquad\text{and }\qquad V_{\alpha}=(\bigcap_i U_i)\cap (\bigcap_{\beta\neq \alpha}U_{\beta}).\]  Write $\lambda_i$ for a cocharacter of $T$ lying on the hyperplane $\ell_i^{\perp}$ but on no other weight or root hyperplane, so that the fixed space $\bfN^{\lambda_i}$ is the sum of the weight spaces of $\bfN$ whose weights lie on the line $\ell_i$.

On the open sets $V_i$, we have \[\tM(\tG,G,\bfN)_{V_i}\cong \tM(\tT,T,\bfN^{\lambda_i})_{V_i},\qquad (\tM(\tG,G,\bfN)\times_{\base{\tG}}\tM(H))_{V_i}\cong \tM(\tT,T,\bfN^{\lambda_i})\times_{V_i}\tM(T_H).\]  
Passing to the corresponding categorical quotient, we obtain the local form of \cref{thm:restriction-groups-hamiltonian-reduction} over $V_i$.  Hence the theorem holds there by \cref{lem:flavor-case}.  In particular, any Weil divisor lying over $V_i$ in the source lies in the quasi-finite locus, and any Weil divisor lying over $V_i$ in the target is in the image.  

    On the other hand, we have
    \[\tM(\tG,G,\bfN)_{V_{\al}}\cong \tM(G)_{V_{\al}},\qquad (\tM(\tG,G,\bfN)\times_{\base{\tG}}\tM(H))_{V_{\al}}\cong \tM(G)\times_{V_{\al}}\tM(H).\]
    Thus, after base change to $V_{\al}$, \cref{thm:restriction-groups-hamiltonian-reduction} reduces to the pure case and therefore holds.  Passing to the corresponding categorical quotient, we conclude exactly as above that any Weil divisor lying over $V_{\al}$ in the source is in the quasi-finite locus, and any Weil divisor lying over $V_{\al}$ in the target is in the image.  

This completes the proof.
\end{proof}

 \begin{proof}[Proof of \cref{thm:restriction-groups-hamiltonian-reduction}]
     By \cref{lem:massive-case}, we can assume that the result is true for the obvious map of pairs $(\tH\times\Gmsc,H)\to(\tG\times\Gmsc,G)$, so we can consider the base change of this isomorphism.  
     This induces the desired isomorphism by \cref{lem:base-change}.
     \end{proof}

\section{Transverse Hilbert schemes}
\label{sec:transverse-hilbert-schemes}

With the construction and functoriality results in hand, we turn to a comparison with transverse Hilbert schemes.  The blowup construction of the Coulomb branch in \cref{def:Coulomb-branch-nonabelian} has a similar flavor to the transverse $W$-Hilbert scheme construction of Bielawski and Foscolo \cite{bielawskiHypertoricVarieties2023}.  In the case $G=PGL(2)$, the two constructions agree, but in general the transverse $W$-Hilbert scheme does not itself recover the Coulomb branch.  Even if we avoid gauge-group factors isomorphic to $Sp(2n)$, the correct statement in general requires replacing the abelian Coulomb branch $\tM(\tT,T,\bfN)$ with a proper open subset $\tM^{\bullet}(\tT,T,\bfN)$.

\subsection{Relation to nilHecke algebras: rank 1}

We begin with the rank-$1$ case, which already contains the key nilHecke calculation behind the comparison.

Although we found the work of Bielawski and Foscolo \cite{bielawskiHypertoricVarieties2023} quite inspiring, some of the global comparison statements in that paper need correction.  We therefore begin by isolating the transverse-Hilbert-scheme calculation itself in a form that can later be compared directly with the Coulomb branch.

Let $W=\{1,s\}$ act on a complex vector space $V$ with $s$ acting as a reflection; let $\alpha$ be a linear function satisfying $s\alpha=-\alpha$.   Consider an affine variety $X$ equipped with a $W$-action and a dominant equivariant map $\pi\colon X \to V$.  Let  $ \K[X]^{W}$ be the invariants of $\K[X]$ and $\K[X]^{W,-}=\{ g\in \K[X] \mid sg=-g\}$.\notation{$\K[X]^{W,-}$}{$W$-anti-invariant functions on $X$}

Following \cite{bielawskiHypertoricVarieties2023}, we consider several variations on the full $W$-Hilbert scheme $\WHilb{X}$ parametrizing $W$-invariant subschemes $Y$ of $X$ such that  $\mathcal{O}(Y)\cong \K W$ as a $W$-module.  Such a subscheme has a distinguished invariant vector $\bar{1}$, the image of the constant function, and a unique anti-invariant line.\notation{$\WHilb{X}$}{Full $W$-Hilbert scheme of $X$}
\begin{Definition}\hfill
    \begin{enumerate}
        \item The full $W$-Hilbert scheme $\WHilb{X}$ is the moduli space of $W$-invariant subschemes $Y\subset X$ such that $\mathcal{O}(Y)\cong \K W$ as a $W$-module.
        \item The closed subset $\HilbW{X}\subset \WHilb{X}$ is the closure of the locus of free orbits.
        \item The transverse $W$-Hilbert scheme $\tWHilb{\pi}{X}\subset \WHilb{X}$ is the open subset that parametrizes subschemes $Y$ such that $\pi(Y)$ is a subscheme of $V$, or equivalently that $\mathcal{O}(Y)$ is a cyclically generated module over $\K[V]$.
         \item The transverse $W$-Hilbert scheme $\tHilb{\pi}{X}\subset \HilbW{X}$ is the intersection \[\tHilb{\pi}{X}=\tWHilb{\pi}{X}\cap \HilbW{X}.\]
\notation{$\HilbW{X}$}{Closure of the free $W$-orbits inside $\WHilb{X}$}\notation{$\tWHilb{\pi}{X}$}{Transverse open subset of the full $W$-Hilbert scheme}\notation{$\tHilb{\pi}{X}$}{Transverse open subset of $\HilbW{X}$}\end{enumerate}
\end{Definition}
The two transverse loci $\tWHilb{\pi}{X}$ and $\tHilb{\pi}{X}$ are each the complement of the zeros of the section $\bar{1}\wedge \bar{\alpha}$ of the second wedge power of the corresponding tautological bundle $\mathcal{T}$.
In particular, if $g\in \K[X]$ is any function satisfying $sg=-g$, then \[\Psi(g)=\frac{\bar{1}\wedge \bar{g}}{\bar{1}\wedge \bar{\alpha}}\] is a well-defined function on $\tWHilb{\pi}{X}$, and hence on $\tHilb{\pi}{X}$.  Note that for $f\in \K[X]^W$, we have 
\[\Psi(f\alpha)=\frac{\bar{1}\wedge \bar{f\alpha}}{\bar{1}\wedge \bar{\alpha}}=f, \qquad \Psi(gf)=\frac{\bar{1}\wedge \bar{gf}}{\bar{1}\wedge \bar{\alpha}}=f\Psi(g).\]
\notation{$\Psi(g)$}{Tautological divided-difference function on the transverse $W$-Hilbert schemes}

\begin{Theorem}\label{thm:alpha-inverse}
The functions on $\tWHilb{\pi}{X}$ are generated by $\K[X]^W$ and symbols $\Psi(g)$ for $g\in \K[X]^{W,-}$ subject to the relations $\Psi(f\alpha)=f$ and $\Psi(gf)=f\Psi(g)$ for $f\in \K[X]^W$.

The functions on $\tHilb{\pi}{X}$ are the image of the homomorphism $\K[\tWHilb{\pi}{X}] \to \K(X)^W$ sending $\Psi(g)\mapsto g/\alpha$ for $g\in \K[X]^{W,-}$.   That is, $\K[\tHilb{\pi}{X}]$ is the subring of $\K(X)^W$ generated by $\K[X]^W$ and $\frac{1}{\alpha}\K[X]^{W,-}$.
\end{Theorem}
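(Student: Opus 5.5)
The plan is to identify $\tWHilb{\pi}{X}$ by the functor of points it represents, and then get $\tHilb{\pi}{X}$ as a closure inside it.

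\textbf{Points of the transverse locus.} Let $Y\subset X$ be a $W$-invariant subscheme over a base ring $S$ with $\mathcal{O}(Y)\cong S W$ as a $W$-module. Transversality says that $\bar 1\wedge\bar\alpha$ is a unit. Since the invariant part of $\mathcal{O}(Y)$ has rank one and is spanned by $\bar 1$, and the anti-invariant part is spanned by $\bar\alpha$, we get $\mathcal{O}(Y)=S\bar 1\oplus S\bar\alpha$. Multiplication makes $\bar\alpha^2$ invariant, so $\bar\alpha^2=c\bar 1$ with $c$ the image of $\alpha^2\in\K[X]^W$. Hence $\mathcal{O}(Y)\cong S[t]/(t^2-c)$ with $st=-t$.

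Consequently, a transverse $Y$ is exactly a $W$-equivariant surjective algebra map $\K[X]\to S[t]/(t^2-c)$ sending $\alpha\mapsto t$. Such a map is determined by two pieces of data:
\begin{itemize}
\item a ring map $\chi\colon\K[X]^W\to S$;
\item an $S$-valued functional $\psi$ on $\K[X]^{W,-}$ with $g\mapsto\psi(g)t$.
\end{itemize}
These must satisfy
\begin{itemize}
\item $\psi(\alpha)=1$;
\item $\psi(fg)=\chi(f)\psi(g)$;
\item the multiplicativity condition $\chi(gh)=\psi(g)\psi(h)\chi(\alpha^2)$ for $g,h\in\K[X]^{W,-}$.
\end{itemize}
Surjectivity is automatic, because $t$ is hit by $\alpha$.

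\textbf{The presented ring represents this functor.} Let $R$ be the ring presented in the theorem. The first point is to check that the last condition is already a consequence of the two stated relations. Given anti-invariant $g$ and $h$, the element $h\alpha$ is invariant and $gh\alpha$ is anti-invariant. The relations then give
\[ gh=\Psi(gh\alpha)=h\alpha\,\Psi(g),\qquad h\alpha=\Psi(h\alpha^2)=\alpha^2\Psi(h). \]
Combining these yields $gh=\alpha^2\Psi(g)\Psi(h)$, so no extra relation is needed.

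Next, write down the universal family over $\Spec R$: take $R\oplus Rt$ with $t^2=\alpha^2$, and the map $f\mapsto f$, $g\mapsto\Psi(g)t$. This is a ring homomorphism by the identity just derived. It gives a morphism $\Spec R\to\tWHilb{\pi}{X}$. Conversely, the description of points above gives an inverse, and under it the tautological functions $\Psi(g)=\bar 1\wedge\bar g/\bar 1\wedge\bar\alpha$ correspond to the generators. This proves the first statement.

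\textbf{The second statement.} Let $X^\circ$ be the locus where $\alpha\ne0$. Since $\pi$ is dominant, $\alpha$ is a nonzerodivisor, and $X^\circ$ is dense. Over $X^\circ/W$, an orbit is free and transverse, and the orbit map $X^\circ/W\to\tWHilb{\pi}{X}$ is an open immersion onto the locus where $\alpha^2$ is invertible. On this locus $\Psi(g)=g/\alpha$, since $\bar g=(g/\alpha)\bar\alpha$ there.

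Conversely, a free orbit on which $\alpha$ vanishes is not transverse. So $\tHilb{\pi}{X}$ is the scheme-theoretic closure of $X^\circ/W$ inside $\Spec R$. Its coordinate ring is therefore $R$ modulo the kernel of $R\to\K[X^\circ]^W\subset\K(X)^W$, that is, the image of $\Psi(g)\mapsto g/\alpha$. This is the subring generated by $\K[X]^W$ and $\frac1\alpha\K[X]^{W,-}$.

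\textbf{Expected obstacle.} The main difficulty is this last step. One has to make sure that "closure of free orbits" intersected with the transverse open set coincides with the closure of the single chart $X^\circ/W$. This needs a check that free orbits with $\alpha|_Y=0$ do not meet the transverse locus; they do not, because on such an orbit $\bar\alpha=0$. One also has to handle reducedness and irreducible components of $X$ carefully.
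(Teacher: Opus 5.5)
Your proposal is correct and follows essentially the paper's route: both identify transverse $W$-invariant subschemes by their functor of points as maps $\K[X]\to Q[t]/(t^2-\alpha^2(x))$ checked on isotypic pieces, both derive $\alpha^2\Psi(g)\Psi(h)=gh$ from the two stated relations, and both get $\K[\tHilb{\pi}{X}]$ by restricting to the free orbits inside $\K(X)^W$. The differences are minor. Building the universal family over the presented ring directly also gives the generation statement that the paper defers to \cref{prop:Hilb-gen}, and you are more careful that free orbits with $\alpha|_Y=0$ are not transverse; just note that ``$\alpha$ is a nonzerodivisor'' uses that $X$ is an integral variety, not merely that $\pi$ is dominant.
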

\begin{proof}
We have already verified that these relations hold, so it remains to show that the functions on $\tWHilb{\pi}{X}$ admit generators of the stated form, and that the displayed relations generate all others.

    Let $a_1,\dots, a_m, b_1,\dots, b_n$ be generators of $\K[X]$ over $\K$, with $a_i$ invariant and $b_j$ anti-invariant for all $i,j$.  A point of $\tWHilb{\pi}{X}$ corresponds to a subscheme of $X$ for which the images $\{\bar{1},\bar{\alpha}\}$ form a basis.  
    
The functions on $\tWHilb{\pi}{X}$ are generated by $A_i=\frac{\bar{a}_i\wedge \bar{\alpha}}{\bar{1}\wedge \bar{\alpha}}$ and $B_j=\frac{\bar{1}\wedge \bar{b}_j}{\bar{1}\wedge \bar{\alpha}}$;  we will give a more careful argument for this fact in more generality in \cref{prop:Hilb-gen}.  We can think of $\K[\tWHilb{\pi}{X}]$ as the coordinate ring of an $X/W$-variety by identifying $A_i$ and $a_i$;  the functions on $\tWHilb{\pi}{X}$ are thus generated by the functions $B_i$ over $\K[X]^W$.  Our claim is that the only relations these generators satisfy are: 
\begin{enumerate}
    \item if $c_1,\dots, c_n\in \K[X]^W$ satisfy $\sum_i c_i b_i=0$, then $\sum_i c_i B_i=0$. 
    \item for all $i,j$ we have $\alpha^2 B_i B_j=b_ib_j$.
\end{enumerate} 
Note that these relations can be deduced from those in the statement of the theorem, since 
\[\sum_i c_i B_i=\Psi(\sum_i c_i b_i), \qquad  \alpha^2 B_i B_j=\Psi(\alpha^2b_i)\Psi(b_j)=\alpha b_i \Psi(b_j)=\Psi(\alpha b_ib_j)=b_ib_j.\]

To prove the claim, consider a $\K$-algebra $Q$ and a $Q$-point $x$ of $X/W$.  Let $\beta_i\in Q$ be scalars satisfying relations (1) and (2): that is, we have $\sum_i c_i(x) \beta_i=0$ whenever $\sum_i c_i b_i=0$, and $\alpha^2(x) \beta_i \beta_j=b_i(x)b_j(x)$ for all $i,j$. 
We claim that $a_i\mapsto a_i(x)$ and $b_i\mapsto \beta_i\alpha$ define a homomorphism to $Q[\alpha]/(\alpha^2-\alpha^2(x))$, whose kernel determines a $Q$-point of $\tWHilb{\pi}{X}$ mapping to $x$.  It is enough to verify the ring-homomorphism property for products of functions that are isotypic for $W$. 
\begin{itemize}
    \item If $f,g\in \K[X]^W$, then $fg\mapsto f(x)g(x)$.
    \item If $f\in \K[X]^W$ and $g\in \K[X]^{W,-}$, then this follows from the relation (1).
    \item If $f,g\in \K[X]^{W,-}$, then this follows from the relation (2).
\end{itemize}

Since the quotient of the free locus is a dense open subset of both $\tHilb{\pi}{X}$ and $X/W$, there is an injective map $\K[\tHilb{\pi}{X}]\hookrightarrow \K(X)^W$ induced by restriction to the free orbits.  Under this homomorphism, the restrictions of $A_i$ and $B_j$ are sent to $a_i$ and $b_j/\alpha$, respectively.  Since $\K[X]^{W,-}=\sum \K[X]^{W}b_i$, this completes the proof. 
\end{proof}

It is also useful to describe this geometrically using blowups.  
Consider the ideal $\mathcal{I}\subset \K[X]$ generated by the anti-invariant functions $g\in \K[X]^{W,-}$. 
The blowup of $X$ at this ideal carries a natural $W$-action, where the action on sections of $\mathcal{O}(1)$ is twisted by the sign representation.  
The quotient of this blowup by $W$ has a natural map to $\HilbW{X}$, induced by mapping anti-invariant functions to the unique anti-invariant line in the tautological bundle.
 This induces an isomorphism to $\tWHilb{\pi}{X}$ from the open subset where we remove the principal transform of the divisor defined by $\alpha$.  This principal transform is the zero locus of $\alpha$, viewed as a section of $\mathcal{O}(1)$.

We can state this in a more suggestive way.  The variety $\tWHilb{\pi}{X}$ carries a tautological bundle $\mathcal{T}$, which is trivialized by the sections $1,\alpha$.
We can think of any function on $X$ as an endomorphism of this vector bundle, and we can define a unique endomorphism $\psi$ satisfying $\psi(\alpha)=2$ and $\psi(1)=0$.

\begin{Definition}
Let $\NH{W}{X}$ be the algebra generated by the symbol $\psi$ and the functions of $\K[X]$ subject to the relations:
\begin{equation}\label{eq:NH-rank-one}
 \psi^2=0\qquad (1-\alpha\psi) g=(s\cdot g) (1-\alpha\psi)    
\end{equation}
\[g\psi =\psi g \qquad \text{ if } sg=g. \]
\notation{$\NH{W}{X}$}{nilHecke-type algebra attached to the $W$-action on $X$}\end{Definition}

\begin{Theorem}\label{thm:NH-rank-one}
    $\End(\mathcal{T})\cong \NH{W}{X}$.
\end{Theorem}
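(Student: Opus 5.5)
We will realize $\NH{W}{X}$ as endomorphisms of $\mathcal{T}$, check that the image is all of $\End(\mathcal{T})$, and then check that the map is injective. The endomorphisms are taken as an $\mathcal{O}$-algebra over $\tWHilb{\pi}{X}$, equivalently as global sections of $\mathcal{E}nd(\mathcal{T})$, since the base is affine over $X/W$. Because $\mathcal{T}$ is trivialized by $\bar 1,\bar\alpha$, we have $\End(\mathcal{T})\cong M_2(\K[\tWHilb{\pi}{X}])$. A function $g\in\K[X]$ acts by multiplication on each fiber $\mathcal{O}(Y)$, and $\psi$ is the endomorphism with $\psi(\bar 1)=0$ and $\psi(\bar\alpha)=2$. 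In this basis, an invariant $f$ acts by the scalar $f$, and $\alpha$ acts by the matrix sending $\bar 1\mapsto\bar\alpha$ and $\bar\alpha\mapsto \alpha^2\bar 1$. An anti-invariant $g$ acts by $\bar 1\mapsto \bar g=\Psi(g)\bar\alpha$ and $\bar\alpha\mapsto \overline{g\alpha}=\alpha^2\Psi(g)\bar 1$, using \cref{thm:alpha-inverse}.

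First we verify the relations \eqref{eq:NH-rank-one}. The relation $\psi^2=0$ holds because $\psi$ maps $\mathcal{T}$ into the invariant line spanned by $\bar 1$, and $\psi$ kills that line. The relation $g\psi=\psi g$ for invariant $g$ holds because such $g$ acts by a scalar. For the twisted relation, observe that $1-\alpha\psi$ fixes $\bar 1$ and sends $\bar\alpha\mapsto\bar\alpha-2\bar\alpha=-\bar\alpha$. So $1-\alpha\psi$ is exactly the action of $s$ on the fiber $\mathcal{O}(Y)$, which is well defined because $Y$ is $W$-stable. The relation $(1-\alpha\psi)g=(s\cdot g)(1-\alpha\psi)$ is then the statement that $s$ acts on $\mathcal{O}(Y)$ compatibly with its action on functions. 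This gives a homomorphism $\NH{W}{X}\to\End(\mathcal{T})$.

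For surjectivity, we produce the four matrix units. The endomorphism $\tfrac12\psi$ is the unit sending $\bar\alpha\mapsto\bar 1$. The endomorphism $\tfrac12\alpha\psi$ is the idempotent projecting onto $\bar\alpha$, so $1-\tfrac12\alpha\psi$ projects onto $\bar 1$. The endomorphism $\tfrac12\psi\alpha$ also sends $\bar 1\mapsto \bar 1$ and $\bar\alpha\mapsto 0$, which we can use as a consistency check. The remaining unit $\bar 1\mapsto\bar\alpha$ is $\alpha\cdot(1-\tfrac12\alpha\psi)$. It then remains to see that the scalars $\K[\tWHilb{\pi}{X}]$ lie in the image. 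The functions $\K[X]^W$ act as scalars directly. For anti-invariant $g$, the scalar $\Psi(g)$ is the $(\bar\alpha,\bar 1)$ entry of $g$, so it equals $\tfrac12\psi\, g\,(1-\tfrac12\alpha\psi)$ conjugated back, for instance $\tfrac14\psi g(2-\alpha\psi)+\alpha\cdot\tfrac14\psi g(2-\alpha\psi)\cdot\tfrac12\psi$. By \cref{thm:alpha-inverse} these scalars generate $\K[\tWHilb{\pi}{X}]$, so the map is onto. Here we use that $2$ is invertible, which follows from the standing characteristic assumption since $|W|=2$.

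Injectivity is the main obstacle. Our plan is to show that $\NH{W}{X}$ is itself a $2\times2$ matrix algebra over its center $Z$, with matrix units given by the same expressions, and that $Z$ satisfies the presentation of \cref{thm:alpha-inverse}. Concretely, we define $e=\tfrac12\psi\alpha$ and $\Psi'(g)=e\,\psi g\, e$ up to the normalization above. Using only the defining relations, we then check three things:
\begin{enumerate}
\item the elements $\Psi'(g)$ commute with everything;
\item they satisfy $\Psi'(f\alpha)=f$ and $\Psi'(gf)=f\Psi'(g)$;
\item $\NH{W}{X}=\bigoplus e_{ij}Z$ with $Z$ generated by $\K[X]^W$ and the $\Psi'(g)$.
\end{enumerate}
The delicate point is deriving $\psi\alpha+\alpha\psi=2$ and $\psi g=-g\psi+(\text{central})$ from the twisted relation. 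For the first, we intend to set $g=\alpha$ in $(1-\alpha\psi)\alpha=-\alpha(1-\alpha\psi)$ and cancel the non-zero-divisor $\alpha$, or else derive it via $\psi^2=0$. Once these hold, the universal property in \cref{thm:alpha-inverse} gives a map $\K[\tWHilb{\pi}{X}]\to Z$ inverse to restriction, and injectivity follows.
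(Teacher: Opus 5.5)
Your construction of the map $\NH{W}{X}\to\End(\mathcal{T})$ is fine, and checking the twisted relation by identifying $1-\alpha\psi$ with the fibrewise action of $s$ is a nice touch. Your explicit surjectivity argument is also fine. Your injectivity strategy matches the paper's: exhibit $\NH{W}{X}$ as $2\times 2$ matrices over a commutative subalgebra $Z$ that receives a map from $\K[\tWHilb{\pi}{X}]$ through the presentation of \cref{thm:alpha-inverse}.

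The gap is exactly your ``delicate point'', and it cannot be closed from the displayed relations. Cancelling $\alpha$ is not legitimate. That $\alpha$ is a non-zero-divisor in $\K[X]$ says nothing about left multiplication by $\alpha$ in the abstractly presented algebra $\NH{W}{X}$. The twisted relation with $g=\alpha$ only yields $\alpha(\psi\alpha+\alpha\psi-2)=0$.

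In fact, take $X=V$ and the assignment $g\mapsto g(0)$, $\psi\mapsto 0$. It satisfies $\psi^2=0$, the twisted relation, and $[\psi,g]=0$ for invariant $g$, but it sends $\psi\alpha+\alpha\psi$ to $0$. A matrix ring $M_2(R)$ over a nonzero commutative ring has no one-dimensional representations. So no derivation from these relations alone, via $\psi^2=0$ or otherwise, can work.

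Your item (1) fails in the same way, even after adding $\psi\alpha+\alpha\psi=2$. Take $\K[X]=\K[x,y,z]$ with $s=-1$ and $\alpha=x$. Let $x,y,z,\psi$ act on $\K^2\otimes\K^2$ by $E_{21}\otimes 1$, $E_{21}\otimes B$, $E_{21}\otimes C$, $2E_{12}\otimes 1$, where $E_{ij}$ are matrix units and $B,C\in M_2(\K)$ do not commute. Every relation holds: all products of two of $x,y,z$ vanish, $1-\alpha\psi=(E_{11}-E_{22})\otimes 1$, and $\psi\alpha+\alpha\psi=2$. Yet $\frac12(\psi y+y\psi)$ and $\frac12(\psi z+z\psi)$ act by $1\otimes B$ and $1\otimes C$, and so do your conjugated-back $\Psi'(y),\Psi'(z)$. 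Their images $\Psi(y),\Psi(z)$ in $\End(\mathcal{T})$ are commuting scalars.

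The paper's proof does not derive these facts either. It uses $1=\frac12(\psi\alpha+\alpha\psi)$ and simply asserts that $\K[X]^W$ and the elements $\mathsf{\Psi}(g)=\frac12(\psi g+g\psi)$ generate a commutative subalgebra. In effect it tacitly treats both as part of the nilHecke relations. So what is missing is not a cleverer computation but extra defining relations.

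There are two ways to supply them:
\begin{enumerate}
\item Add $\psi\alpha+\alpha\psi=2$ and the centrality of $\psi g+g\psi$ for anti-invariant $g$ to the definition of $\NH{W}{X}$.
\item Read $\NH{W}{X}$ as the subalgebra of $\End(\mathcal{T})$ generated by $\K[X]$ and $\psi$; then your surjectivity argument is the entire proof.
\end{enumerate}

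With the relations added, use $\mathsf{\Psi}(g)$ rather than $e\psi g e$, which is $2\Psi(g)e$ and not central. One checks $\mathsf{\Psi}(g)\alpha=g$ for anti-invariant $g$, so $\K[X]\subset Z+Z\alpha$. Hence $\frac12\alpha\psi,\frac12\psi,\frac12\alpha\psi\alpha,\frac12\psi\alpha$ span $\NH{W}{X}$ over $Z$. The composite $M_2(Z)\to\NH{W}{X}\to\End(\mathcal{T})$ is then an isomorphism, which finishes the proof exactly as in the paper.
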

\begin{proof}
    We have already defined the endomorphism $\psi$, and we can define a homomorphism $\NH{W}{X}\to \End(\mathcal{T})$ sending $g\in \K[X]$ to multiplication by this element, and $\psi$ to the endomorphism $\psi$.  Note that in $\End(\mathcal{T})$, we can rewrite \eqref{eq:NH-rank-one} as
    \[\psi (gh)=(s\cdot g) \psi(h)+\psi(g)h    \] using the usual Leibniz rule for divided difference operators.  

    In particular, if $sg=-g$, then the element $\mathsf{\Psi}(g)=\frac{1}{2}(\psi g +g\psi)$ maps to $\Psi(g)$.  Note that the invariants $\K[X]^s$ and the elements $\mathsf{\Psi}(g)$ generate a commutative subalgebra.  Moreover, $\mathsf{\Psi}(f\alpha)=f$ and $\mathsf{\Psi}(gf)=f\mathsf{\Psi}(g)$ for $f\in \K[X]^s$, so the presentation of \cref{thm:alpha-inverse} gives a homomorphism $\K[\tWHilb{\pi}{X}]$ to this subalgebra sending the generator $\Psi(g)$ to $\mathsf{\Psi}(g)$.  Composing with the homomorphism to $\End(\mathcal{T})$ shows that this map is an isomorphism onto this commutative subalgebra.

    Consider the structure of $\End(\mathcal{T})$ as a left module over $\K[\tWHilb{\pi}{X}]$. Any element of $\NH{W}{X}$ can be written as a monomial $\psi g_1 \psi g_2 \cdots \psi g_m$ with $g_i\in \K[X]$.  We can write any $g=g^++g^-\alpha$ where $g_+=\frac{1}{2}(g+sg)$ and $g_-=\frac{1}{2}\psi(g)\in \K[\tWHilb{\pi}{X}]$.  Applying this to each $g_i$, we can reduce to the case where $g_i\in \{1,\alpha\}$ for $i>1$.  Since $\psi ^2=0,\psi\alpha\psi=\psi $  and $1=\frac{1}{2}(\psi\alpha+\alpha\psi)$, we can assume that there is exactly one appearance of $\psi$.  This shows that the elements $\frac{1}{2}\alpha \psi,\frac{1}{2}\psi, \frac{1}{2}\alpha\psi\alpha, \frac{1}{2}\psi \alpha$ generate $\NH{W}{X}$ as a $\K[\tWHilb{\pi}{X}]$ module.  

    In fact, these elements define a surjective map from $M_{2\times 2}(\K[\tWHilb{\pi}{X}])\to \NH{W}{X}$:
    \[\begin{bmatrix}
        g_1 & g_2\\
        g_3 & g_4\\
    \end{bmatrix}\mapsto \frac{1}{2}g_1\alpha \psi+ \frac{1}{2}g_2\alpha\psi\alpha+\frac{1}{2}g_3\psi+ \frac{1}{2}g_4\psi \alpha\]
    The kernel of this map must be generated by an ideal in $\K[\tWHilb{\pi}{X}]$.  This ideal must be trivial since the induced map obtained by composing with the map to $\End(\mathcal{T})$ is an isomorphism.  Thus, we have the desired isomorphism.
\end{proof}
Let $e=\frac{1}{2}\psi \alpha$; in the action of $\NH{W}{X}$ on $\mathcal{T}$, this acts by the projection $f\mapsto \frac{1}{2}(f+sf)$.
\begin{Corollary}
    $\K[\tWHilb{\pi}{X}]\cong e \NH{W}{X} e$.
\end{Corollary}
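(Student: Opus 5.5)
The plan is to use the isomorphism $\End(\mathcal{T})\cong \NH{W}{X}$ of \cref{thm:NH-rank-one}. Through it, $e=\frac{1}{2}\psi\alpha$ is an idempotent endomorphism of the rank-$2$ bundle $\mathcal{T}$, and it acts as the projection $f\mapsto \frac{1}{2}(f+sf)$ onto the invariant line. That line is the subbundle spanned by $\bar{1}$, which is a trivial line subbundle. It is also a direct summand, with complement spanned by $\bar\alpha$, since $\mathcal{T}$ is trivialized by $1,\alpha$ on the transverse locus. So
\[
e\End(\mathcal{T})e\cong \End(e\mathcal{T})\cong \End(\mathcal{O}_{\tWHilb{\pi}{X}})=\K[\tWHilb{\pi}{X}],
\]
and transporting this along \cref{thm:NH-rank-one} gives the corollary.

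To make this concrete, I would first check that $e$ is idempotent, using the relation $\psi\alpha\psi=\psi$ established in the proof of \cref{thm:NH-rank-one}: $e^2=\frac{1}{4}\psi\alpha\psi\alpha=\frac{1}{4}\psi\alpha=\frac{1}{2}e$. This is off by a factor of $2$. So I would recheck the normalization: in the action on $\mathcal{T}$, $\psi(\alpha)=2$ forces $\psi\alpha\psi=2\psi$, not $\psi$. With $\psi\alpha\psi=2\psi$ we get $e^2=e$. I would carry out this bookkeeping against the explicit action on the basis $\bar1,\bar\alpha$. There $\psi\alpha$ sends $\bar 1\mapsto 2\bar1$ and $\bar\alpha\mapsto \psi(\alpha^2)=0$, because $\alpha^2$ is invariant. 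So $e$ is exactly the projection onto $\K\bar1$ along $\K\bar\alpha$.

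Next, in the matrix picture from the proof of \cref{thm:NH-rank-one}, $\NH{W}{X}\cong M_{2\times 2}(\K[\tWHilb{\pi}{X}])$, with $e$ corresponding to an elementary diagonal idempotent $E_{11}$ in the basis $\bar1,\bar\alpha$. The corner $E_{11}M_2(R)E_{11}$ is then identified with $R$ via $g\mapsto gE_{11}$. Concretely, the map $\K[\tWHilb{\pi}{X}]\to e\NH{W}{X}e$, $f\mapsto efe=fe$, is a ring homomorphism. This uses that $\K[\tWHilb{\pi}{X}]$ is realized as the commutative subalgebra generated by $\K[X]^W$ and the $\mathsf\Psi(g)$, each of which preserves the splitting $\K\bar1\oplus\K\bar\alpha$ and hence commutes with $e$. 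The step I expect to need the most care is exactly this last point: verifying that $\mathsf\Psi(g)=\frac12(\psi g+g\psi)$ commutes with $e$. On the basis it acts by $\bar1\mapsto \Psi(g)\bar1$ and $\bar\alpha\mapsto\Psi(g)\bar\alpha$, since $g\alpha$ is invariant and $\psi(g)$ is a function. So it acts as the scalar $\Psi(g)$, and the commutation follows.
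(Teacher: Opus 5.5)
Your proposal is correct and follows the argument the paper intends: the corollary comes from $\End(\mathcal{T})\cong\NH{W}{X}\cong M_{2\times 2}(\K[\tWHilb{\pi}{X}])$, with $e=\frac{1}{2}\psi\alpha$ the rank-one idempotent projecting onto the trivial summand spanned by $\bar 1$, whose corner is $\K[\tWHilb{\pi}{X}]$. Your normalization check is also right: with $\psi(\alpha)=2$ one gets $\psi\alpha\psi=2\psi$, not $\psi$ as written in the proof of the theorem; this is what makes $e$ idempotent, and the slip does not affect the reduction argument there.
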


We can recover $\K[\tHilb{\pi}{X}]$ instead of $\K[\tWHilb{\pi}{X}]$ by considering the image of $\NH{W}{X}$ in the twisted character ring $W\ltimes \K(X)$, where the element $\psi$ is sent to the operator $g\mapsto \frac{g-sg}{\alpha}$.

\subsection{Relation to nilHecke algebras: general case}
Now let $W$ be a finite reflection group acting faithfully on the finite-dimensional complex vector space $V$. Let $\simples$ be the set of reflections in $W$, and fix $\alpha_s$ a linear function such that $s\alpha_s=-\alpha_s$.  Let $\Delta=\prod \alpha_s$ be the unique lowest-degree anti-invariant.\notation{$\Delta$}{Lowest-degree anti-invariant for the reflection representation of $W$} Let $R=\K[V]=\Sym V^*$.

\begin{Definition}
For any dominant affine $V$-scheme $X$ with a compatible $W$-action, we let $\NH{W}{X}$ be the algebra generated by a copy of  $\K[X]$ and the operators $\psi_{s}$ for each simple reflection $s$, subject to the relations that
\begin{enumerate}
    \item $\psi_{s}$ satisfies the rank-$1$ relations \cref{eq:NH-rank-one} with respect to the reflection $s$ and the function $\alpha_s$.
    \item $\psi_{s}$ commutes with functions invariant under $s$.
    \item The operators $\psi_{s}$ satisfy the braid relations of $W$.  That is, if $s_1,\dots, s_m$ and $t_1,\dots, t_m$ are two sequences of simple reflections such that $s_1\cdots s_m=t_1\cdots t_m$ has length $m$ in $W$, then $\psi_{s_1}\cdots \psi_{s_m}=\psi_{t_1}\cdots \psi_{t_m}$.
\end{enumerate} 
\end{Definition}
The case of $X=V$ is particularly nice:
\begin{Proposition}\hfill
\begin{enumerate}
    \item $\NH{W}{V}=\End_{R^W}(R)$ is a matrix ring of rank $\# W$ over $R^W$.
    \item The ring extension $R^W\subset R$ is Frobenius with Frobenius trace \[\Psi_{w_0}(f)=\frac{1}{\# W}\cdot\frac{\sum (-1)^{\ell(w)}wf}{\Delta}.\]
\end{enumerate}
\end{Proposition}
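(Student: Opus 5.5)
We use the classical theory of Chevalley, Demazure and Bernstein--Gelfand--Gelfand. By Chevalley's theorem (valid since the characteristic of $\K$ is coprime to $\#W$), $R$ is a free $R^W$-module of rank $\#W$. One convenient basis is given by Schubert-type polynomials $\{\mathfrak S_w\}$, obtained by applying divided differences to a top-degree element of degree $\ell(w_0)$. We let the generators act on $R$ in the standard way: $\K[V]=R$ acts by multiplication, and each $\psi_s$ acts by the Demazure operator $\partial_s(g)=(g-sg)/\alpha_s$, rescaled to match the normalization $\psi(\alpha)=2$ of \cref{eq:NH-rank-one}.

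The relations are then quick to verify. The rank-1 relations and commutation with $s$-invariants are direct computations, exactly as in \cref{thm:NH-rank-one}. The braid relations are the standard theorem of Demazure and BGG. This gives a homomorphism $\NH{W}{V}\to\End_{R^W}(R)$.

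For part (1) we must show this homomorphism is an isomorphism.

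\emph{Spanning.} By the braid relations, the products $\psi_w=\psi_{s_1}\cdots\psi_{s_m}$ along reduced words are well defined. The relation $\psi_s^2=0$ and the twisted Leibniz rule (the rewriting of \cref{eq:NH-rank-one} in the proof of \cref{thm:NH-rank-one}) allow us to move functions to the left. Hence $\NH{W}{V}$ is spanned as a left $R$-module by $\{\psi_w\}_{w\in W}$, giving a surjection $R^{\oplus \#W}\twoheadrightarrow\NH{W}{V}$.

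\emph{Injectivity of $R^{\oplus\#W}\to\End_{R^W}(R)$.} We check this after tensoring with $\K(V)$. There the operators $\psi_w$ are upper-triangular, with respect to the Bruhat order, in the basis $\{w\}$ of the twisted group ring $W\ltimes\K(V)$. Their leading coefficients are $\pm\prod\alpha^{-1}$ and hence nonzero, so the $\psi_w$ are $\K(V)$-linearly independent.

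\emph{Surjectivity onto $\End_{R^W}(R)$.} This is the main obstacle. Both sides are free $R^W$-modules of rank $(\#W)^2$: the source is $R^{\#W}$ with $R$ free of rank $\#W$ over $R^W$, and the target is $\End_{R^W}$ of a free module of rank $\#W$. An injective map between such modules is an isomorphism if and only if its determinant is a unit. To exhibit surjectivity concretely, we use the pairing $(f,g)\mapsto \psi_{w_0}(fg)$. We show that the matrix $\bigl(\psi_{w_0}(\mathfrak S_u\,\mathfrak S^\vee_v)\bigr)$ is unitriangular for a suitable dual basis $\mathfrak S^\vee_v=\psi_{v^{-1}w_0}$ applied to a top-degree class. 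This is the usual BGG duality computation: degree considerations force $\psi_{w_0}(\mathfrak S_u\mathfrak S^\vee_v)=\delta_{uv}$ up to lower terms. From this, each matrix unit $\mathfrak S^\vee_v\,\psi_{w_0}(\mathfrak S_u\,\cdot\,)$ lies in the image, which gives surjectivity. Alternatively, after localizing at primes of height one of $R^W$, it suffices to compare along a single reflection hyperplane. There the statement reduces to the rank-1 case, proved as in \cref{thm:NH-rank-one}. Normality of $R^W$ together with reflexivity of both sides then finishes the argument.

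For part (2), we must show that $\Psi_{w_0}$ lands in $R^W$ and that the pairing $(f,g)\mapsto\Psi_{w_0}(fg)$ is perfect.

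\emph{Image in $R^W$.} The sum $\sum(-1)^{\ell(w)}wf$ is anti-invariant. Since each $\alpha_s$ divides every anti-invariant, $\Delta$ divides it, and the quotient is invariant. Its $R^W$-linearity is clear.

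\emph{Identification with $\psi_{w_0}$.} Expanding $\psi_{w_0}$ in the twisted group ring shows that $\Psi_{w_0}=c\,\psi_{w_0}$ for a nonzero constant $c$. Indeed, $\psi_{w_0}$ is $R^W$-linear, kills nothing anti-invariant, and takes values in invariants, so it is proportional to the antisymmetrizer divided by $\Delta$. The factor $1/\#W$ fixes the normalization so that $\Psi_{w_0}(\Delta)=1$.

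\emph{Perfection.} Perfection of the pairing is precisely the unitriangularity established in part (1). Equivalently, $\Hom_{R^W}(R,R^W)\cong R$ via $g\mapsto\Psi_{w_0}(g\,\cdot)$, which is the Frobenius property.
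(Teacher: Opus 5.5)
The paper gives no proof of this proposition; it is quoted as the classical Demazure/BGG/Kostant--Kumar result, and your outline is that standard argument. However, your spanning step fails as written, and this is exactly where statement (1), read literally, breaks. You use the twisted Leibniz rule $\psi_s g=(sg)\psi_s+\partial_s(g)$ as an identity in $\NH{W}{V}$, citing its rewriting in the proof of \cref{thm:NH-rank-one}. That rewriting is an identity of operators. In the abstract algebra, \cref{eq:NH-rank-one} only yields $\alpha_s\bigl(\psi_s g-(sg)\psi_s-\partial_s(g)\bigr)=0$, and there is no way to cancel $\alpha_s$. Indeed, sending $g\mapsto g(0)$ for $g\in R$ and $\psi_s\mapsto 0$ for every $s$ respects all the defining relations: both sides of $(1-\alpha_s\psi_s)g=(sg)(1-\alpha_s\psi_s)$ go to $g(0)$, since $0$ is $W$-fixed. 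On this one-dimensional module, $\psi_s\alpha_s-(s\alpha_s)\psi_s-\partial_s(\alpha_s)$ acts by $-2$. Since $\End_{R^W}(R)\cong M_{\#W}(R^W)$ has no one-dimensional modules when $W\neq 1$, the algebra as literally presented is not spanned over $R$ by the $\psi_w$, and its map to $\End_{R^W}(R)$ has a nonzero kernel.

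The fix is to use the standard nilHecke presentation, i.e.\ to add the relation $\alpha_s\psi_s+\psi_s\alpha_s=2$. The paper uses this tacitly, e.g.\ $1=\frac{1}{2}(\psi\alpha+\alpha\psi)$ in the proof of \cref{thm:NH-rank-one}. Writing $g=a+\alpha_s b$ with $a,b\in R^s$, this relation together with commutation with $s$-invariants gives $\psi_s g=(a-\alpha_s b)\psi_s+2b=(sg)\psi_s+\partial_s(g)$. With that in place, the rest of part (1) goes through: your spanning argument (using Tits' word property to kill non-reduced products), your triangularity argument for injectivity, and either of your surjectivity arguments.

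There are also two smaller points in part (2). First, your reason why $\psi_{w_0}$ is a constant multiple of the antisymmetrizer over $\Delta$ does not work. Being $R^W$-linear, $R^W$-valued and nonzero on anti-invariants does not pin the map down: any $f\mapsto\Psi_{w_0}(hf)$ with $\Psi_{w_0}(h\Delta)\neq 0$ has these properties. What you need is $\psi_{w_0}\circ s=-\psi_{w_0}$, which follows from $\psi_{w_0}\psi_s=0$ and $\psi_s\alpha_s=1+s$. Then $\psi_{w_0}$ factors through the projection onto the anti-invariants $\Delta R^W$, and $R^W$-linearity gives $\psi_{w_0}=\psi_{w_0}(\Delta)\,\Psi_{w_0}$. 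Here $\psi_{w_0}(\Delta)$ is a scalar by degree, and it is nonzero because $\psi_{w_0}\neq 0$. Second, degree considerations only give block-triangularity of your pairing matrix. That the diagonal blocks are identity matrices is the actual BGG computation in the coinvariant algebra, and you should cite it as such rather than attribute it to degrees.
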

\notation{$\Psi_{w_0}$}{Frobenius trace for the free extension $R^W\subset R$}That is, there are bases $\{f_1,\dots, f_{d}\}$ and $\{f^1, \dots, f^d\}$ of $R$ as a free $R^W$ module such that 
$g=\sum_{i=1}^df^i\Psi_{w_0}(f_ig)$ for all $g\in R$. Furthermore, we can assume that the $\K$-span of $\{f_1,\dots, f_{d}\}$ and $\{f^1, \dots, f^d\}$ are $W$-invariant.  For simplicity, we assume that $f_1=f^d=1, f_d=f^1=\Delta$, so all other $f_i,f^i$ have trivial projection to the invariant and anti-invariant subspaces.  
By this freeness, the images of the vectors $f_i$ give a basis of the functions on any subscheme whose functions are a rank 1 $\K W$-module.  This corresponds to a maximal ideal of $R^W$, and one of the things we can do with this Frobenius structure is describe the action of multiplication by $f_i$ on this subscheme as a function of this maximal ideal: it is given by the $R^W$-valued $d\times d$ matrix $(F_i)_{j,k}=\Psi_{w_0}(f_if_jf^k)$.  

This gives us a natural approach to describing the transverse Hilbert scheme.  Given such a subscheme, every element of $\K[X]$ is congruent to a unique linear combination of the $f_i$, so we can think of the coefficients in this linear combination as functions on 
$\tWHilb{\pi}{X}$.  Given $h\in \K[X]$, let $H_i$ be the unique function on $\tWHilb{\pi}{X}$ given by $h\equiv\sum H_if_i$.  
\notation{$H_i,H_i^{(j)}$}{Coefficient functions expressing generators of $\K[X]$ in the Frobenius basis on the transverse Hilbert scheme}

To see this function more explicitly, the wedge $\upsilon$ of the vectors $f_{i}$ (in some order) gives a section of $\bigwedge^{\operatorname{top}}\mathcal{T}$ which is non-vanishing on $\tWHilb{\pi}{X}$.  We can rewrite the function $H_i$ as the ratio of the wedge with $f_i$ replaced by $h$ with the section $\upsilon$.  This means that the action of $h$ on the subscheme defining a point of $\tWHilb{\pi}{X}$ is given by the matrix $\sum_iH_iF_i$.

Observe that if $h$ is invariant, then $H_1$ is the only one of these functions that is non-zero, and it can be identified with $h$ as a function on $X/W$, pulled back via the Chow map.  Similarly, if $h$ is anti-invariant, then only $H_d$ is non-zero; we have an equality of invariant functions $H_d\Delta=h$, so on the open set of free orbits, we can identify $H_d$ with the pullback of $h/\Delta$.  We let $\Psi(h)=H_d$ for an arbitrary function $h\in \K[X]$; this is a well-defined function on $\tWHilb{\pi}{X}$, and it agrees with the previous definition of $\Psi(h)$ in the rank 1 case.  Equivalently, $\Psi_{w_0}(h)=\Psi(h)$.  

We will show below that these functions $H_i$ generate the functions on $\tWHilb{\pi}{X}$ modulo ``the obvious relations.''

Let us be more precise.  To make it clear that our variety is of finite type, we choose a finite generating set $h^{(1)},\dots, h^{(p)}$ of $\K[X]$ as an $R$-algebra.  For simplicity, we will assume that the span of  $\{h^{(1)},\dots, h^{(p)}\}$ is $W$-invariant.

There are two natural sets of relations that the functions $H^{(j)}_i$ must satisfy:
\begin{enumerate}
	\item The map $h^{(j)}\mapsto \sum_i H_i ^{(j)} F_i$ must be $W$-invariant.  Since $w\cdot h^{(j)}$ is a linear combination of other $h^{(k)}$, and the same is true of $F_i$, each $w$ gives linear relations on $H_i^{(j)}$ for all $i$ and $j$. 
    \item If a polynomial $q(h^{(1)},\cdots, h^{(p)})=0 $ gives a relation on the $h^{(*)}$, then the same is true of its image.  That is:
	\[q\Big(\sum_i H_i ^{(1)} F_i, \dots , \sum_i H_i ^{(p)} F_i\Big)=0\]
	The left-hand-side of this equation is a linear combination of $F_i$'s with coefficients that are polynomials in $H_i^{(j)}$, and so this equation gives relations between them. 
\end{enumerate}

\begin{Remark}\label{rmk:isotypic}
We can simplify the relations (1) by more carefully choosing our basis to be compatible with the $W$-representation structure. 	

Enumerate the simple characters $\{\chi_1,\dots, \chi_t\}$ of $W$.  
We can change to a basis $\{f_{i,r,s}\}$ of the span of the $f_i$'s for $i=1,\dots, t$ and $r,s=1,\dots, \chi_i(1)$, so that $V_{i,s}=\operatorname{span}(f_{i,r,s})_{r=1,\dots, \chi_i(1)}$ for $i,s$ fixed is a copy of $V_i$, an irrep affording character $\chi_i$, for some fixed basis $v_{i,s}$.  

We can similarly organize our generators of $\K[X]$ to be $h^{(i,r,k)}$ so that $\operatorname{span}(h^{(i,r,k)})_{r=1,\dots, \chi_i(1)}$ for $i,k$ fixed is a copy of $V_i$.  

Once we have chosen these bases, the relations (1) say that $H^{(j,\ell,k)}_{i,r,s}$ is zero unless $i=j$ and $\ell=r$, and that in the non-zero case, it depends only on $i,k,s$, not on $r$. 
\end{Remark}

\begin{Proposition}\label{prop:Hilb-gen}
    The functions on $\tWHilb{\pi}{X}$ are generated over $R^W$ by the functions $H_i^{(j)}$ subject to the relations (1-2).  That is, $\tWHilb{\pi}{X} $ is the subscheme of $\mathbb{A}^{dp}\times X/W$ with these defining equations. 
\end{Proposition}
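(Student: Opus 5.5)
We prove \cref{prop:Hilb-gen} by comparing functors of points, generalizing the proof of \cref{thm:alpha-inverse}. Let $Z\subset \mathbb{A}^{dp}\times X/W$ be the subscheme cut out by relations (1) and (2). There are two maps to construct, $\tWHilb{\pi}{X}\to Z$ and $Z\to\tWHilb{\pi}{X}$, and we then check that they are mutually inverse on $Q$-points for an arbitrary $\K$-algebra $Q$.

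\textbf{The map $\tWHilb{\pi}{X}\to Z$.} Take a $Q$-point of $\tWHilb{\pi}{X}$, that is, a $W$-stable subscheme $Y\subset X_Q$. Transversality means that $\mathcal{O}(Y)$ is cyclic over $R\otimes Q$. Since $\mathcal{O}(Y)\cong QW$, it is a quotient $R\otimes_{R^W}Q$ for the induced map $R^W\to Q$. Freeness of $R$ over $R^W$ with basis $\{f_i\}$ shows that the $\bar f_i$ form a $Q$-basis of $\mathcal{O}(Y)$; this is the nonvanishing of $\upsilon$. We then define $H^{(j)}_i$ as the coefficients of $\bar h^{(j)}$ in this basis.

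By the Frobenius property of $R^W\subset R$, multiplication by $\bar f_i$ on $R\otimes_{R^W}Q$ is given by the matrix $F_i$ evaluated at the point. Hence multiplication by $\bar h^{(j)}$ is $\sum_i H^{(j)}_iF_i$. These matrices satisfy every polynomial relation among the $h^{(j)}$, which is relation (2), and they are $W$-equivariant because $Y$ is $W$-stable, which is relation (1). This gives a morphism $\tWHilb{\pi}{X}\to Z$ over $X/W$, compatible with the Chow map.

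\textbf{The map $Z\to\tWHilb{\pi}{X}$.} Given a $Q$-point of $Z$, consider the map
\[
\K[X]=R[h^{(1)},\dots,h^{(p)}]/I\longrightarrow \End_Q(R\otimes_{R^W}Q)
\]
sending $R$ to multiplication and $h^{(j)}\mapsto \sum_i H^{(j)}_iF_i$. The steps are as follows.
\begin{enumerate}
\item The matrices $F_i$ commute with one another, since they are multiplication operators in the commutative ring $R\otimes_{R^W}Q$. The subtle point is that the $\sum_iH^{(j)}_iF_i$ are linear combinations of the $F_i$, so they commute with each other and with $R$.
\item The map descends from the polynomial ring to $\K[X]$ by relation (2), and it is $W$-equivariant by relation (1).
\item Evaluating at the vector $\bar1$ gives a surjection $\K[X]_Q\to R\otimes_{R^W}Q$, because $R$ already surjects. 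Its kernel is a $W$-stable ideal whose quotient is $R\otimes_{R^W}Q\cong QW$ and is cyclic over $R$, so it is a transverse point.
\end{enumerate}
Compatibility with $R^W$ is exactly what ensures the point lies over the given point of $X/W$.

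\textbf{Mutual inverses.} Starting from $Y$, the reconstructed ideal is the annihilator of $\bar1$, which is the original ideal. Conversely, the coefficients of $\bar h^{(j)}\cdot\bar1$ are $H^{(j)}_i$, because $F_i\bar1=\bar f_i$ as $f^1=1$ normalizes things. Finally, because the $h^{(j)}$ together with $R$ generate $\K[X]$, the functions $H_i$ of an arbitrary $h$ are polynomials in the $H^{(j)}_i$ over $R^W$. Hence the generation statement holds.

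\textbf{Main obstacle.} The crux is step 3 above: showing that an abstract solution of (1)–(2) really gives a \emph{commutative} $W$-equivariant module structure of the right shape over an arbitrary base $Q$, with no nilpotent subtleties. I would handle this as outlined, by realizing everything inside the commutative algebra $R\otimes_{R^W}Q$ and using freeness of $R$ over $R^W$, which holds since $\operatorname{char}\K\nmid\#W$. This replaces the explicit isotypic case analysis used in rank $1$; \cref{rmk:isotypic} can be used to check that relation (1) has the expected form.
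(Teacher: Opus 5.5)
Your proposal is correct and follows essentially the same route as the paper. The paper also identifies $Q$-points of $Z$ with $Q$-points of $\tWHilb{\pi}{X}$, using that the coordinate ring of a transverse point is $Q$-free on the images $\bar f_i$, that relations (2) make the kernel of $Q\otimes_{\K}\K[X]\to Q^d$ an ideal, and that relations (1) make it $W$-stable. Your realization of this map as a ring homomorphism into $R\otimes_{R^W}Q$ spells out what the paper leaves implicit. One small slip: the paper normalizes $f_1=f^d=1$ and $f^1=f_d=\Delta$, not $f^1=1$, though $F_i\bar 1=\bar f_i$ holds anyway because $F_i$ is multiplication by $\bar f_i$.
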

This proposition makes more precise the discussion in \cite[Rmk. 3.6 \& 3.9]{bielawskiHypertoricVarieties2023}.
\begin{proof}
	Let $Z$ be the subscheme defined by these equations, and for a commutative $\K$-algebra $Q$, consider the set $Z(Q)$ of $Q$-points of $Z$.  Such a $Q$-point $x$ gives scalars $H_i^{(j)} (x)\in Q$ which satisfy the relations (1)-(2).  This gives a $Q$-module homomorphism $Q\otimes_{\K}\K[X]\to Q^d$.  The relations (2) show that the kernel is an ideal, and the relations (1) show that this ideal is $W$-invariant.  Thus, the quotient by this ideal defines a subscheme of $X_Q$ that is flat over $Q$, whose coordinate ring is a free module of rank 1 over $QW$, and whose pushforward to $V_Q$ is a subscheme.  
	
Conversely, the coordinate ring of such a subscheme is a free $Q$-module with basis given by the images of the $f_i$ for $i=1,\dots, d$.  For each generator $h^{(j)}$, the corresponding coefficients $H_i^{(j)}$ are therefore well-defined elements of $Q$, and these determine a point of $Z(Q)$.  Thus, there is a natural bijection between the $Q$-points of $Z$ and $\tWHilb{\pi}{X}$, which proves the desired isomorphism.
\end{proof}

We can deduce from this:
\begin{Theorem}\label{th:divide-delta}
    The functions on $\tWHilb{\pi}{X}$ are generated by $\K[X]^W$ and the functions $\Psi(g)$ for $g\in \K[X]$ subject to the relations 
    \[\Psi(gh)=\sum_i\Psi(f^ig)\Psi(f_ih) \qquad\Psi(h)=\frac{1}{\# W}\sum_{w\in W} (-1)^{\ell(w)} \Psi(wh) \qquad \Psi(h\Delta)=\frac{1}{\# W}\sum_{w\in W} wh\] for all $g,h\in \K[X]$.  The functions on $\tHilb{\pi}{X}$ are the image of the induced homomorphism $\K[\tWHilb{\pi}{X}] \to \K(X)^W$ sending $\Psi(g)\mapsto g/\Delta$ for $g\in \K[X]^{W,-}$; equivalently, this means that $\K[\tHilb{\pi}{X}]$ is the $W$-invariants of the {\bf Demazure envelope} of $\K[X]$ in the sense of \cite{gannonCoordinateRing2026}.
\end{Theorem}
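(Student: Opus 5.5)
The plan is to deduce the presentation directly from \cref{prop:Hilb-gen}, exactly paralleling how \cref{thm:alpha-inverse} was obtained in rank $1$. First I will rewrite the coefficient functions $H_i$ in terms of $\Psi$. For $h\in\K[X]$, the Frobenius expansion $g=\sum_i f^i\Psi_{w_0}(f_ig)$ holds in $R$, and its image on any subscheme parametrized by $\tWHilb{\pi}{X}$ holds as well. This gives $H_i=\Psi(f^ih)$ up to the duality between the bases, since $\Psi$ is the $\Delta$-coordinate and the pairing $\Psi_{w_0}(f_if^k)=\delta_{ik}$ is preserved on each fiber. Every $H^{(j)}_i$ is then of the form $\Psi(\cdot)$. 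Conversely, each $\Psi(g)$ is a linear combination of $H_d$'s, since $g$ is a polynomial in the generators and the product rule below reduces it. So $\K[X]^W$ together with the $\Psi(g)$ generate.

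Next I will check that the three displayed relations hold on $\tWHilb{\pi}{X}$.
\begin{itemize}
\item The product rule $\Psi(gh)=\sum_i\Psi(f^ig)\Psi(f_ih)$ comes from expanding $g=\sum_i f^i\Psi(f_ig)$ on the fiber. Multiplying by $h$ and applying $\Psi$ uses the $R^W$-linearity of $\Psi$ on the fiber, where the coefficients are scalars pulled back from $X/W$.
\item The antisymmetrization relation says that $\Psi$ factors through the sign-isotypic projection.
\item The relation $\Psi(h\Delta)=\frac{1}{\#W}\sum_w wh$ follows from $f_d=\Delta$ and $f^d=1$, together with the fact that the invariant part of $h$ is the $f_1$-coefficient.
\end{itemize}
All three hold on the dense locus of free orbits, where $\Psi(g)$ restricts to $\Psi_{w_0}(g)$, which is $g/\Delta$ after antisymmetrizing. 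By \cref{prop:Hilb-gen}, however, they must be verified scheme-theoretically, so I will verify them on $Q$-points as in \cref{prop:Hilb-gen}, using the matrices $F_i$.

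The main step is showing these relations are complete, i.e.\ that they imply relations (1) and (2) of \cref{prop:Hilb-gen}.
\begin{itemize}
\item Relation (1), $W$-equivariance, should follow from the antisymmetrization relation combined with $W$-invariance of the spans of the $f_i$ and $f^i$ (cf.\ \cref{rmk:isotypic}).
\item Relation (2) follows from the product rule. Given $q(h^{(1)},\dots,h^{(p)})=0$, iterating the product rule shows that $\Psi(f^kq(h))$ equals the coefficient of $f_k$ in $q(\sum_i H^{(1)}_iF_i,\dots)$, with $(F_i)_{jk}=\Psi_{w_0}(f_if_jf^k)$ matching the structure constants. The left-hand side is $\Psi(0)=0$.
\end{itemize}
I expect the bookkeeping identifying iterated products with the matrices $F_i$ to be the main obstacle. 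I would handle it by defining a $Q$-point map $h\mapsto(\Psi(f^kh))_k\in Q^d$ and showing that the product rule makes it multiplicative for the algebra structure on $Q^d$ transported from $R/\mathfrak m$. This is exactly the functor-of-points argument of \cref{thm:alpha-inverse}.

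Finally, for $\tHilb{\pi}{X}$ I will argue as in the rank-$1$ case. Restriction to free orbits is dense in $\HilbW{X}$, hence in $\tHilb{\pi}{X}$, so it gives an injection $\K[\tHilb{\pi}{X}]\hookrightarrow\K(X)^W$. This map is the composite $\K[\tWHilb{\pi}{X}]\to\K[\tHilb{\pi}{X}]$, which is surjective since $\tHilb{\pi}{X}$ is a closed subscheme, followed by the injection. On free orbits, $\Psi(g)$ restricts to $\Psi_{w_0}(g)$, which is $g/\Delta$ for anti-invariant $g$. Thus the image is generated by $\K[X]^W$ and $\Delta^{-1}\K[X]^{W,-}$. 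Matching this ring with the $W$-invariants of the Demazure envelope is then a comparison of definitions with \cite{gannonCoordinateRing2026}: the envelope is generated by applying all Demazure operators, and its invariants are spanned by $\Psi_{w_0}$ of its elements.
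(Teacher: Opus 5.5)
Your proposal is correct and follows essentially the same route as the paper. Generation comes from the identification $H^{(j)}_i=\Psi(f^ih^{(j)})$, and completeness comes from turning a $Q$-valued assignment of the $\Psi(g)$ into the map $h\mapsto\sum_i\Psi(f^ih)F_i$ (your map $h\mapsto(\Psi(f^kh))_k$ into $R\otimes_{R^W}Q$), with $W$-equivariance from the antisymmetrization relation, multiplicativity from iterating the product rule (exactly the paper's triple expansion \eqref{eq:triple-expansion}), and the statement about $\tHilb{\pi}{X}$ obtained by restricting to the dense locus of free orbits as in \cref{thm:alpha-inverse}.
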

It will be useful to note that we can apply the relation above twice to obtain:
\begin{equation}\label{eq:triple-expansion}
\begin{split}
\Psi(g_1g_2g_3)&=\sum_i\Psi(f^ig_1)\Psi(f_ig_2g_3)\\
&=\sum_{i,j}\Psi(f^ig_1)\Psi(f^jg_2)\Psi(f_if_jg_3).
\end{split}
\end{equation}
\begin{proof}
    This is effectively the same as the proof of \cref{thm:alpha-inverse}, but with more complicated notation.  We can choose generators $h^{(j)}$ of $\K[X]$ as above, and consider the functions $H_i^{(j)}$.  To begin with, we write each $H_i^{(j)}$ as a scalar multiple of $\Psi(g)$.  
    
    Consider the function $h^{(j)} f^i\in \K[X]$.  Let $g$ be the projection of this function to the anti-invariants.  The function $h^{(j)} f^i\in \K[X]$ acts on the tautological subscheme by the sum $\sum_k H_k^{(j)}F_kF^i$.  Since $f_kf^i$ has trivial projection to the anti-invariants if $k\neq i$, and projection $\Delta=f_d$ if $k=i$, we find that $g$ acts on this subscheme by multiplication by $H^{(j)}_iF_d$.  That is, $\Psi(g)=H^{(j)}_i$.  This shows the desired functions generate.

    The relations are also straightforward to verify, so the non-trivial part of the proof is to show that they are sufficient to deduce all other relations.  Equivalently, we need to show that if we assign values $\Psi(g)(x)\in Q$ for a $\K$-algebra $Q$ to the generators $\Psi(g)$, and these values satisfy the relations in the statement of the theorem, then they define a $Q$-point of $\tWHilb{\pi}{X}$.  We define a homomorphism $\eta$ by setting $\eta(h)= \sum_i \Psi(f^ih)(x)F_i$.  Using a basis compatible with irreps as in \cref{rmk:isotypic}, we can verify that this homomorphism is $W$-invariant, since the tensor $\sum_i f^i\otimes F_i$ is anti-invariant, and $\Psi$ intertwines the sign and trivial $W$-representations.  Moreover, the image contains the operators coming from $R$, so $\eta$ is surjective.

    Thus, we only need to verify that this homomorphism is well-defined, that is, that $\eta(gh)=\eta(g)\eta(h)$ for all $g,h\in \K[X]$.  We can write $g=\sum_i \Psi(f^ig)(x)f_i$ and $h=\sum_i \Psi(f^ih)(x)f_i$.  Thus, we have
    \[\eta(gh)=\sum_k \Psi(ghf^k )(x)F_k=\sum_{i,j,k} \Psi(gf^i)(x)\Psi(hf^j)(x)\Psi(f_if_jf^k)(x)F_k\] by applying \cref{eq:triple-expansion} with $g_1=g$, $g_2=h$, and $g_3=f^k$.
    On the other hand, we have  
    \[\eta(g)\eta(h)=\sum_{i,j} \Psi(gf^i)(x)\Psi(hf^j)(x)F_iF_j=\sum_{i,j,k} \Psi(gf^i)(x)\Psi(hf^j)(x)\Psi(f_if_jf^k)(x)F_k,\]
    since $F_iF_j=\sum_k \Psi(f_if_jf^k)F_k$ by the definition of the matrices $F_i$.
    Comparing these expressions shows that we have a homomorphism as required, so we have the desired isomorphism.
\end{proof}
As in the rank-$1$ case, we can consider the element $e= \frac{1}{\# W}\psi_{w_0}\Delta$ which acts by projection to the trivial isotypic component, and we have:
\begin{Lemma}
$\K[\tWHilb{\pi}{X}]\cong e\NH{W}{X}e.$
\end{Lemma}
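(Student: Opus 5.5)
The plan is to imitate the rank-$1$ argument of \cref{thm:NH-rank-one} and its corollary. First I will build a homomorphism $\NH{W}{X}\to \End(\mathcal{T})$. Since $\mathcal{T}$ is free over $\tWHilb{\pi}{X}$ with basis the images of $f_1,\dots,f_d$, each $g\in\K[X]$ acts on $\mathcal{T}$ by multiplication. For each simple reflection $s$ I will let $\psi_s$ act by the divided difference operator $f\mapsto (f-sf)/\alpha_s$, applied to the basis $\{f_i\}$ of $R$ and extended $\K[\tWHilb{\pi}{X}]$-linearly. The relations of $\NH{W}{X}$ (rank-$1$ relations, commuting with $s$-invariants, braid relations) then hold because they hold for $\NH{W}{V}=\End_{R^W}(R)$ and the action is obtained from that one by base change. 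The element $e=\frac{1}{\# W}\psi_{w_0}\Delta$ acts as the symmetrization projector $f\mapsto \frac{1}{\#W}\sum_w wf$. Its image in $\mathcal{T}$ is the rank-one summand spanned by $\bar 1$, so $e\End(\mathcal{T})e\cong \K[\tWHilb{\pi}{X}]$.

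Next I will produce a map $\K[\tWHilb{\pi}{X}]\to e\NH{W}{X}e$ using the presentation in \cref{th:divide-delta}. The assignment is $\Psi(g)\mapsto \mathsf{\Psi}(g):=\frac{1}{\#W}e\,\psi_{w_0} g\, e$ suitably normalized, together with $f\mapsto ef$ for $f\in \K[X]^W$; this generalizes $\mathsf{\Psi}(g)=\frac12(\psi g+g\psi)$. I then need to check the three relations of \cref{th:divide-delta} inside $e\NH{W}{X}e$:
\begin{enumerate}
\item The antisymmetrization relation follows from $\psi_{w_0}w=(-1)^{\ell(w)}\psi_{w_0}$ up to lower terms that die after multiplying by $e$.
\item The relation $\Psi(h\Delta)=\frac1{\#W}\sum_w wh$ follows from the definition of $e$.
\item The product relation $\Psi(gh)=\sum_i\Psi(f^ig)\Psi(f_ih)$ follows by inserting the identity $1=\sum_i f^i e\,\psi_{w_0} f_i$ in $\NH{W}{X}$. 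This identity is the Frobenius expansion $g=\sum f^i\Psi_{w_0}(f_ig)$, and it holds in $\NH{W}{V}$ and hence in $\NH{W}{X}$.
\end{enumerate}
Composing with the first map gives the identity on $\K[\tWHilb{\pi}{X}]$, so the map $\K[\tWHilb{\pi}{X}]\to e\NH{W}{X}e$ is injective.

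The main obstacle is surjectivity onto $e\NH{W}{X}e$, i.e.\ showing that $\NH{W}{X}$ is not larger than $\End(\mathcal{T})$. I would follow the rank-$1$ proof. Every $g\in\K[X]$ can be written as $g=\sum_i f^i\,\Psi(f_ig)$ with coefficients in the commutative subalgebra. Using this and the braid relations, any word $\psi_{s_1}g_1\cdots\psi_{s_m}g_m$ can be rewritten as a $\K[\tWHilb{\pi}{X}]$-combination of elements $f_i\psi_{w_0}f^j$. Here I would invoke the known structure of $\NH{W}{V}$ as a matrix ring: every element of $\NH{W}{V}$ is an $R^W$-combination of $f_i\psi_{w_0}f^j$. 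This gives a surjection $M_{d\times d}(\K[\tWHilb{\pi}{X}])\to \NH{W}{X}$. Its composite with the map to $\End(\mathcal{T})\cong M_d(\K[\tWHilb{\pi}{X}])$ is an isomorphism, so the surjection is injective. Hence $\NH{W}{X}\cong\End(\mathcal{T})$, and cutting down by $e$ gives the lemma. The delicate point is making sure the coefficients of this rewriting really lie in the commutative subalgebra generated by the $\mathsf{\Psi}(g)$ and not just in $\NH{W}{X}$; I expect to handle this by the isotypic bookkeeping of \cref{rmk:isotypic}.
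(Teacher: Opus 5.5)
Your construction of $\NH{W}{X}\to\End(\mathcal{T})$, the map $\Psi(g)\mapsto \mathsf{\Psi}(g)=\frac{1}{\#W}e\psi_{w_0}ge$, the check of the three relations of \cref{th:divide-delta} by inserting the Frobenius identity, and injectivity from the composite being the identity all agree with the paper. The gap is in surjectivity.

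Your route through $\NH{W}{X}\cong\End(\mathcal{T})$ needs $\K[\tWHilb{\pi}{X}]$ to sit inside $\NH{W}{X}$ as a central subalgebra. Without that, neither ``$\K[\tWHilb{\pi}{X}]$-combinations of $f_i\psi_{w_0}f^j$'' nor the map $M_{d\times d}(\K[\tWHilb{\pi}{X}])\to\NH{W}{X}$ is even defined. But the elements $\mathsf{\Psi}(g)$ you built live in the corner $e\NH{W}{X}e$, whose unit is $e$ rather than $1$. They do not commute with the $f^i$ or the $\psi_s$. The expansion you invoke is also false in $\NH{W}{X}$: what actually holds is $\sum_i f^i\,\mathsf{\Psi}(f_ig)=ge$, not $g$. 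The rank-one central elements $\frac12(\psi g+g\psi)$ have no evident higher-rank analogue. The isotypic bookkeeping of \cref{rmk:isotypic} does not help, since it concerns relations among coefficient functions on $\tWHilb{\pi}{X}$, not centrality inside $\NH{W}{X}$.

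The missing idea is that you never need all of $\NH{W}{X}$. Two facts hold in $\NH{W}{V}=\End_{R^W}(R)$, and hence in $\NH{W}{X}$:
\begin{enumerate}
\item $e\psi_{w_0}=\psi_{w_0}$, because $\psi_{w_0}$ lands in $R^W$.
\item The Frobenius formula writes each $\psi_s$ as $\sum_j\frac{f^j-sf^j}{\alpha_s}\psi_{w_0}f_j$, suitably normalized.
\end{enumerate}
The second fact turns any element of $e\NH{W}{X}e$ into a sum of words $eg_0\psi_{w_0}g_1\psi_{w_0}\cdots\psi_{w_0}g_me$. Now write the first $e$ as $\frac{1}{\#W}\psi_{w_0}\Delta$ and insert $e$ before each $\psi_{w_0}$. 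This factors such a word as $\bigl(\frac{1}{\#W}\psi_{w_0}\Delta g_0e\bigr)(\psi_{w_0}g_1e)\cdots(\psi_{w_0}g_me)$, which is a product of images of the generators $\Psi(g)$. So your map onto $e\NH{W}{X}e$ is surjective, and this is exactly how the paper concludes.

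Your stronger claim $\NH{W}{X}\cong\End(\mathcal{T})$ then follows afterwards. The idempotent $e$ is full, since $1=\frac{1}{\#W}\sum_i f^ie\psi_{w_0}f_i$. The module $\NH{W}{X}e$ is free over $e\NH{W}{X}e$ on the $f^ie$, since $\frac{1}{\#W}\psi_{w_0}f_if^je=\delta_{ij}e$. Hence $\NH{W}{X}\cong M_d(e\NH{W}{X}e)$.
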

\begin{proof}
    Let $\psi_s$ be the endomorphism of the tautological bundle $\mathcal{T}$ that sends $g$ to
    \[\psi_{s}(g)=\sum_j \frac{f^j-sf^j}{\alpha_s}\Psi(f_jg).\]
    In the case of $X=V$, this is the usual divided difference operator, so calculations in that case verify that the relations satisfied by $\psi_s$ are exactly those in the definition of $\NH{W}{X}$.  Thus, we have a homomorphism $\NH{W}{X}\to \End(\mathcal{T})$ sending $g\in \K[X]$ to multiplication by this element, and $\psi_s$ to the endomorphism $\psi_s$.  Furthermore, this case shows that $\frac{1}{\# W}\psi_{w_0}$ gives the operation $g \mapsto \Psi(g)$ defined above.
    
The bundle $\mathcal{T}$ is trivialized by the sections $f_i$, and for each $i$, the element $f^i\psi_{w_0}f_i$ acts by projection to the line spanned by $f_i$.  Thus, we have \[e\End(\mathcal{T})e\cong \K[\tWHilb{\pi}{X}].\]  The homomorphism $\NH{W}{X}\to \End(\mathcal{T})$ induces a homomorphism \[\gamma\colon e\NH{W}{X}e\to \K[\tWHilb{\pi}{X}],\] which we wish to show is an isomorphism.  

We will prove this by showing that we have an inverse map \[\beta \colon \K[\tWHilb{\pi}{X}]\to e\NH{W}{X}e\] sending $\beta(h)=eh$ for $h\in \K[X]^W$ and  $\beta(\Psi(g))= \frac{1}{\# W}\psi_{w_0}ge$.  To see that this is well-defined, we need to verify that the relations in \cref{th:divide-delta} are satisfied.  The first relation follows from \[\beta(\Psi(gh))=\frac{1}{\# W}\psi_{w_0}ghe=\frac{1}{(\# W)^2}\sum_i (\psi_{w_0}gf^ie)(\psi_{w_0}hf_ie)= \sum_i\beta(\Psi(gf^i))\beta(\Psi(hf_i)).\]
The second relation follows from the fact that $s\psi_{w_0}=-\psi_{w_0}$ for all reflections $s$.  The third relation follows from the fact that $\psi_{w_0}\Delta h e=\sum_{w\in W} whe$.  

Finally, we need to verify that $\beta$ and $\gamma$ are inverses.  The composition $\gamma\beta$ is clearly the identity on $\K[X]^W$.  On the other hand, \[ \gamma\beta(\Psi(g))=\gamma\Big(\frac{1}{\# W}\psi_{w_0}ge\Big)=\Psi(g)\] since we have already verified that $\frac{1}{\# W}\psi_{w_0}$ maps to the operation $\Psi$.  This shows that $\gamma\beta$ is the identity.

On the other hand, using the Frobenius formula $\psi_s=\sum_j \frac{f^j-sf^j}{\alpha_s}\, \psi_{w_0} f_j$, any element of $e\NH{W}{X}e$ can be written as a product of $\frac{1}{\# W}\psi_{w_0} g_1 e, \frac{1}{\# W}\psi_{w_0} g_2e,\dots, \frac{1}{\# W}\psi_{w_0} g_me$ for some $g_i\in \K[X]$.  Applying $\gamma$ to this element gives $\Psi(g_1) \Psi(g_2)\cdots \Psi(g_m)$, and applying $\beta$ gives the same element back since \[\Big(\frac{1}{\# W}\psi_{w_0} g_1 e\Big)\Big(\frac{1}{\# W}\psi_{w_0} g_2e\Big)\cdots \Big(\frac{1}{\# W}\psi_{w_0} g_me\Big)=\beta(\Psi(g_1))\beta(\Psi(g_2))\cdots \beta(\Psi(g_m)).\]  This shows that $\beta\gamma$ is the identity, so we have the desired isomorphism.
\end{proof}
\subsubsection{Comparison with Coulomb branches}
For the rest of this comparison, we return to the rational Coulomb branch with $B=\Ga$.  Let $X=\tM(\tT,T,\bfN)$ and $V=\base{\tT}=\LtT$, with structure map $\pi\colon X\to V$ and the natural action of the Weyl group $W$ on both varieties.  
\begin{Lemma}
 There is a natural inclusion of algebras $\K[\tHilb{\pi}{X}] \subset \K[\tM(\tG,G,\bfN)]$.  
\end{Lemma}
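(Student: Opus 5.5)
By \cref{th:divide-delta}, $\K[\tHilb{\pi}{X}]$ is the subring of $\K(X)^W=(\K(\base{\tT})[\Tvee])^W$ generated by $\K[X]^W$ and the quotients $g/\Delta$ for $g\in\K[X]^{W,-}$. Both rings live inside this common ambient field, so the plan is to show that each generator lies in $\soa_{G,\bfN}$, using the description of $\soa_{G,\bfN}$ in \cref{def:Coulomb-branch-nonabelian}: it consists of the $W$-invariant rational functions that are regular over the generic locus and at the generic points of the exceptional divisors $E_i$ lying over the root divisors $D_\alpha$.

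The generators of the first kind are easy. Any $h\in\K[X]^W=\K[\tM(\tT,T,\bfN)]^W$ is regular on all of $\tM(\tT,T,\bfN)$. Pulling back along the blowup $\Bl_{J_\alpha}$, it stays regular on each $\nonabbl{\alpha,\tG,\bfN}$, and hence at every $E_i$. So $h\in\soa_{G,\bfN}$.

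For the second kind, fix $g\in\K[X]^{W,-}$. Since $\Delta=\prod_s\alpha_s$ is a unit away from $\bigcup D_\alpha$, the quotient $g/\Delta$ is $W$-invariant and regular on the preimage of $\base{\tT}\setminus\bigcup D_\alpha$. It remains to check regularity at the generic point of each $E_i$ lying over a component of $D_\alpha$. There we work locally: by \cref{lem:slice-ours}, we may restrict to $\base{\tGl,\lambda}$ for a subgeneric $\lambda$ with $\alpha(\lambda)=0$.
\begin{itemize}
\item On this open set, every factor $\alpha_{s'}$ of $\Delta$ with $s'\neq s_\alpha$ is invertible. So it suffices to show that $g/\alpha$ is regular on $\nonabbl{\alpha,\tG,\bfN}$ near $E_i$.
\item Write $g$ as a function on $\tM(\tT,T,\bfN)$ that is anti-invariant under $s_\alpha$. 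Then $g$ vanishes on the fixed locus of $s_\alpha$ over $D_\alpha$, which is exactly the part of $D_{\alpha,1}$ over $D_\alpha$.
\item We need $g$ to lie in $J_\alpha$ to order compatible with $\epsilon_\alpha$. After the blowup and removal of the principal transform of $\epsilon_\alpha D_\alpha$, elements of $J_\alpha$ become divisible by the local equation of $\epsilon_\alpha D_\alpha$. For $\epsilon_\alpha=1$ this equation is $\alpha$ itself. For $\epsilon_\alpha=2$ it is $\alpha^2$ (in the $SL(2)$ normalization, $\Eu(\alpha)\Eu(-\alpha)$), and we would need $g\in J_\alpha$ at least to first order, so that $g/\alpha$ is regular.
\end{itemize}

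The main obstacle is exactly this last step: showing that an anti-invariant function vanishes to the right order along the center, uniformly over all components. I would handle it by reducing, through \cref{subsubsec:semi-simple-rank-1} and the finite-cover results \cref{lem:cover-ours}, to the rank-one local models of \cref{ex:PGL2-running,ex:SL2-running}. There one checks directly that anti-invariants have the form $\tau\cdot(\text{invariant})$ plus multiples of $\tau^{m}(z-z^{-1})$, $\tau^{m-1}(z-z^{-1})$, or $(z-z^{-1})$ respectively, and that dividing by $\tau$ lands in the listed generators of $\K[\nonabbl{G,\bfN}]$.

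The delicate case is $SL(2)$ with $m=0$. There $(z-z^{-1})/\tau$ is regular because the ideal is $(z-1,\tau)^2$ and $z-z^{-1}$ vanishes at $z=1$ only to first order, so the check must use that we divide by $\alpha$, not by $\alpha^2$. Once the rank-one check is done, normality (\cref{cor:normality-ours}) and \cref{lem:bullet-affinization-ours} assemble the local statements into the inclusion $\K[\tHilb{\pi}{X}]\subset\K[\tM(\tG,G,\bfN)]$.
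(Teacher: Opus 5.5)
Your proposal is correct, but it takes a genuinely different route from the paper. The paper never checks generators against \cref{def:Coulomb-branch-nonabelian}. It works with the presentation of the Coulomb branch from the Iwahori/extended-category picture, where $\K[X]$ is the abelian subalgebra of the Iwahori Coulomb branch algebra $\EuScript{A}$. It notes that $\NH{W}{X}$ maps into $\EuScript{A}$, and applies the idempotent $e$ to get $\K[\tWHilb{\pi}{X}]=e\NH{W}{X}e\to e\EuScript{A}e=\K[\tM(\tG,G,\bfN)]$. The image of this map is $\K[\tHilb{\pi}{X}]$ by birationality with $X/W$. What that buys is structure: the inclusion is the spherical part of an inclusion of noncommutative algebras, which immediately gives the criterion (isomorphism if and only if $\NH{W}{X}=\EuScript{A}$) used right afterwards. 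The price is importing that external presentation.

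Your argument instead stays inside the paper's own codimension-one definition. You use \cref{th:divide-delta} for generators and reduce everything to the single valuative inequality $\nu_{E_i}(g)\ge\nu_{E_i}(\alpha)$ for $g\in\K[X]^{W,-}$. The key input is exactly right: an $s_\alpha$-anti-invariant function vanishes on the $s_\alpha$-fixed locus, and that locus contains the blowup center. Your route also makes visible why the inclusion can be strict. For $\epsilon_\alpha=2$ the Coulomb branch may divide elements of $J_\alpha$ by $\alpha^{2}$ (e.g.\ $(z-2+z^{-1})/\tau^2$ in \cref{ex:SL2-running}), and such elements are not in the ring generated by $\K[X]^W$ and $\frac{1}{\Delta}\K[X]^{W,-}$.

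A few points to tidy:
\begin{itemize}
\item The fixed locus of $s_\alpha$ over $D_\alpha$ contains the center but need not equal it; for $\epsilon_\alpha=2$ it also contains $e^{\alpha^\vee}=-1$. Only the containment is needed.
\item In the rank-one models, the anti-invariant built from $\tau^m z$ is $\tau^m\bigl(z-(-1)^m z^{-1}\bigr)$, not always $\tau^m(z-z^{-1})$.
\item You can skip the case-by-case rank-one computation for $\epsilon_\alpha=2$. Since $g$ vanishes on the center, $g^{\epsilon_\alpha}$ vanishes there to order $\epsilon_\alpha$, so $g^{\epsilon_\alpha}\in J_\alpha$. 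Every element of $J_\alpha$ becomes divisible by $\alpha^{\epsilon_\alpha}$ on $\nonabbl{\alpha,\tG,\bfN}$, which gives $\epsilon_\alpha\nu_{E_i}(g)\ge\epsilon_\alpha\nu_{E_i}(\alpha)$ directly.
\item The closing appeal to \cref{cor:normality-ours} and \cref{lem:bullet-affinization-ours} is unnecessary. Membership in $\soa_{G,\bfN}$ is by definition exactly the codimension-one check you performed.
\end{itemize}
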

\begin{proof}
The relevant portion of the proof of \cite[\S 7.1]{bielawskiHypertoricVarieties2023} gives this inclusion, though the claim that it is a surjection is not in general true.  
 
 On the other hand, we can prove this directly using the description of functions on the Coulomb branch from \cite[\S 3]{WebSD}.  There $\K[X]$ is denoted $\EuScript{A}_{\operatorname{ab}}$ and the algebra $\NH{W}{X}$ naturally maps to a subalgebra of the larger algebra $\EuScript{A}$, the Iwahori Coulomb branch algebra \cite[Def. 3.2]{WebSD}.  Applying the idempotent $e$ on both sides gives  a map
    \[\K[\tWHilb{\pi}{X}]=e\NH{W}{X}e\to \K[\tM(\tG,G,\bfN)]=e\EuScript{A}e.\]
    Since the Coulomb branch $\K[\tM(\tG,G,\bfN)]$ is birational to $X/W$, the image of this map is exactly $\K[\tHilb{\pi}{X}]$, so we have the desired inclusion.
\end{proof}
By the Morita equivalence \cite[Th. 3.3]{WebSD}, this inclusion is an isomorphism if and only if $\NH{W}{X}=\EuScript{A}$.
There is one important case where this equality does hold:

\begin{Proposition}\label{prop:rank-1}
    If $G$ is a reductive group of semi-simple rank $1$ and $G\ncong SL(2)\times Z(G)^{\circ}$, then $\K[\tHilb{\pi}{X}] \cong \K[\tM(\tG,G,\bfN)]$.
\end{Proposition}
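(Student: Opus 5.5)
The strategy is to show that the reverse inclusion to the one in the preceding lemma holds, working inside $\K(X)^W$. By \cref{thm:alpha-inverse}, $\K[\tHilb{\pi}{X}]$ is the subring of $\K(X)^W$ generated by $\K[X]^W$ and $\frac{1}{\alpha}\K[X]^{W,-}$. So it suffices to show that every section of $\soa_{G,\bfN}$ lies in this subring. The argument is local on $\base{\tG}$, and both sides are compatible with localization. Away from $D_{\alpha}$ the two rings agree, since $\alpha$ is invertible there. Hence we only need to work near components of $D_{\alpha}$.

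\textbf{Role of the hypothesis.} In semi-simple rank $1$, the condition $G\ncong SL(2)\times Z(G)^{\circ}$ should be equivalent to $\epsilon_{\alpha}=1$, that is, to $\alpha/2$ not being a character of $T$. If $\alpha/2$ were a character, the derived group would be $SL(2)$ and the character $\alpha/2$ would split off the center. I would check this equivalence first, using the structure recalled in \cref{subsubsec:semi-simple-rank-1}. Once $\epsilon_{\alpha}=1$, the ideal $J_{\alpha}$ consists of functions vanishing to first order on the preimage of $D_{\alpha}\cap D_{\alpha,1}$. Because $\epsilon_{\alpha}=1$, this locus is exactly the reduced fixed locus of $s$ lying over $D_{\alpha}$.

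\textbf{Key step.} The main claim, and the step I expect to be the real obstacle, is that $J_{\alpha}$ is generated, locally over $\base{\tG}$, by $\alpha$ together with $\K[X]^{W,-}$. Every anti-invariant function vanishes on the fixed locus, so it lies in $J_{\alpha}$. For the converse, I would use \cref{prop:local-comparison} together with the cover and product reductions of \cref{subsubsec:semi-simple-rank-1} to reduce to the explicit $PGL(2)$ models of \cref{ex:PGL2-running}. There:
\begin{itemize}
\item for $m=0$, $J$ is generated by $\tau$ and $z-z^{-1}$;
\item for $m>0$, $J$ is generated by $\tau$, $\tau^mz$ and $\tau^mz^{-1}$, which equals the ideal generated by $\tau$ and the anti-invariant $\tau^m(z-(-1)^mz^{-1})$.
\end{itemize}
The difficult point is ensuring that the reduction via finite covers does not reintroduce the $SL(2)$ phenomenon. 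In the $SL(2)$ case, $J=(z-1,\tau)^2$ genuinely needs an invariant generator, namely $\frac{z-2+z^{-1}}{\tau^2}$. The hypothesis should exclude exactly this case.

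\textbf{Conclusion.} Given the key step, take a section $h$ and write $h=f/\alpha^n$ with $f\in J_{\alpha}^n$ and $f$ of $W$-parity $(-1)^n$, as in the proof of \cref{lem:rank-1-ours}. Expand $f$ as a sum of terms $c\,g_1\cdots g_k\alpha^{n-k}$ with $g_i\in\K[X]^{W,-}$ and $c\in\K[X]$. Then split $c=c_++c_-$ into invariant and anti-invariant parts. Since $c_-$ also lies in $J_{\alpha}$, it can be absorbed as an extra factor $g_{k+1}$ at the cost of one more power of $\alpha$ in the denominator. Averaging over $W$ then shows that $h$ is a $\K[X]^W$-linear combination of products $\prod_i (g_i/\alpha)$, which lies in the subring generated by $\K[X]^W$ and $\frac{1}{\alpha}\K[X]^{W,-}$. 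Combined with the inclusion from the preceding lemma, this gives $\K[\tHilb{\pi}{X}]\cong\K[\tM(\tG,G,\bfN)]$.
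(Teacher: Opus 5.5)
Your reading of the hypothesis as $\epsilon_{\alpha}=1$ is correct. The closing averaging argument would also work if your key step held. But the key step is false, already in the model of \cref{ex:PGL2-running} with $m>0$.

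\textbf{Why the key step fails.} Put $u=\tau^m z$ and $v=\tau^m z^{-1}$, so that $\K[X]=\K[\tau,u,v]/(uv-\tau^{2m})$, $s(u)=(-1)^m v$, and $J=(\tau,u,v)$ is the maximal ideal of the node. Modulo $\tau$, every anti-invariant reduces to a combination of the elements $u^j-(-1)^{mj}v^j$. These generate the ideal $(u-(-1)^m v,\,u^2)$ of $\K[u,v]/(uv)$, which has colength $2$. Hence $u\notin(\tau)+\K[X]\cdot\K[X]^{W,-}$. In particular $(\tau,\tau^m(z-(-1)^mz^{-1}))\neq J$: the quotient by the former is $\K[v]/(v^2)$.

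Geometrically, the ideal generated by the anti-invariants cuts out the scheme-theoretic fixed locus $X^s$. This is non-reduced at the node, because $s$ exchanges the two branches $u=0$ and $v=0$ of the fibre. Your identification of the center with ``the reduced fixed locus'' therefore holds only set-theoretically.

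This is exactly where the blowup at $J$ (the Coulomb branch) and the blowup at the anti-invariant ideal $\mathcal{I}$ (the transverse Hilbert scheme) differ. For $m=1$ one has $z\in\K[X][J/\tau]$. By contrast, $\K[X][\mathcal{I}/\tau]=\K[\tau,\tau z,\tau z^{-1},z+z^{-1}]$ has image $\K[z+z^{-1}]$ in $\K[\tau,z^{\pm1}]/(\tau)=\K[z^{\pm1}]$, so it does not contain $z$. The two rings agree only after taking $W$-invariants. So no statement about the ideal $J$ alone can justify expanding $f\in J^n$ into terms $c\,g_1\cdots g_k\alpha^{n-k}$; the $W$-invariance of $h$ must enter essentially.

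\textbf{A secondary gap.} \cref{prop:local-comparison} and the reductions of \cref{subsubsec:semi-simple-rank-1} only produce $PGL(2)$-type models near subgeneric points. At points of $D_{\alpha}$ lying on other weight hyperplanes (e.g.\ the origin for $GL(2)$ acting on $\C^2$), the local model is not of that form. Since $\K[\tHilb{\pi}{X}]$ is not known to be normal, you cannot stop in codimension one.

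\textbf{The missing ingredient.} You need a generation statement for the invariant ring itself. The paper takes it from the monopole basis rather than from the center of the blowup. Since $\epsilon_{\alpha}=1$, the cocharacter lattice is generated by cocharacters $\nu$ with $\alpha(\nu)\in\{0,\pm1\}$. The rank-$1$ analysis of \cref{subsubsec:BFN-comparison} shows that the Coulomb branch is generated by the corresponding dressed monopole operators.
\begin{itemize}
\item Those with $\alpha(\nu)=0$ already lie in $\K[X]^W$.
\item Those with $\alpha(\nu)=\pm1$ are given by the Atiyah--Bott formula for $\mathbb{P}^1$, as in \eqref{eq:PGL2-monopole}. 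That is, they have the form $q/\alpha$ with $q\in\K[X]^{W,-}$.
\end{itemize}
By \cref{thm:alpha-inverse}, all of these generators lie in $\K[\tHilb{\pi}{X}]$, which gives the reverse inclusion.
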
  
The argument in the proof of \cite[Th. 6.13]{bielawskiHypertoricVarieties2023} establishes this, but we reproduce it here for clarity.
\begin{proof}
    Note that the Weyl group of $G$ is the order $2$ group $\{1,s\}$.  Since $G\ncong SL(2)\times Z(G)^{\circ}$, the cocharacter lattice of $G$ is generated by cocharacters satisfying $\alpha(\nu)=\pm 1, 0$.  The explicit rank-$1$ calculations in \cref{subsubsec:BFN-comparison} show that the non-abelian coordinate ring is generated by the corresponding dressed monopole operators, which we give formulas for in \cref{eq:PGL2-monopole}.  These generators are either identical with the abelian monopole operators (if $\alpha(\nu)=0$) or obtained from an abelian monopole operator by a divided-difference operator, given by the Atiyah--Bott formula for $\mathbb{P}^1$.  In either case, they lie in $\NH{W}{X}$, so the inclusion is surjective as required.  
\end{proof}

\subsection{A counterexample}
Outside semi-simple rank~$1$, the global comparison can fail.\footnote{This issue is acknowledged by the authors in an arXiv note attached to \cite{bielawskiHypertoricVarieties2023}.}  We give two counterexamples, pointed out to us by Ki Fung Chan: the first shows that the transverse Hilbert scheme can fail to be flat over the base, and the second, for $SL(3)$, shows that the Coulomb branch is not isomorphic to the transverse Hilbert scheme in this case.  In particular, the second example shows that \cite[Th. 7.1]{bielawskiHypertoricVarieties2023} is false in general, even when we avoid $Sp(2n)$ factors.  We will give a corrected version of this statement in \cref{prop:BF}.

Consider the representation of $G=GL(1)\times GL(2)$ on $\C^2\oplus \Hom(\C^2,\C^1)$, where $\C^1$ and $\C^2$ are the defining representations of the two factors.  The abelian Coulomb branch is isomorphic to 
\[X\cong \Spec \C[x_{ij}]_{i,j=1,\dots, 4}/(x_{11}+x_{22}+x_{33}+x_{44},x_{ij}x_{k\ell}-x_{i\ell}x_{kj})\]
the space of $4\times 4$ nilpotent matrices of rank $\leq 1$.  This isomorphism sends
\begin{equation*}
    (a,b,c)\in \LtT^* \mapsto (a-b)x_{11}+(a-c)x_{22} +b x_{33} +c x_{44}
\end{equation*}
\begin{align*}
    r_{1,0,0}&\mapsto x_{12} & r_{-1,0,0} & \mapsto -x_{21} &
    r_{-1,-1,0} &\mapsto -x_{23} & r_{1,1,0}& \mapsto x_{32} \\
    r_{1,1,1}& \mapsto x_{34} & r_{-1,-1,-1}& \mapsto -x_{43} &
    r_{0,-1,0}& \mapsto -x_{13} & r_{0,1,0}& \mapsto -x_{31}\\
    r_{0,0,1}&\mapsto x_{24} & r_{0,0,-1} & \mapsto x_{42} &
    r_{1,0,1} &\mapsto x_{14} & r_{-1,0,-1} & \mapsto -x_{41} 
\end{align*}
The action of $s\in W$ sends $(a,b,c)\to (a,c,b)$ and $r_{a,b,c}\to r_{a,c,b}$.  In particular, in the degree 2 part of $\K[X]$, the span of $(a,0,0)\in \LtT^*$ and the monopole operators $ r_{\pm 1,0,0}$ and $r_{\pm(1,1,1)}$ give a 5-dimensional space on which $s$ acts trivially, and the complementary 10-dimensional space is a free module of rank $5$ over $\C W$, and thus has a $-1$ eigenspace of multiplicity 5.  Thus, $\WHilb{X}$ contains a subvariety isomorphic to $\mathbb{P}^4$, given by the ideals containing all functions of degree $>2$, all $s$-fixed elements of degree $2$, and a codimension 1 subspace in the $-1$-eigenspace of $s$ (the choice of this subspace gives a point of $\mathbb{P}^4$).  This $\mathbb{P}^4$ lies in the fiber over the origin in $\C^3/W$, showing that this fiber has dimension $\geq 4$, while the general fiber has dimension 3.  This shows that:
\begin{Proposition}
    The map $\WHilb{X}\to \C^3/W$ is not flat in this case.
\end{Proposition}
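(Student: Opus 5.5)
The argument is a dimension count: I will show that the fiber of $\WHilb{X}\to \C^3/W$ over the origin has dimension at least $4$, while the generic fiber has dimension $3$. For a flat family over the irreducible base $\C^3/W$, fiber dimension would be constant on the components dominating the base, and the extra fiber dimension cannot come from a separate component. So I also need to see that the $\mathbb{P}^4$ is not contained in a component of $\WHilb{X}$ lying over a proper closed subset.

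\textbf{Generic fiber.} Over a point of $\C^3/W$ with free $W$-orbit and no vanishing weight, $X$ restricted to the orbit is two copies of a fiber of $X\to\C^3$, hence a $3$-dimensional torus $\Tvee$ each. A $W$-invariant subscheme with $\mathcal{O}(Y)\cong \K W$ is then a free orbit $\{x,sx\}$. These are parametrized by one of the two torus fibers, so the generic fiber has dimension $3$. Here I use \cref{ex:generic-fiber}: the generic fiber of $X\to\LtT$ is $\Tvee$.

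\textbf{Special fiber.} I will show the described $\mathbb{P}^4$ really sits in $\WHilb{X}$. Let $\mathfrak{m}_0$ be the graded maximal ideal at the cone point of $X$. Then $\K[X]$ is graded, with $\LtT^*$ and the twelve $x_{ij}$ in degree $2$, and the $15$-dimensional degree-$2$ part decomposes as a trivial part of dimension $5$ plus a rank-$5$ free $\K W$-module. For each hyperplane $L$ in the $5$-dimensional $(-1)$-eigenspace, define
\[ I_L = \K[X]_{>2}+(\K[X]_2)^{s}+L+\text{(its free complement's $+1$ part)}. \]
More precisely, $I_L$ is spanned by everything in degree $>2$, the whole $+1$-eigenspace of $\K[X]_2$, and $L$. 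This is an ideal: multiplying degree-$2$ elements by anything of positive degree lands in degree $>2$, and $\K[X]$ is generated in degree $2$. It is also $W$-stable, and $\K[X]/I_L$ is spanned by $1$ together with one anti-invariant line, so it is isomorphic to $\K W$. Its image in $\C^3/W$ is the origin, since all positive-degree invariants lie in $I_L$. The assignment $L\mapsto I_L$ is injective and algebraic, giving $\mathbb{P}^4\hookrightarrow \WHilb{X}$ over $0$.

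\textbf{Concluding non-flatness.} Suppose the map were flat. A flat morphism of finite type is open and satisfies $\dim_y(\text{fiber})=\dim_y \WHilb{X}-\dim \C^3/W$ locally. Any irreducible component $Z$ of $\WHilb{X}$ through a point of the $\mathbb{P}^4$ would then dominate $\C^3/W$ by openness, so its generic fiber has dimension $\dim Z-3$. Since generic points of $\C^3/W$ have $3$-dimensional fibers, $\dim Z\le 6$. Flatness then forces the fiber over $0$ to have local dimension $\dim Z-3\le 3<4$, a contradiction.

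\textbf{Main obstacle.} The delicate step is verifying the weight-space count, namely that the $s$-fixed part of $\K[X]_2$ is exactly $5$-dimensional. This amounts to checking the table: $s$ fixes $(a,0,0)$, $r_{\pm1,0,0}$, and $r_{\pm(1,1,1)}$, and swaps the remaining pairs of $\LtT^*$ directions and monopoles in free orbits. That the $x_{ij}$ and $\LtT^*$ span $\K[X]_2$ with only the trace relation I take from the given presentation.
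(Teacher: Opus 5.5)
Your proposal is correct and follows the paper's own argument: the same $\mathbb{P}^4$ of length-two fat points at the cone point (everything of degree $>2$, all of $(\K[X]_2)^s$, and a hyperplane in the $5$-dimensional $(-1)$-eigenspace) lies over the origin, while the generic fibers are $3$-dimensional. Your openness and local-dimension argument simply spells out the passage from this fiber-dimension jump to non-flatness, which the paper leaves implicit.

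One small slip in your last paragraph: the $s$-fixed part of $\K[X]_2$ is $10$-dimensional, namely the $5$-dimensional trivial summand plus the invariants of the free rank-$5$ part. What you actually need, and use in your construction, is that the $(-1)$-eigenspace is $5$-dimensional.
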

This failure is not isolated.  In particular, \cite[Th. 7.1]{bielawskiHypertoricVarieties2023} does not hold in many cases beyond those involving an $Sp(2n)$ factor, especially when the center has several components.  The group $SL(3)$ already illustrates the issue.  For example, take the representation $\bfN=(\C^3)^{\oplus 4}$.  
The degree of an abelian monopole operator for $(a_1,a_2,a_3)$ with $a_1+a_2+a_3=0$ is $4(|a_1|+|a_2|+|a_3|)$.  In particular, the minimal such degree is 8, since at least two of the $a_i$ are non-zero. 
By \cref{th:divide-delta}, this means that the span of all non-trivial dressed monopole operators in the nonabelian Coulomb branch is concentrated in degree $\geq 2$, and so the degree-$0$ part of $\K[\HilbW{X}]$ consists only of scalars, whereas the monopole operator $r_{(1,0,-1)}$ has degree $0$, and thus is not in $\K[\HilbW{X}]$.  Thus, we have:
\begin{Proposition}\label{prop:SL3-counterexample}
    When $X=\M(T,\bfN)$, where $T\subset SL(3)$ is the maximal torus, the natural map does not induce an isomorphism between $\HilbW{X}$ and $\M(SL(3),\bfN)$.
\end{Proposition}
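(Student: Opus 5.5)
The plan is to compare the two coordinate rings degree by degree, using the grading by conformal/monopole degree. It suffices to exhibit one element of $\K[\M(SL(3),\bfN)]$ that does not lie in the image of $\K[\HilbW{X}]$. Here we take the inclusion $\K[\tHilb{\pi}{X}]\subset\K[\tM(\tG,G,\bfN)]$ from the lemma above, specialized to $\tG=G=SL(3)$. The natural map in the statement is the one inducing this inclusion, so non-surjectivity shows it is not an isomorphism.

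\textbf{Step 1: degrees on $X$.} First I compute the degree of abelian monopole operators on $X=\M(T,\bfN)$. The grading is the BFN monopole formula: $r_\gamma$ has degree $\tfrac12\sum_{\varphi}|\langle\gamma,\varphi\rangle|$ over the weights of $\bfN$ (up to the normalisation chosen), and $\K[\LT]$ sits in degree $2$. For $\bfN=(\C^3)^{\oplus 4}$ the weights are the coordinate functionals, each with multiplicity $4$. So, in the paper's normalization, $\deg r_{(a_1,a_2,a_3)}=4(|a_1|+|a_2|+|a_3|)$ as a function on $X$. Since $a_1+a_2+a_3=0$ and $a\neq 0$, at least two $a_i$ are nonzero, so every nontrivial $r_\gamma$ has degree $\geq 8$. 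Because $\K[X]$ is spanned by $f\,r_\gamma$ with $f\in\K[\LT]$, every element of $\K[X]$ of positive $\Tvee$-weight has degree $\geq 8$.

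\textbf{Step 2: degrees on $\HilbW{X}$.} Next I bound degrees in $\K[\tHilb{\pi}{X}]$ using \cref{th:divide-delta}. That result says this ring is generated by $\K[X]^W$ together with the functions $g/\Delta$ for $g\in\K[X]^{W,-}$. The map $\Psi$ lowers degree by exactly $\deg\Delta=2\cdot\#\{\text{positive roots}\}=6$. Hence any generator carrying a nontrivial $\Tvee$-weight has degree $\geq 8-6=2>0$. Since degree is additive and $\geq 0$ on $\K[\LT]^W$, every element of degree $0$ in $\K[\tHilb{\pi}{X}]$ is a combination of products of degree-$0$ generators, and these are only scalars. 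In the nonabelian ring, the dressed monopole operator attached to the dominant coweight $(1,0,-1)$ has degree given by the nonabelian monopole formula, namely the abelian degree minus $\sum_{\alpha>0}|\langle\gamma,\alpha\rangle|$. This is $8-(1+1+2)\cdot 2=0$. It is nonzero and nonconstant, being of nonzero $\Tvee$-weight. So it is not in the image, and the proposition follows.

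\textbf{Main obstacle.} The delicate step is the degree bookkeeping in Step 2. There are two things to check. The first is that the filtration/grading on $\K[\tHilb{\pi}{X}]$ inherited through $\Psi$ really matches the BFN grading on $\K[\M(SL(3),\bfN)]$, so that $\Psi$ shifts degree by exactly $\deg\Delta$. The second is that nothing built from products of negatively shifted generators sneaks back to degree $0$. I would verify both by working inside $\K(X)^W$: there the inclusion is literally an inclusion of graded subrings, $\Delta$ is homogeneous, and additivity of degree is automatic. I would also double-check the normalization of the nonabelian degree of $r_{(1,0,-1)}$ against the formula in \cite{BFN}.
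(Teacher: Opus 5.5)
Your proposal is correct and follows essentially the same argument as the paper. Abelian monopole operators have degree at least $8$, and dividing anti-invariants by $\Delta$ (degree $6$) leaves every nonscalar generator of $\K[\tHilb{\pi}{X}]$ in degree at least $2$, whereas the nonabelian monopole operator $r_{(1,0,-1)}$ has degree $0$. One small point to tidy: with $\K[\LT]$ in degree $2$, the abelian degree is $\sum_\varphi|\langle\gamma,\varphi\rangle|$ and the root correction is $2\sum_{\alpha>0}|\langle\gamma,\alpha\rangle|$, which is the normalization your numbers actually use, though your stated formulas carry the factor-$\tfrac12$ convention.
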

This shows that the subring $\K[\M(SL(3),\bfN)]$ is not integral over $\K[\HilbW{X}]$. The variety $\M(SL(3),\bfN)$ is normal and affine and the map $\M(SL(3),\bfN)\to \HilbW{X}$ is birational, so Zariski's Main Theorem implies that $\M(SL(3),\bfN)$ is a proper open subset of the normalization of $\HilbW{X}$.  Note that this shows that this normalization is not flat over $\LT/S_3$, since a divisor in the complement of $\M(SL(3),\bfN)$ must have image of codimension $\geq 2$ in $\LT/S_3$.
The same argument should apply to many other $SL(3)$ Coulomb branches, though without enough matter to give the Coulomb branch a conical grading, the dimension count becomes less transparent.  Interestingly, \cite[Cor. 1.2.2]{gannonCoordinateRing2026} implies that if $\bfN=0$, then the natural map is an isomorphism as long as $G$ has no $Sp(2n)$ factors, in particular, for $G=SL_n$ for $n>2$.

The origin of the issue in \cref{prop:SL3-counterexample} is easily explained in terms of the presentation of $\K[\tM(\tG,G,\bfN)]$ given in \cite[\S 3]{WebSD}:  elements of $\K[\tM(\tG,G,\bfN)]$ can be described using paths in the real Cartan that join points in the coweight lattice, with an optional application of a Demazure operator, that is, the affine divided-difference operator $\psi_s$, at each point where a path crosses the vanishing locus of an affine root.  Here the affine hyperplane $y_1-y_3=1$ corresponds to the affine root hyperplane $\alpha_{13}-\delta$.  The path from $(0,0,0)$ to $(1,0,-1)$ crosses this hyperplane, but $\NH{W}{X}$ has no way to add the corresponding Demazure operator.  This illustrates the utility of the extended-category approach to the Coulomb branch, which corrects the Hilbert-scheme construction.  

On the other hand, there are many cases where $\HilbW{\tM(\tT,T,\bfN)}\cong \tM(\tG,G,\bfN)$.  By \cite[Cor. 2.11]{weekesGeneratorsCoulomb2019}, this holds if $\tM(\tG,G,\bfN)$ is generated by minuscule monopole operators, that is, monopole operators attached to minuscule coweights.  In particular, this holds for quiver gauge theories by \cite[Prop. 3.1]{weekesGeneratorsCoulomb2019}.

\begin{Remark}
    One possible way to repair this failure is to enlarge the group, as in \cite{bielawskiHypertoricVarieties2023}, where $Sp(2n)$ is embedded in the conformal symplectic group.  One can then recover the Coulomb branch for the smaller group as a symplectic quotient.  This is natural to expect because enlarging the gauge group enables more factorization of abelian monopole operators
    and thus the construction of lower degree elements of $\K[\HilbW{X}]$.  For example, if we consider gauge group $GL(3)$ instead of $SL(3)$, then $\bfN=(\C^3)^{\oplus 4}$ will give a quiver gauge theory and so $\M(GL(3),\bfN)$ will be generated by minuscule monopole operators and is isomorphic to the transverse Hilbert scheme.
\end{Remark}

\subsection{Corrected comparison}

The counterexamples above show that $\tM(\tG,G,\bfN)$ is not, in general, itself a transverse Hilbert scheme.  The correct replacement is to restrict to the open locus controlled by the generic and subgeneric slices of \cref{subsubsec:slices-codim-1}.  Recall that by a generalized root we mean either a root of $G$ or a weight of $\bfN$.  Consider the subset $\base{\tG}^{\bullet}\subset \base{\tG}$ given by the set of points that lie on zero or one hyperplane defined by the vanishing of a generalized root.  The papers \cite{BFN, bielawskiHypertoricVarieties2023} both use the fact that the preimage $\tM^{\bullet}(\tG,G,\bfN)$ of this set in $\tM(\tG,G,\bfN)$ controls its geometry; we let $\tM^{\bullet}(\tT,T,\bfN)$ be the preimage of this set in $\tM(\tT,T,\bfN)$.  As discussed in \cref{lem:bullet-affinization}, the variety $\tM(\tT,T,\bfN)$ is the affinization of this open subset over $\base{\tG}$.

While $\tM(\tG,G,\bfN)$ is not a transverse Hilbert scheme, the open subvariety $\tM^{\bullet}(\tG,G,\bfN)$ is.
\begin{Proposition}\label{prop:BF}
    If $G$ has no factors isomorphic to $Sp(2n)$, then we have an isomorphism of $\base{\tG}$-schemes $\tM^{\bullet}(\tG,G,\bfN)\cong \HilbW{\tM^{\bullet}(\tT,T,\bfN)}$.
\end{Proposition}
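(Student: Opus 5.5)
The plan is to localize over $\base{\tG}^{\bullet}$ using the étale slices of \cref{subsubsec:slices-codim-1}, and compare the two sides chart by chart.

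First, construct a global comparison morphism $\HilbW{\tM^{\bullet}(\tT,T,\bfN)}\to \tM^{\bullet}(\tG,G,\bfN)$ over $\base{\tG}$. Work with the transverse loci throughout. Over a point of $\base{\tG}^{\bullet}$, at most one root hyperplane passes through any preimage in $\base{\tT}$, so the stabilizer $W_b$ has order at most $2$. Hence every $W$-cluster is étale locally induced from a $W_b$-cluster with $\#W_b\le 2$. Using the $\Delta$-trivialization, I expect such a cluster to be transverse, so that $\HilbW{}$ agrees with $\tHilb{\pi}{}$ over $\base{\tG}^{\bullet}$. By \cref{th:divide-delta}, $\K[\tHilb{\pi}{X}]$ is the subring of $\K(X)^W$ generated by $\K[X]^W$ and $\Delta^{-1}\K[X]^{W,-}$. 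The lemma preceding \cref{prop:rank-1} embeds this ring in $\K[\tM(\tG,G,\bfN)]$. Sheafifying over open sets $U\subset\base{\tG}^{\bullet}$ and taking relative spectra gives a birational morphism
\[
\tM^{\bullet}(\tG,G,\bfN)\to \HilbW{\tM^{\bullet}(\tT,T,\bfN)}.
\]
It remains to show this morphism is an isomorphism étale locally on $\base{\tG}^{\bullet}$.

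Both constructions are compatible with étale base change on the base. For $\tM$ this is \cref{lem:slice-ours}. For the $W$-Hilbert scheme, over $\base{\tT,\lambda}$ a free $W$-cluster is induced from a $W_\lambda$-cluster, giving $\HilbW{X}_{\base{\tGl,\lambda}}\cong \operatorname{Hilb}^{W_\lambda}(X)_{\base{\tGl,\lambda}}$. The abelian side also localizes to $\tM(\tT,T,\Nl)$. So it suffices to treat $\lambda$ generic or subgeneric.

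In the generic case $W_\lambda=1$, and both sides equal $\tM(\tT,T,\bfN)$ restricted to the free locus. In the subgeneric case, either only weights vanish on $\lambda$, so $W_\lambda=1$ and both sides are the abelian branch, or $\Gl$ has semisimple rank $1$. In the latter case the factors $\Gl\cong SL(2)\times Z^\circ$ are exactly those coming from roots $\alpha$ with $\alpha/2\in X^*(T)$ in a direct-product splitting. The hypothesis that $G$ has no $Sp(2n)$ factors should rule these out, apart from genuine $SL(2)=Sp(2)$ factors, which are excluded directly. I then apply \cref{prop:rank-1}, which identifies $\K[\tHilb{\pi}{X}]$ with $\K[\tM(\tGl,\Gl,\Nl)]$ for these Levis.

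The main obstacle is the root-system claim: for $G$ without $Sp(2n)$ factors, no rank-one Levi $\Gl$ splits as $SL(2)\times Z(\Gl)^\circ$. My approach is to note that such a splitting requires the coroot $\alpha^{\vee}$ to be a direct summand complement issue: a cocharacter $\nu$ with $\alpha(\nu)=1$ exists unless $\alpha\in 2X^*(T)$. Divisibility of a root by $2$ in the weight lattice happens only for long roots of $C_n$ components, i.e. $Sp(2n)$ factors. I would check this by the standard classification argument on root data, using that $\alpha/2$ being a weight forces $\langle\alpha/2,\beta^\vee\rangle\in\Z$ for all coroots. A secondary point needing care is the transversality claim over points lying on a weight hyperplane that meets no root hyperplane. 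There, I would verify directly that the invariant/anti-invariant basis $1,\Delta$ remains a basis, using freeness of $W$-orbits in $\base{\tT}$.
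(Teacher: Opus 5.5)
Apart from one step, your argument is the paper's. The ingredients match: a comparison map from the inclusion $\K[\tHilb{\pi}{X}]\subset \K[\tM(\tG,G,\bfN)]$; étale localization to $\base{\tGl,\lambda}$, using \cref{lem:slice-ours} on the Coulomb side and induction of $W$-clusters from $W_\lambda$-clusters on the Hilbert side (the paper cites \cite[Prop.~2.18]{bielawskiHypertoricVarieties2023}); the tautological abelian case; and \cref{prop:rank-1} in semisimple rank $1$. Building the global morphism first makes the gluing cleaner than the paper's ``extend the birational isomorphism.'' Your root-datum check also fills in a point the paper only asserts: if $\alpha/2\in X^*(T)$, then $\langle\alpha,\beta^\vee\rangle$ is even for every root $\beta$, so $\alpha$ is a long root of a type $C_n$ component ($n\geq 1$), and the lattice condition then forces that component to split off as $Sp(2n)$.

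The step that fails is your expectation that every $W$-cluster over $\base{\tG}^{\bullet}$ is transverse, so that $\HilbW{}$ and $\tHilb{\pi}{}$ agree there. This is false over root hyperplanes. Suppose $b'\in\base{\tT}$ lies only on the hyperplane of a root $\alpha$, and $x$ in the fiber over $b'$ is not fixed by $s_\alpha$. Then the $W$-orbit $Y$ of $x$ is free, hence a point of $\HilbW{X}$. But $\alpha$ (your local $\Delta$) vanishes on $Y$, so $\K[V]\to\mathcal{O}(Y)$ is not surjective.

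Concretely, take $G=\tG=PGL(2)$, $\bfN=0$, $B=\Ga$. Then $X=\Aone\times\Gm$ with $s(\tau,z)=(-\tau,z^{-1})$, and $\base{\tG}^{\bullet}=\base{\tG}$. Here $\HilbW{X}$ is the minimal resolution of the two $A_1$ points of $X/W$ at $\tau=0$, $z=\pm1$. It contains two complete exceptional curves over $0\in\base{\tG}$, so it is not affine over $\base{\tG}$. It therefore cannot be isomorphic to the affine $\base{\tG}$-scheme $\tM^{\bullet}(\tG,G,\bfN)=\tM(\tG,G,\bfN)$. Deleting the non-transverse locus, which is the strict transform of the image of $\{\tau=0\}$, is exactly what produces the Coulomb branch.

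So the proposition holds only with the transverse Hilbert scheme $\tHilb{\pi}{\tM^{\bullet}(\tT,T,\bfN)}$ on the right-hand side. That is what the paper's proof actually establishes, since \cref{prop:rank-1} concerns $\tHilb{\pi}{X}$. It is also what your argument proves once you drop the transversality claim rather than trying to prove it. Your ``secondary point'' is fine: off root hyperplanes the $W$-orbits in $\base{\tT}$ are free, so clusters there are reduced free orbits mapping isomorphically onto their image in $V$. The failure occurs only on root hyperplanes.
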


\begin{proof}
Both $\HilbW{\tM^{\bullet}(\tT,T,\bfN)}$ and $\tM^{\bullet}(\tG,G,\bfN)$ have flat maps to $\base{\tG}^{\bullet}=\base{\tT}^{\bullet}/W$.  It therefore suffices to produce a $W$-equivariant isomorphism after base change to $\base{\tT}^{\bullet}$.  Over the generic locus $\base{\tG}^{\circ}$, both varieties are isomorphic to $\base{\tG}^{\circ}\times T^{\vee}$, so this gives a birational isomorphism that we wish to extend across the codimension-$1$ loci. 

It is enough to check this after base change to $\base{\tGl,\lambda}$ for subgeneric $\lambda$, since these give an \'etale cover of $\base{\tG}^{\bullet}$.  

 By \cref{lem:slice-ours}, after base change we obtain $\tM^{\bullet}(\tGl,\Gl,\Nl)_{\base{\tGl,\lambda}}$, where $\tGl$ has semi-simple rank $1$ or $0$.  Moreover, $\tGl$ is not isomorphic to $SL(2)\times Z(\tGl)^{\circ}$, since $G$ has no factors isomorphic to $Sp(2n)$.

 On the other hand, by \cite[Prop. 2.18]{bielawskiHypertoricVarieties2023}, the base change of $\HilbW{\tM^{\bullet}(\tT,T,\bfN)}$ is
 $\operatorname{Hilb}^{W_{\lambda}}(\tM^{\bullet}(\tT,T,\bfN)_{\base{\tT,\lambda}})$, where $W_{\lambda}$, the stabilizer of $\lambda$ in $W$, is either trivial or of order $2$, depending on whether $\Gl$ is abelian or not.
 If $\tGl=\tT$ is abelian, then the isomorphism is tautological.  If $\tGl$ is non-abelian, then we are reduced to the case of a group of semi-simple rank $1$, which is covered by \cref{prop:rank-1}.  
\end{proof}

\section{Slices and leaves}
\label{sec:slices-and-leaves}

In this section, we specialize to the case $\tG=G$ and $B=\Ga$.  Furthermore, in some subsections, we will only work in the symmetrizer-free setting.  We will make some remarks indicating how these results might change in the symmetrizer setting.  In particular, $\M(G,\bfN):=\tM(G,G,\bfN)$ is a symplectic singularity by \cite{bellamyCoulombBranches2023}, so it has finitely many symplectic leaves.  As before, we write $\ints\colon \M(G,\bfN)\to \LT/W$ for the natural integrable system and write $R=\Sym(\LT^*)$.  The variants $B=\Gm$ and $B=\elli$ are obtained from the same codimension-$1$ local models \'etale locally on $\base{G}$, so the leaf classification below can easily be modified to cover all three cases.

The local models of \cref{subsubsec:slices-codim-1,subsubsec:finite-covers} reduce the description of the leaves of $\M(G,\bfN)$ to the smaller Coulomb branches $\M(\Gl^1,\Nl)$.  Our goal is to make this reduction explicit.  We begin with two preparatory steps: first we explain how passing from a possibly unfaithful action to an almost faithful one affects leaves, and then we record the monopole calculation that forces the image of a leaf to be a flat.

\subsection{Preparatory results}

\subsubsection{Unfaithful representations}
\label{sec:unfaithful}

In this subsection, assume that we are symmetrizer-free.
Most often, authors consider Coulomb branches attached to faithful representations, but useful information can also be gained from unfaithful ones.  Let $G^0$ be the identity component of the kernel of the action of $G$ on $\bfN$, and let $G^1=G/G^0$.  The remaining kernel $K$ of the induced action of $G^1$ on $\bfN$ is therefore finite and central, so this action is almost faithful.  By \cref{lem:cover-ours}, $\M(G^1,\bfN)$ is the quotient of $\M(G^1/K,\bfN)$ by the action of $K^{\vee}$, which is not necessarily free and thus can affect the set of leaves.  
\begin{Example}
    If $G=\Gm$ and $\bfN$ is 1-dimensional with weight $n$, then $K$ is the group of $n$-th roots of unity.  The Coulomb branch $\M(G/K,\bfN)$ is $\mathbb{A}^2$, since $G/K$ acts with weight 1.  The action of $K^{\vee}$ is the usual symplectic action of the $n$-th roots of unity on $\mathbb{A}^2$, so the quotient $\M(G,\bfN)$ is the $A_{n-1}$ surface singularity.  In particular, $\M(G/K,\bfN)$ has one leaf, while $\M(G,\bfN)$ has two.
\end{Example}

Since $G$ is reductive, the extension $0\to G^0 \to G \to G^1\to 0$ splits after quotienting by a finite central subgroup: there is a central finite group $Z\subset G^0$ and a homomorphism $G\to \bar{G}^0=G^0/Z$ whose restriction to $G^0$ is the obvious quotient map.

Let $\bar{G}=G^1\times \bar{G}^0\cong G/Z$.  
\notation{$\bar{G}^0,\bar{G}$}{Central quotient of $G^0$ and the resulting quotient $\bar{G}=G^1\times \bar{G}^0$}\begin{Lemma}\label{lem:unfaithful}
    The Coulomb branch $\M(\bar{G},\bfN)\cong \M(G^1,\bfN)\times \M(\bar{G}^0,0)$ carries a free action of the Pontryagin dual $Z^{\vee}$ induced by actions on the two factors, and $\M(G,\bfN)$ is the quotient by this action.  
\end{Lemma}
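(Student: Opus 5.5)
The plan is to reduce everything to \cref{lem:cover-ours} together with the product decomposition of Coulomb branches.

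First I will check the setup. Since $G^0$ acts trivially on $\bfN$ and $Z\subset G^0$, the representation $\bfN$ descends to $\bar G=G/Z$. The map $G\to G^1\times \bar G^0$ (the quotient map in the first factor, the splitting homomorphism in the second) is surjective with kernel $G^0\cap\ker(G\to\bar G^0)$. On $G^0$ the second component restricts to the quotient by $Z$, so this kernel is exactly $Z$. Hence $\bar G\cong G^1\times\bar G^0$, and the action of $\bar G$ on $\bfN$ factors through the first factor.

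Next I will identify $\M(\bar G,\bfN)$ with the product. I would invoke the product decomposition \cite[\S 3(vii)(a)]{BFN}, already used in \cref{subsubsec:semi-simple-rank-1}, to get $\M(G^1\times\bar G^0,\bfN\oplus 0)\cong \M(G^1,\bfN)\times\M(\bar G^0,0)$. One can also see this directly from \cref{def:Coulomb-branch-nonabelian}: the base, the torus $\Tvee$, the Weyl group, and the weight and root divisors all split as products. The weights of $\bfN$ are pulled back from the first factor and the roots lie in one factor or the other, so the regularity conditions split.

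Then I will apply the second half of \cref{lem:cover-ours} to the finite cover $G\to\bar G$ with kernel $Z$, taking $\tG=G$. This gives an action of $Z^{\vee}$ on $\M(G,G,\bfN)$. The roles need care here: the lemma is stated as $\tM(\tG,G,\bfN)/K^{\vee}\cong\tM(\tG,G',\bfN)$ with $G'$ the \emph{cover}. With the flavor group $\tG=\bar G$ and a common Weyl group, it therefore reads $\M(\bar G,\bfN)/Z^{\vee}\cong \M(G,\bfN)$. The point is that $\Tvee_{\bar G}\to\Tvee_G$ is a cover with group $Z^{\vee}$, dual to $X_*(T_G)\subset X_*(T_{\bar G})$. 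I would verify that the flavor parameter does not matter by noting that $\base{T_G}\to\base{T_{\bar G}}$ is an isomorphism in the rational case $B=\Ga$ used in this section.

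It remains to check two things.

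\textbf{Compatibility with the product.} The $Z^{\vee}$-action is induced by the grading of functions by $X_*(T_{\bar G})/X_*(T_G)\cong Z$. Since $X_*(T_{\bar G})=X_*(T_{G^1})\oplus X_*(T_{\bar G^0})$, a character of $Z$ restricts to characters of the two summands. So the action is a product of the induced actions on the two factors, through the maps $Z^{\vee}\to (\text{cocharacter quotients})^{\vee}$ on each side.

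\textbf{Freeness.} This is the main obstacle. I would show that $Z^{\vee}$ acts freely on the factor $\M(\bar G^0,0)$ alone; then the diagonal action is free. Here $\M(\bar G^0,0)$ is the pure Coulomb branch, a group scheme. By \cref{lem:universal-centralizer}-type descriptions, or directly from the construction, $Z^{\vee}$ acts on it by translation through the central subgroup. Since $Z\subset G^0$, the map $Z\to \bar G^0$ has trivial image, so $Z$ maps injectively into $\pi_1(\bar G^0)/\pi_1(G^0)$ and the grading of $\K[\M(\bar G^0,0)]$ by $Z$ is faithful. Concretely, multiplying by the $Z^{\vee}$-character $e^{\gamma}$ for $\gamma\in X_*(T_{\bar G^0})$ acts on each fiber over $\base{\bar G^0}$ by translating through a finite subgroup of the fiber group scheme. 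That subgroup embeds injectively in each fiber (including over $D_\alpha$), because the monopole operators $r_\gamma$ of every class in $X_*(T_{\bar G^0})/X_*(T_{G^0})$ are invertible in the group scheme. Invertibility gives freeness fiberwise. If this fiberwise check is awkward over the root divisors, I would fall back on the identification of the pure Coulomb branch with a universal centralizer. In that model the $Z^{\vee}$-action is multiplication by central elements of the centralizer group, which is evidently free.
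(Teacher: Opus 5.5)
Your proposal is correct and follows essentially the same route as the paper: the product decomposition from \cite[\S 3(vii)(a)]{BFN}, and the $Z^{\vee}$-action and quotient coming from the finite cover $G\to\bar G$ (you get this from \cref{lem:cover-ours}, where the paper cites \cite[\S 3(vii)(c)]{BFN}). Compatibility with the two factors follows because the map $\pi_1(\bar G)=\pi_1(G^1)\times\pi_1(\bar G^0)\to Z$ is induced factorwise, and freeness comes from the action on $\M(\bar G^0,0)$ viewed as a universal centralizer.

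One caution: your first freeness argument via ``invertible monopole operators'' fails when $\bar G^0$ is non-abelian. For instance, $\K[\M(PGL_2,0)]$ has no non-constant units. So the argument you actually need is your universal-centralizer fallback, where $Z^{\vee}\hookrightarrow Z((\bar G^0)^{\vee})$ acts by multiplication by non-trivial central elements, exactly as in the paper.
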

\begin{proof}
The product decomposition arises from \cite[\S 3(vii)(a)]{BFN} and the action of $Z^{\vee}$ from \cite[\S 3(vii)(c)]{BFN}.  Since the map $\pi_1(\bar{G})\to Z$ is induced by maps on the two factors, the same is true for the induced action on $\M(\bar{G},\bfN)$.
 The action on $\M(\bar{G}^0,0)$, which is the universal centralizer in the Langlands dual $(\bar{G}^0)^{\vee}$, is free, so the action on $\M(\bar{G},\bfN)$ is also free.
\end{proof}

\begin{Remark}
    This result is necessarily more complicated in the symmetrizer setting, since the kernel of the representation is not necessarily compatible with the product decomposition of the group.  It seems quite likely that this is not a serious bar to extending the results of this section to the symmetrizer setting, but it seems that a more subtle analysis of the residual theory giving the slice is needed.  
\end{Remark}

Since $\M(\bar{G}^0,0)$ is smooth, every leaf of $\M(\bar{G}, \bfN)$ is of the form $L \times \M(\bar{G}^0,0)$ for some leaf $L$ of $\M(G^1,\bfN)$.  The freeness of the action of $Z^{\vee}$ on $\M(\bar{G}^0,0)$ implies that the image of such a leaf in $\M(G,\bfN)$ is again a leaf, rather than introducing new singularities along the quotient.  However, multiple leaves of $\M(G^1,\bfN)$ might map to the same leaf of $\M(G,\bfN)$.

This shows that:
\begin{Corollary}\label{cor:unfaithful}
    The leaves of $\M(G,\bfN)$ are in bijection with $Z^{\vee}$-orbits on the leaves of $\M(G^1,\bfN)$.
\end{Corollary}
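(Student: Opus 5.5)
The plan is to combine \cref{lem:unfaithful} with the preceding discussion of leaves of the product $\M(\bar{G},\bfN)\cong \M(G^1,\bfN)\times \M(\bar{G}^0,0)$. First, since $\M(\bar{G}^0,0)$ is smooth and connected (it is the universal centralizer of $(\bar{G}^0)^{\vee}$), it is a single symplectic leaf. The leaves of a product of Poisson varieties are the products of leaves, so the leaves of $\M(\bar{G},\bfN)$ are exactly $L\times \M(\bar{G}^0,0)$ for $L$ a leaf of $\M(G^1,\bfN)$. This gives a bijection between leaves of $\M(G^1,\bfN)$ and leaves of $\M(\bar{G},\bfN)$. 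The bijection is $Z^{\vee}$-equivariant, because $Z^{\vee}$ acts factorwise and so permutes the leaves $L\times \M(\bar{G}^0,0)$ through its action on the first factor.

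Next I pass to the quotient $\M(G,\bfN)=\M(\bar{G},\bfN)/Z^{\vee}$. Because the action is free, the quotient map $q$ is étale and, as the action preserves the Poisson structure, a local Poisson isomorphism. Symplectic leaves are determined locally by the Poisson structure, since they are the maximal connected integral submanifolds of the Hamiltonian distribution. Therefore $q$ maps each leaf of $\M(\bar{G},\bfN)$ onto a leaf of $\M(G,\bfN)$. Every leaf downstairs arises this way: its preimage is a union of leaves upstairs, by the same local-isomorphism argument. Two leaves upstairs have the same image exactly when they differ by an element of $Z^{\vee}$, since $q^{-1}(q(L'))=\bigcup_{z}zL'$. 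Combining this with the equivariant bijection of the first step gives the claimed bijection with $Z^{\vee}$-orbits on leaves of $\M(G^1,\bfN)$.

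The main obstacle is justifying freeness of the $Z^{\vee}$-action on the whole product, which is where \cref{lem:unfaithful} is used. I would take that freeness, coming from the free action on the universal-centralizer factor, as given. I would also check that the leaves, as locally closed smooth strata, are compatible with the étale quotient. The cleanest way is to note that $q$ is étale and Poisson, so it preserves the rank of the Poisson bivector and the local leaf decomposition.
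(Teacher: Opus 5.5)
Your proposal is correct and follows essentially the same route as the paper. You use smoothness (and connectedness) of $\M(\bar{G}^0,0)$ to identify the leaves of $\M(\bar{G},\bfN)$ with the products $L\times\M(\bar{G}^0,0)$, and then use the freeness of the $Z^{\vee}$-action from \cref{lem:unfaithful} to see that leaves descend to leaves of $\M(G,\bfN)$, with two leaves having the same image exactly when they lie in one $Z^{\vee}$-orbit; you also spell out the \'etale, local-Poisson-isomorphism justification for this descent step, which the paper leaves implicit.
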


\subsubsection{A monopole calculation}

In order to prove our desired leaf classification, we will need to do a small calculation of Poisson brackets with monopole operators, in the associated graded for the filtration by dominant coweights  given in \cite[\S 6(i)]{BFN}.  Note that in this subsection, the results hold in the symmetrizer setting as well.

In this filtration, the $\mu$ graded component $\soa_\mu$ consists of exactly the monopole operators $r_{\mu}(g)$ with dressings $g\in R^{W_\mu}$ for $W_{\mu}$ the stabilizer of $\mu$ in $W$.  The Poisson bracket with $\SymtW$ preserves this filtration, and thus induces a Poisson bracket $\SymtW\otimes \soa_\mu\to \soa_\mu$.  Note that the directional derivative $\partial_\mu f$ in the $\mu$ direction, considered as a vector field on $\LT$, preserves the subalgebra $R^{W_\mu}$.

\begin{Lemma}\label{lem:monopole-bracket}
    For $f,g\in R^{W_\mu}$, we have 
    \begin{equation}\label{eq:monopole-bracket}
        \{r_{m\mu}(f),r_{m'\mu}(g)\}=r_{(m+m')\mu}((\partial_{m'\mu} f)\cdot g-(\partial_{m\mu} g)\cdot f).
    \end{equation}
\end{Lemma}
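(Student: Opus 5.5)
The plan is to compute the bracket inside the abelian localization, where everything is explicit, and then read off the answer in the associated graded. By the comparison results, the functions on $\M(G,\bfN)$ embed into $(\K(\base{T})[\Tvee])^W$. The Poisson structure on $\base{T}\times\Tvee$ is the standard one with $\{\xi, e^{\gamma}\}=\langle \xi,\gamma\rangle e^{\gamma}$ for $\xi\in\LT^*$, and functions on $\base{T}$ Poisson commute among themselves, as do the $e^\gamma$. Consequently, for rational functions $a,b$ on $\LT$ and coweights $\gamma,\gamma'$,
\[\{a e^{\gamma}, b e^{\gamma'}\}=\big(a\,\partial_{\gamma}b-b\,\partial_{\gamma'}a\big)e^{\gamma+\gamma'},\]
up to the sign convention fixed by $\{\xi,e^\gamma\}$. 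This formula is what produces the directional derivatives in \eqref{eq:monopole-bracket}.

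Next I would write $r_{m\mu}(f)$ via the localization formula used in \cref{lem:rank-1-BFN}, as a sum over $w\in W/W_\mu$ of terms $w\big(f\,P_{m\mu}\,e^{m\mu}/\Eu(T_{m\mu})\big)$. Here $P_{m\mu}$ is the Euler class of matter attached to $m\mu$ (as in \cref{eq:monopole-operator-abelian}), and $\Eu(T_{m\mu})$ is the product of $\Eu(\alpha)$ over the roots $\alpha$ with $\langle\alpha,\mu\rangle<0$, i.e. the Euler class of the tangent space to the orbit $\Gr^{m\mu}$.

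The key structural observation is that the factors $P_{m\mu}$ and $\Eu(T_{m\mu})$ are multiplicative in $m$. Since $m\mu$ and $m'\mu$ lie in the same open chamber face, the weights and roots pairing positively with them coincide, so $P_{m\mu}P_{m'\mu}=P_{(m+m')\mu}$ and the root product depends only on $\mu$. In the associated graded, only the leading coweight $(m+m')\mu$ survives: cross terms $w\neq w'$ give coweights $wm\mu+w'm'\mu$, which are strictly shorter and so lie in lower filtration. The product term $w=w'$ reproduces $r_{(m+m')\mu}(fg)$, up to a correction $\Eu(T_\mu)$ factor that must be absorbed. I expect this correction to cancel against the normalization of the filtration, in the same way that the product $r_{m\mu}(f)r_{m'\mu}(g)$ is known to equal $r_{(m+m')\mu}(fg)$ in $\operatorname{gr}$ by \cite[\S 6]{BFN}.

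For the bracket itself, I would apply the displayed formula to the $w=w'$ summands. The derivative $\partial_{m\mu}$ falls on $g$, and also on $P_{m'\mu}$ and the Euler denominators. This is the main obstacle: one must check that the derivatives landing on $P$ and on $\Eu(T)$ cancel. For $P$, the two terms give $f g\big(P_{m\mu}\partial_{m\mu}P_{m'\mu}-P_{m'\mu}\partial_{m'\mu}P_{m\mu}\big)$. Because $P_{k\mu}=\prod\Eu(\varphi)^{k\langle\mu,\varphi\rangle}$ and $\partial_{k\mu}$ scales linearly with $k$, the two terms are both $mm'$ times the same logarithmic derivative, so they cancel; the same argument applies to the root Euler classes. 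What remains is $(\partial_{m'\mu}f)g-(\partial_{m\mu}g)f$ times the leading monopole, which gives \eqref{eq:monopole-bracket} after fixing signs. Finally, the result is valid in the symmetrizer setting, since only the rescaled pairing enters the exponents.
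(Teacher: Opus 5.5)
\textbf{Comparison with the paper.} Your route differs from the paper's and is legitimate. The paper computes the commutator of the quantized monopole operators, viewed as difference operators in the GKLO representation, and then takes the semiclassical limit. You instead compute the classical bracket $\{ae^{\gamma},be^{\gamma'}\}$ directly in the abelian localization.

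\textbf{The gap.} As written, there is a concrete error in a key step, and it breaks both your product step and your bracket step. The denominator in the localization formula is the Euler class of
\[T_{t^{m\mu}}\Gr^{m\mu}\cong\bigoplus_{\langle\alpha,\mu\rangle>0}\mathfrak{g}_{\alpha}\otimes\K[t]/t^{m\langle\alpha,\mu\rangle}\]
(up to your sign convention). So each such root occurs with multiplicity $m\langle\alpha,\mu\rangle$, and $\Eu(T_{m\mu})=\Eu(T_{\mu})^{m}$. The $m$-independent product of the $\Eu(\alpha)$, each taken once, is the Euler class of the tangent space of $G/P_{\mu}$, not of $\Gr^{m\mu}$.

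If $E$ really were independent of $m$, two things would go wrong. First, the $w=w'$ term of the product would carry $E^{-2}$ rather than $E^{-1}$. Second, in the bracket the derivatives of $E^{-1}$ would not cancel: with $A=P_{\mu}^{m}/E$ and $B=P_{\mu}^{m'}/E$, the leftover is $(m-m')\,fg\,AB\,\partial_{\mu}\log E$, which is nonzero whenever $m\neq m'$ and $E$ is nonconstant.

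No ``normalization of the filtration'' can absorb these, because dressed monopole operators are determined modulo lower filtration. In fact, the identity $r_{m\mu}(f)r_{m'\mu}(g)\equiv r_{(m+m')\mu}(fg)$ in $\operatorname{gr}$ that you cite, read off on the coefficient of $e^{(m+m')\mu}$, forces $P_{k\mu}/\Eu(T_{k\mu})$ to be multiplicative in $k$. Your later claim that the logarithmic-derivative argument ``applies to the root Euler classes'' tacitly uses this correct multiplicativity. So the proposal is inconsistent on exactly the point that matters.

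\textbf{The fix.} With the correction, the argument closes up with no leftover factors. Set $Q=P_{\mu}/\Eu(T_{\mu})$, so the leading term of $r_{k\mu}(f)$ is $fQ^{k}e^{k\mu}$. With $\{\xi,e^{\gamma}\}=\langle\xi,\gamma\rangle e^{\gamma}$,
\begin{align*}
\{fQ^{m}e^{m\mu},gQ^{m'}e^{m'\mu}\}&=Q^{m+m'}\bigl((\partial_{m'\mu}f)g-(\partial_{m\mu}g)f\bigr)e^{(m+m')\mu}\\
&\quad+fg\,Q^{m+m'-1}\bigl(m\,\partial_{m'\mu}Q-m'\,\partial_{m\mu}Q\bigr)e^{(m+m')\mu}.
\end{align*}
The last term vanishes because $m\,\partial_{m'\mu}=mm'\partial_{\mu}=m'\,\partial_{m\mu}$.

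This is the classical shadow of the cocycle identity for the $\hbar$-shifted Euler-class prefactors, which the paper's one-line commutator formula uses without comment. Your version makes visible why the matter and root factors drop out. The paper's version gets the compatibility of the bracket with the filtration for free from the filtered quantization.

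\textbf{Two smaller points.} For the cross terms, the relevant fact is not that $wm\mu+w'm'\mu$ is shorter. It is that its dominant representative is $\leq(m+m')\mu$ in dominance order, with equality only if $w\mu=w'\mu$. You should also state explicitly that the coefficient of $e^{(m+m')\mu}$ in the localization determines the class in $\operatorname{gr}_{(m+m')\mu}$; this is what lets you read off the dressing.
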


\begin{proof}
    This follows from the Atiyah--Bott localization formula for the monopole operator $r_{m\mu}(f)$, modulo lower order terms; see \cite[(6.3)]{BFN}. The monopole operator, in its action on the GKLO representation, is given by the symmetrization of a rational function times an $\hbar$-scaled difference operator for $m\mu$ (corresponding to the $T$-fixed point $t^{m\mu}$ under Atiyah--Bott), plus a sum of translations which are lower in Bruhat order.  Thus, we have 
    \[[r_{m\mu}(f), r_{m'\mu}(g)]=r_{(m+m')\mu}((\delta_{\hbar\mu}f-f) g-(\delta_{\hbar\mu}g-g)\cdot f)\] plus lower order translations. 
    
    Taking semi-classical limit turns the difference operator into a directional derivative in the $m\mu$ direction, yielding \cref{eq:monopole-bracket}.
\end{proof}

\subsubsection{Monopole operators and ideals}

In this subsection, the results hold in the symmetrizer setting as well.
Let $I\subset \K[\M(G,\bfN)]$ be an ideal and $A=\K[\M(G,\bfN)]/I$ be the corresponding quotient.  The filtration by dominant coweights on $\K[\M(G,\bfN)]$ induces a filtration on $A$, with associated graded $\gr A$.  For each dominant coweight $\mu$, taking the dressed monopole operators gives a map $\mathsf{r}_{\mu}\colon R^{W_\mu}\to \gr A$.

Let $J_\mu\subset R^{W_\mu}$ be the kernel of this map. This is not obviously an ideal, but it is a $\SymtW$-submodule and if $n>n'$, then $ R^{W_\mu}J_{n'\mu}\subset J_{n\mu}$.  Since $R^{W_\mu}$ is Noetherian, both as a ring and as a module over $\SymtW$, this implies that: 

\begin{Lemma}\label{lem:monopole-ideal}
For $n\gg 0$, the subspace $J_{n\mu}$ stabilizes to an ideal $J^{\infty}_\mu$ of $R^{W_\mu}$.  If $I$ is a Poisson ideal, then $V(J^{\infty}_\mu)$ is invariant under translation by $\mu$ in $\LT/W_{\mu}$.  
\end{Lemma}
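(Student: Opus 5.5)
The plan has two parts: first show that the $J_{n\mu}$ form an increasing chain of $\SymtW$-submodules of $R^{W_\mu}$ which stabilizes and whose limit is an ideal; then use the bracket formula of \cref{lem:monopole-bracket} to show that this limit ideal is stable under $\partial_\mu$.

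\textbf{Monotonicity.} Multiplication in $\gr A$ should give, for $n>n'$ and any $h\in R^{W_\mu}$ and $f\in J_{n'\mu}$,
\[\mathsf{r}_{(n-n')\mu}(h)\,\mathsf{r}_{n'\mu}(f)=\mathsf{r}_{n\mu}(hf),\]
since in the associated graded the product of monopole operators along the same ray is the monopole operator of the sum with product dressing. This is the abelian formula, which is valid here because $W_{n\mu}=W_\mu$. As $\mathsf{r}_{n'\mu}(f)=0$, we get $hf\in J_{n\mu}$, which is the stated inclusion $R^{W_\mu}J_{n'\mu}\subset J_{n\mu}$. Taking $h=1$ shows the $J_{n\mu}$ increase with $n$.

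\textbf{Stabilization and the ideal property.} $R^{W_\mu}$ is a finitely generated module over the Noetherian ring $\SymtW$, so the increasing chain of submodules $J_{\mu}\subset J_{2\mu}\subset\cdots$ stabilizes at some $J^\infty_\mu$. For large $n'$, the inclusion with $n=2n'$ gives $R^{W_\mu}J^\infty_\mu\subset J^\infty_\mu$, so $J^\infty_\mu$ is an ideal.

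\textbf{Poisson case.} Suppose $I$ is Poisson, so $\gr A$ inherits a Poisson bracket compatible with the filtration. Apply \cref{eq:monopole-bracket} with $m=n$, $m'=1$, $f\in J^\infty_\mu=J_{n\mu}$ (for $n$ large) and $g=1$. Then $\partial_{n\mu}1=0$, so
\[\{\mathsf{r}_{n\mu}(f),\mathsf{r}_{\mu}(1)\}=\mathsf{r}_{(n+1)\mu}(\partial_\mu f).\]
The left side vanishes in $\gr A$, hence $\partial_\mu f\in J_{(n+1)\mu}=J^\infty_\mu$. Thus $J^\infty_\mu$ is an ideal of $R^{W_\mu}$ stable under the derivation $\partial_\mu$. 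Over characteristic zero, or more generally when $\K$ has characteristic $0$ as in the Poisson setting, an ideal stable under a constant vector field has zero set invariant under the flow of that field, which is translation by $\mu$. Note that $\partial_\mu$ descends to $\LT/W_\mu$ because $\mu$ is $W_\mu$-fixed.

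\textbf{Main obstacle.} The delicate step is justifying that the bracket formula, stated in $\gr\K[\M(G,\bfN)]$, descends to $\gr A$. I would handle this by noting that $\gr I$ is a Poisson ideal of the associated graded; this holds because the filtration is compatible with the bracket, in the sense that it is a degree $0$ Poisson filtration. The formula then holds modulo lower filtration pieces, which vanish in the relevant graded component. I would also check that the product identity used for monotonicity holds on the nose in $\gr$, again from the localization formula.
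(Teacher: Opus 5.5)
Your proposal is correct and follows essentially the same route as the paper. You stabilize the increasing chain $J_{n\mu}$ by Noetherianity, get the ideal property from $R^{W_\mu}J_{n'\mu}\subset J_{n\mu}$, and then bracket with $r_\mu$ via \cref{eq:monopole-bracket} to obtain $\partial_\mu$-stability of $J^\infty_\mu$. You also fill in points the paper leaves implicit: the same-ray product in $\gr$ that yields the inclusion, the compatibility of the bracket with the filtration, and the characteristic-zero Taylor argument; and your index $(n+1)\mu$ in the bracket is the correct one, where the paper writes $n\mu$.
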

\begin{proof}
    The first claim follows from Noetherianity as above: for $n\gg 0$, we have $J_{n\mu}=J_{(n+1)\mu}$, which implies that $R^{W_\mu}J_{n\mu}= J_{n\mu}$.

    For the second claim, note that if $I$ is a Poisson ideal, then $\{I,r_\mu\}\subset I$.  By \cref{eq:monopole-bracket}, we have that $\{r_{n\mu}(f), r_\mu\}\subset r_{n\mu}(\partial_{\mu} f)$ for all $f\in R^{W_\mu}$ and $n\gg 0$.  Thus, we find that if $f\in J^{\infty}_\mu$, then  $\partial_{\mu} f\in J^{\infty}_\mu$ as well. This shows the claim about translation invariance of $V(J^{\infty}_\mu)$.
\end{proof}

\subsection{Leaves and flats}
\label{sec:leaves-and-flats}

Given a leaf $L$ of $\M(G,\bfN)$, the image $\ints(\overline{L})$ is a closed subvariety of $\LT/W$, which we denote $U_L$.  We will analyze leaves by considering the question of which subsets of $\LT/W$ can occur as $U_L$ for various leaves $L$.  The analysis below shows that these images are controlled by the hyperplanes defined by the weights of $\bfN$.  

Algebraically, leaves are expressed as the zero sets of prime Poisson ideals.  Thus, given a leaf $L$, let $I$ be the ideal of functions vanishing on the closure $\overline{L}$ of $L$,  
and let $A=\K[\M(G,\bfN)]/I$ be the corresponding quotient.  

The map $\K[\LT/W]\to A$ has a kernel $I_0\subset \SymtW$;  the zero set $V(I_0)$ is the image of the closed Poisson subscheme $V(I)=\Spec A\subset \M(G,\bfN)$, and in particular, is integral.  Note that since the fibers of the map $V(I)\to V(I_0)$ are coisotropic, we have $k=\dim V(I_0)\leq \frac{1}{2}\dim V(I)$.  

As in \cref{subsec:setup-and-notation}, we enumerate the lines $\ell_i$ spanned by the weights of $\bfN$.  Let $\ell_i^{\perp}$ be the corresponding hyperplanes in $\base{T}=\LT$.
\begin{Definition}\label{def:flat}
    A {\bf flat} of the weight hyperplane arrangement in $\LT$ is a subspace defined as the intersection of a subset of the hyperplanes $\ell_i^{\perp}$, including the empty intersection $\LT$.
\end{Definition}
Note that we have {\it not} included the root hyperplanes in this definition.  While the fiber of $\ints$ can change over these hyperplanes, we will see below that this never happens in a way that leads to a new leaf.  \cref{cor:unfaithful} establishes the key observation:  if the action of $G$ on $\bfN$ is not almost faithful, then there are no zero-dimensional leaves.  
The next lemma shows that only these flats can occur as images of leaves.
\begin{Lemma}\label{lem:leaf-gives-flat}
    Given a Poisson prime ideal $I$, the subvariety $V(I_0)$ is the image in $\LT/W$ of a $k$-dimensional flat of $\LT$ and $\dim V(I)=2k$.
\end{Lemma}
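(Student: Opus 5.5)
Let $Z\subset \LT$ be an irreducible component of the preimage of $V(I_0)$, so $\dim Z=k$. The plan is to show first that $Z$ is invariant under translation by a large lattice of coweights, and then to use the monopole operators that survive in $A$ to show that those directions span a subspace $F$ with $Z$ a translate of $F$. We then show that $F$ is a flat and that the translate passes through the origin. The dimension count $\dim V(I)=2k$ will come from comparing the generic fibre of $V(I)\to V(I_0)$ with a torus.

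Step 1 locates the generic point. Let $\lambda$ be a generic point of $Z$ and consider the \'etale neighbourhood $\base{\tGl,\lambda}$. By \cref{lem:slice-ours}, over this neighbourhood $\M(G,\bfN)$ is isomorphic to $\M(\Gl,\Nl)$. The first gain is that all weights and roots not vanishing on $\lambda$ become irrelevant there. Let $F_\lambda$ be the intersection of the hyperplanes $\ell_i^\perp$ of weights vanishing on $\lambda$; this is a flat containing $\lambda$.

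Step 2 applies \cref{lem:monopole-ideal}. For each coweight $\mu$ with $\langle\mu,\varphi\rangle=0$ for all weights $\varphi$ of $\Nl$, the monopole operator $r_{n\mu}$ is, over this neighbourhood, a unit times $e^{n\mu}$ (no Euler factors appear). So $r_{n\mu}(f)$ can only lie in $I$ if $f$ vanishes on $Z$, which means $J^\infty_\mu$ cuts out exactly the image of $Z$ locally. By \cref{lem:monopole-ideal}, $Z$ is invariant under translation by every such $\mu$, and these $\mu$ span $\LTl^0\supseteq F_\lambda$ rationally. Since $Z$ is an irreducible closed subvariety of dimension $k$ that is invariant under a subspace containing $F_\lambda$, and lies in the proper closed set cut out by the weights vanishing at $\lambda$ (which is exactly $F_\lambda$ when those weights are taken to be all of the vanishing ones), we get $Z=F_\lambda$. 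The main obstacle is making this precise. The generic point of $Z$ could lie on further hyperplanes, namely root hyperplanes or hyperplanes of weights not containing all of $Z$. I would handle this by choosing $\lambda$ generic in $Z$ and noting that any hyperplane containing $\lambda$ generically contains $Z$; root hyperplanes only change the Weyl group quotient and do not affect translation invariance. One also needs the rank-$1$ local models of \cref{subsubsec:slices-codim-1} to check that $r_{n\mu}(f)$ really survives in $\gr A$ unless $f$ vanishes on $Z$, which uses primality of $I$ and the fact that $r_\mu r_{-\mu}$ is a unit near $\lambda$.

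Step 3 is the dimension count. Over the generic point of $F_\lambda$, the fibre of $\ints$ restricted to $V(I)$ is a Poisson-invariant subvariety of the fibre of $\M(\Gl,\Nl)$. Translation invariance along $\LTl^0$ and the dual torus action generated by the $r_\mu$ above show this fibre contains an orbit of the torus $(\Gl^0\cap T)^\vee$, which has dimension $k$. Conversely, coisotropy of the fibres gives $k\le \frac12\dim V(I)$. Since $V(I)$ is the closure of a single leaf, the generic fibre must be exactly one such torus orbit: otherwise the fibre would contain a positive-dimensional family of leaf-directions in the $\M(\Gl^1,\Nl)$ factor, contradicting that it is a single leaf intersected with a fibre of a Lagrangian-type fibration. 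This gives $\dim V(I)=2k$.
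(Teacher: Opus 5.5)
Your route differs from the paper's, and it has two genuine gaps.

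\textbf{Step 2 fails in the root-hyperplane case.} You dismiss this case, but it is where the argument breaks. The claim that $r_{n\mu}$ is a unit times $e^{n\mu}$ holds only when $\mu$ is central in $\Gl$. All coweights of $\Gl^0\cap T$ are central exactly when $\Gl^0$ is a torus.

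Suppose instead $\Gl^0$ is non-abelian, i.e.\ some simple factor of $\Gl$ acts trivially on $\Nl$, and let $\alpha$ be a root of $\Gl^0$. Then the monopole operators in the $\alpha^\vee$ direction carry root denominators and are not units. Worse, $\lambda$ is generic in $Z$ and $\alpha(\lambda)=0$, so $Z\subset\alpha^\perp\subsetneq F_\lambda$. Hence $Z$ cannot be invariant under translation by $\alpha^\vee$.

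So the claim that root hyperplanes ``only change the Weyl group quotient'' is false here. This configuration is precisely what must be ruled out, and translation invariance cannot rule it out. An example: $G=PGL(2)\times\Gm$, with $\bfN$ a charged line for $\Gm$, and a putative leaf with $Z=\{0\}$.

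The paper rules this out by a dimension squeeze. The Poisson map $\M(\Gl,\Nl)\to\M(\Gl^0,0)$ has target smooth symplectic of dimension $2k'$, where $k'=\dim F_\lambda$. A Poisson map to a symplectic variety is submersive along leaves, so every leaf has dimension at least $2k'$.

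\textbf{Step 3 has no upper bound.} The torus orbit and the coisotropy of fibres both give only $\dim V(I)\ge 2k$. Your claim that the generic fibre has no extra directions in the $\M(\Gl^1,\Nl)$ factor amounts to saying that the fibres of $\ints$ restricted to a leaf are isotropic. Equivalently, a positive-dimensional leaf cannot sit inside a fibre of $\ints$. That is part of what has to be proved. It does not follow from a general ``Lagrangian-type fibration'' principle on a singular Poisson variety.

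The missing idea is to apply \cref{lem:monopole-ideal} in the opposite direction. Take a coweight $\mu\notin F_\lambda$. Then $V(J^\infty_\mu)$ lies over $V(I_0)$, hence in the image of the $W_\lambda$-stable subspace $F_\lambda$. It is also invariant under translation by $\mu$, so it is empty. Therefore high multiples of every monopole operator with coweight outside $F_\lambda$ vanish in $\gr A$. Noether normalization then bounds the fibre dimension of $V(I)$ by $k'$.

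Combining the bounds gives $2k'\le\dim V(I)\le k+k'$, and $Z\subset F_\lambda$ gives $k\le k'$. Together these yield $k=k'$, $Z=F_\lambda$ and $\dim V(I)=2k$ at once. Along this route your translation-invariance argument in the $F_\lambda$ directions is not needed at all.
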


\begin{proof}
    Given a generic point $\lambda$ in $V(I_0)$, by \cref{lem:slice-ours}, we may replace $G$ with $\Gl$ and $\bfN$ with $\Nl$, and thereby assume that the corresponding cocharacter $\lambda$ is central and acts trivially on $\bfN$.  As before, let $\Gl^0$ be the identity component of the kernel of the action of $\Gl$ on $\Nl$, and let $k'$ be the rank of $\Gl^0$.  Equivalently, $k'$ is the dimension of the smallest flat $U$ containing $\lambda$.  Note that this shows that $V(I_0)\subset U$, since otherwise, a generic point would not lie in $U$.  Thus, $k=\dim V(I_0)\leq k'$.  Thus, the same is true of all $V(J_{\mu}^{\infty})$ for $\mu$ a dominant coweight of $\Gl$.  By \cref{lem:monopole-ideal}, $V(J_{\mu}^{\infty})$ is invariant under translation by $\mu$, so if $\mu\notin U$, this is only possible if $V(J_{\mu}^{\infty})$ is empty, and so all dressed monopole operators $r_{\mu}(g)$ map to zero in $A$.  By Noether normalization, this shows that the fiber over a point in $V(I_0)$ must have dimension $\leq k'$.  

    On the other hand, choosing a splitting $\LT\to \LT^0_\lambda$ of the inclusion $\LT^0_\lambda\subset \LT$ such that simple roots pull back to simple roots, we can define a map $\K[\M(\Gl^0,0)]\to A$ by sending $r_{\mu}(g)\mapsto r_{\mu}(g)$ for $\mu\in ^0_\lambda$, with the map $\K[\LT^0_\lambda]\to \K[\LT]$ induced by the splitting chosen above. This induces a Poisson map $\M(\Gl,\Nl)\to \M(\Gl^0,0)$.  Since the target is symplectic of dimension $2k'$, any symplectic leaf in the source must have dimension $\geq 2k'$.  This implies that the fiber of $V(I)$ over $\lambda$ has dimension $\geq k'$, since it is coisotropic.  This is only possible if $k=k'$, and thus $V(I_0)=U$ (since both are irreducible).  This completes the proof that $V(I_0)$ is the image of a flat.  
\end{proof}
Thus, the classification problem now splits into a combinatorial step and a geometric step.  First we enumerate the images of flats of the weight hyperplane arrangement in $\LT$; these are in bijection with the $W$-orbits of flats in $\LT$.  Then, for each such flat, we determine the set of leaves $L$ such that $U_L$ is the image of that flat.

\subsection{Analysis of generic fibers}

Let $U\subset \LT/W$ be the image of a flat $U_{\lambda}$ in $\LT$ with generic point $\lambda$.  We wish to understand the leaves $L$ such that $U_L=U$.  

The idea is that once the flat is fixed, the remaining ambiguity is entirely in the residual Coulomb branch attached to the fixed subrepresentation $\Nl$.  The next two lemmas isolate the two finite-group quotients that appear in passing from that residual Coulomb branch back to $\M(G,\bfN)$.

Consider the centralizer $\Gl=C_G(\lambda)$, the subrepresentation $\Nl$ of $\Gl$, and the stabilizer $\Gamma_{\lambda}$.  Note that $\Gamma_{\lambda}$ contains the Weyl group $W_{\lambda}$ of $\Gl$, but could potentially be larger.  The subgroup $W_{\lambda}\subset \Gamma_{\lambda}$ is normal.  
We know that the base change of $\M(G,\bfN)$ to $\base{G,\lambda}$ is isomorphic to the base change of $\M(\Gl,\Nl)$ to the same set.  
Thus, a leaf $L$ such that $U_L=U$ must be the image of a leaf of $\M(\Gl,\Nl)$ whose image is $U_{\lambda}/W_{\lambda}$.  
This set of leaves in $\M(\Gl,\Nl)$ carries an action of $\bar{\Gamma}_{\lambda}:=\Gamma_{\lambda}/W_{\lambda}$ induced by the action of $W$ on $\M(T,\bfN)$, and two such leaves have the same image in $\M(G,\bfN)$ if and only if they lie in the same $\bar{\Gamma}_{\lambda}$-orbit.  
The group $\bar{\Gamma}_{\lambda}$ is also the Galois group of the generic cover $U_{\lambda}/W_{\lambda} \to U$.  Thus, we have:
\notation{$\Gamma_{\lambda},W_{\lambda},\bar{\Gamma}_{\lambda}$}{Stabilizer of $\LT_\lambda\subset \LT$ in $W$, the Weyl group of $\Gl$ and the quotient $\bar{\Gamma}_{\lambda}=\Gamma_{\lambda}/W_{\lambda}$}\begin{Lemma}\label{lem:leaf-from-Levi}
    The set of leaves of $\M(G,\bfN)$ such that $U_L=U$ is the quotient of the set of leaves of $\M(\Gl,\Nl)$ such that $U_{L'}=U_{\lambda}/W_{\lambda}$ by the action of $\bar{\Gamma}_{\lambda}$.
\end{Lemma}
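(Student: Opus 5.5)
The plan is to compare $\M(G,\bfN)$ and $\M(\Gl,\Nl)$ on the open set where they agree, and to use the fact that leaves are determined by their generic points over $U$. Let $V\subset \LT/W$ be the image of $\base{T,\lambda}$, with $\lambda$ generic in $U_\lambda$. By \cref{lem:slice-ours}, applied with $\tG=G$, the restriction $\M(G,\bfN)_V$ is identified with $\M(\Gl,\Nl)_{\base{\Gl,\lambda}}$, where the map $\base{\Gl,\lambda}=\base{T,\lambda}/W_\lambda\to V$ is étale. Since $\lambda$ is generic in the flat, $V$ meets $U$ in a dense open subset. So for a leaf $L$ with $U_L=U$, the intersection $L\cap \ints^{-1}(V)$ is a nonempty open subset of $L$. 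It is also a union of leaves of the Poisson variety $\M(G,\bfN)_V$, because leaves are local in the analytic or étale topology.

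Next I would pull back along the étale map $\M(\Gl,\Nl)_{\base{\Gl,\lambda}}\to \M(G,\bfN)_V$. This map is Poisson, since the identification in \cref{lem:slice-ours} is compatible with the birational maps to $\base{T}\times\Tvee$, where the Poisson structures agree. Étale Poisson maps carry leaves onto open pieces of leaves. Hence each component of the preimage of $L$ lies in a leaf $L'$ of $\M(\Gl,\Nl)$ with $U_{L'}=U_\lambda/W_\lambda$. Conversely, each such $L'$ meets the preimage of $V$ densely and maps into a unique leaf $L$ with $U_L=U$. This gives a surjection from leaves $L'$ over $U_\lambda/W_\lambda$ to leaves $L$ over $U$. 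The density claims use \cref{lem:leaf-gives-flat}, which says $\overline{L}$ and $\overline{L'}$ are determined by their parts over the generic point of the flat.

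It remains to identify the fibers of this surjection with $\bar\Gamma_\lambda$-orbits. The map $\base{T,\lambda}/W_\lambda\to V$ restricts over $U$ to the cover $U_\lambda/W_\lambda\to U$, which is generically Galois with group $\bar\Gamma_\lambda$. The group $\bar\Gamma_\lambda$ acts on $\M(\Gl,\Nl)$ near $U_\lambda/W_\lambda$: it is induced by $\Gamma_\lambda\subset W$ acting on $\M(T,\bfN)$ via \cref{lem:goodness-degree}, it preserves the regularity conditions defining $\soa_{\Gl,\Nl}$, and it acts by Poisson automorphisms commuting with the map to $\M(G,\bfN)_V$. So leaves in one orbit have the same image. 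Conversely, suppose $L'_1$ and $L'_2$ have the same image $L$. Take generic points $x_1\in L'_1$ and $x_2\in L'_2$ mapping to the same point of $L$ over a generic point of $U$. Their images in $\base{\Gl,\lambda}$ are two lifts of one point of $U$, so they differ by some $\gamma\in\bar\Gamma_\lambda$. The orbit of $\gamma$ identifies the fibers, so $\gamma x_1$ and $x_2$ lie in the same fiber of the étale map over $L$. I would then argue that this fiber, restricted to a single sheet over $U_\lambda/W_\lambda$, is a single point generically. This gives $\gamma L'_1=L'_2$.

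The main obstacle is this last injectivity step. Over points of $U$, the étale map $\base{\Gl,\lambda}\to V$ need not be Galois; only the generic cover of $U$ is. So I need a clean statement that the fiber of $\M(\Gl,\Nl)_{\base{\Gl,\lambda}}\to\M(G,\bfN)_V$ over a point above a generic $u\in U$ is exactly a $\bar\Gamma_\lambda$-torsor of points. I would first try to prove this by base changing to the fiber over $u$, where the map becomes the disjoint union over lifts of $u$ of isomorphic copies of fibers of $\ints$. I would then check that these copies are permuted by $\bar\Gamma_\lambda$, using the $W$-equivariance of the birational model.
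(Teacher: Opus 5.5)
Your proposal is correct and follows the paper's argument. Like the paper, you use \cref{lem:slice-ours} to lift leaves over $U$ to leaves of $\M(\Gl,\Nl)$ over $U_\lambda/W_\lambda$, and you identify the remaining ambiguity with the Galois group $\bar{\Gamma}_{\lambda}$ of the generic cover $U_\lambda/W_\lambda\to U$; the paper simply asserts the orbit statement that you flag as the main obstacle. One wording correction also settles that obstacle: \cref{lem:slice-ours} identifies the fiber product $\M(G,\bfN)\times_{\LT/W}\base{\Gl,\lambda}$ with $\M(\Gl,\Nl)_{\base{\Gl,\lambda}}$, not the restriction to $V$. Consequently, the fiber of your étale map over a point $x$ is the set of lifts of $\ints(x)$ to $\base{\Gl,\lambda}$, and $\bar{\Gamma}_{\lambda}$ acts on these lifts through the second factor. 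For generic $u\in U$ the lifts are exactly $W_\lambda\backslash\Gamma_\lambda\mu$, because any translate $wU_\lambda\neq U_\lambda$ lies on a weight hyperplane removed in $\base{\Gl,\lambda}$. This set is a $\bar{\Gamma}_{\lambda}$-torsor, which is the clean statement you were looking for.
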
 

\begin{Lemma}\label{lem:closure-relation}
    A leaf $L$ lies in the closure of another leaf $L'$ if and only if the corresponding closure relation holds with at least one choice of preimages in $\M(\Gl,\Nl)$.  In particular, if $L\subset \overline{L'}$, then we have $U_L\subset U_{L'}$ for the corresponding flats.
\end{Lemma}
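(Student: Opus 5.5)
The plan is to reduce the closure relation to the local étale model near a generic point of $U_L$, and then transport it through the finite quotient by $\bar{\Gamma}_{\lambda}$ described in \cref{lem:leaf-from-Levi}.

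First I fix a generic point $\lambda$ of the flat $U_{\lambda}$ whose image is $U_L$. By \cref{lem:leaf-gives-flat}, $U_L$ is such an image. By \cref{lem:slice-ours}, the base changes of $\M(G,\bfN)$ and $\M(\Gl,\Nl)$ to $\base{\Gl,\lambda}$ agree. The natural map $\M(\Gl,\Nl)_{\base{\Gl,\lambda}}\to \M(G,\bfN)$ is étale and Poisson, and its image is an open set $\M(G,\bfN)_{\base{G,\lambda}}$ which meets $L$. Étale Poisson maps send leaves onto leaves, so they are local isomorphisms of the stratification by leaves. Closures of leaves can be tested on any open set meeting the smaller leaf: $L\subset\overline{L'}$ if and only if $L\cap V\subset \overline{L'\cap V}$ for an open $V$ meeting $L$, because $\overline{L'}$ is a union of leaves and $L$ is irreducible.

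Next I pull back along the étale map. The preimage of $L$ is the union of a $\bar{\Gamma}_{\lambda}$-orbit of leaves $\tilde L$ of $\M(\Gl,\Nl)$, and similarly for $L'$, restricted to the open set over $\base{\Gl,\lambda}$. Since the map is étale, hence open, the preimage of $\overline{L'}$ is the closure of the preimage of $L'$. Therefore $L\subset\overline{L'}$ holds if and only if some preimage $\tilde L$ lies in the closure of some preimage $\tilde L'$, which is the ``at least one choice'' clause.

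The point needing care is that $\tilde L'$ must meet the open set over $\base{\Gl,\lambda}$. This follows once we know $U_{L'}\supset U_L$: the generic point $\lambda$ of $U_L$ then lies in $U_{L'}$, so the preimage of $L'$ there is nonempty. So I would prove the final sentence first, and it is immediate: if $L\subset\overline{L'}$, then $\ints(\overline{L})\subset\ints(\overline{L'})$, so $U_L\subset U_{L'}$. Conversely, if $U_L\not\subset U_{L'}$, both sides of the equivalence fail, since $\overline{L'}$ misses the generic fibre of $L$. In that case no preimage of $L'$ meets the preimage of $L$'s generic part, so the equivalence holds trivially there too.

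The main obstacle I expect is justifying that closure relations are detected on the open set over $\base{\Gl,\lambda}$. The rest is formal topology of étale maps together with the orbit description in \cref{lem:leaf-from-Levi}.
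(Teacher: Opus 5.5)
Your proof is correct and follows the paper's argument. The paper also notes that $L$ meets the open set where only the weights vanishing on $U_L$ remain (your open set over $\base{\Gl,\lambda}$), so closure relations can be checked there; you have filled in the étale-openness and irreducibility steps it leaves implicit, and the one-line deduction of $U_L\subset U_{L'}$. One harmless overstatement: the preimage of $L'$ need not be a single $\bar{\Gamma}_{\lambda}$-orbit of leaves, since several lifts of $U_{L'}$ to $\LT$ containing $U_{\lambda}$ need not be $\Gamma_{\lambda}$-conjugate; your argument only uses that this preimage is a finite union of irreducible leaves, so nothing breaks.
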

\begin{proof}
    Since the leaf $L$ has non-trivial intersection with the open set $V$ obtained by removing all $D_{\ell_i}$ for lines of weights that do not vanish on $U$, the closure $\overline{L}$ also has non-trivial intersection with this open set, so we may check closure relations after intersecting with $V$.  
\end{proof}

Now, we assume that we are symmetrizer-free.  In order to understand the leaves of $\M(\Gl,\Nl)$, we can apply \cref{cor:unfaithful} to the action of $\Gl$ on $\Nl$. Just as in \cref{sec:unfaithful}, we can choose a central subgroup $Z\subset \Gl$ such that the quotient $\bGl=\Gl/Z$ is a product $\Gl^1\times \bGl^0$ and find that $\M(\bGl,\Nl)\cong \M(\Gl^1,\Nl)\times \M(\bGl^0,0)$ carries a free action of $Z^{\vee}$ such that $\M(\Gl,\Nl)$ is the quotient by this action.  This behavior is particularly simple when $(\Gl^1,\Nl)$ is good in the usual sense of the literature:
\notation{$\bGl$}{Quotient $\bGl= \Gl^1\times \bGl^0$ of $\Gl$ splitting off the almost-faithful factor}\begin{Definition} Recall that $\M(G,\bfN)$ is called {\bf good} if the space of functions of degree $\leq 1$ on $\M(G,\bfN)$ is precisely the scalars.  We call $U$ {\bf good} if the Coulomb branch $\M(\Gl^1,\Nl)$ is good in the usual sense.  In particular, this means $\M(\Gl^1,\Nl)$ has a unique zero-dimensional leaf.
\end{Definition}

Pulling this all together, we can combine these two quotient operations using the fact that $\bar{\Gamma}_{\lambda}$ naturally acts on $Z$ and thus $Z^{\vee}$, and so we can form the semidirect product $Z^{\vee}\rtimes \bar{\Gamma}_{\lambda}$.  This is a finite group that acts on $\M(\bGl,\Nl)$ preserving the product decomposition and thus also on  $\M(\Gl^1,\Nl)$.
\begin{Lemma}\label{leaf-bijection} Assume that we are symmetrizer-free.
    The leaves $L$ such that $U_L = U$ are in canonical bijection with $Z^{\vee}\rtimes \bar{\Gamma}_{\lambda}$-orbits on the zero-dimensional leaves of $\M(\Gl^1,\Nl)$.  The closure of another leaf $L'$ contains $L$ if and only if the same holds for at least one choice of preimages in $\M(\Gl^1,\Nl)$.

    In particular, if $U$ is good, then there is a unique leaf $L$ with $U_L = U$.  
    If $U$ and $U'$ are both good, then the converse to \cref{lem:closure-relation} also holds, so if $U\subset U'$, then $L\subset \overline{L'}$, where $L$ and $L'$ are the unique leaves with images $U_L=U$ and $U_{L'}=U'$.
\end{Lemma}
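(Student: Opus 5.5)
We will build the bijection by chaining the reductions already proved. By \cref{lem:leaf-from-Levi}, leaves $L$ of $\M(G,\bfN)$ with $U_L=U$ correspond to $\bar{\Gamma}_{\lambda}$-orbits of leaves $L'$ of $\M(\Gl,\Nl)$ with $U_{L'}=U_{\lambda}/W_{\lambda}$. Next we use the decomposition $\M(\bGl,\Nl)\cong \M(\Gl^1,\Nl)\times \M(\bGl^0,0)$ and the free $Z^{\vee}$-action with quotient $\M(\Gl,\Nl)$ from \cref{lem:unfaithful}. Since $\M(\bGl^0,0)$ is smooth and symplectic, every leaf of the product has the form $L''\times \M(\bGl^0,0)$. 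Applying the argument of \cref{cor:unfaithful}, leaves of $\M(\Gl,\Nl)$ correspond to $Z^{\vee}$-orbits of leaves $L''$ of $\M(\Gl^1,\Nl)$.

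We must then identify which $L''$ give $U_{L'}=U_\lambda/W_\lambda$. The integrable-system base of $\M(\bGl^0,0)$ is $\LTl^0/W$ of the $\bGl^0$ factor, which maps isomorphically to the flat $U_\lambda$. The base of $\M(\Gl^1,\Nl)$ is the complementary directions, in which $\lambda$ sits at $0$ after translation. Hence the image of $\overline{L''\times\M(\bGl^0,0)}$ equals $U_\lambda/W_\lambda$ exactly when $\ints(\overline{L''})$ is a point. By \cref{lem:leaf-gives-flat}, applied to $\M(\Gl^1,\Nl)$, we get $\dim L''=2\dim \ints(\overline{L''})$, so this happens exactly when $L''$ is zero-dimensional.

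Combining the two steps, we would check that the $\bar{\Gamma}_\lambda$-action on $\M(\Gl,\Nl)$ lifts to $\M(\bGl,\Nl)$ compatibly with $Z^{\vee}$. Concretely, $\bar\Gamma_\lambda$ normalizes $W_\lambda$, acts on $\Gl$ by outer automorphisms, preserves $Z$, and thus acts on $Z^\vee$. The combined quotient is then the orbit set of $Z^{\vee}\rtimes\bar{\Gamma}_\lambda$ on the zero-dimensional leaves of $\M(\Gl^1,\Nl)$. We expect the main obstacle to be this lifting: a priori $\bar\Gamma_\lambda$ is only defined via $W$ acting on $\M(T,\bfN)$. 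We would produce the lift by acting on $\tM(\tT,T,\bfN)$ with the normalizer elements of $W$ and using the functoriality of \cref{lem:cover-ours} and \cite[\S 3(vii)]{BFN} under automorphisms preserving the product splitting, possibly after adjusting the splitting $\Gl\to\bGl^0$ to be $\bar\Gamma_\lambda$-stable by averaging, which is allowed since the characteristic is prime to $|W|$.

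For closure relations, we combine \cref{lem:closure-relation} with the fact that the quotient maps are finite. Closure relations among leaves of $\M(\Gl^1,\Nl)\times\M(\bGl^0,0)$ reduce to those in the first factor, and finite quotients map closures onto closures, which gives the "at least one preimage" formulation. For the good case, uniqueness follows because the unique zero-dimensional leaf forms a single orbit. For the converse closure statement when $U\subset U'$ are both good, we apply the slice at $\lambda\in U$. The leaf $L'$ corresponds to a leaf of $\M(\Gl^1,\Nl)$ whose image in $\LT/W$ is the image of $U'$, which is positive-dimensional if $U'\ne U$. Goodness means the conical $\Gm$-action contracts everything to the unique fixed point, so the closure of any conical leaf contains that point. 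This point is the preimage of $L$, giving $L\subset\overline{L'}$.
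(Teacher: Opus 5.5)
Your proposal is correct and follows the same route as the paper. It uses \cref{lem:leaf-from-Levi} to pass to $\M(\Gl,\Nl)$, then the free $Z^{\vee}$-quotient of \cref{lem:unfaithful} with its smooth factor $\M(\bGl^0,0)$, then combines the two finite actions into $Z^{\vee}\rtimes\bar{\Gamma}_{\lambda}$, and handles closures via \cref{lem:closure-relation}. Your use of \cref{lem:leaf-gives-flat} to show the relevant leaves are exactly the zero-dimensional ones, your $\bar{\Gamma}_{\lambda}$-stable choice of splitting, and your $\Gm$-contraction argument for the converse closure statement spell out steps the paper leaves implicit; for the last of these, the paper only remarks that there is no ambiguity in choosing preimages.
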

\begin{proof}
    The first claim follows from the discussion above, together with the fact that the action of $Z^{\vee}$ on $\M(\bGl^0,0)$ is free, so the zero-dimensional leaves of $\M(\Gl,\Nl)$ are in bijection with $Z^{\vee}$-orbits on the zero-dimensional leaves of $\M(\Gl^1,\Nl)$.  The second claim follows from \cref{lem:closure-relation} and the fact that if $U$ is good, then there is a unique zero-dimensional leaf in $\M(\Gl^1,\Nl)$, so there is no ambiguity in choosing preimages.
\end{proof}

\begin{Corollary}\label{cor:slice-to-leaf}
    Assume that we are symmetrizer-free.
    The slice to any leaf in $\M(G,\bfN)$ is always isomorphic to the slice to a zero-dimensional leaf in $\M(\Gl^1,\Nl)$.  In particular, if $U$ is good, then this slice is isomorphic to $\M(\Gl^1,\Nl)$.
\end{Corollary}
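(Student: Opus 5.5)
The plan is to localize near a point of the leaf, reduce to the residual theory $(\Gl,\Nl)$, and then strip off the smooth symplectic factor and the finite free quotients. Fix a leaf $L$ with $U_L=U$, the image of a flat $U_\lambda$ with generic point $\lambda$, and fix a point $x\in L$ with $\ints(x)$ generic in $U$. Since $x$ lies over a point of $\base{G,\lambda}$, \cref{lem:slice-ours} gives an étale neighborhood of $x$ in $\M(G,\bfN)$ isomorphic to an étale neighborhood of a lift $x'$ in $\M(\Gl,\Nl)$. The map $\base{\Gl,\lambda}\to\base{G}$ is étale, and the isomorphism is Poisson because it is birational and compatible with the Poisson structure on the common open subset $(\base{T}^\circ\times\Tvee)$. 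Transverse slices are étale-local invariants of a Poisson variety with finitely many leaves, so the slice to $L$ at $x$ equals the slice to the leaf $L'$ of $\M(\Gl,\Nl)$ through $x'$. By \cref{lem:leaf-from-Levi}, this leaf satisfies $U_{L'}=U_\lambda/W_\lambda$.

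Next I would pass to the free quotient. By the construction before \cref{leaf-bijection}, $\M(\Gl,\Nl)=\bigl(\M(\Gl^1,\Nl)\times\M(\bGl^0,0)\bigr)/Z^\vee$, and the $Z^\vee$-action is free by \cref{lem:unfaithful}. A free finite quotient map is étale and Poisson, so the slice at $x'$ equals the slice at a preimage $(y,z)$. Since $\M(\bGl^0,0)$ is smooth and symplectic, the leaf through $(y,z)$ is $L''\times\M(\bGl^0,0)$, and the slice is the slice to $L''$ in $\M(\Gl^1,\Nl)$. It remains to show that $L''$ is zero-dimensional. For this I would count dimensions. By \cref{lem:leaf-gives-flat}, $\dim L=2k$, where $k=\dim U$ equals the rank of $\Gl^0$, which is also the rank of $\bGl^0$. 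Hence $\dim L''=2k-\dim\M(\bGl^0,0)=2k-2k=0$. This is the same matching that appears in \cref{leaf-bijection}.

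For the good case, $\M(\Gl^1,\Nl)$ carries a contracting $\Gm$-action with a unique fixed point, and that point is the unique zero-dimensional leaf. The standard argument then shows that the slice to a conical symplectic singularity at its cone point is the whole variety. Concretely, a transverse slice at the vertex is an étale neighborhood of the vertex, and the contracting action globalizes it. So the slice is $\M(\Gl^1,\Nl)$.

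I expect the main obstacle to be making the local comparison rigorous at the level of Poisson structures and slices. \cref{lem:slice-ours} is stated as an isomorphism of schemes after base change, so two points need care. First, the isomorphism must respect the brackets. I would argue that it does because both sides are normal and agree with $(\base{T}\times\Tvee)/W_\lambda$ on a dense open set. Second, the notion of "slice" must be taken étale-locally or formally. I would phrase everything in terms of the formal completion at the point, where the Kaledin product decomposition of a symplectic singularity makes slices canonical and invariant under étale Poisson maps.
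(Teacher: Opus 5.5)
Your proposal is correct and follows essentially the same route as the paper, which obtains this corollary from \cref{leaf-bijection} and the discussion just before it: localize via \cref{lem:slice-ours} and \cref{lem:leaf-from-Levi}, pass through the free $Z^{\vee}$-quotient of $\M(\Gl^1,\Nl)\times\M(\bGl^0,0)$, and strip off the smooth symplectic factor, with the good case reducing to the cone point. Your dimension count via \cref{lem:leaf-gives-flat} (showing the residual leaf is zero-dimensional) and your remarks on Poisson compatibility and the \'etale-local invariance of slices spell out steps that the paper leaves implicit.
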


This gives a partial positive answer to \cite[Question 2.1(2)]{nakajimaQuestionsProvisional2015} concerning transverse slices.

Since the statements above are much simpler when $U$ is good, the next natural question is when $\M(G,\bfN)$ has zero-dimensional leaves, and in particular whether such leaves can exist when $\M(G,\bfN)$ is not good.  Following the standard terminology in the physics literature \cite{gaiottoSDualityBoundary2009}, we refine the notion of goodness to a trichotomy of theories into good, ugly, and bad:
\begin{enumerate}
    \item We call $(G,\bfN)$ ugly if the natural $\Gm$-action on $\M(G,\bfN)$ contracts to a fixed point, but this point is not a leaf;  an ugly theory must have a unique lowest-dimensional leaf, which is isomorphic to a symplectic vector space (in fact, the dual of the space of degree 1 functions on $\M(G,\bfN)$).
    \item We call $(G,\bfN)$ bad if the natural $\Gm$-action on $\M(G,\bfN)$ does not contract to a fixed point.
\end{enumerate}
Other related notions are:  we call $(G,\bfN)$ or the corresponding Coulomb branch {\bf conical} if it is good or ugly and {\bf weakly conical} if every point has a limit under the contracting $\Gm$-action, but this fixed point is not necessarily unique.  Recall that:
\begin{enumerate}
    \item The theory $(G,\bfN)$ is good if and only if the Hilbert series for the $\Gm$-action on the Coulomb branch is well-defined and of the form $1+O(t^2)$, which is equivalent to the condition that the monopole operator of each non-trivial coweight has degree at least 2.
    \item The theory $(G,\bfN)$ is conical if and only if the Hilbert series for the $\Gm$-action on the Coulomb branch is well-defined and of the form $1+O(t)$, which is equivalent to the condition that the monopole operator of each non-trivial coweight has degree at least 1.
    \item The theory $(G,\bfN)$ is weakly conical if and only if every monopole operator has degree at least 0.  It does not necessarily have a well-defined Hilbert series, since there could be infinitely many degree 0 monopole operators.
\end{enumerate}

The two classifications above are not independent: since good and ugly theories are both conical by definition, and conical theories are automatically weakly conical, the only freedom is in how a bad theory can fail to be conical.  This nesting is summarized in the following diagram.
\begin{center}
\begin{tikzpicture}
\draw[thick, pattern={Dots[radius=1pt,distance=16pt,yshift=4pt]}, pattern color=black!45] (-1.3,0) ellipse (3.3 and 2.4);
\draw[thick, pattern={Lines[angle=45,distance=10pt,line width=0.5pt]}, pattern color=black!55] (2.3,0) ellipse (2.6 and 1.8);
\draw[thick, fill=black!15, fill opacity=0.75] (-2.3,0) circle (1.35);
\draw (-2.3,-1.35) -- (-2.3,1.35);
\node at (-1.3,2.75) {\textbf{weakly conical}};
\node at (2.3,2.15) {\textbf{bad}};
\node at (-2.3,1.7) {\textbf{conical}};
\node at (-2.9,0) {good};
\node at (-1.7,0) {ugly};
\end{tikzpicture}
\end{center}

An ugly theory manifestly has no zero-dimensional leaves.  For bad theories, however, both behaviors occur: some have zero-dimensional leaves, and some do not.
\begin{Example}\label{ex:bad}
Assume that $\bfN=\mathfrak{g}$.  In this case, the classical description of the Coulomb branch is correct and we have an isomorphism $\M(G,\fg)\cong T^*\check{T}/W$.  This is a bad but weakly conical theory, since the monopole operator of each non-trivial coweight has degree $0$.

In particular, if $G$ is simple, each fixed point of $W$ in $\check{T}$ is a zero-dimensional leaf. Thus, by taking $G=PGL_n$, we can arrange to have any number of zero-dimensional leaves in a bad Coulomb branch.
\end{Example}
More complicated examples of bad theories with multiple zero-dimensional leaves are given in \cite{bourgetTaleCones2023}.

\subsection{Comparison with the Higgs branch}
In this subsection, we assume that we are symmetrizer-free.
As mentioned in the introduction, it is expected that the symplectic leaves of $\M(G,\bfN)$ will correspond to the symplectic leaves of $\M_{\Higgs}(G,\bfN)$ with closure order reversed.  It is also well-known that this correspondence is not perfect, and examples such as affine type A quiver varieties show that the general result must be something other than a bijection between leaves.  

Our main result \cref{leaf-bijection} suggests a first approximation to such a correspondence: it associates to each leaf of $\M(G,\bfN)$ a flat, together with the extra data coming from the action of $Z^{\vee}\rtimes \bar{\Gamma}_{\lambda}$ on the zero-dimensional leaves of $\M(\Gl^1,\Nl)$.  On the other hand, the leaves of $\M_{\Higgs}(G,\bfN)$ are classified by conjugacy classes of stabilizer subgroups.    

Given a point $[x]\in \M_{\Higgs}(G,\bfN)$, choose a representative $x\in T^*\bfN$ with closed $G$-orbit, and let $G_x$ be its stabilizer.  We can assume that we have chosen a maximal torus $T$ of $G$ such that $G_x\cap T=T_x$ is a maximal torus of $G_x$.  In this case, $T_x$ is the common kernel of the weights on which $x$ has non-trivial component, so its Lie algebra is a flat.  For example, when $G=T$ is a torus, this recovers the usual toric-arrangement picture: $\LT_x$ is cut out by the weights that appear in $x$.
\notation{$G_x$}{Stabilizer in $G$ of a point $x\in T^*\bfN$}

Let $\Leaf{\M_{\Coulomb}}$ and $\Leaf{\M_{\Higgs}}$ be the sets of leaves of $\M(G,\bfN)$ and $\M_{\Higgs}(G,\bfN)$, respectively.
\notation{\vtop{\hbox{$\Leaf{\M_{\Coulomb}}$,}\hbox{$\Leaf{\M_{\Higgs}}$}}}{Sets of symplectic leaves on the Coulomb and Higgs branches}

\begin{Definition}
    Define $\CH\subset \Leaf{\M_{\Coulomb}}\times \Leaf{\M_{\Higgs}}$ by declaring that $(L,L')\in \CH$ if, for a generic point $[x]\in L'$, the set $U_L$ is the image of $\LT_x$ in $\LT/W$.  This is independent of the choice of point, since the stabilizer of a generic point is constant along a leaf up to conjugacy.
\end{Definition}
\notation{$\CH$}{Relation between Coulomb and Higgs leaves defined by matching flat images}This correspondence need not be the graph of a bijection.  There are several issues that can arise:
\begin{enumerate}
    \item There may be distinct leaves $L,L'$ of $\M(G,\bfN)$ with $U_L=U_{L'}$, so all of these leaves will be related to the same leaves of $\M_{\Higgs}(G,\bfN)$.
    \item A given flat may fail to occur as $\LT_x$ for any leaf of $\M_{\Higgs}(G,\bfN)$, so a leaf of $\M(G,\bfN)$ may not be related to any leaf of $\M_{\Higgs}(G,\bfN)$.
    \item There may be multiple leaves of $\M_{\Higgs}(G,\bfN)$ whose corresponding stabilizer has the same maximal torus, so all of these leaves will be related to the same leaves of $\M(G,\bfN)$.
\end{enumerate}
Issue (1) is already visible in the case discussed in \cref{ex:bad}.  Issues (2) and (3) occur for $G=GL_n,\bfN=\C^n\oplus \mathfrak{gl}_n$, discussed in more detail in \cref{ex:Hilbert,ex:Hilbert-ellgt1}. 
These examples suggest that the correct comparison should single out a distinguished subset of the leaves, called ``special,'' rather than all leaves at once.  A definition of special leaves is given in \cite[\S 6.3]{BLPWgco}, but this discussion suggests a possible alternate definition.
\begin{Conjecture}\label{conj:special}
    The special leaves of $\M(G,\bfN)$ are precisely those whose image under $\ints$ is a good flat.  The relation $\CH$ gives a bijection between the special leaves of $\M(G,\bfN)$ and the special leaves of $\M_{\Higgs}(G,\bfN)$.
\end{Conjecture}
In many examples, this relation gives a bijection between the leaves of $\M(G,\bfN)$ and the leaves of $\M_{\Higgs}(G,\bfN)$, but in others it does not.  

\begin{Example}\label{ex:abelian}
    If $G$ is a torus with unimodular action, then the theory $(\Gl^1,\Nl)$ is good if and only if the corresponding flat is coloop-free and ugly otherwise, so there is a unique leaf of $\M(G,\bfN)$ corresponding to each coloop-free flat.  The leaves of $\M_{\Higgs}(G,\bfN)$ are also classified by these flats by \cite[\S 2]{proudfootIntersectionCohomology2007}.  Thus, in this case, $\CH$ gives the well-known bijection between the leaves of $\M(G,\bfN)$ and $\M_{\Higgs}(G,\bfN)$.  In particular, every leaf is special, so \cref{conj:special} holds by \cite[Th. 10.8]{BLPWgco}.
\end{Example}
\begin{Example}\label{ex:adjoint}
    If $G=GL_n$ and $\bfN=\mathfrak{gl}_n$, then $\M(G,\bfN)$ is the $n$th symmetric power of $\C\times \C^*$, with the integrable system given by the projection to $\C$.  The flats are given by loci in $\C^n/S_n$ obtained by choosing a set partition corresponding to a partition $\lambda$ of $n$ and requiring that the coordinates in each part be equal.  In this case, the Levi subgroup $\Gl$ corresponds to the partition $\lambda$, and $\Nl$ is the sum of the adjoint representation of each factor.  In particular, the identity component $\Gl^0$ of the kernel of the action of $\Gl$ on $\Nl$ is the center of this Levi, which is a torus of rank equal to the number of parts of $\lambda$, and $\Gl^1=\prod PGL_{n_i}$, where $n_i$ are the sizes of the parts of $\lambda$.  
    
    Thus, the Coulomb branch $\M(\Gl^1,\Nl)$ is a product \[\prod \M(PGL_{n_i},\mathfrak{pgl}_{n_i})=\prod T^*(\C^{\times})^{n_i}_0/S_{n_i}\] where $(\C^{\times})^{n_i}_0$ denotes the determinant-1 diagonal $n_i\times n_i$ matrices.  The zero-dimensional leaves of this Coulomb branch are the $(\prod_i S_{n_i})$-fixed points, of which there are $\prod n_i$, obtained by choosing an order-$n_i$ scalar matrix for each factor.  The group $Z^{\vee}$ acts simply transitively on these zero-dimensional leaves, namely on the same set of block scalar matrices, so there is a unique leaf of $\M(G,\bfN)$ corresponding to the flat associated to $\lambda$.
\end{Example}
\begin{Example}\label{ex:Hilbert}
    If $G=GL_n$ and $\bfN=\C^n\oplus \mathfrak{gl}_n$, then the Coulomb branch $\M(G,\bfN)$ and the Higgs branch $\M_{\Higgs}(G,\bfN)$ are the $n$th symmetric power of $\mathbb{A}^2$, with the integrable system given by taking the product of the two coordinates in each factor of $\mathbb{A}^2$.

    The flats include all the ones discussed in \cref{ex:adjoint}, but now there are also weight hyperplanes corresponding to one coordinate being zero.  Equivalently, to specify a flat one chooses an integer $n_0\leq n$, recording how many coordinates are forced to be zero, together with a partition $\lambda$ of $n-n_0$, recording how the remaining non-zero coordinates are grouped into equal blocks.  We then have $\Gl=GL_{n_0}\times GL_{n_1}\times \cdots \times GL_{n_k}$ where $\lambda=(n_1, \dots, n_k)$, and $\Gl^1=GL_{n_0}\times PGL_{n_1}\times \cdots \times PGL_{n_k}$, acting on $\C^{n_0}$ plus the adjoint representations of $\mathfrak{gl}_{n_i}$.

    If $n_0>0$, then $(GL_{n_0}, \C^{n_0}\oplus \mathfrak{gl}_{n_0})$ is ugly; indeed, $\Sym^{n_0}\mathbb{A}^2$ has a positive-dimensional minimal stratum isomorphic to $\mathbb{A}^2$.  Hence, this theory has no zero-dimensional leaves, so we only get Coulomb leaves in the case $n_0=0$, where we recover exactly the same description of the leaves as in \cref{ex:adjoint}.

    On the Higgs side, the stabilizers of points are given by Levi subgroups corresponding to set partitions of $n$, so the relevant Higgs leaves are also indexed by partitions of $n$.  The groups $G_x$ are thus all Levi subgroups, and each contains a maximal torus, so they all correspond to $\LT$ as a flat.  Thus, every Higgs leaf is related to the open leaf in $\M(G,\bfN)$, and no other leaves of $\M(G,\bfN)$ are related to any leaves of $\M_{\Higgs}(G,\bfN)$.

    This meshes with the well-known fact that the closure order on the leaves of the symmetric power of $\mathbb{A}^2$ is given by the refinement order on partitions, so there is no order-reversing bijection between the leaves of $\M(G,\bfN)$ and $\M_{\Higgs}(G,\bfN)$.
\end{Example}

\begin{Example}\label{ex:Hilbert-ellgt1}
    We can generalize further by considering $G=GL_n$ and $\bfN=(\C^n)^{\oplus \ell} \oplus \mathfrak{gl}_n$ for $\ell>1$.  In this case: 
    \begin{itemize}
        \item The Coulomb branch $\M(G,\bfN)$ is the $n$th symmetric power of $\mathbb{A}^2/\Z_{\ell}$, with the integrable system given by taking the product of the two coordinates in $\mathbb{A}^2$.
        \item The Higgs branch $\M_{\Higgs}(G,\bfN)$ is the Gieseker quiver variety, related to the moduli of rank $\ell$ torsion-free sheaves on $\mathbb{P}^2$ framed at $\infty$ with second Chern class $n$. 
    \end{itemize}
    
    Since $\ell>1$, we have that $(GL_{n_0}, (\C^{n_0})^{\oplus \ell}\oplus \mathfrak{gl}_{n_0})$ is good for all $n_0$.  So, compared with \cref{ex:Hilbert}, the combinatorics of the flats is the same, but the residual theory at the origin is now good rather than ugly.  Combined with the structure of the $PGL_{n_i}$ factors, this shows that there is a unique leaf of $\M(G,\bfN)$ corresponding to each flat, that is, to each choice of $n_0$ and $\lambda$.  In terms of the symmetric power of $\mathbb{A}^2/\Z_{\ell}$, the flat corresponding to $n_0$ and $\lambda$ is the image of the locus where $n_0$ of the points are at the origin of $\mathbb{A}^2/\Z_{\ell}$, and the remaining points are grouped according to $\lambda$.

    On the Higgs side, the stabilizers of a generic point of a leaf of $\M_{\Higgs}(G,\bfN)$ are of the form $\C^*\times GL_{n_1}\times \cdots \times GL_{n_k}$, where the $\C^*$ factor corresponds to the scalar matrices in $GL_{n_0}$ and the point in the Higgs branch defines a simple non-trivial representation of the Gieseker variety of rank $n_0$ and second Chern class $\ell$.  So, the only Coulomb branch leaves that can be related to Higgs leaves are those corresponding to some $n_0$ and $\lambda=(1,\dots,1)$, that is, the ones where the non-zero coordinates are all distinct.  As in previous examples, many Coulomb leaves have no Higgs partner under $\CH$.
\end{Example}

We can refine this correspondence by asking how the geometry of slices and leaves compares on the two sides.  Fix a flat $U$ with generic point $\lambda$, and suppose that $U$ corresponds to some leaves in $\M_{\Higgs}(G,\bfN)$.  Choose a leaf $L$ maximal among these, and let $x$ be a generic point of a component of $\mu^{-1}(0)\cap T^*\Nl$ representing $L$.  Then the stabilizer $G_x$ has maximal torus $T_x$ with Lie algebra $U$.   

The space $T^*\Nl$ carries a natural action of $N_G(T_x)$, but this action is not faithful, since $T_x$ acts trivially.  Let $K$ be its kernel, so $K$ is a normal subgroup of $N_G(T_x)$ containing $T_x$.  Furthermore, the Lie algebra of $N_G(T_x)$ is precisely $\LG^{T_x}$, and the restriction of the moment map for the action of $G$ on $T^*\bfN$ to $T^*\Nl$ naturally lands in $(\LG^*)^{T_x}$.  This gives the moment map for the action of $N_G(T_x)$ on $T^*\Nl$.  

We can thus consider the Higgs branch $\M_{\Higgs}(N_G(T_x)/K,\Nl)$, which is naturally equipped with a Poisson map $\M_{\Higgs}(N_G(T_x)/K,\Nl)\to \M_{\Higgs}(G,\bfN)$. 
\begin{Lemma}
        The leaf $L$ in $\M_{\Higgs}(G,\bfN)$ is isomorphic to an open leaf in the Higgs branch $\M_{\Higgs}(N_G(T_x)/K,\Nl)$.
\end{Lemma}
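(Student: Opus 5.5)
We first set up the map that should realize the isomorphism. Write $\mu$ for the moment map of $G$ on $T^*\bfN$, and let $G\cdot x$ be the closed orbit of our chosen generic point $x$. Since $x$ lies in $T^*\Nl$ and its component is $T_x$-fixed, we will show that $x\in\mu^{-1}(0)\cap T^*\Nl$ is fixed by $T_x$. Our aim is to show that the map $\M_{\Higgs}(N_G(T_x)/K,\Nl)\to\M_{\Higgs}(G,\bfN)$, restricted to a suitable open leaf, is a Poisson isomorphism onto $L$.

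The tool is the Luna slice theorem together with the standard description of Higgs leaves as stabilizer-type strata. Consider the locus $(T^*\bfN)^{T_x}=T^*\Nl$. Every point of $L$ is represented, up to $G$-conjugacy, by a closed-orbit point whose stabilizer is conjugate to $G_x$. Such a point has stabilizer containing a conjugate of $T_x$, so we may conjugate it into $T^*\Nl$. Two points of $T^*\Nl$ with closed orbits that are $G$-conjugate are already conjugate by $N_G(T_x)$. This follows from the usual argument for fixed points of a torus: if $gy=y'$, then $T_x$ and $gT_xg^{-1}$ are both maximal tori of $G_{y'}^\circ$, so after modifying $g$ by an element of $G_{y'}$ we get $g\in N_G(T_x)$.

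Next we check that the moment maps match. Our claim is that for $y\in T^*\Nl$, the condition $\mu(y)=0$ is equivalent to the vanishing of the $N_G(T_x)$-moment map. This holds because $\mu(y)$ is $T_x$-invariant and so lies in $(\LG^*)^{T_x}=\mathrm{Lie}(N_G(T_x))^*$. Combining this with the previous step shows that the natural map is injective on closed-orbit points whose stabilizer type is that of $x$. Its image is exactly the stratum $L$.

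Finally we identify dimensions and the open leaf. In $\M_{\Higgs}(N_G(T_x)/K,\Nl)$, the stratum of points with stabilizer $G_x\cap N_G(T_x)$ modulo $K$ is the principal stratum. By maximality of $L$ among the leaves attached to $U$, the generic stabilizer there is as small as possible, namely $K$ up to finite index, so this stratum is open, hence an open leaf.

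To see that the bijection is an isomorphism of varieties, we use Luna's étale slice at $x$ on both sides. The symplectic slice $(T_x(G\cdot x))^{\perp\omega}/T_x(G\cdot x)$ splits as a $G_x$-representation into its $G_x$-fixed part and its complement. The fixed part is the tangent space to the leaf, and it lies inside $T^*\Nl$ because it is $T_x$-fixed. We expect the main obstacle to be comparing the fixed parts: one must verify that the $G_x$-fixed part of the $G$-slice coincides with the $(G_x\cap N_G(T_x))/K$-fixed part of the $N_G(T_x)$-slice. Our first attempt is to decompose $\LG=\LG^{T_x}\oplus(\LG^{T_x})^\perp$ and note that the tangent directions $(\LG^{T_x})^\perp\cdot x$ pair trivially with $T_x$-fixed vectors. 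Given this comparison, the map is étale and bijective on the leaf, hence an isomorphism; it is Poisson because it is induced by a Hamiltonian reduction in stages.
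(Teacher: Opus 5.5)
Your first three paragraphs are the paper's argument. Points of $L$ can be represented in $T^*\Nl=(T^*\bfN)^{T_x}$, and $G$-conjugacy on the preimage $V$ of $L$ reduces to $N_G(T_x)$-conjugacy by moving $gT_xg^{-1}$ back to $T_x$ inside $G_{y'}$. Since $K$ acts trivially, orbits for $N_G(T_x)$ and $N_G(T_x)/K$ agree. Your claim for arbitrary closed-orbit points of $T^*\Nl$ is too strong, because $T_x$ need not be a maximal torus of a larger stabilizer, but you only use it on $V$.

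The gap is in the fourth paragraph. You identify the image of $L$ with the stratum where the $N_G(T_x)/K$-stabilizer is $(G_x\cap N_G(T_x))/K$, and deduce openness from this group being finite. That finiteness is automatic, since $T_x\subseteq K\subseteq N_{G_x}(T_x)$ and $N_{G_x}(T_x)^\circ=T_x$, so maximality has not actually been used. The identification is also false: the image of $L$ consists of classes whose $G$-stabilizer is conjugate to $G_x$, which is a stronger condition. For example, take $G=SL(2)$, $\bfN=\Sym^4\C^2$, $U=\LT$. Then $T^*\Nl$ is the plane spanned by the zero-weight vector $e_1^2e_2^2$ and its dual, $K=N_G(T)$, and $\M_{\Higgs}(N_G(T_x)/K,\Nl)=T^*\C$ is a single leaf with trivial stabilizers. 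But the maximal leaf $L$ (stabilizer $N_G(T)$) is the image of $T^*\C\setminus\{0\}$, because the origin has stabilizer $SL(2)$. So an isomorphism from an entire open leaf onto $L$, which is what you set out to prove, does not exist in general. The paper's closing sentence, ``its image is an open leaf there'', overstates things in the same way. What is true is that $V/N_G(T_x)$ is an open subset of $\M_{\Higgs}(N_G(T_x)/K,\Nl)$, lying in an open leaf, and it maps isomorphically onto $L$.

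The missing idea is to use maximality on the $G$-side. By Luna's slice theorem at $y\in V$, a closed-orbit point $z\in\mu^{-1}(0)\cap T^*\Nl$ near $y$ has $G_z$ conjugate into $G_y$, and $G_z$ contains $T_x$. Hence $T_x$ is a maximal torus of $G_z$, and the leaf through $z$ corresponds to $U$. By the conical local model, that leaf contains $L$ in its closure, so maximality forces $z\in V$. This gives openness, and it is also what resolves the slice comparison you flag as the main obstacle. Your orthogonality computation only shows that the $N_G(T_x)$-slice $S_N$ at $x$ is the $T_x$-fixed part of the $G$-slice $S_G$. It cannot give $S_G^{G_x}=S_N^{N_{G_x}(T_x)}$, and this equality genuinely fails without maximality. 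At the origin in the example above, $S_G=T^*\Sym^4\C^2$, and $e_1^2e_2^2$ is fixed by $N_G(T)$ but not by $SL(2)$. With maximality, every point of $S_N$ near $0$ lies in $V$, so $S_N=S_G^{G_x}$, on which $N_{G_x}(T_x)$ acts trivially. Then $V/N_G(T_x)\to L$ is \'etale and bijective, hence an isomorphism.
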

\begin{proof}
 We can also write $\Nl=\bfN^{T_x}$.  Since $T_x\subset G_x$, every point of $L$ has a representative in $T^*\Nl$; indeed, $T^*\Nl\supset T^*\bfN^{G_x}$.  Let $V$ be the preimage of $L$ in $T^*\Nl$.      

If two points $x,x'\in V$ are conjugate by an element $g\in G$, then $gT_xg^{-1}$ is a maximal torus of $G_{x'}$, so there is an element $h\in G_{x'}$ such that $hgT_xg^{-1}h^{-1}=T_{x'}$.  Thus $hg\in N_G(T_x)$ and $hg\cdot x=x'$.  Conversely, if $x$ and $x'$ lie in the same orbit of $N_G(T_x)$, then they certainly lie in the same orbit of $G$.
Since $K$ acts trivially on $T^*\Nl$, the orbits of $N_G(T_x)$ and $N_G(T_x)/K$ on $T^*\Nl$ are the same.  Hence $L$ embeds in $\M_{\Higgs}(N_G(T_x)/K,\Nl)$, and by the maximality of $L$, its image is an open leaf there.  
\end{proof}

Note that we have a natural map $\Gl\to N_G(T_x)$, and that the preimage of $K$ under this map is the kernel of the action of $\Gl$ on $\Nl$.  Its identity component is therefore $\Gl^0$, so we obtain an induced map $\Gl^1\to N_G(T_x)/K$.  We have already established that this map is an isomorphism on Lie algebras, so it induces an isomorphism on identity components.  In particular, we have a map $\M_{\Higgs}(\Gl^1,\Nl)\to \M_{\Higgs}(N_G(T_x)/K,\Nl)$, which is a finite cover on each open leaf of $\M_{\Higgs}(N_G(T_x)/K,\Nl)$.  Thus:
\begin{Corollary}
    Each leaf $L$ which is maximal among those corresponding to a flat $U$ has a finite cover by an open leaf in $\M_{\Higgs}(\Gl^1,\Nl)$.
\end{Corollary}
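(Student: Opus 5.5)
The plan is to reduce the statement to the local picture above the flat: the maximal leaf $L$ for the flat $U$ is identified with an open leaf of $\M_{\Higgs}(N_G(T_x)/K,\Nl)$ (previous lemma). It then suffices to show that some open leaf $L''$ of $\M_{\Higgs}(\Gl^1,\Nl)$ maps onto that open leaf by a finite, surjective, leaf-preserving map.

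First I will check that the map $\Gl^1\to N_G(T_x)/K$ is well defined and an isogeny onto the identity component. Here $\Gl=C_G(\lambda)$ and $\lambda$ is generic in $U=\LT_x$, so $\Gl=C_G(T_x)$. The centralizer of a torus is connected and is the identity component of its normalizer, so $\Gl=N_G(T_x)^\circ$. The preimage of $K$ in $\Gl$ is exactly the kernel of the $\Gl$-action on $\Nl$, so we obtain an injection $\Gl/\ker\hookrightarrow N_G(T_x)/K$ with image of finite index. Since $\Gl^1=\Gl/\Gl^0$ surjects onto $\Gl/\ker$ with finite kernel $\ker/\Gl^0$, the composite $\Gl^1\to N_G(T_x)/K$ has finite kernel and finite-index image. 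Both groups act on $T^*\Nl$ with compatible moment maps, since the Lie algebras agree. Hence there is a natural finite morphism
\[\M_{\Higgs}(\Gl^1,\Nl)=\mu^{-1}(0)/\!\!/\Gl^1\to \mu^{-1}(0)/\!\!/(N_G(T_x)/K),\]
namely the quotient by the finite group $\pi_0(N_G(T_x)/K)$ acting on the identity-component quotient. This morphism is Poisson.

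Next I will show that this finite quotient map sends an open leaf onto $L$ as a covering. Take $L''$ to be a component of the preimage of $L$. It is open because $L$ is open and the map is finite and dominant on the relevant component. A finite group quotient of a Poisson variety maps leaves onto leaves, and the restriction to the open locus where the stabilizer in the finite group is constant is a finite étale cover. Stratifying by stabilizer type therefore gives a finite cover of a dense open part of $L$ by an open subset of $L''$.

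The main obstacle is to show that the whole leaf $L''$, and not just a dense open part of it, maps onto $L$ as a finite cover. The danger is that the finite group could have nontrivial fixed points inside $L''$ and create branching. To handle this I would use homogeneity: symplectic leaves are orbits of Hamiltonian flows, and the map is Poisson and equivariant for these flows, so $L''\to L$ is a surjective local diffeomorphism. Its degree is constant, and it is finite because the ambient map is finite. If this argument runs into trouble, I would instead argue via stabilizers. Since $x$ is generic in a component of $\mu^{-1}(0)\cap T^*\Nl$, its stabilizer in $N_G(T_x)/K$ is finite modulo the image of $\Gl^1$. Along $L$ the stabilizer type is constant, so the finite group acts freely on $L''$ modulo a constant subgroup.
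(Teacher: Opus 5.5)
Your reduction is the same as the paper's. You identify $L$ with an open leaf of $\M_{\Higgs}(N_G(T_x)/K,\Nl)$ using the preceding lemma. You note that $\Gl=C_G(T_x)=N_G(T_x)^\circ$, and that $\Gl^1\to N_G(T_x)/K$ has finite kernel $(\Gl\cap K)/\Gl^0$, acting trivially on $\Nl$, with image the identity component. You conclude that $q\colon \M_{\Higgs}(\Gl^1,\Nl)\to \M_{\Higgs}(N_G(T_x)/K,\Nl)$ is the quotient by the finite group $\pi_0(N_G(T_x)/K)$. This is correct, and slightly more careful than the paper's phrase ``isomorphism on identity components''.

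The gap is in your last two paragraphs. You try to show that a whole leaf $L''$ of $\M_{\Higgs}(\Gl^1,\Nl)$ maps onto $L$ as a finite covering. You support this with ``a finite group quotient maps leaves onto leaves'' and a homogeneity argument. Both are false, and so is the claim itself.

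The paper's own example $G=GL_n$, $\bfN=\C^n\oplus\mathfrak{gl}_n$, $U=\LT$, with $n\geq 2$, shows this. There $\Gl=T$, $\Nl=\mathfrak t$, $\Gl^1=1$ and $N_G(T)/K=S_n$. So $\M_{\Higgs}(\Gl^1,\Nl)=T^*\mathfrak t\cong\mathbb{A}^{2n}$ is a single leaf, and $q$ is $\mathbb{A}^{2n}\to\Sym^n\mathbb{A}^2$. This map hits every stratum and is branched along the diagonals. Only $q^{-1}(L)=\operatorname{Conf}_n(\C^2)$ covers $L$. In fact no finite covering $\mathbb{A}^{2n}\to L$ exists, since the universal cover of $L$ is $\operatorname{Conf}_n(\C^2)$.

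Two things go wrong in your arguments. Hamiltonian flows of functions not pulled back from the target do not descend. And the stabilizer in the finite group is constant on $q^{-1}(L)$, not on the leaf containing it.

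What you should prove instead is that $q^{-1}(L)\to L$ is a finite cover. This is what the paper's proof actually asserts (``a finite cover on each open leaf of $\M_{\Higgs}(N_G(T_x)/K,\Nl)$''). The map is finite by base change. It is étale by the following argument:
\begin{itemize}
\item At $y\in q^{-1}(L)$, the vectors $dq_y(X_{q^*f})=X_f(q(y))$ span $T_{q(y)}L$.
\item This space has full dimension, so the leaf through $y$ is open.
\item Hence $y$ is a smooth point and $dq_y$ is an isomorphism.
\end{itemize}
Equivalently, a nontrivial stabilizer acting symplectically contains no pseudo-reflections, so it would make $q(y)$ singular. Each component of $q^{-1}(L)$ is then an open subset of an open leaf. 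That is the sense in which the corollary holds.
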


This is an approximate answer to the slice portion of \cite[Question 2.6]{nakajimaQuestionsProvisional2015}.  Up to taking connected components and dealing with bad cases, the slice to a leaf $L$ in $\M(G,\bfN)$ is the Coulomb branch of a theory whose Higgs branch is approximately the corresponding dual leaf.   

We also expect a dual version of this statement, describing a leaf of the Coulomb branch in terms of the Coulomb branch of a modified theory, and similarly for Higgs branches.  On the Higgs side, this process is well-understood: acting on $x$ gives a map of quotient adjoint representations $\mathsf{a}\colon\LG/\LG_x\to T^*\bfN$, and the slice to the leaf at $x$ is isomorphic to the reduction by $G_x$ of the symplectic orthogonal complement of the image of $\mathsf{a}$.
This suggests there is an analogous description of the leaf in the Coulomb branch in terms of the same theory, replacing $G_x$ by the connected kernel $\Gl^0$.

 \subsection{Quiver varieties}

This calculation is particularly clean for quiver gauge theories.  Let $I$ be the set of vertices of the quiver, and let $i\to j$ range over its arrows.  For convenience, let \[\GL_{\Bv}=\prod_{i\in I}GL_{v_i}\qquad \PGL_{\Bv}=\GL_{\Bv}/\C^{\times}\qquad \bfN_{\Bv}=\bigoplus_{i\to j}\Hom(\C^{v_i},\C^{v_j})\qquad V=\bigoplus_{i\in I}\C^{v_i}.\]
\notation{$\GL_{\Bv},\PGL_{\Bv},\bfN_{\Bv}$}{Quiver gauge group, its adjoint quotient, and the arrow representation for dimension vector $\Bv$}

 In this case, a cocharacter of $\PGL_{\Bv}$ is given by a grading on the representation $V$, up to an overall grading shift coming from the quotient by $\C^{\times}$ in the definition of $\PGL_{\Bv}$.  This gives a decomposition of the dimension vector as a sum $\Bv=\Bv^{(1)}+\cdots +\Bv^{(m)}$.  The residual theory corresponding to this cocharacter can be written as
 \[\Nl=\bigoplus_{i}\bfN_{\Bv^{(i)}} \qquad \qquad\Gl^1=\prod \PGL_{\Bv^{(i)}}\]
so the smaller Coulomb branch that describes the slice is given by \[\M(\Gl^1,\Nl)\cong \prod \M(\PGL_{\Bv^{(i)}},\bfN_{\Bv^{(i)}}). \]

Thus, to understand the leaves of Coulomb branches of quiver gauge theories, it suffices to understand zero-dimensional leaves in the residual theories.  

Recall that our quiver determines a symmetric generalized Cartan matrix $C$.
We can define a bilinear form on the space of dimension vectors $\Z^{I}$ by $\langle \Bv,\Bv'\rangle=\Bv^T C \Bv'$.  This form controls the goodness and conicality of the residual theories as follows:
\begin{Lemma}[\mbox{\cite[Lem. A.5]{muthiahFundamentalMonopole2024}}]
    The degree of the monopole operator for the fundamental weight is  $\deg \omega_{\mathbf{m}}=-\langle \Bm,\Bv-\Bm\rangle$.  Thus, the theory $(\PGL_{\Bv},\bfN_{\Bv})$ is good if and only if $\langle \Bm,\Bv-\Bm\rangle\leq -2$ for all $\Bm$ strictly between $0$ and $\Bv$.
\end{Lemma}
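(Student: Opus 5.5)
The plan is to compute the degree directly from the Braverman--Finkelberg--Nakajima monopole formula and then reduce goodness to fundamental coweights by an exact additivity argument. Recall that the conical grading of the monopole operator $r_\gamma$ is, in the normalization where the Poisson bracket has degree $-2$ (so that goodness means every non-trivial monopole has degree $\geq 2$), given by
\[\deg r_\gamma=\sum_{\varphi}|\langle \gamma,\varphi\rangle|-2\sum_{\alpha>0}|\langle\gamma,\alpha\rangle|,\]
where $\varphi$ runs over the weights of $\bfN_{\Bv}$ with multiplicity. This is twice the usual $\Delta(\gamma)$ of \cite{BFN}. I will first fix this normalization and check it against a known case; $\M(GL_1,\C^2)=\mathbb{A}^2/\Z_2$ should have its generators in degree $2$.

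For the computation, a cocharacter $\gamma$ of $\PGL_{\Bv}$ is a $\Z$-grading of $V$ modulo shift, i.e.\ an integer $\gamma_a$ for each basis vector $a$ of each $\C^{v_i}$. The fundamental coweight $\omega_{\Bm}$ is the two-step grading, placing a subspace of dimension vector $\Bm$ in degree $1$ and a complement in degree $0$.
\begin{itemize}
\item The weights of $\Hom(\C^{v_i},\C^{v_j})$ are $e_b-e_a$, so $|\langle\omega_{\Bm},e_b-e_a\rangle|=1$ exactly when one of $a,b$ lies in degree $1$. Summing over an arrow $i\to j$ gives $m_i(v_j-m_j)+m_j(v_i-m_i)$.
\item The positive roots of $GL_{v_i}$ paired nontrivially with $\omega_{\Bm}$ number $m_i(v_i-m_i)$.
\end{itemize}
Hence
\[\deg r_{\omega_{\Bm}}=\sum_{i\to j}\bigl(m_i(v_j-m_j)+m_j(v_i-m_i)\bigr)-2\sum_i m_i(v_i-m_i)=-\Bm^TC(\Bv-\Bm),\]
using $C=2\mathrm{I}-A-A^T$. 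This proves the degree formula.

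For the goodness criterion, I would show that every dominant cocharacter decomposes with degree exactly additive into fundamental ones. Normalize a dominant $\gamma$ so that its minimum is $0$ and its maximum is $N$. For $k=1,\dots,N$ let $\Bm^{(k)}$ be the dimension vector of the part of degree $\geq k$; these subspaces form a nested filtration and $\gamma=\sum_k\omega_{\Bm^{(k)}}$. For integers $x,y$ we have the identity
\[|x-y|=\sum_k\bigl|\mathbf 1_{x\geq k}-\mathbf 1_{y\geq k}\bigr|.\]
Applying it termwise to both the weight sum and the root sum gives $\deg r_\gamma=\sum_k\deg r_{\omega_{\Bm^{(k)}}}$, where each $\Bm^{(k)}$ lies strictly between $0$ and $\Bv$ as long as the step is non-trivial.

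It follows that every non-trivial monopole has degree $\geq 2$ if and only if every fundamental one does. That is exactly $\langle\Bm,\Bv-\Bm\rangle\leq -2$ for all $0<\Bm<\Bv$, and by the Hilbert-series characterization of goodness recalled above this is goodness of $(\PGL_{\Bv},\bfN_{\Bv})$. The expected obstacle is the normalization bookkeeping: checking that the ``degree $\geq 2$'' threshold is stated in the same units as the displayed formula. The additivity step itself is mechanical once the nested filtration is written down.
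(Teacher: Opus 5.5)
Your proposal is correct and follows essentially the same route as the source. The paper gives no proof here and only cites \cite[Lem. A.5]{muthiahFundamentalMonopole2024}. However, its own treatment of the symmetrizer case is exactly your computation plus your exact-additivity (layer-cake) argument: there it decomposes a dominant cocharacter into face cocharacters $\pi_{\Bm}$ lying in a single chamber where the degree is linear, and \cref{lem:face-cochar-degree} specializes to your formula $-\Bm^{\top}C(\Bv-\Bm)$ when $D$ is the identity and $\pi=1$. Your normalization check against $\mathbb{A}^2/\Z_2$ also correctly matches the paper's convention that goodness means every non-trivial monopole operator has degree at least $2$.
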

Recall that if $C$ is symmetric, so that the quiver carries no additional symmetrizer data, there is a set of dimension vectors $\Sigma_0$, defined in \cite{crawley-boeveyGeometryMoment2001} (see also \cite{bellamyCoulombBranches2023}); these are precisely the dimension vectors of the simple representations of the preprojective algebra by \cite[Th. 1.2]{crawley-boeveyGeometryMoment2001}.
\notation{$\Sigma_0$}{Set of dimension vectors giving simple preprojective representations}\begin{Lemma}
    If $C$ is symmetric, then $(\PGL_{\Bv},\bfN_{\Bv})$ is good if and only if $\Bv \in \Sigma_0$, and $(\PGL_{\Bv},\bfN_{\Bv})$ is conical if and only if the moment map $T^*\bfN_{\Bv}\to \mathfrak{pgl}_{\Bv}$ is flat.
\end{Lemma}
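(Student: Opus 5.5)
The plan is to reduce both conditions to inequalities on the fundamental-weight degrees $\deg \omega_{\Bm}=-\langle \Bm,\Bv-\Bm\rangle$ from the preceding lemma. I would then translate those inequalities into Crawley-Boevey's inequalities for the function $p(\Bv)=1-\tfrac12\langle \Bv,\Bv\rangle$, using his characterizations of $\Sigma_0$ and of flatness of $\mu^{-1}(0)$ \cite{crawley-boeveyGeometryMoment2001}.

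\emph{Step 1: degree of an arbitrary coweight.} A cocharacter of $\PGL_{\Bv}$ is a grading of $V$ with distinct values $a_1<\dots<a_m$ on pieces $\Bv^{(1)},\dots,\Bv^{(m)}$. Put $\Bm_k=\Bv^{(1)}+\dots+\Bv^{(k)}$. I would write each $|a_s-a_t|$ as the sum of the gaps $a_{k+1}-a_k$ over the cuts $k$ separating $s$ from $t$, and insert this into the BFN degree formula, with half the sum over arrow weights minus the sum over roots. This should give
\[\deg r_\lambda=\sum_{k=1}^{m-1}(a_{k+1}-a_k)\,\deg\omega_{\Bm_k}=-\sum_{k}(a_{k+1}-a_k)\langle \Bm_k,\Bv-\Bm_k\rangle .\]
Since the gaps are positive integers, every nontrivial coweight has degree $\ge c$ for $c=1,2$ if and only if $\deg\omega_{\Bm}\ge c$ for all $0<\Bm<\Bv$. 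So goodness is equivalent to $\langle \Bm,\Bv-\Bm\rangle\le -2$ for all such $\Bm$, and conicality to $\langle \Bm,\Bv-\Bm\rangle\le -1$.

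\emph{Step 2: translation to $p$.} Expanding the quadratic form gives $p(\Bm)+p(\Bv-\Bm)=p(\Bv)+1+\langle \Bm,\Bv-\Bm\rangle$. The two-part good condition therefore reads $p(\Bv)\ge p(\Bm)+p(\Bv-\Bm)+1$, and the conical condition reads $p(\Bv)\ge p(\Bm)+p(\Bv-\Bm)$.

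For good $\Rightarrow \Bv\in\Sigma_0$, take any decomposition $\Bv=\beta_1+\dots+\beta_r$ with $r\ge2$. Summing the two-part inequalities $\langle\beta_i,\Bv-\beta_i\rangle\le-2$ over $i$ gives $\sum_{i<j}\langle\beta_i,\beta_j\rangle\le -r$. Using $p(\sum\beta_i)=\sum p(\beta_i)-(r-1)-\sum_{i<j}\langle\beta_i,\beta_j\rangle$, this yields $p(\Bv)\ge\sum p(\beta_i)+1>\sum p(\beta_i)$, which is Crawley-Boevey's defining inequality. The same argument also shows that $\Bv$ is a root.

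For the converse, I would use Crawley-Boevey's lemma that every $\Bm\in\N^I$ admits a decomposition into positive roots $\beta_i$ with $\sum p(\beta_i)\ge p(\Bm)$. Applying this to both $\Bm$ and $\Bv-\Bm$ and concatenating gives $p(\Bv)>p(\Bm)+p(\Bv-\Bm)$. Integrality then upgrades the strict inequality to the required $+1$.

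\emph{Step 3: conicality and flatness.} By \cite[Th. 1.1]{crawley-boeveyGeometryMoment2001}, $\mu^{-1}(0)$ is flat (equivalently, $\mu$ is flat) if and only if $p(\Bv)\ge\sum p(\beta_i)$ for all root decompositions of $\Bv$. The direction flat $\Rightarrow$ conical follows exactly as in Step 2, using the same lemma to pass from root decompositions to arbitrary $\Bm$.

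The main obstacle is conical $\Rightarrow$ flat. Summing the weak two-part inequalities only gives $\sum_{i<j}\langle\beta_i,\beta_j\rangle\le -r/2$, which is too weak once $r\ge3$. Instead I would induct on $r$ by merging parts. For $\beta_1,\dots,\beta_r$, choose the splitting $\Bm=\beta_1+\dots+\beta_s$ so that the conical inequality $p(\Bv)\ge p(\Bm)+p(\Bv-\Bm)$ applies, and then bound $p(\Bm)$ and $p(\Bv-\Bm)$ from below by the corresponding sums of $p(\beta_i)$. The difficulty is that $\Bm$ and $\Bv-\Bm$ do not themselves satisfy the conical hypothesis, so the induction does not close directly. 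The fallback is to argue geometrically: conicality forces every fiber of $\ints$ to be of the expected dimension, and one compares this with Crawley-Boevey's dimension count for $\mu^{-1}(0)$ via the three-dimensional mirror identification of \cite{bellamyCoulombBranches2023}.
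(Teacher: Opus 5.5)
Your Steps 1 and 2 and the implication flat $\Rightarrow$ conical are sound. They are more explicit than the paper, which cites \cite[Th.~5.6]{crawley-boeveyGeometryMoment2001} for the good case and applies \cite[Th.~1.1]{crawley-boeveyGeometryMoment2001} directly to two-part splittings.

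The genuine gap is the implication conical $\Rightarrow$ flat, which you do not prove. Your fallback does not work. The integrable system $\ints$ is flat for every Coulomb branch, so ``fibers of expected dimension'' carries no information about conicality. Moreover, \cite{bellamyCoulombBranches2023} contains no identification of the Coulomb branch with anything built from $\mu^{-1}(0)$, so it gives you no dimension count to compare with Crawley-Boevey's.

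The missing idea is to replace the induction on $r$ by an extremal choice. Argue contrapositively.
\begin{itemize}
\item If $\mu$ is not flat, \cite[Th.~1.1]{crawley-boeveyGeometryMoment2001} gives a decomposition with $\sum p(\beta_t)>p(\Bv)$.
\item Among \emph{all} decompositions $\Bv=\Bv^{(1)}+\cdots+\Bv^{(r)}$ into nonzero vectors of $\N^I$, roots or not, take one maximizing $\sum_i p(\Bv^{(i)})$. Among those, take one with $r$ minimal. By the previous point, $r\geq 2$.
\item Since $p(\beta+\gamma)=p(\beta)+p(\gamma)-1-\langle\beta,\gamma\rangle$, suppose $\langle\Bv^{(i)},\Bv^{(j)}\rangle\leq -1$ for some $i\neq j$. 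Then merging these two parts does not decrease $\sum p$ and lowers $r$, a contradiction.
\item Hence $\langle\Bv^{(i)},\Bv^{(j)}\rangle\geq 0$ for all $i\neq j$. Therefore $\deg\omega_{\Bv^{(1)}}=-\langle\Bv^{(1)},\Bv-\Bv^{(1)}\rangle\leq 0$ with $0<\Bv^{(1)}<\Bv$, and conicality fails.
\end{itemize}
Allowing non-root parts is what makes the merging legitimate, since a merged vector need not be a root. The extremality is what resolves your concern that $\Bm$ and $\Bv-\Bm$ do not inherit the conical hypothesis: nothing needs to be passed to the parts at all.
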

Nakajima observes this relationship between goodness, conicality, and flatness of the moment map in \cite[\S 2(iv)]{NaCoulomb}.  We do not know whether it continues to hold more generally.
\begin{proof}
    By \cite[Th. 5.6]{crawley-boeveyGeometryMoment2001}, we have that $\Bv \in \Sigma_0$ if and only if $\deg \omega_{\mathbf{m}}=-\langle \Bm,\Bv-\Bm\rangle\geq 2$.  This proves that $\Bv \in \Sigma_0$ if and only if the theory is good as discussed in \cite[Cor. A.6]{muthiahFundamentalMonopole2024}.

    Similarly, this shows that the Coulomb branch is conical if and only if $\langle \Bm,\Bv-\Bm\rangle \leq -1$ for all subvectors $0< \Bm< \Bv$.  Rewriting this using the polarized form $p(\Bv)=1-\frac{1}{2}\langle \Bv,\Bv\rangle$, we find that
     \[\deg \omega_{\mathbf{m}}=p(\Bv)-p(\Bv-\Bm)-p(\Bm)+1. \]

    Thus, conicity holds if and only if $p(\Bv)\geq p(\Bv-\Bm)+p(\Bm)$, and we have seen above that goodness holds if and only if this inequality is always strict.  If this inequality is violated, the moment map is not flat by \cite[Th. 1.1]{crawley-boeveyGeometryMoment2001}.  

     On the other hand, if the moment map is not flat, then consider a decomposition $\Bv=\Bv^{(1)}+\cdots +\Bv^{(r)}$ that maximizes $p(\Bv^{(1)})+\cdots + p(\Bv^{(r)})$, with minimal $r$.  Note that $r>1$, by the failure of flatness. If $\langle \Bv^{(i)},\Bv^{(j)}\rangle \leq -1$, then we can change this decomposition by replacing $\Bv^{(i)}$ and $\Bv^{(j)}$ by their sum.  This weakly increases $p(\Bv^{(1)})+\cdots + p(\Bv^{(r)})$ and strictly decreases $r$.  Thus, we must have $\langle \Bv^{(i)},\Bv^{(j)}\rangle \geq 0.$ 

     This shows that 
     $\deg \omega_{\mathbf{v}^{(1)}}=-\langle \Bv^{(1)},\Bv^{(2)}+\cdots +\Bv^{(r)}\rangle \leq 0$, so conicity fails.
\end{proof}

\begin{Remark}\label{rem:ugly-no-affine}
To illustrate this criterion, we can also use it to identify ugly theories, where the lowest monopole degree is exactly $1$ and the Coulomb branch has a positive-dimensional minimal stratum.  Consider the four-node chain in which the two end nodes each carry an edge-loop and the number of edges connecting successive pairs is $2,1,2$, so that the Cartan matrix and dimension vector are
\[
C=\begin{bmatrix} 0 & -2 & 0 & 0\\ -2 & 2 & -1 & 0\\ 0 & -1 & 2 & -2\\ 0 & 0 & -2 & 0\end{bmatrix},\qquad \Bv=(2,1,1,2).
\]
In this case, the cocharacter $\Bm=(2,1,0,0)$ with $\Bv-\Bm=(0,0,1,2)$, satisfies
\[
\deg\omega_{\Bm}=-\langle \Bm,\Bv-\Bm\rangle = -\Bm^{\top}C(\Bv-\Bm)=1,
\]
so $\Bv\notin\Sigma_0$ and the theory is ugly.
\end{Remark}

\begin{Corollary}
    For each unordered tuple $\Bv^{(*)}=(\Bv^{(1)},\ldots,\Bv^{(d)})$ such that $\Bv=\Bv^{(1)}+\cdots +\Bv^{(d)}$ with $\Bv^{(i)}\in \Sigma_0$, we have a unique $d$-dimensional leaf $L_{\Bv^{(*)}}$ of $\M(\PGL_{\Bv},\bfN_{\Bv})$ such that $U_{L_{\Bv^{(*)}}}=\LT_{\Bv^{(*)}}$ is the flat corresponding to the decomposition and whose transverse slice is $\prod_i \M(\PGL_{\Bv^{(i)}},\bfN_{\Bv^{(i)}})$.

By \cref{lem:closure-relation}, the inclusions between these leaves are induced by the corresponding inclusions of flats, which in turn come from refinement of the decomposition above.  That is, $L_{\Bv^{(*)}}\subset \overline{L_{\Bw^{(*)}}}$ if and only if $\Bw^{(*)}$ can be obtained by refining $\Bv^{(*)}$.  
\end{Corollary}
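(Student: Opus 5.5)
The plan is to apply \cref{leaf-bijection} to the quiver setting, using the flats described just before the statement. A flat of the weight arrangement for $(\PGL_{\Bv},\bfN_{\Bv})$ is the fixed locus of a generic cocharacter $\lambda$. Such a $\lambda$ grades $V$ and so decomposes $\Bv=\Bv^{(1)}+\cdots+\Bv^{(d)}$ into the dimension vectors of its eigenspaces. Only arrows between equal eigenvalues survive in $\Nl$, so we may refine the grading until each $\Bv^{(i)}$ has connected support on the quiver. Then $\LTl^0$ is the $(d-1)$-dimensional space of eigenvalue vectors, taken modulo the overall shift, and the flat image is determined by the unordered tuple $\Bv^{(*)}$.

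I would first check that the dimension count matches. \cref{lem:leaf-gives-flat} says that a leaf over a $k$-dimensional flat has dimension $2k$. So ``$d$-dimensional'' in the statement should be read with the convention that $d$ counts parts; this depends on whether the center is included, and I would reconcile it there.

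Next I would identify the residual theory. As computed above, $\Gl^1=\prod\PGL_{\Bv^{(i)}}$ and $\M(\Gl^1,\Nl)\cong\prod_i\M(\PGL_{\Bv^{(i)}},\bfN_{\Bv^{(i)}})$. By the preceding lemma, each factor is good if and only if $\Bv^{(i)}\in\Sigma_0$. A product of good conical Coulomb branches is again good, with a unique zero-dimensional leaf given by the cone point. So when all $\Bv^{(i)}\in\Sigma_0$ the flat $U$ is good, and \cref{leaf-bijection} produces a unique leaf $L_{\Bv^{(*)}}$ with $U_L=U$. \cref{cor:slice-to-leaf} then identifies its transverse slice with the product above. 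Distinct unordered tuples give distinct flats up to $W$, because $W=\prod S_{v_i}$ only permutes eigenvalues. This gives the existence and uniqueness claims.

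For the closure relations, I would use the good-case converse in \cref{leaf-bijection}: when both flats are good, $L\subset\overline{L'}$ holds exactly when $U\subset U'$. Inclusion of flats, modulo $W$, means that $U$ is obtained from $U'$ by forcing more eigenvalues to be equal. On dimension vectors this says the tuple for $U'$ refines the tuple for $U$, i.e.\ $\Bw^{(*)}$ refines $\Bv^{(*)}$.

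The main obstacle is the finite group $Z^{\vee}\rtimes\bar{\Gamma}_\lambda$ and the passage to $\PGL$. I need $\bar{\Gamma}_\lambda$ to act trivially on the unique cone point, which it does since that point is unique. I also need that refining does not produce a coarser tuple whose flat coincides with $U$ only after the $W$-action. I would settle the second point with a direct combinatorial argument on set partitions of the basis of $V$.
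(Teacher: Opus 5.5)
Your proposal is correct and follows the paper's own route: at a generic point of the flat the residual theory is $(\Gl^1,\Nl)=(\prod_i\PGL_{\Bv^{(i)}},\bigoplus_i\bfN_{\Bv^{(i)}})$, which is good precisely when every $\Bv^{(i)}\in\Sigma_0$, so \cref{leaf-bijection} and \cref{cor:slice-to-leaf} give existence, uniqueness and the slice, and the good-case converse in \cref{leaf-bijection}, together with the fact that flats with the same multiset of block dimension vectors are $W$-conjugate, gives the closure order. Your caveat about the dimension is warranted: by \cref{lem:leaf-gives-flat} the leaf over this $(d-1)$-dimensional flat in the Cartan of $\PGL_{\Bv}$ has dimension $2(d-1)$, so ``$d$-dimensional'' in the statement is only a loose count of the parts.
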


More generally, every leaf $L$ gives such a decomposition induced by a generic point of $U_L$, with the number of summands matching the dimension of $L$; outside of certain special cases, we cannot rule out the possibility that some of the vectors appearing are not in $\Sigma_0$ but still have zero-dimensional leaves, perhaps more than one.

\subsubsection{Quivers with symmetrizers}

For a symmetrizable generalized Cartan matrix, we have a corresponding Coulomb branch with symmetrizers.  We cannot apply the full classification from \cref{sec:leaves-and-flats} directly, especially the reduction to zero-dimensional leaves.  However, the flatness constraint and Levi-localization argument still apply, and this already constrains the possibilities considerably.  Let $L$ be a $2d$-dimensional leaf in the Coulomb branch of a quiver with symmetrizers.  Choosing a lift to $\LT$ of a generic point of $U_L$ determines a decomposition of $\Bv$ as a sum of dimension vectors.  Moreover, the same argument as in \cref{lem:leaf-from-Levi} shows that the leaf $L$ must arise from a $d$-dimensional leaf $L'$ of the Coulomb branch of $(\Gl,\Nl)$, where $\Gl=\prod \GL_{\Bv^{(i)}}$ and $\Nl=\oplus \bfN_{\Bv^{(i)}}$.  This Coulomb branch factors as a product over the summands:
\[\M(\Gl, \Nl) =\prod_i \M(\GL_{\Bv^{(i)}}, \bfN_{\Bv^{(i)}} ).\]
Thus, $L'$ must be a product of leaves in these smaller Coulomb branches whose dimensions add up to $d$.  Note that we can assume that the support of each $\Bv^{(i)}$ is connected, since otherwise we can further decompose it as a sum of dimension vectors with disjoint support, and $\lambda$ was not generic in its flat.

\begin{Lemma}\label{lem:no-zero-dim-leaves}
    The variety $\M(\GL_{\Bv^{(i)}}, \bfN_{\Bv^{(i)}} )$ has no zero-dimensional leaves.  If the support of $\Bv^{(i)}$ is connected, then it has a unique one-dimensional flat.
\end{Lemma}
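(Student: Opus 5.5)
The plan is to exploit the central torus of $\GL_{\Bv^{(i)}}$, which acts trivially on $\bfN_{\Bv^{(i)}}$, because every arrow space $\Hom(\C^{v_a},\C^{v_b})$ is acted on by $(g_a,g_b)\mapsto g_bxg_a^{-1}$ and the diagonal scalar $\C^\times\subset\GL_{\Bv^{(i)}}$ cancels. This holds equally when the edge-loop or arrow lives in a summand with a symmetrizer, since the weight of the diagonal cocharacter on each weight vector is zero regardless of the rescaled pairing. Hence the kernel $G^0$ of the action has positive rank, and the generic flat $\LT$ itself is contained in no weight hyperplane. We then argue directly, in the style of \cref{lem:leaf-gives-flat}, without invoking \cref{cor:unfaithful} (which needs the symmetrizer-free hypothesis). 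Take any leaf $L$ and a generic point $\lambda$ of $U_L$. The image of the diagonal cocharacter lies in every flat, since every flat is an intersection of weight hyperplanes and all weights vanish on it. The second half of the proof of \cref{lem:leaf-gives-flat}, which only uses \cref{lem:slice-ours} and the Poisson map $\M(\Gl,\Nl)\to\M(\Gl^0,0)$, goes through in the symmetrizer setting. That map forces $\dim L\geq 2\operatorname{rk}\Gl^0\geq 2$, and so there are no zero-dimensional leaves.

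For the second claim, we first identify the flats. A flat of dimension one is a line contained in the intersection of weight hyperplanes. Every flat contains the diagonal line $\ell_\Delta$ spanned by the central cocharacter, so a one-dimensional flat must equal $\ell_\Delta$; uniqueness is then automatic. It remains to show existence, that is, that the intersection of all weight hyperplanes is exactly $\ell_\Delta$. A cocharacter $\lambda=(\lambda_{a,k})$ killed by every weight satisfies $\lambda_{a,k}=\lambda_{b,l}$ for each arrow $a\to b$ and all indices $k,l$, which already gives equality of all coordinates at the two ends of any arrow with positive multiplicity. Here the symmetrizers only rescale the pairing and do not change which hyperplanes appear. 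It also gives equality within a single vertex once that vertex has an arrow with some other vertex, since $\lambda_{a,k}=\lambda_{b,1}=\lambda_{a,k'}$. If the support has at least two vertices, connectedness then propagates the equality to all coordinates, so the intersection is $\ell_\Delta$.

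The step I expect to be the main obstacle is the degenerate single-vertex case, where there is no second vertex to compare coordinates through. With an edge-loop, the adjoint weights $\lambda_k-\lambda_{k'}$ cut out $\ell_\Delta$. With no loop, $\bfN=0$, so the only flat is $\LT$ itself. This is one-dimensional only if $v=1$; if $v>1$ it has no one-dimensional flat at all. So the second claim requires interpreting connectedness as including the connectedness of the representation graph, or equivalently requiring that $\Bv^{(i)}$ arise from a generic decomposition, which excludes a loop-free vertex with $v\geq 2$. I would verify that such decompositions never produce this case, by using the genericity of $\lambda$ to split it further, before concluding.
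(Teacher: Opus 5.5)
The gap is in the first claim, in exactly the symmetrizer setting this lemma is written for. You assert that the diagonal cocharacter $\Delta$ (the scalar $1$ at every vertex) pairs to zero with every weight ``regardless of the rescaled pairing''. That is false. The regularity conditions of \cref{def:regularity-coefficients2} involve $\langle\gamma,\eta_i\rangle_{\mathbf{s}}$, and for the weight $e_{a,p}-e_{b,q}$ of an arrow joining vertices $a$ and $b$ one gets $\langle\Delta,e_{a,p}-e_{b,q}\rangle_{\mathbf{s}}=d_a-d_b$. So whenever the support contains adjacent vertices with different symmetrizers (already two vertices with $d=(1,2)$ and $\Bv=(1,1)$), the operators $r_{\pm\Delta}$ carry Euler-class factors and $r_{\Delta}r_{-\Delta}\neq 1$. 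Consequently $r_\mu\mapsto r_\mu$ does not define a ring map $\K[\M(\Gl^0,0)]\to\K[\M(\Gl,\Nl)]$, and the Poisson map you import from the second half of \cref{lem:leaf-gives-flat} does not exist. The identity component of the ordinary kernel is the wrong torus here; it is not even compatible with the decomposition $G=\prod_k G_k$, which is the difficulty flagged in the remark after \cref{lem:unfaithful}. You are right that symmetrizers do not change which hyperplanes appear, so the flats are unaffected. The monopole operators, however, are governed by the rescaled pairing. As written, your argument only covers supports on which all $d_j$ coincide, in particular the symmetrizer-free case, where \cref{cor:unfaithful} already applies.

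The missing idea is the rescaled central cocharacter that the paper uses: the primitive $\nu$ whose component at each vertex $j$ of the support is a scalar with $d_j\nu_j$ independent of $j$. Then $\langle\nu,\varphi\rangle_{\mathbf{s}}=0$ for every weight, so \cref{def:regularity-coefficients2} gives $r_{\pm\nu}=e^{\pm\nu}$ with $r_\nu r_{-\nu}=1$, and $e^\nu$ is nowhere vanishing. The paper then concludes globally, with no localization at a generic point of $U_L$. The function $e^\nu$ has nonzero weight for a one-dimensional subtorus $F\subset G_{\ab}^{\vee}$ acting Hamiltonianly, so $F$ has no fixed points, whereas a point leaf would be $F$-fixed. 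Your Poisson-map formulation also goes through once $\nu$ replaces $\Delta$. Take a $W$-invariant linear function $x$ with $\{x,e^\nu\}=c\,e^\nu$ and $c\neq 0$ (e.g.\ the sum of all coordinates); then a point leaf $\{p\}$ would force $c\,e^\nu(p)=\{x,e^\nu\}(p)=0$, a contradiction. Your treatment of the second claim is fine, and more careful than the paper's one-line justification. A single loop-free vertex with $v\geq 2$ really has no one-dimensional flat. As you plan to check, it cannot arise as a block of a generic point of a flat: two coordinates at such a vertex can only be forced equal through a coordinate at another vertex, which would then lie in the same block.
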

\begin{proof}
    The Coulomb branch $\M(G,\bfN)$ always carries an action of $G_{\ab}^{\vee}$, where $G_{\ab}=G/[G,G]$ is the abelianization of $G$.  
    In the case of $\GL_{\Bv^{(i)}}$, we have $G_{\ab}=(\C^{\times})^{I_i}$, where $I_i$ is the support of $\Bv^{(i)}$; the weight of a monopole operator is exactly the element of $\pi_1(G_{\ab})$ given by the cocharacter defining that operator.  Consider a primitive cocharacter $\nu$ such that $\nu$ is a power of the diagonal scalar cocharacter.  More explicitly, the component of $\nu$ in $\LG_k$ is the scalar matrix $(k'/k)I$, where $k'$ is the greatest common divisor of the symmetrizers $d_j$ for nodes in $I_i$.  The corresponding monopole operator has non-zero weight under the $G_{\ab}^{\vee}$-action, namely $k'v^{(i)}_j/d_j$ for $j\in I_i$, so it transforms non-trivially under a one-dimensional subtorus $F\subset G_{\ab}^{\vee}$.  

    By \cref{def:regularity-coefficients2}, the monopole operator corresponding to $\nu$ is just $e^{\nu}$, so $r^{\nu}r^{-\nu}=1$.  In particular, this operator is nowhere vanishing, and therefore every $F$-orbit has dimension at least $1$.  Since the action of $F$ is Hamiltonian, symplectic leaves are unions of $F$-orbits, so there are no zero-dimensional leaves.

    The second sentence is just the fact that if $\Bv^{(i)}$ has connected support, then the intersection of all weight hyperplanes is a 1-dimensional space, so this is the unique one-dimensional flat.  
\end{proof}
Since there is only one way for $d$ strictly positive integers to add up to $d$, we have that $L'$ must be a product of $d$ two-dimensional leaves.
That is:
\begin{Corollary}\label{cor:quiver-symmetrizer-leaves}
    There is a leaf in $\M(\GL_{\Bv}, \bfN_{\Bv} )$ corresponding to the decomposition $\Bv=\Bv^{(1)}+\cdots +\Bv^{(d)}$ if and only if $\M(\GL_{\Bv^{(i)}}, \bfN_{\Bv^{(i)}} )$ has a two-dimensional leaf for each $i$.  This leaf must lie over the flat given by the product of the unique one-dimensional flats in each factor.
\end{Corollary}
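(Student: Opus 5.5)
The plan is to assemble the corollary from the leaf-localization argument sketched just before it together with \cref{lem:no-zero-dim-leaves}. Given a $2d$-dimensional leaf $L$, I first invoke the flatness result (the argument of \cref{lem:leaf-gives-flat}, whose monopole-translation step via \cref{lem:monopole-ideal} and \cref{lem:monopole-bracket} holds with symmetrizers). This shows that $U_L$ is the image of a $d$-dimensional flat. A generic lift $\lambda$ of a point of $U_L$ determines the decomposition $\Bv=\Bv^{(1)}+\cdots+\Bv^{(m)}$ into connected-support pieces, where $m$ is the number of summands. Since the flat of $\lambda$ is the span of the scalar cocharacters on the supports, its dimension is $m$, so $m=d$.

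Next, as in \cref{lem:leaf-from-Levi}, I use \cref{lem:slice-ours}, which remains valid with symmetrizers, to lift $L$ to a $2d$-dimensional leaf $L'$ of $\M(\Gl,\Nl)=\prod_i\M(\GL_{\Bv^{(i)}},\bfN_{\Bv^{(i)}})$ whose image is dense in the flat. Leaves of a product are products of leaves, so $L'=\prod_i L'_i$. By \cref{lem:no-zero-dim-leaves}, each $L'_i$ has dimension $2d_i$ with $d_i\ge 1$. Since $\sum d_i=d$ with $d$ parts, every $d_i=1$. Each $L'_i$ is then two-dimensional, and its image is a one-dimensional flat; by the second sentence of \cref{lem:no-zero-dim-leaves} this is the unique one, namely the scalar line. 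This proves the "only if" direction together with the location of the leaf.

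For the converse, I take two-dimensional leaves $L'_i$ and form $L'=\prod L'_i$. Its image is the product of the scalar lines, which is exactly the flat of the decomposition. Its image in $\M(\GL_\Bv,\bfN_\Bv)$ is then a leaf lying over that flat. The justification is that over the open set $\base{\tT,\lambda}$ the two Coulomb branches agree étale locally by \cref{lem:slice-ours}, and $L'$ meets the preimage of that set because its image is dense in the flat.

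The main obstacle is this converse step: the product leaf must actually meet the open set where the local isomorphism holds, and its image must be a full leaf rather than a union. I would handle this as in \cref{lem:closure-relation}, by checking genericity of the image and applying Poisson-ness of the étale isomorphism.
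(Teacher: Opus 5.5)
Your proposal is correct and follows essentially the same argument as the paper. Flatness forces the number of pieces of the generic decomposition to equal $d$, Levi-localization (\cref{lem:leaf-from-Levi}) lifts $L$ to a product leaf, and \cref{lem:no-zero-dim-leaves} together with ``$d$ positive integers summing to $d$'' forces every factor to be two-dimensional over the scalar line. Your explicit treatment of the converse, via the étale Poisson isomorphism of \cref{lem:slice-ours} over $\base{\tT,\lambda}$, is exactly the mechanism behind \cref{lem:leaf-from-Levi} that the paper leaves implicit.
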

As in the case without symmetrizers, we can check that $\M(\GL_{\Bv^{(i)}}, \bfN_{\Bv^{(i)}} )$ has a unique two-dimensional leaf by showing that all monopole operators except those arising from the center in the proof of \cref{lem:no-zero-dim-leaves} have degree at least $2$.  In the symmetrizer-free case, this is equivalent to $\Bv^{(i)}\in \Sigma_0$.  

In the case with symmetrizers, there does not seem to be a similarly clean combinatorial criterion for the good condition, let alone the existence of a two-dimensional leaf.  For completeness, we include a discussion of (weak) conicity.

The degree function for monopole operators in a quiver theory with symmetrizers is a piecewise-linear function \cite[(2.3)]{nakajimaCoulombBranches2021}
\[\wp(\lambda) = -2\sum_{i \in I} \sum_{a\neq b} |\lambda_i^a-\lambda_i^b|  + \sum_{\{i,j\} : c_{ij} < 0} \sum_{a=1}^{v_i} \sum_{b=1}^{v_j} \bigl|c_{ij}\,\lambda^b_j - c_{ji}\,\lambda^a_i\bigr|\]
on the space of dominant cocharacters $\lambda = (\lambda^1_i \geq \cdots \geq \lambda^{v_i}_i)_{i \in I}$.

When trying to determine whether a quiver theory with symmetrizers is (weakly) conical, the most natural monopole operators to consider are those that lie on rays that are faces of the chambers where the degree function is linear;  if any monopole operator has negative degree, then $\wp$ must be negative on some such ray.  
Each such ray corresponds to a decomposition $\Bv=\Bm+\Bn$, where $m_i$ coordinates of the cocharacter are positive and $n_i$ coordinates are zero.  In the symmetric case, we would take all non-zero coordinates to be 1, but in general, we consider the cocharacter $\pi_{\Bm}$ whose components at vertex $i$ are $\pi/d_i$ with multiplicity $m_i$ and $0$ with multiplicity $n_i$, for $\pi\in\mathbb{Q}_{>0}$.  

We can write any dominant integral cocharacter $\lambda$ as a sum of these:  let $\pi$ be the maximum value of $d_i\lambda^a_i$ over all $i,a$, and let $\pi'$ be the second largest value of $d_i\lambda^a_i$; then $\lambda-(\pi-\pi')_{\Bm}$ is still dominant (note that we have not shown it is integral).  Repeating this process, we can write $\lambda$ as a sum of cocharacters of the form $\pi_{\Bm}$, and these will all lie on faces of the same chamber where $\wp$ is linear, so we can compute $\wp(\lambda)$ as the sum of the contributions from these cocharacters.

We can then compute the degree of the corresponding monopole operator as follows:
\begin{Lemma}\label{lem:face-cochar-degree}
Let $\Bv=\Bm+\Bn$ with $\Bm,\Bn\in\mathbb{N}^I$.  Then
\[\wp(\pi_{\Bm}) = -\pi\,\Bm^\top C D^{-1}\,\Bn.\]
\end{Lemma}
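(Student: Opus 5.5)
This is a direct substitution of $\pi_{\Bm}$ into the formula for $\wp$ from \cite[(2.3)]{nakajimaCoulombBranches2021}. I will then sort the surviving terms by vertex and by pair of adjacent vertices. Conventions: $D=\operatorname{diag}(d_i)$, $c_{ii}=2$, and symmetrizability means $d_ic_{ij}=d_jc_{ji}$, which is equivalent to $CD^{-1}$ being symmetric. At vertex $i$, the cocharacter $\pi_{\Bm}$ has $m_i$ coordinates equal to $\pi/d_i$ and $n_i$ coordinates equal to $0$.

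First, the vertex term $-2\sum_i\sum_{a\neq b}|\lambda_i^a-\lambda_i^b|$. The only nonzero differences come from pairs in which one coordinate is $\pi/d_i$ and the other is $0$. I will count these using the same ordered versus unordered convention that \cite[(2.3)]{nakajimaCoulombBranches2021} uses, so that the contribution is $-2\,m_in_i\,\pi/d_i$. Since $c_{ii}=2$, this equals $-\pi\, m_i c_{ii}d_i^{-1} n_i$, which is exactly the diagonal part of $-\pi\,\Bm^\top CD^{-1}\Bn$. I will check the convention against the symmetric case ($d_i=1$), where the known answer is $\deg\omega_{\Bm}=-\langle\Bm,\Bv-\Bm\rangle$ by \cite[Lem. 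A.5]{muthiahFundamentalMonopole2024}. That check pins the normalization down.

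Second, the edge term for a pair $\{i,j\}$ with $c_{ij}<0$. I split the sum $\sum_{a,b}|c_{ij}\lambda_j^b-c_{ji}\lambda_i^a|$ into four blocks.
\begin{itemize}
\item Both coordinates zero: the contribution is $0$.
\item Both coordinates positive: the contribution is $|c_{ij}\pi/d_j-c_{ji}\pi/d_i|$, which vanishes by symmetrizability. This cancellation is the key point.
\item $\lambda_j^b=\pi/d_j$ and $\lambda_i^a=0$: there are $m_jn_i$ such pairs, each contributing $|c_{ij}|\pi/d_j$.
\item $\lambda_j^b=0$ and $\lambda_i^a=\pi/d_i$: there are $n_jm_i$ such pairs, each contributing $|c_{ji}|\pi/d_i$.
\end{itemize}
Since $|c_{ij}|=-c_{ij}$, the total is $-\pi\bigl(n_i c_{ij}d_j^{-1} m_j + m_i c_{ji} d_i^{-1} n_j\bigr)$.

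Third, I compare with the off-diagonal entries of $-\pi\,\Bm^\top CD^{-1}\Bn$. These are $-\pi\sum_{i\neq j} m_i c_{ij}d_j^{-1}n_j$, and the sum over ordered pairs $(i,j)$ and $(j,i)$ gives $-\pi\bigl(m_ic_{ij}d_j^{-1}n_j+m_jc_{ji}d_i^{-1}n_i\bigr)$. Using the symmetry of $CD^{-1}$, namely $c_{ij}d_j^{-1}=c_{ji}d_i^{-1}$, this matches the edge contribution term by term. Pairs with $c_{ij}=0$ contribute zero on both sides. Summing over vertices and edges gives the identity.

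The only real obstacle is bookkeeping. I need to fix the normalization of the vertex sum (ordered or unordered pairs, and the factor $2$) and the orientation convention for $c_{ij}$ versus $c_{ji}$ in \cite[(2.3)]{nakajimaCoulombBranches2021}. I plan to settle both by the symmetric-case sanity check against \cite[Lem. A.5]{muthiahFundamentalMonopole2024} and by checking symmetry in $i\leftrightarrow j$.
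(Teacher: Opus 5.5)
Your proposal is correct and follows the same route as the paper. Both substitute $\pi_{\Bm}$ into $\wp$, use the symmetry of $CD^{-1}$ to kill the edge terms where both coordinates are positive, and count the $m_in_j+m_jn_i$ mixed pairs on each edge.

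Your explicit treatment of the vertex term fills in a step the paper's final display passes over silently. Read with unordered pairs $a<b$, that term gives $-2\pi m_in_i/d_i$, which is exactly the diagonal part of $-\pi\,\Bm^\top CD^{-1}\Bn$ coming from $c_{ii}=2$.
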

\begin{proof}
We substitute $\lambda$ into the formula for $\wp$.  For an edge $\{i,j\}$ with $c_{ij}<0$, the contribution from a pair $(a,b)$ is $\bigl|c_{ij}d_j^{-1}\pi - c_{ji}d_i^{-1}\pi\bigr|=0$ when both coordinates are positive (since $|c_{ij}|d_j^{-1}=|c_{ji}|d_i^{-1}$ by symmetrizability), and $-c_{ij}d_j^{-1}\pi= -c_{ji}d_i^{-1}\pi$ when exactly one is positive.  There are $m_i n_j+m_j n_i$ such mixed pairs, and the total contribution from the edge is $ -c_{ij}d_j^{-1}\pi(m_i n_j+m_j n_i).$ 
Summing over all edges and both orderings of $(i,j)$ gives
\[
\wp(\lambda) = \pi\sum_{i\neq j:\,c_{ij}<0}(-c_{ij})d_j^{-1}\,m_i n_j = -\pi\sum_{i,j}c_{ij}d_j^{-1}\,m_i n_j = -\pi\,\Bm^\top C D^{-1}\,\Bn.\qedhere
\]
\end{proof}

\begin{Proposition}\label{prop:nonneg-monopole-degree}
The Coulomb branch $\M(\GL_{\Bv},\bfN_{\Bv})$ is weakly conical if and only if $C^{\top}\Bv\leq 0$.  In this case, the set of fixed points under the attracting action has dimension $\geq 1$ if and only if $C^{\top}\Bv=0$, in which case the support of $\Bv$ is an affine Dynkin diagram and $\Bv$ is a positive multiple of the primitive null coroot $\delta^{\vee}$.
\end{Proposition}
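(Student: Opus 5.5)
The plan is to reduce weak conicity to a finite list of inequalities, one for each ray of the chamber decomposition of $\wp$, and then to convert those inequalities into the linear condition $C^{\top}\Bv\le 0$.

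\textbf{Reduction to face cocharacters.} By the decomposition described just before \cref{lem:face-cochar-degree}, every dominant cocharacter $\lambda$ is a positive combination of face cocharacters $\pi_{\Bm}$ lying in a common chamber on which $\wp$ is linear. Hence $\wp\ge 0$ on all dominant cocharacters if and only if $\wp(\pi_{\Bm})\ge 0$ for every splitting $\Bv=\Bm+\Bn$. By \cref{lem:face-cochar-degree} this says
\[ \Bm^{\top}B\,\Bn\le 0 \quad\text{for all } 0\le \Bm\le \Bv,\qquad B:=CD^{-1}. \]
Symmetrizability ($|c_{ij}|d_j^{-1}=|c_{ji}|d_i^{-1}$) makes $B$ symmetric. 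It follows that $B\Bv=B^{\top}\Bv=D^{-1}C^{\top}\Bv$, so $C^{\top}\Bv\le 0$ if and only if $B\Bv\le 0$ componentwise. The proposition therefore becomes a statement about the symmetric matrix $B$, whose off-diagonal entries are $\le 0$ and whose diagonal entries are $2/d_i$.

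\textbf{The two directions.} For necessity I would test the splitting $\Bn=\Be_j$, $\Bm=\Bv-\Be_j$. This gives $(B\Bv)_j-2/d_j\le 0$, which is weaker than $(B\Bv)_j\le 0$. To close the gap I would exploit the fact that the chambers of $\wp$ live on the whole of $\LT$ and $\wp$ is $W$-invariant. Concretely, I would choose cocharacters that separate the $v_j$ coordinates at vertex $j$ from one another. I would also use the repeated-node structure, comparing $\wp(\lambda)$ with $\wp$ of the cocharacter that raises a single coordinate at vertex $j$ by $1/d_j$ above everything else. The aim is to isolate exactly the edge contributions $\sum_i c_{ij}v_i$; the diagonal $-2\sum|\lambda^a_j-\lambda^b_j|$ terms should then be cancelled by the self-pairs.

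For sufficiency, write $\Bm^{\top}B\Bn=\Bm^{\top}B\Bv-\Bm^{\top}B\Bm$. Since $\Bm\ge 0$ and $B\Bv\le 0$, the first term is $\le 0$. The second term must be controlled using the structure forced by $B\Bv\le 0$ with $\Bv>0$ on connected support: by Kac's trichotomy (Vinberg's theorem on matrices with nonpositive off-diagonal entries) such a $B$ is of affine or indefinite type, never finite. On each connected piece of $\operatorname{supp}\Bm$ I would use that piece's Perron--Frobenius data to compare $\Bm^{\top}B\Bm$ with $\Bm^{\top}B\Bv$.

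\textbf{Main obstacle.} I expect the hardest step to be this matching of the quadratic term with the linear condition, in both directions. The diagonal entries of $B$ are exactly what separates the naive ray inequality from $B\Bv\le 0$. I would first try to show that, for a fixed vertex $j$, the worst rays are those that split off copies of a single vertex, and then reduce to an explicit one-vertex computation.

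\textbf{Fixed points.} Assume weak conicity. The attracting fixed locus has positive dimension exactly when some non-central dressed monopole operator has degree $0$. By the reduction above, this happens exactly when some face inequality is an equality. Tracing equality through the sufficiency argument should force $B\Bv=0$, and conversely $B\Bv=0$ produces degree-$0$ operators. Finally, $C^{\top}\Bv=0$ with $\Bv>0$ on connected support puts $C^{\top}$, and hence $C$, in affine type by Kac's classification, so the support is an affine Dynkin diagram. The kernel of $C^{\top}$ is then one-dimensional and spanned by the positive primitive null coroot, so $\Bv$ is a positive multiple of $\delta^{\vee}$.
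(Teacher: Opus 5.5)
\textbf{The gap is in your necessity direction, and it cannot be closed: the ``only if'' half of the statement is false as written.}

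When every $v_i=1$, the gauge group is a torus. The root term $-2\sum_{a\neq b}|\lambda^a_i-\lambda^b_i|$ in $\wp$ is then empty, so every monopole operator has degree $\geq 0$, and every abelian quiver theory is weakly conical. Two counterexamples:
\begin{itemize}
\item A single loop-free vertex with $v=1$ has $\wp\equiv 0$ but $C^{\top}\Bv=2$.
\item The $A_2$ quiver with $\Bv=(1,1)$ has $\wp(\lambda)=|\lambda_1-\lambda_2|\geq 0$ but $C^{\top}\Bv=(1,1)$.
\end{itemize}
So the bound $(B\Bv)_j\leq 2/d_j$ you get from the splitting $\Bn=e_j$ is a genuine necessary condition, but no choice of cocharacters will sharpen it to $(B\Bv)_j\leq 0$. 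The paper's proof in fact only establishes the ``if'' direction. A minor point: $B_{ii}=c_{ii}/d_i$, which is not $2/d_i$ at vertices carrying loops, and the paper allows loops.

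\textbf{Sufficiency.} Your decomposition $\Bm^{\top}B\Bn=\Bm^{\top}B\Bv-\Bm^{\top}B\Bm$ is a good start. However, Perron--Frobenius data and the trichotomy are not what is needed. The missing idea is to expand the quadratic form as a weighted Laplacian relative to $\Bv$. Writing $\alpha_i=m_i/v_i$, one has
\[
\Bm^{\top}B\Bm=\sum_i v_i(B\Bv)_i\,\alpha_i^2+\sum_{i<j}(-B_{ij})\,v_iv_j(\alpha_i-\alpha_j)^2 .
\]
Substituting this into your decomposition gives the paper's identity
\[
-\Bm^{\top}B\Bn=\sum_{i<j}(-B_{ij})\,v_iv_j(\alpha_i-\alpha_j)^2+\sum_i\frac{m_in_i}{v_i}\bigl(-(B\Bv)_i\bigr).
\]
Both terms are $\geq 0$ when $B\Bv\leq 0$. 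This identity also shows exactly why necessity fails: if $v_k=1$, then $m_kn_k=0$, so $(B\Bv)_k$ never enters. Consequently, weak conicity is not equivalent to any linear condition on $\Bv$.

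\textbf{Fixed points.} The same identity handles the equality case as you anticipated. On connected support, vanishing of both terms forces $\Bm=\alpha\Bv$, and then forces $B\Bv=0$. But your converse, ``$B\Bv=0$ produces degree-$0$ operators'', needs a splitting $\Bm=\alpha\Bv$ with $0<\alpha<1$ and $\alpha\Bv$ integral. That means $\Bv=k\delta^{\vee}$ with $k\geq 2$.

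For $\Bv=\delta^{\vee}$ itself it fails. Take the $n$-cycle ($n\geq 3$) with all $v_i=1$: the degree $\sum_i|\lambda_i-\lambda_{i+1}|$ vanishes only on central cocharacters. The degree-$0$ part is therefore just the central $\C[x^{\pm 1}]$, even though $C^{\top}\Bv=0$. This central $\C^{\times}$ is always present, so ``dimension $\geq 1$'' in the statement must be read as ``strictly larger than the central $\C^{\times}$''. The paper's closing sentence has the same gap at $k=1$.
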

\begin{proof}
Assume for simplicity that the support $\supp(\Bv)$ is equal to the quiver. Let $B=CD^{-1}$, so $B$ is a symmetric matrix with $B_{ij}\leq 0$ for $i\neq j$.   

  If we write $\alpha_i=m_i/v_i$ and $\beta_i=n_i/v_i=1-\alpha_i$ for $i$ in the support of $\Bv$, then the algebraic identity $\alpha_i\beta_j+\alpha_j\beta_i = \alpha_i\beta_i+\alpha_j\beta_j+(\alpha_i-\alpha_j)^2$,
  immediate from $\beta_k=1-\alpha_k$, gives
  \[
  m_i n_j + m_j n_i = v_iv_j\bigl(\alpha_i\beta_i+\alpha_j\beta_j+(\alpha_i-\alpha_j)^2\bigr).
  \]
  Substituting and collecting terms, we find that
  \begin{align*}
        -\Bm^{\top}B\Bn
    &= \sum_{i<j}(-B_{ij})v_iv_j(\alpha_i-\alpha_j)^2
      +\sum_i \frac{m_i n_i}{v_i}\!\Bigl(\sum_{j\neq i}(-B_{ij})v_j - B_{ii} v_i\Bigr)\\
     &=
    \sum_{i<j}(-B_{ij})\,v_i v_j\!\left(\frac{m_i}{v_i}-\frac{m_j}{v_j}\right)^{\!2}
    \;+\;
    \sum_{i}\frac{m_i n_i}{v_i}\,(-(B\Bv)_i)
    \;\geq\; 0.
  \end{align*}
    This shows that all monopole operators have non-negative degree if $C^{\top}\Bv\leq 0$, so we have weak conicity.

  The fixed point set is the spectrum of the degree 0 elements.  We always have a copy of $\C[x^{\pm 1}]$ generated by $\pi_{\Bv}$.  Assume now that some other monopole operator has degree 0.  By convexity, there must be such an operator of the form $\pi_{\Bm}$ for some $\Bm$.  Since both terms in the formula above are non-negative, they must both vanish.  The first term vanishes only if $m_i/v_i=m_j/v_j$ for all $i,j$ in the support of $\Bv$, so $\Bm=\alpha \Bv,\Bn= (1-\alpha)\Bv $ for some $\alpha\in (0,1)$.  Since all entries of $\Bm$ and $\Bn$ are positive, the second term can only vanish if $C^{\top}\Bv=0$. 
  
   This implies that the support of $\Bv$ is an affine Dynkin diagram, by the Trichotomy Theorem for generalized Cartan matrices \cite[Th. 4.3]{kacInfinitedimensionalLie1990}, and $\Bv$ is a positive multiple of the primitive null coroot $\delta^{\vee}$.  In this case, the monopole operator corresponding to $\delta^{\vee}$ has degree zero, so the Coulomb branch is not conical.
\end{proof}

This matches the ``fission'' and ``decay'' description of leaves of quiver gauge theories discussed by Bourget--Sperling--Zhong in the section on the algorithm in \cite{bourgetDecayFission2024}; ``decay'' refers to the case where some of the vectors $\Bv^{(i)}$ are unit vectors, so the corresponding factor of the Coulomb branch is just a point, while ``fission'' refers to a decomposition into multiple (an)isotropic vectors, so the Coulomb branch is written as a non-trivial product.  However, in that paper, some assumptions are made about exactly when a theory of the form $\M(\GL_{\Bv}, \bfN_{\Bv} )$ will have a two-dimensional leaf.
A preliminary version of these conditions was given in \cite{bourgetDecayFission2024}, but it was not adequate to determine the correct list of theories with two-dimensional leaves;  
 \cref{rem:ugly-no-affine} gives an example satisfying all these conditions but for which the corresponding theory $(\PGL_{\Bv}, \bfN_{\Bv})$ is ugly.
 In \cite[Def. 2]{bourgetClassifyingIsolated2025}, Bourget--Lamouret--Soys\"uren--Sperling give an updated set of candidates and use the term ``good'' in a different sense.  
 \begin{enumerate}[label=(\roman*)]
    \item 
 The first group of these candidates (Def. 2(i) in their terminology) are precisely the theories that follow the usual definition of goodness, with a small modification to account for symmetrizers.  If we have no symmetrizers, then this is just the condition that $(\PGL_{\Bv}, \bfN_{\Bv})$ is good, but in the symmetrizer case, there is no natural way to remove the kernel of $\GL_{\Bv}\to \PGL_{\Bv}$.  Instead, our replacement for goodness is the condition (*) that any monopole operator not of the form $\pi_{\Bv}$ has degree at least $2$.

When this is satisfied, there is a 2-dimensional leaf whose image is the Lie algebra of this kernel and on which all monopole operators of degree $>0$ vanish.
    \item The second group of candidates (Def. 2(ii) in their terminology) consists of the cases where the support of $\Bv$ is affine and $\Bv$ is a positive integer multiple of the primitive imaginary coroot, or where the support of $\Bv$ is a Jordan quiver and $v>1$. 
 \end{enumerate}
One can easily see that if $\M(\GL_{\Bv},\bfN_{\Bv})$ is weakly conical and has attracting set of dimension $1$ (in the no symmetrizer case, this is equivalent to $(\PGL_{\Bv}, \bfN_{\Bv})$ being conical), then it has a two-dimensional leaf if and only if there are no monopole operators of degree $1$, so condition (i) above is satisfied.  Thus \cref{prop:nonneg-monopole-degree} shows that:
\begin{Corollary}\label{cor:weakly-conical-two-dim-leaf}
    If $\M(\GL_{\Bv},\bfN_{\Bv})$ has a two-dimensional leaf and it is weakly conical, then $(\GL_{\Bv},\bfN_{\Bv})$ is of the form (i) or (ii) above.
\end{Corollary}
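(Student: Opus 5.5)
We argue by contraposition against \cref{prop:nonneg-monopole-degree}. Since $\M(\GL_{\Bv},\bfN_{\Bv})$ is weakly conical, that proposition gives $C^{\top}\Bv\leq 0$, and it tells us the fixed locus of the attracting $\Gm$-action is positive-dimensional beyond the torus generated by $\pi_{\Bv}$ exactly when $C^{\top}\Bv=0$. This splits the proof into two cases.

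\textbf{Case $C^{\top}\Bv=0$.} By \cref{prop:nonneg-monopole-degree}, the support of $\Bv$ is an affine Dynkin diagram and $\Bv$ is a positive multiple of $\delta^{\vee}$. This is condition (ii). The Jordan quiver with $v>1$ is the degenerate affine case with a single node carrying a loop, where $C=(0)$. There we need a short separate check that $v=1$ is excluded. When $v=1$ the Coulomb branch is $\C\times\C^{\times}$, which has no two-dimensional leaf of the required kind; more precisely, its only leaf is the whole space, with image all of $\LT$, so the condition is vacuous. We would note this edge case explicitly rather than belabor it.

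\textbf{Case $C^{\top}\Bv\neq 0$.} The degree-$0$ part of the coordinate ring is then only $\C[x^{\pm1}]$, generated by $r_{\pm\pi_{\Bv}}$. So the attracting set is one-dimensional, and by \cref{lem:no-zero-dim-leaves} it is the unique one-dimensional flat. It remains to show there is no monopole operator of degree $1$ other than those of the form $\pi_{\Bv}$, because condition (i) is exactly the statement that all such operators have degree $\geq 2$.

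Suppose $r$ is an operator of degree $1$ not on the $\pi_{\Bv}$ ray. Using the decomposition of a dominant cocharacter into face cocharacters $\pi_{\Bm}$ along a chamber of linearity of $\wp$, together with \cref{lem:face-cochar-degree}, we may assume the degree-$1$ function is a dressed monopole operator on a face ray. Take a two-dimensional leaf $L$. Its image $U_L$ is the one-dimensional flat, by the flatness argument of \cref{lem:leaf-gives-flat}, whose monopole part applies in the symmetrizer setting. On $L$, all operators of positive degree vanish, since $L$ is the $\Gm$-attracting closed leaf. A degree-$1$ function $r$ has Poisson bracket with the degree-$1$ part giving a nonzero degree-$0$ element of the form $\pi_{\Bv}$-type, or a nonvanishing pairing of degree-$1$ functions. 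This would produce a symplectic vector-space direction transverse to the minimal stratum, as in the ugly case, so the lowest stratum through the attracting set would have dimension greater than $2$, or the attracting stratum would not be a leaf. Either outcome contradicts the existence of the two-dimensional leaf.

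\textbf{Main obstacle.} The hardest step is making this last implication precise: showing that a degree-$1$ operator forces the minimal stratum to be larger. We would first try the standard argument that the degree-$1$ functions, modulo the $\C[x^{\pm1}]$-span, restrict to linear coordinates on the normal slice to the fixed curve, and that their Poisson brackets land in degree $0$, where they are nondegenerate. Nondegeneracy follows from symplecticity at a point of the attracting curve, which makes the fixed curve not a union of leaves of dimension $2$. This gives condition (i).
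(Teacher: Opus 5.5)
Your outline is the paper's. \cref{prop:nonneg-monopole-degree} gives $C^{\top}\Bv\le 0$, and $C^{\top}\Bv=0$ lands in (ii). Otherwise the degree-$0$ part is $\C[x^{\pm1}]$, and one must show that a two-dimensional leaf excludes non-central monopole operators of degree $1$; this gives (i). The paper simply asserts this last implication as easy. It is the weakly conical analogue of the fact that an ugly theory has no zero-dimensional leaves.

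Your treatment of the Jordan quiver with $v=1$ is wrong. $\M(\GL_1,\mathfrak{gl}_1)\cong\C\times\C^{\times}$ \emph{is} a two-dimensional leaf, so this case is not excluded. The corollary holds there because (i) is vacuous: every cocharacter is central.

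The genuine gap is your justification of the key step. First, two misidentifications:
\begin{itemize}
\item The fixed curve $\Spec\C[x^{\pm1}]$ lies over $0\in\LT/W$, so it is not the flat $U_L$.
\item Not all positive-degree functions vanish on $L$: the central coordinate of $\LT/W$ has degree $2$ and is non-constant on $L$.
\end{itemize}
More seriously, nondegeneracy of the bracket on degree-$1$ functions cannot come from ``symplecticity'' at a fixed point $p\in L$. Such a $p$ exists, since $L$ is closed by \cref{lem:no-zero-dim-leaves} and is $\Gm$-stable. Because the bracket has degree $-2$, the bivector at $p$ only pairs cotangent weights summing to $2$. The pairing of $dx$ with the differential of the central Cartan coordinate already has rank $2$. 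So $\dim L=2$ \emph{forces} $\{f,g\}(p)=0$ for all $f,g$ of degree $1$. Nondegeneracy is therefore exactly what must be proved, and it is not formal. For example, take $\C^2/\{\pm1\}$ with $x,y$ of weights $\tfrac12,\tfrac32$, so that the invariants $x^2,xy,y^2$ have degrees $1,2,3$. Its bracket has degree $-2$ and it has the degree-$1$ invariant $x^2$, yet its vertex is a zero-dimensional leaf.

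So you need Coulomb-branch input.
\begin{itemize}
\item Without symmetrizers, \cref{lem:unfaithful,cor:unfaithful} exhibit $\M(\GL_\Bv,\bfN_\Bv)$ as a free quotient of $\M(\PGL_\Bv,\bfN_\Bv)\times T^*\C^{\times}$. A two-dimensional leaf then comes from a zero-dimensional leaf of $\M(\PGL_\Bv,\bfN_\Bv)$. Your $C^{\top}\Bv\neq 0$ analysis makes this theory conical, and a non-central degree-$1$ monopole operator makes it ugly. Ugly theories have no zero-dimensional leaves, which closes the argument.
\item With symmetrizers there is no such splitting. You must show directly that a non-central degree-$1$ monopole operator has nonzero bracket with another degree-$1$ function. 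This is what happens in the abelian case: there $r_\mu r_{-\mu}$ is a single weight $\varphi$, and $\{r_\mu,r_{-\mu}\}$ is a nonzero constant.
\end{itemize}
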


Thus, we can deduce \cite[Conj. 1]{bourgetClassifyingIsolated2025} from \cref{cor:quiver-symmetrizer-leaves} if the following conjecture holds.
\begin{Conjecture}\label{conj:two-dim-leaf}
    The Coulomb branch $\M(\GL_{\Bv},\bfN_{\Bv})$ has a $2$-dimensional leaf (and no zero-dimensional leaves) if and only if one of the conditions (i) or (ii) above is satisfied, and in each case, there is a unique such leaf.  In particular, if $\M(\GL_{\Bv},\bfN_{\Bv})$ is not weakly conical, then it has no two-dimensional leaves.
\end{Conjecture}
Existence and uniqueness are clear in case (i).  For case (ii), this is confirmed in the Jordan quiver case by \cref{ex:adjoint} and, as discussed in \cref{ex:affine-A} below, for affine type A in \cite[Th. 7.26]{nakajimaCherkisBow2017}.  In general, $\M(\GL_{\Bv},\bfN_{\Bv})$ should be a partial compactification of $\operatorname{Bun}^G(\mathbb{A}^2)$ for $G$ the corresponding simple group, with a unique two-dimensional leaf given by the small diagonal in the symmetric power of $\mathbb{A}^2$; this is confirmed in the ADE case, though not discussed in much detail, in \cite[Th. 3.22]{BFNplus}, building on earlier work in \cite{bravermanUhlenbeckSpaces2007,bravermanPursuingDouble2010}.

Thus, it seems to be mostly a matter of carefully tracing through the literature to confirm the converse of \cref{cor:weakly-conical-two-dim-leaf}.  However, outside the weakly conical case, it is hard to know whether to expect there will be other cases where there are two-dimensional leaves.

\subsubsection{Relation to Higgs branches}

For each such decomposition $\Bv^{(*)}$, there is a unique maximal leaf $L^{\Higgs}_{\Bv^{(*)}}\subset \M_{\Higgs}(G,\bfN)$ where the stabilizer of a generic point is the product $\prod \GL_{\Bv^{(i)}}$.  Identifying points of the variety $\M_{\Higgs}(G,\bfN)$ with the set of semi-simple representations of the preprojective algebra, the leaf $L^{\Higgs}_{\Bv^{(*)}}$ is open in the set of representations which decompose into simple summands with dimension vector $\Bv^{(*)}$. On this subset of leaves, we thus have the expected order-reversing bijection between the leaves of $\M(G,\bfN)$ and $\M_{\Higgs}(G,\bfN)$, sending $L_{\Bv^{(*)}}$ to $L^{\Higgs}_{\Bv^{(*)}}$.

By \cite[Th. 1.9]{bellamySymplecticResolutions2021}, the leaves of $\M_{\Higgs}(G,\bfN)$ are determined by representation-type data.  Concretely, we must fix the dimension vector of each simple summand that occurs, together with its multiplicity, that is, we record which pairs of summands with the same dimension vector are actually isomorphic.  We can coarsen this description by fixing only the dimension vectors of the simple summands that occur, without regard for whether pairs are isomorphic.  

Collecting together all appearances of the same dimension vector, we have $\Bv=\sum m_i \Bv^{[i]}$ with all $\Bv^{[i]}$ distinct.  The corresponding pieces are precisely the products of symmetric powers $\prod_i S^{m_i} \M_{\Higgs}(\PGL_{\Bv^{[i]}},\bfN_{\Bv^{[i]}})$.  The leaf $L^{\Higgs}_{\Bv^{(*)}}$ is the unique open leaf in this product of symmetric powers.  This product has lower-dimensional leaves if any of the $m_i$ are greater than $1$ for a dimension vector $\Bv^{[i]}$ that is isotropic or anisotropic, or equivalently is not a unit vector.  This is precisely when the leaves $L_{\Bv^{(*)}}$ and $L^{\Higgs}_{\Bv^{(*)}}$ do not give a full list of the leaves of $\M(G,\bfN)$ or $\M_{\Higgs}(G,\bfN)$.  

We can recover the case of framed (i.e. Nakajima) quiver varieties by
the ``Crawley-Boevey trick'', starting with a quiver $I_0$ and adding one additional node $\infty$ to the quiver with $w_i$ arrows from the vertex $i\in I$ to $\infty$, and taking $v_{\infty}=1$.  In this case, there can only be one summand of the decomposition with support on $\infty$, so non-trivial symmetric powers of the space associated to these representations never occur.  

In the case where $I_0$ is a simply-laced Dynkin diagram, all isotropic or anisotropic elements of $\Sigma_0$ for $I$ have non-trivial support on the additional node $\infty$, since there are none supported on $I_0$.  On the other hand, a vector $\Bv'$ with $v'_{\infty}=1$ lies in $\Sigma_0$ if and only if \[\langle\alpha_i^{\vee}, \sum_{j\in I_0} v_j'\alpha_j \rangle\geq w_i.\]  We can describe this condition simply in terms of a dominant weight $\lambda$ satisfying $\langle\lambda, \alpha_i^{\vee}\rangle=w_i$; it simply says that $\mu'=\lambda-\sum_{i\in I_0} v'_i\alpha_i$ is a dominant weight.

Thus if we let $\mu=\lambda-\sum_{i\in I_0} v_i\alpha_i$, then the elements of $\Sigma_0$ with support on $\infty$ that appear in the decomposition are in bijection with the dominant weights $\mu'$ satisfying $\lambda \geq \mu'\geq \mu$, and any such $\mu'$ can be completed to a decomposition of $\Bv^{(*)}$ by adding unit vectors in a unique way;  since the residual theories are always good in these cases, we have a unique leaf of $\M(G,\bfN)$ corresponding to each such $\mu'$.  
\begin{Corollary}
    For a framed quiver gauge theory of type ADE, the leaves of $\M(G,\bfN)$ are classified by dominant weights $\mu'$ satisfying $\lambda \geq \mu'\geq \mu$.
\end{Corollary}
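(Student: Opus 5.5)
The plan is to assemble this from \cref{leaf-bijection} together with the analysis of $\Sigma_0$ above, specialized to the Crawley-Boevey quiver $I=I_0\cup\{\infty\}$ with $v_\infty=1$.

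\textbf{Reduction to decompositions.} First, every leaf $L$ of $\M(G,\bfN)$ has image $U_L$ equal to the image of a flat, by \cref{lem:leaf-gives-flat}. For a quiver theory, a flat corresponds to a decomposition $\Bv=\Bv^{(1)}+\cdots+\Bv^{(d)}$ into vectors with connected support. The residual Coulomb branch is then $\M(\Gl^1,\Nl)\cong\prod_i\M(\PGL_{\Bv^{(i)}},\bfN_{\Bv^{(i)}})$.

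\textbf{Which summands occur.} Since $v_\infty=1$, exactly one summand, say $\Bv'$, contains $\infty$. I will show the remaining summands are forced to be unit vectors $e_j$ with $j\in I_0$, as follows.
\begin{itemize}
\item A summand supported in $I_0$ of finite ADE type is a root of a finite-type Dynkin diagram.
\item For $\M(\PGL_{\Bm},\bfN_{\Bm})$ to have a zero-dimensional leaf, the theory must in particular be weakly conical, with a unique attracting fixed point.
\item Using the degree formula from the muthiah lemma together with positive-definiteness of the finite Cartan form, any non-simple root $\Bm$ admits some $\Bm'$ with $-\langle\Bm',\Bm-\Bm'\rangle<0$. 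This gives a negative-degree monopole operator, so such $\Bm$ contribute no leaves.
\end{itemize}
I expect this step to be the main obstacle: it requires ruling out zero-dimensional leaves for non-good theories, which \cref{leaf-bijection} does not handle by itself. I would try \cref{prop:nonneg-monopole-degree} first. Finite-type $C$ with $\Bm$ a positive root has $C\Bm\not\le 0$, so the theory is not weakly conical. I would then need to argue that non-weakly-conical theories here have no zero-dimensional leaves. One route is to exhibit a nowhere-vanishing product $r_\nu r_{-\nu}$ with a Hamiltonian torus, as in \cref{lem:no-zero-dim-leaves}, restricted to the Levi where the degree is negative.

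\textbf{Classifying $\Bv'$ and concluding.} Next I classify the summand $\Bv'$ containing $\infty$. I expect $(\PGL_{\Bv'},\bfN_{\Bv'})$ to have a zero-dimensional leaf exactly when $\Bv'\in\Sigma_0$. The forward direction (good implies a unique leaf) is \cref{leaf-bijection}. For the converse, if $\Bv'\notin\Sigma_0$, I would again find a negative-degree (non-conical) or ugly situation by the same moment-map flatness argument as above.

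By the stated criterion, $\Bv'\in\Sigma_0$ holds iff $\mu'=\lambda-\sum v'_i\alpha_i$ is dominant. Since $0\le\Bv'\le\Bv$, this gives $\lambda\ge\mu'\ge\mu$. Conversely, each such $\mu'$ determines $\Bv'$, and completing $\Bv'$ by unit vectors is unique. All residual theories here are good: unit vectors give points, and $\Bv'\in\Sigma_0$.

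Finally, \cref{leaf-bijection} gives exactly one leaf per decomposition. Distinct $\mu'$ give distinct flats up to $W$, because the summand through $\infty$ is intrinsic. This yields the stated bijection.
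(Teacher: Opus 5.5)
\paragraph{Review of the proposal.} Your skeleton is the paper's argument:
\begin{itemize}
\item leaves map to flats, and a flat is a decomposition of $\Bv$;
\item exactly one summand $\Bv'$ meets $\infty$, and $\Bv'\in\Sigma_0$ if and only if $\mu'=\lambda-\sum_{i\in I_0}v'_i\alpha_i$ is dominant;
\item the remaining summands are unit vectors;
\item since every residual theory is good, \cref{leaf-bijection} gives exactly one leaf per $\mu'$.
\end{itemize}
The paper stops there. It never separately shows that a decomposition with a summand outside $\Sigma_0$ contributes no leaf; it even flags this possibility in general, just after the corollary on $\Sigma_0$-decompositions. You are right that this is the crux, but your argument for it fails at several points.

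First, summands supported on $I_0$ need not be roots. On $A_2$, in coordinates $x_i^a$ on $\LT$, a generic point of the flat $x_1^1=x_1^2=x_2^1$ gives the summand $(2,1)$.

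Second, for a positive root $\Bm$ your degree claim is backwards. The form $\langle\cdot,\cdot\rangle$ is even and positive definite, so for $0<\Bm'<\Bm$
\[
2=\langle\Bm,\Bm\rangle=\langle\Bm',\Bm'\rangle+\langle\Bm-\Bm',\Bm-\Bm'\rangle+2\langle\Bm',\Bm-\Bm'\rangle\geq 4+2\langle\Bm',\Bm-\Bm'\rangle .
\]
Hence every fundamental monopole has degree $\geq 1$, with equality for $\Bm'=\Bm-\alpha_i$ whenever $\Bm-\alpha_i$ is a root. Non-simple roots therefore give ugly theories. That, not a negative-degree operator, is why they contribute no zero-dimensional leaf.

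Third, \cref{prop:nonneg-monopole-degree} cannot be used as you do. Its proof only shows that $C^{\top}\Bv\leq 0$ is sufficient, and necessity fails. For $A_2$ with $\Bm=(1,1)$, $C\Bm=(1,1)$, yet every monopole degree is $|a_1-a_2|\geq 0$. In fact $\M(\PGL_{(1,1)},\bfN_{(1,1)})\cong\mathbb{A}^2$, which is conical.

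Your stated necessary condition, ``weakly conical with a unique attracting fixed point,'' is also unsupported. The theory in \cref{ex:bad} has several zero-dimensional leaves and a positive-dimensional fixed locus.

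\paragraph{The bad cases remain open.} These are:
\begin{itemize}
\item non-root summands on $I_0$, where $\langle\Bm,\Bm\rangle\geq 4$, so $p(\Bm)<0$ and the moment map is not flat;
\item non-ugly summands $\Bv'\ni\infty$ with $\mu'$ non-dominant.
\end{itemize}
Here a negative-degree monopole is not an obstruction the paper provides, and bad theories can have zero-dimensional leaves.

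The device of \cref{lem:no-zero-dim-leaves} does not transfer either. It needs a cocharacter acting trivially on the matter, so that $r_\nu r_{-\nu}=1$. In the almost faithful $\PGL_{\Bm}$, every nonzero $\nu$ pairs nontrivially with some weight. So $r_\nu r_{-\nu}$ is a nonconstant product of Euler classes, and the corresponding torus has fixed points; this already happens on $\mathbb{A}^2$ above.

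Closing this step needs genuinely new input. One route is to identify these residual branches with generalized affine Grassmannian slices and use the known description of their leaves \cite{muthiahSymplecticLeaves2023}. That description is, however, essentially the statement being proved.
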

If $\mu$ is dominant, then the same data classifies the leaves of $\M_{\Higgs}(G,\bfN)$, and the relation $\CH$ induces a bijection between the leaves of $\M(G,\bfN)$ and $\M_{\Higgs}(G,\bfN)$.  In particular, \cref{conj:special} holds in this case.

\begin{Example}\label{ex:affine-A}
    In the case where $I_0$ is an $n$-cycle, the picture is more complicated, but the answer must match \cite[Th. 7.26]{nakajimaCherkisBow2017}.  As in finite ADE type, we obtain elements of $\Sigma_0$ from dominant weights $\mu'$ satisfying $\lambda \geq \mu'\geq \mu$, but the dimension vector $\delta$, which is constant with value $1$ on $I_0$, also lies in $\Sigma_0$.  

    First, we note that if $\Bv=k\delta$, then \cite[Th. 7.26]{nakajimaCherkisBow2017} in the case $\underline{k}=\emptyset$ and $\lambda=\kappa$ establishes that $\M(\PGL_{k\delta},\bfN_{k\delta})$ is isomorphic to the centered Uhlenbeck space $\mathcal{U}^k_{n}$.   

    Thus, we can reconstruct the leaves of $\M(G,\bfN)$ by considering the decompositions $\Bv=\Bv^{(1)}+\cdots +\Bv^{(m)}$ where $\Bv^{(1)}$ corresponds to a dominant $\mu'$, and all other vectors appearing are unit vectors or multiples of $\delta$.  This gives the same set of strata as \cite[Th. 7.26]{nakajimaCherkisBow2017}, with the partition $\underline{k}$ given by the multiples of $\delta$ that appear (and $\mu'$ being the weight $\kappa$ given there).  \cref{cor:slice-to-leaf} agrees with the description of the slices in \cite[Th. 7.26]{nakajimaCherkisBow2017}, since each multiple of $\delta$ gives a factor of $\mathcal{U}^k_{n}$.

   As discussed below \cref{conj:two-dim-leaf}, we expect that more general affine Dynkin diagrams will have similar features, but we believe that this should be confirmed in more detail before claiming it is established.  
\end{Example}

\ifanindex
\IndexOfNotation
\fi

{\renewcommand{\markboth}[2]{}\printbibliography}
\end{document}